\documentclass[12pt,reqno]{amsart}
\usepackage[paper=letterpaper,margin=1in]{geometry}

\usepackage{amsmath,amssymb,mathrsfs,amscd,amsthm}
\usepackage{braket}
\usepackage{mathtools} % for using \xarrow and \avsum
\usepackage[inline]{enumitem} % change enum. items
\usepackage{hyperref}
\hypersetup{%
	colorlinks=true, linkcolor=blue, 
	citecolor=green
}

%--------------% theorem %--------------%
\newtheorem{theorem}{Theorem}[section]
\newtheorem{lemma}[theorem]{Lemma}
\newtheorem{proposition}[theorem]{Proposition}
\newtheorem{corollary}[theorem]{Corollary}

\theoremstyle{definition}
\newtheorem{remark}[theorem]{Remark}

\numberwithin{equation}{section}
\allowdisplaybreaks

%--------------% acronym %--------------%
\usepackage{acronym}
\acrodef{SDE}[SDE]{Stochastic Differential Equation}
\acrodef{DBM}[DBM]{Dyson's Brownian Motion}

%--------------% brakets %--------------%
\newcommand{\BK}[1]{ {\left( #1 \right)} }
\newcommand{\sqBK}[1]{ {\left[ #1 \right]} }
\newcommand{\curBK}[1]{ {\left\{ #1 \right\}} }
\newcommand{\absBK}[1]{ {\left| #1 \right|} }

\newcommand{\ceilBK}[1]{ {\lceil #1 \rceil} }
\newcommand{\floorBK}[1]{ {\lfloor #1 \rfloor} }

%--------------% operators %--------------%

\newcommand{\bdy}{\lambda} % boundary term
\newcommand{\I}{\mathcal{L}} % boundary term

%--------------%-% functions %------------%
\DeclareMathOperator{\sign}{sign}
\newcommand{\sine}{\mathcal{S}_{\text{sine}}}

%--------------%-% others %--------------%
\newcommand{\ind}{\mathbf{1}}
\newcommand{\av}{\mathrlap{\rotatebox[origin=c]{20}{\bf--}}{\Sigma}}

\newcommand{\norm}[1]{|#1|_{\alpha,\rho}} %Ysp norm

%--------------% spaces, events, maps %--%

\newcommand{\Xsp}{\mathcal{X}}
\newcommand{\XTsp}{\mathcal{X}_{\mathcal{T}}}
\newcommand{\XRsp}{\mathcal{X}^\text{rg}}
\newcommand{\XRTsp}{\mathcal{X}_{\mathcal{T}}^\text{rg}}

\newcommand{\Ysp}{\mathcal{Y}}
\newcommand{\Yspl}{\underline{\mathcal{Y}}}

\newcommand{\YTsp}{\mathcal{Y}_{\mathcal{T}}}
\newcommand{\YTspl}{\underline{\mathcal{Y}}_{\mathcal{T}}}
\newcommand{\YTspll}{\underline{\mathcal{Y}}_{\mathcal{T}}'}

\newcommand{\Rsp}{\mathcal{R}}
\newcommand{\RTsp}{\mathcal{R_T}}

\newcommand{\KTsp}{\mathcal{KT}} % Katori Tanemura

\newcommand{\Weyl}{\mathcal{W}} % Weyl
\newcommand{\WeylT}{\mathcal{W_T}} % Weyl T

\newcommand{\A}{\mathcal{A}} % A_b(k)
\newcommand{\AL}{\mathcal{A}^{\hZ}} % A_b(k)
\newcommand{\tilA}{\widetilde{\mathcal{A}}} % tilA_b(k)
\newcommand{\hZ}{\mathbb{L}} % half Z

\usepackage{graphicx}
\newcommand*{\Cdot}{{\raisebox{-0.5ex}{\scalebox{1.8}{$\cdot$}}}}

%--------------% symbols %--------------%
\newcommand{\f}{ \psi }
\newcommand{\fS}{ \eta }
\newcommand{\undffp}{ \widetilde{\eta}^{\text{lw},+} }
\newcommand{\undffm}{ \widetilde{\eta}^{\text{lw},-} }
\newcommand{\undffpm}{ \widetilde{\eta}^{\text{lw},\pm} }

\newcommand{\undffs}{ \widetilde{\eta}^{\text{lw},\sigma} }
\newcommand{\undf}{ \eta^{\text{lw}} }
\newcommand{\undfp}{ \eta^{\text{lw},+} }
\newcommand{\undfm}{ \eta^{\text{lw},-} }
\newcommand{\undfpm}{ \eta^{\text{lw},\pm} }
\newcommand{\undfmp}{ \eta^{\text{lw},\mp} }
\newcommand{\barf}{ \eta^{\text{up}} }
\newcommand{\barE}[1]{ \overline{E_{#1}} } % E_{interval}(s)

%--------------%-% \Yg %------------%

\newcommand{\bfYgn}{ \mathbf{Y}^{\vee\gamma_n} }
\newcommand{\bfYg}{ \mathbf{Y}^{\vee\gamma} }

%--------------%-% X,Y^r %-----------------%
\newcommand{\YR}[2]{Y^{^\star #1}_{#2}}
\newcommand{\barYR}[2]{\overline{Y^{^\star #1}_{#2}}}

\newcommand{\bfYR}[1]{\mathbf{Y}^{^\star #1}}

\newcommand{\bfG}{\mathbf{G}}

%--------------%-% up lw %---------------%

%--------------%-%-% f %-----------------%
\newcommand{\fup}{ f^{\text{up}} }
\newcommand{\flw}{ f^{\text{lw}} }

%--------------%-%-% y %-----------------%
\newcommand{\yup}{ y^{\text{up}} }
\newcommand{\Yup}{ Y^{\text{up}} }
\newcommand{\bfYup}{ \mathbf{Y}^{\text{up}} }
\newcommand{\ylw}{ y^{\text{lw}} }
\newcommand{\Ylw}{ Y^{\text{lw}} }

\newcommand{\undYlw}{ \underline{Y}^{\text{lw}} }
\newcommand{\bfYlw}{ \mathbf{Y}^{\text{lw}} }

\newcommand{\bfundYlw}{ \underline{\mathbf{Y}}^\text{lw} }

%--------------%-%-% z %-----------------%
\newcommand{\Zup}{ Z^{\text{up}} }
\newcommand{\bfZup}{ \mathbf{Z}^{\text{up}} }
\newcommand{\Zlw}{ Z^{\text{lw}} }
\newcommand{\bfZlw}{ \mathbf{Z}^{\text{lw}} }

%--------------%-% hat %-----------------%

\newcommand{\hatB}{ \widehat{B} }

%--------------%-% tilde %--------------%
\newcommand{\tilm}{\widetilde{m}}
\newcommand{\tilu}{\widetilde{u}}

\newcommand{\tilphi}{\widetilde{\phi}}
\newcommand{\tilL}{\widetilde{L}}
\newcommand{\tilB}{\widetilde{B}}
\newcommand{\tilQ}{\widetilde{Q}}
\newcommand{\tilT}{\widetilde{T}}

\newcommand{\tilY}{\widetilde{Y}}

%--------------%-% over, underline %----%

%\newcommand{\barE}{ \overline{E} }

\newcommand{\barY}{ \overline{Y} }

\newcommand{\undy}{ \underline{y} }

\newcommand{\undY}{ \underline{Y} }

%--------------%-% hat, over, under ---%
\newcommand{\bftilY}{ \widetilde{\mathbf{Y}} }

\newcommand{\bfbarY}{ \overline{\mathbf{Y}} }

\newcommand{\bfundY}{ \underline{\mathbf{Y}} }
\newcommand{\bfundZ}{ \underline{\mathbf{Z}} }

%--------------% fonts %--------------%
\newcommand{\bfgamma}{ \boldsymbol{\gamma} }

\newcommand{\bfx}{ \mathbf{x} }
\newcommand{\bfy}{ \mathbf{y} }
\newcommand{\bfz}{ \mathbf{z} }
\newcommand{\bfB}{ \mathbf{B} }
\newcommand{\bfQ}{ \mathbf{Q} }

\newcommand{\bfW}{ \mathbf{W} }
\newcommand{\bfX}{ \mathbf{X} }
\newcommand{\bfY}{ \mathbf{Y} }
\newcommand{\bfZ}{ \mathbf{Z} }

\newcommand{\bbE}{ \mathbb{E} }
\newcommand{\bbK}{ \mathbb{K} }
\newcommand{\bbP}{ \mathbb{P} }
\newcommand{\bbQ}{ \mathbb{Q} }
\newcommand{\bbR}{ \mathbb{R} }
\newcommand{\bbZ}{ \mathbb{Z} }

\newcommand{\calA}{ \mathcal{A} }

\newcommand{\calI}{ \mathcal{I} }

\newcommand{\calK}{ \mathcal{K} }
\newcommand{\calL}{ \mathcal{L} }

\newcommand{\calO}{ \mathcal{O} }

\newcommand{\calS}{ \mathcal{S} }
\newcommand{\calT}{ \mathcal{T} }
\newcommand{\calU}{ \mathcal{U} }

\newcommand{\fkA}{ \mathfrak{A} }
\newcommand{\fkB}{ \mathfrak{B} }

\newcommand{\fkG}{ \mathfrak{G} }

\newcommand{\fkI}{ \mathfrak{I} }
\newcommand{\fkJ}{ \mathfrak{J} }

\newcommand{\fkL}{ \mathfrak{L} }

\newcommand{\fkN}{ \mathfrak{N} }

%--------------% superscript %--------------%

\newcommand{\ic}{\text{in}}

%--------------% filtrations %--------------%

\newcommand{\Wfil}{ \mathscr{F}^\mathbf{W} }
\newcommand{\Bfil}{ \mathscr{F}^\mathbf{B} }
\newcommand{\scrG}{ \mathscr{G} }

%-------------% end of preamble %----------%

\begin{document}

\title[Infinite Dimensional SDE for Dyson's Model]
{Infinite Dimensional Stochastic Differential Equations for Dyson's Model}

\address{L.-C.\ Tsai, Department of Mathematics, Stanford University, California 94305}
\email{lctsai@stanford.edu}
\author[L.-C.\ Tsai]{Li-Cheng Tsai}

\date{\today}

\subjclass[2010]{
Primary 60K35, %Interacting random processes; statistical mechanics type models; percolation theory 
Secondary 60J60, %Diffusion processes
82C22. 	%Interacting particle systems
}
\keywords{ 
	Dyson's Brownian motion, Dyson's model, stochastic differential equations, infinite-dimensional,
	strong existence, pathwise uniqueness, correlation function.
}
\thanks{}

\begin{abstract}
In this paper we show the strong existence and the pathwise uniqueness
of an infinite-dimensional \ac{SDE} corresponding to the bulk limit of \ac{DBM},
for all $\beta\geq 1$.
Our construction applies to an explicit and general class of initial conditions,
including the lattice configuration $\{x_i\}=\bbZ$ and the sine process.
We further show the convergence of the finite to infinite-dimensional \ac{SDE}.
This convergence concludes the determinantal formula of \cite{katori10}
for the solution of this \ac{SDE} at $\beta=2$.
\end{abstract}

\maketitle

\section{Introduction}
\label{sect:intro}

In this paper we study the well-posedness of the infinite-dimensional \ac{SDE},
\begin{align}\label{eq:DBM}
	X_i(t) = X_i(0) + B_i(t) 
	+ 
	\beta \int_0^t 
	\phi_i(\bfX(s)) ds,
	\ i\in\bbZ,
\end{align}
where $\bfX(s)=( \ldots<X_0(s)<X_1(s)<\ldots )$ describes ordered particles on $ \bbR $,
 $ B_i(t) $, $ i\in\bbZ $, denote independent standard Brownian motions,
and the interaction $ \phi_i(\bfx) $ takes the form
\begin{align}\label{eq:phi}
	\phi_i(\bfx) := \frac12 \lim_{k\to\infty}\sum_{j:|j-i|\leq k} \frac{1}{x_i-x_j},
\end{align}
with $ \beta\geq 1 $ measuring its strength.
The interest of such \ac{SDE} arises from random matrix theory.
Equation \eqref{eq:DBM} represents the bulk limit of \ac{DBM}, which describes the evolution of the eigenvalues 
of the symmetric and Hermitian random matrices
with independent Brownian entries, for $ \beta=1,2 $, respectively, see \cite{dyson62,mckean69}.

The difficulty of establishing the well-posedness of \eqref{eq:DBM}
lies in the long-range and singular nature of $ \phi_i $. 
Indeed, for a particle configuration $ \bfx $ with a roughly uniform density,
we have  
\begin{align}\label{eq:diverge}
	\sum_{j:j\neq i} \frac{1}{|x_i-x_j|} = \infty,
\end{align}
so the only way \eqref{eq:phi} converges is by canceling
two divergent series from $ j<i $ and $ j>i $.
Further, as we argue in Remark~\ref{rmk:singular} in the following,
unlike the case of finite dimensions, the Bessel-type repulsion of $ \phi_i $ alone
does \emph{not} prevent finite time collisions, i.e.\ $ X_i(t)=X_{i+1}(t) $.
Alternatively, under the framework of \cite{lang77,lang77a},
equation \eqref{eq:DBM} formally has the logarithmic potential $ -\beta \sum_{i<j} \log|x_i-x_j| $.
However, due the logarithmic growth as $ |x_i-x_j| \to \infty $,
such a potential is still \emph{ill-defined} even under a limiting procedure as in \eqref{eq:phi},
suggesting a considerable challenge for establishing the well-posedness of \eqref{eq:DBM}.

At $ \beta=1,2,4 $, this challenge has been largely overcome thanks to the integrable structure of \ac{DBM}.
This starts with \cite{spohn87} constructing the equilibrium process as an $ L^2 $ Markovian semigroup.
Combining the theory of Dirichlet form and the theory of determinantal or Pfaffian point processes,
\cite{osada12,osada13} obtain the weak existence for near-equilibrium configurations.
The recent work of \cite{osada14a} further shows the strong existence and pathwise uniqueness at equilibrium.
In a different direction,
\cite{katori10} constructs infinite-dimensional \ac{DBM} as a determinantal (in spacetime) point process,
for general, out-of-equilibrium, configurations at $ \beta=2 $.
This construction, as a point process, is not directly related to solutions of the \ac{SDE} \eqref{eq:DBM}.

In this paper, we attack the problem, for all $ \beta\geq 1 $, \emph{without} referring to the integrable structure,
whereby establishing the strong existence and pathwise uniqueness of \eqref{eq:DBM} (see Theorem~\ref{thm:DBM}).
As our techniques do not refer to a specific equilibrium measure,
Theorem~\ref{thm:DBM} holds for an \emph{explicit}, \emph{out-of-equilibrium} configuration space $\XRsp(\alpha,\rho,p)$,
which, loosely speaking, consists of particle configurations with a roughly uniform density $ \rho^{-1}>0 $.
In particular, the space includes the lattice configuration $\{x_i\}=\bbZ$
and the sine process (see Lemma~\ref{lem:sine}).
For infinite-dimensional interacting diffusions with $ C^3_0 $ potentials,
an out-of-equilibrium result is first established in \cite{fritz87}.
With the logarithmic potential,
Theorem~\ref{thm:DBM} is the first out-of-equilibrium result on well-posedness.

Further, by establishing a finite-to-infinite-dimensional convergence,
in Corollary~\ref{cor:det} we show that the determinantal point process
constructed in \cite{katori10} coincides with the unique strong solution given by Theorem~\ref{thm:DBM},
for a class of out-of-equilibrium configurations.
This has also been obtained in the recent work of \cite[Theorem 2.2]{osada14b}
for the equilibrium process at $ \beta=2 $.

The main idea here is to use the \emph{monotonicity} of the gap process
$ \{Y_a(t)\}_{a\in\hZ} $, where $ Y_a(t):=X_{a+1/2}(t)-X_{a-1/2}(t) $ and $ \hZ:=\frac12+\bbZ $,
based on a certain simple observation of $ \phi_i $.
Such monotonicity allows us to conveniently identify
the long-range and singular effect of $ \phi_i $ on $ \bfX $.
Although the techniques employed in this paper are standard,
they are applicable only in a careful setup that captures the monotonicity.
This monotonicity of $ \{Y_a(t)\}_{a\in\hZ} $ is new, 
and in particular differs from that of \cite[Lemma~4.3.6]{anderson10}.

\begin{remark}
For $ \beta\geq 1 $,
we shows that particles stay strictly ordered, $ X_i(t)<X_{i+1}(t) $,
for all time, almost surely.
For $ \beta\in(0,1) $, however, one expects finite time collisions to occur.
Due to this fact,
proving well-posedness, even in finite dimensions, requires extra effort (see \cite{cepa1995}).
We do not pursuit the case $ \beta\in(0,1) $ here. 
\end{remark}

Besides the bulk limit of \ac{DBM} \eqref{eq:DBM} considered here,
the edge limit is also a related subject of interest.
The interest lies in random matrix theory and
the Kardar--Parisi--Zhang universality class (see \cite{corwin12}).
Based on the aforementioned theory of Dirichlet form and determinantal point processes,
\cite{osada13a,osada14} obtain well-posedness results of the corresponding \ac{SDE},
and \cite{katori09} constructs the corresponding determinantal point process.
A multilayer generalization of \ac{DBM}, the corner process, 
is studied in \cite{gorin14a,gorin14} and the references therein.
In \cite{corwin14,corwin15},
the notion of Brownian--Gibbs property is introduced to characterize the edge limit as a line ensemble,
and is further generalized to the corresponding property for the Kardar--Parisi--Zhang equation.

\subsection{Definitions and Statement of the Results}
We begin by defining the spaces $\Xsp(\alpha,\rho)$ and $\XRsp(\alpha,\rho,p)$.
This is done by considering their corresponding \emph{gap configurations}.
More explicitly,
let $\Weyl:=\{\bfx\in\bbR^\bbZ: x_i<x_{i+1},\forall i\in\bbZ\}$
denote the Weyl chamber (of particle configurations),
and let $u$ denote the map into gap configurations:
\begin{align}\label{eq:u}
	& u: \Weyl \longrightarrow (0,\infty)^\hZ,
	\quad
	\hZ:=\tfrac12+\bbZ,&
	&
	u(\bfx) := (x_{a+1/2}-x_{a-1/2})_{a\in\hZ},
\end{align}
which is made bijective by augmenting the zeroth particle coordinate, as 
\begin{align}\label{eq:tilu}
	\tilu: \ \Weyl
	\xrightarrow{\text{bijective}} \bbR \times (0,\infty)^\hZ,
	\quad
	\tilu(\bfx) := (x_0,u(\bfx)).
\end{align}
For $ \alpha\in(0,1) $ and $ \rho>0 $, 
we consider the following space of gap configurations 
\begin{align}
	&
	\label{eq:Ysp}
	\Ysp(\alpha,\rho) := \curBK{ \bfy\in(0,\infty)^\hZ : \norm{\bfy} < \infty },
\\
	&
	\label{eq:Ynorm}
	\norm{\bfy}
	:=
	\sup_{m\in\bbZ\setminus\{0\}} 
	\big\{
		\absBK{ \av_{(0,m)}(\bfy)-\rho} \ |m|^{\alpha}
	\big\},
\end{align}
where $ \av_\calI(\bfy) $ denotes the average over a generic finite set $\calI$:
\begin{align}\label{eq:av}
	\av^p_\calI(\bfy) := |\calI|^{-1} \sum_{a\in\calI} (y_a)^p,
	\quad
	\av_\calI(\bfy) := \av^1_\calI(\bfy),
\end{align}
with the convention $(i,j]=[j,i)$ (and similarly for $(i,j)$, $[i,j]$, etc)
and $ \av^p_{\emptyset}(\bfy):=0 $.
We \emph{define} $ \Xsp(\alpha,\rho) := u^{-1}(\Ysp(\alpha,\rho)) $.
That is, $ \Xsp(\alpha,\rho) $ consists of particle configurations
whose corresponding gap processes satisfy \eqref{eq:Ysp}.
Similarly, for $ p>1 $, 
we define $ \XRsp(\alpha,\rho,p) :=  u^{-1}(\Ysp(\alpha,\rho)\cap\Rsp(p)) $, where
\begin{align}
	&
	\label{eq:Rsp}
	\Rsp(p) 
	:=
	\Big\{ 
		\bfy\in(0,\infty)^\hZ:
		\sup_{m\in\bbZ} \av^p_{(0,m)}(\bfy) <\infty
	\Big\}.
\end{align}

We proceed to defining the process-valued analogs of $\Xsp(\alpha,\rho)$ and $\XRsp(\alpha,\rho,p)$.
To simply notations,
we often use $ \bfx $ and $ \bfy $, instead of $ \bfx(\Cdot) $ and $ \bfy(\Cdot) $, 
to denote processes.
Let $\WeylT:=\{\bfx\in C([0,\infty))^\bbZ : \bfx(t)\in\Weyl,\forall t\geq 0\}$
denote the process-valued analog of $ \Weyl $.
By abuse of notation, we let $u$ and $ \tilu $ act on $\WeylT$ 
by $u(\bfx)(t) := u(\bfx(t))$ and by $\tilu(\bfx)(t) := \tilu(\bfx(t))$.
With $ \YTsp(\alpha,\rho) $ and $ \RTsp(p) $ denoting the analogs of
$ \Ysp(\alpha,\rho) $ and $ \Rsp(p) $ as follows
\begin{align}
	&
	\label{eq:YTsp}
	\YTsp(\alpha,\rho) 
	:= 
	\Big\{ 
		\bfy \in C_+([0,\infty))^\hZ 
		: 
		\sup_{s\in[0,t]}\norm{\bfy(s)} < \infty, \ \forall t\geq 0 
	\Big\},
\\
	&
	\label{eq:RTsp}
	\RTsp(p)
	:=
	\Big\{
		\bfy \in C_+([0,\infty))^\hZ :
		\sup_{s\in[0,t],m\in\bbZ} \av^p_{(0,m)}(\bfy)<\infty,
		\forall t \geq 0
	\Big\},
\\
	&
	\label{eq:Csp}
	\quad \text{ where } C_+([0,\infty)) := \{ y\in C([0,\infty)): y(t)>0,\forall t\geq 0 \},
\end{align}
we define
$
	\XTsp(\alpha,\rho) := \tilu^{-1} (C([0,\infty))\times\YTsp(\alpha,\rho))
$
and
$
	\XRTsp(\alpha,\rho,p) := \tilu^{-1} (C([0,\infty))\times(\YTsp(\alpha,\rho)\cap\Rsp(p))).
$

Recall from \cite[Definition 5.2.1, 5.3.2]{karatzas91} 
the notions of strong solutions and pathwise uniqueness of \ac{SDE},
which are readily generalized to infinite dimensions here.
Let $ \bfB(t) := (B_i(t))_{i\in\bbZ} $ denote the driving Brownian motion,
with the canonical filtration $\Bfil_t:=\sigma(\bfB(s):s\in[0,t])$.
\emph{Hereafter, we fix $ \beta \geq 1 $, $ \alpha\in(0,1) $, $ \rho>0 $ and $ p>1 $ unless otherwise stated.}
The following is our main result.
\begin{theorem}\label{thm:DBM}
Given any $\bfx^\ic\in\Xsp(\alpha,\rho)$,
there exists an $\XTsp(\alpha,\rho)$-valued, $\Bfil$-adapted solution $\bfX$ of \eqref{eq:DBM}
starting from $\bfx^\ic$.
If, in addition, $\bfx^\ic\in\XRsp(\alpha,\rho,p)$, 
this solution $\bfX$ takes value in $\XRTsp(\alpha,\rho,p)$,
and is the unique $\XRTsp(\alpha,\rho,p)$-valued solution in the pathwise sense.
\end{theorem}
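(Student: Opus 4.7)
The plan is to construct the unique strong solution of \eqref{eq:DBM} as a monotone limit of finite-dimensional truncations. For each $n\geq 1$, I consider the $(2n+1)$-particle $\beta$-DBM $\bfX^{(n)}=(X^{(n)}_i)_{|i|\leq n}$ driven by $(B_i)_{|i|\leq n}$, started from $(x^\ic_i)_{|i|\leq n}$, with truncated interaction $\phi_i^{(n)}(\bfx):=\tfrac12\sum_{|j|\leq n,\,j\neq i}(x_i-x_j)^{-1}$. For $\beta\geq 1$ this finite-dimensional SDE admits a pathwise-unique strong solution with strictly ordered particles by classical $\beta$-Dyson theory (C\'epa--L\'epingle). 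The task is then to pass $n\to\infty$ while retaining strong adaptedness to $\bfB$.

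The central structural input is an explicit formula for the drift of the gap $Y_a:=X_{a+1/2}-X_{a-1/2}$. A direct calculation yields
\begin{align*}
\phi_{a+1/2}(\bfx)-\phi_{a-1/2}(\bfx)=\frac{1}{Y_a}-\frac{Y_a}{2}\sum_{j\neq a\pm 1/2}\frac{1}{(x_{a+1/2}-x_j)(x_{a-1/2}-x_j)},
\end{align*}
in which every summand is strictly positive since the two denominator factors share a sign. In particular the sum is absolutely (rather than merely conditionally) convergent, and is monotone nondecreasing in the configuration: adding particles to $\bfx$ can only decrease the gap drift. Under the natural coupling of the truncations to the common $\bfB$, this translates into the pathwise estimate $Y_a^{(n+1)}(t)\leq Y_a^{(n)}(t)$ for all $t\geq 0$ and $|a|\leq n-\tfrac12$. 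Consequently the limit $Y_a(t):=\lim_{n\to\infty}Y^{(n)}_a(t)$ exists and is $\Bfil$-adapted; a candidate $\bfX$ is then recovered by taking a likewise-monotone limit of $X^{(n)}_0$ and telescoping. Strict positivity $Y_a(t)>0$, which rules out collisions, is established by comparing the nearest-neighbour gap from below with a squared-Bessel-type process of dimension at least $2$, for which $\beta\geq 1$ is the crucial hypothesis.

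To upgrade this candidate $\bfX$ to an $\XTsp(\alpha,\rho)$-valued solution of \eqref{eq:DBM}, I need uniform control of $\norm{\bfY(t)}$ on compact intervals. Summing the gap SDE over $a\in(0,m)$ telescopes the drift into $\phi_{m+1/2}(\bfX)-\phi_{1/2}(\bfX)$, so the $|m|^\alpha$-weighted bound in \eqref{eq:Ynorm} propagates in $t$ via a Gronwall-type argument fed by $\bfx^\ic\in\Xsp(\alpha,\rho)$. The extra $p$-th moment estimate needed to land in $\XRTsp(\alpha,\rho,p)$ is handled similarly. These uniform bounds are in turn what allow $\phi_i^{(n)}(\bfX^{(n)})\to\phi_i(\bfX)$ to be justified in the limit, so that $\bfX$ solves \eqref{eq:DBM}.

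For pathwise uniqueness in $\XRTsp(\alpha,\rho,p)$, let $\bfX,\bfX'$ be two such solutions driven by the same $\bfB$ with the same initial data. In $D_a:=Y_a-Y_a'$ the Brownian parts cancel, so $D_a$ has only a drift, given by the difference of two gap drifts; combining the explicit formula above with convexity of $x\mapsto 1/x$ on $(0,\infty)$ yields a one-sided differential inequality for a suitably weighted $\ell^p$-norm of $(D_a)$, which one closes by Gronwall using the moments supplied by $\RTsp(p)$. The main obstacle throughout is that $\phi_i$ is only conditionally convergent, via delicate cancellation between the $j<i$ and $j>i$ halves: justifying both $\phi_i^{(n)}(\bfX^{(n)})\to\phi_i(\bfX)$ and its stability under the uniqueness argument requires an Abel-summation estimate uniform in $n$ and in $t\in[0,T]$, and it is precisely the monotonicity $Y_a^{(n+1)}\leq Y_a^{(n)}$ that converts the time-$0$ partial-sum control provided by $\Xsp(\alpha,\rho)$ into the needed time-$t$ control.
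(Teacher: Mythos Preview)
Your overall architecture --- pass to gaps, exploit the monotonicity of $\psi_a$ in the ambient configuration to get a decreasing sequence of approximations, take a monotone limit --- matches the paper's philosophy, and your gap-drift formula is exactly the paper's decomposition $\eta_a(\bfy)=\tfrac{1}{y_a}-\psi_a(y_a,\bfy)$ with $\psi_a\geq 0$. The paper's primary construction is not via finite-dimensional DBM but via an iteration scheme (Bessel processes, then repeatedly solve one-dimensional SDEs with $\psi_a$ frozen at the previous step); however, the finite-to-infinite route you propose is essentially what the paper carries out in Section~\ref{sect:conv} for Theorem~\ref{thm:conv}, so it is a legitimate alternative entry point.

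There is, however, a genuine gap in your no-collision step. You write that $Y_a(t)>0$ follows ``by comparing the nearest-neighbour gap from below with a squared-Bessel-type process of dimension at least $2$.'' This is precisely what the paper warns does \emph{not} work in infinite dimensions (Remark~\ref{rmk:singular}): the compression $\psi_a(y_a,\bfy)$ contains terms like $\tfrac{y_a}{y_{a\pm1}(y_a+y_{a\pm1})}$, which blow up when neighbouring gaps shrink, so there is no uniform lower bound of the form $\eta_a\geq \tfrac{\beta}{y_a}-C$ to feed into a Bessel comparison. The positive and negative contributions to the gap drift balance to all orders, and the residual positivity that saves the finite-dimensional Lyapunov argument is absent. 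The paper instead proves positivity by a \emph{global} argument: it shows the average spacing $\av_{(0,m)}(\bfY(t))$ is conserved (or bounded below by $\gamma$) as $|m|\to\infty$, uses this to locate windows where $h_{(J^\pm,\pm\infty)}(\underline{\bfY})\geq\gamma/2$ via Lemma~\ref{lem:goodset}, and only then reduces to a finite-dimensional comparison (Proposition~\ref{prop:bd>0}). Your proposal needs a replacement for this step.

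Two smaller points. First, $X_0^{(n)}$ is not obviously monotone in $n$; the paper obtains its convergence by separate estimates on $\phi_0$ inside and outside a large window (Lemma~\ref{lem:phiOut}), not by monotonicity. Second, your uniqueness sketch via a weighted $\ell^p$-Gronwall is optimistic: the coefficient of $D_a$ coming from $\tfrac{1}{Y_a}-\tfrac{1}{Y_a'}$ is $-(Y_aY_a')^{-1}$, unbounded near collisions, and the cross-terms from $\psi_a$ couple all indices. The paper's uniqueness (Proposition~\ref{prop:unique2}) instead exploits that the constructed solution is the \emph{greatest} one, so any other solution sits below it; the telescoped difference $E_{(i_1,i_2)}$ then satisfies an identity with only boundary terms $L_i^\pm$, and the work goes into showing these vanish along good subsequences $i\to\pm\infty$ (Lemmas~\ref{lem:Lbd}--\ref{lem:fkG:exists}), which is where $\Rsp(p)$ is actually used.
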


\begin{remark}\label{rmk:Xsp}
For any $ \bfx\in\XTsp(\alpha,\rho) $, one easily verifies that
$
	(\lim_{k\to\infty} \sum_{j:|i-j|\leq k} \frac{1}{x_i(t)-x_j(t)})
$
converges uniformly in $ t\in[0,t'] $,
for any fixed $ i\in\bbZ $ and $ t'<\infty $.
Further, the limit $ \phi_i(\bfx(t)) $ 
takes values in $ L^\infty_\text{loc}([0,\infty)) $,
so in particular the r.h.s.\ of \eqref{eq:DBM} is well-defined
for $ \XTsp(\alpha,\rho) $-valued processes.
\end{remark}

Proceeding to the result on finite-to-infinite-dimensional convergence,
we consider the finite-dimensional version of \eqref{eq:DBM}:
\begin{align}\label{eq:DBMf}
	X_i(t) = X_i(0) + B_i(t) 
	+ 
	\beta \int_0^t 
	\phi_i(\bfX(s)) ds,
	\ i\in[i_1,i_2]\cap\bbZ.
\end{align}
Let $ \Weyl^{[i_1,i_2]}:=\{\bfx\in\bbR^{[i_1,i_2]\cap\bbZ} : x_i<x_{i+1}, i\in[i_1,i_2]\} $ denote
the finite-dimensional Weyl camber.
Recall from \cite[Lemma 4.3.3]{anderson10} that, for any given $ \bfx^\ic\in\Weyl^{[i_1,i_2]} $,
there exists a $ C([0,\infty))^{[i_1,i_2]\cap\bbZ} $-valued strong solution $\bfX$ of \eqref{eq:DBMf}
with $\bbP(\bfX(t)\in\Weyl^{[i_1,i_2]},\forall t\geq 0)=1$, which is unique in the pathwise sense.
In Section~\ref{sect:conv} we show

\begin{theorem}\label{thm:conv}
Fixing $\bfx^\ic \in \XRsp(\alpha,\rho,p)$, 
we let $\bfX$ be the $\XRsp(\alpha,\rho,p)$-valued solution
of \eqref{eq:DBM} starting from $\bfx^\ic$,
and for
\begin{align}\label{eq:I}
	i_n^+ := \max\curBK{ i: x^\ic_i <n },
	\quad
	i_n^- := \min\curBK{ i: x^\ic_i > -n },
	\quad
	\I_n := [i^-_n(\bfx),i^+_n(\bfx)],
\end{align}
we let $\bfX^{n}$ be the $C([0,\infty))^{\I_n\cap\bbZ}$-valued
solution of \eqref{eq:DBMf} starting from $(x^\ic_i)_{i\in\I_n}$.
We have the following finite-to-infinite-dimensional convergences:
\begin{enumerate}[label=(\alph*)]
	\item	\label{enu:asCnvg}	
		For any fixed $ t\geq 0 $ and $ i\in\bbZ $,
		\begin{align}\label{eq:asCnvg}
			\sup_{s\in[0,t]}  |X^n_i(s)-X_i(s)| \to 0,	\text{ almost surely, as } n \to \infty;
		\end{align}
	\item	\label{enu:LpCnvg}	
		For any fixed $ t\geq 0 $, $ i\in\bbZ $ and $ p' \geq 1 $,
		\begin{align}\label{eq:LpCnvg}
			\bbE \Big( \sup_{s\in[0,t]} |X^n_i(s)-X_i(s)| \Big)^{p'} \to 0,
			\text{ as } n \to \infty;
		\end{align}
	\item	\label{enu:crlCnvg}	
		For any open $ \calO_1,\ldots,\calO_{j_*}\subset\bbR $ and $ s_1,\ldots,s_{j_*}\in[0,\infty) $,
		\begin{align*} %\label{eq:corrCnvg}
			\bbE\BK{ \prod_{j=1}^{j_*} \Big| \calO_j\cap \{ X^{n}_{i}(s_j) \}_{i\in\I_n} \Big| }
			\to	
			\bbE\BK{ \prod_{j=1}^{j_*} \Big| \calO_j\cap \{ X_{i}(s_j) \}_{i\in\bbZ} \Big| }
			<
			\infty,
			\text{ as } n\to\infty.
		\end{align*}
\end{enumerate}
\end{theorem}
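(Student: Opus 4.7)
The plan is to couple the finite-dimensional solutions $\bfX^n$ with the infinite-dimensional solution $\bfX$ through the common driving Brownian motions $(B_i)_{i\in\bbZ}$, and to exploit the monotonicity of the gap process---the central tool underlying Theorem~\ref{thm:DBM}---together with its pathwise uniqueness on $\XRTsp(\alpha,\rho,p)$.

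For part~\ref{enu:asCnvg}, I would construct, for each $n$, two auxiliary infinite-dimensional solutions $\bfX^{n,+}$ and $\bfX^{n,-}$ of \eqref{eq:DBM}, whose initial data agree with $\bfx^\ic$ on $\I_n$ but are perturbed outside $\I_n$ so as to produce monotone bounds on the interior---for instance by translating the exterior particles outward by a large amount, or by sending them to $\pm\infty$ through a monotone limit. By the gap monotonicity, the interior coordinates of $\bfX^{n,+}$ (resp.\ $\bfX^{n,-}$) dominate (resp.\ are dominated by) both $X^n_i$ and $X_i$ on $\I_n$. As $n\to\infty$, these perturbed initial data agree with $\bfx^\ic$ on any fixed finite window and recede to infinity outside, so the pathwise uniqueness of Theorem~\ref{thm:DBM} together with the continuous dependence on initial data built into the monotone construction forces $\bfX^{n,\pm}\to\bfX$ uniformly on compact time intervals, squeezing $\bfX^n$ between them.

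Part~\ref{enu:LpCnvg} follows from part~\ref{enu:asCnvg} by uniform integrability. The $\XRsp(\alpha,\rho,p)$ control of $\sup_m \av^p_{(0,m)}(\bfy)$ propagates in time to a uniform-in-$n$ moment estimate $\bbE\bigl(\sup_{s\in[0,t]}|X^n_i(s)|^q\bigr)<\infty$ for every $q\geq 1$, using the gap estimates developed for Theorem~\ref{thm:DBM} together with standard Gaussian tail bounds on the $B_i$; combined with part~\ref{enu:asCnvg}, Vitali's convergence theorem then delivers the claim.

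For part~\ref{enu:crlCnvg}, the uniform density estimates from $\XRTsp(\alpha,\rho,p)$ ensure that for any bounded open $\calO_j$ and any $\varepsilon>0$, with probability at least $1-\varepsilon$ all particles intersecting $\calO_j$ carry indices inside a fixed finite window $[-N,N]$ independent of $n$. On that event, part~\ref{enu:asCnvg} gives pointwise convergence of the finitely many relevant $X^n_i(s_j)$ to $X_i(s_j)$; writing $\calO_j\subset\bbR$ as a countable disjoint union of open intervals, only the countable set of interval endpoints contributes to a potential discontinuity of the counting indicator, and for $s_j>0$ each $X_i(s_j)$ has an absolutely continuous marginal law (inherited from the Brownian component of \eqref{eq:DBM}), so almost surely no $X_i(s_j)$ hits such an endpoint. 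Hence the counts $\bigl| \calO_j \cap \{ X^n_i(s_j) \}_{i\in\I_n} \bigr|$ converge a.s.\ to $\bigl| \calO_j \cap \{ X_i(s_j) \}_{i\in\bbZ} \bigr|$, and dominated convergence with an integrable upper bound coming from the moment estimates behind part~\ref{enu:LpCnvg} finishes the argument. The main obstacle is the sandwich construction in part~\ref{enu:asCnvg}---choosing $\bfX^{n,\pm}(0)$ so that they lie in $\XRsp(\alpha,\rho,p)$, produce monotone bounds on both $X^n_i$ and $X_i$ inside $\I_n$, and have their bounds close as $n\to\infty$---and it is precisely where the gap monotonicity developed in the proof of Theorem~\ref{thm:DBM} is indispensable.
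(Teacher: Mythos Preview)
Your overall strategy---gap monotonicity plus uniqueness---is the right engine, but the sandwich construction you propose in part~\ref{enu:asCnvg} does not work as stated, and the paper proceeds differently.

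The gap monotonicity gives comparisons of the form $Y^{n}_a\ge Y_a$, not comparisons of particle positions. If you send the exterior particles to $\pm\infty$, the resulting process \emph{is} $\bfX^n$ on the interior (nothing outside interacts), so your $\bfX^{n,+}$ coincides with $\bfX^n$ there and gives no sandwich on $X_i$. What the paper does instead is a one-sided argument on gaps: define $\bfYR{n}$ by extending $\bfY^n=u(\bfX^n)$ with $+\infty$ outside $\I_n$; Lemma~\ref{lem:DBMcomp} then gives $\bfYR{1}(\Cdot)\ge\bfYR{2}(\Cdot)\ge\ldots\ge\bfY(\Cdot)$. The decreasing limit $\bfY^\infty$ is a $\YTspl$-valued solution of \eqref{eq:DBMg}, and since $\bfY$ is the \emph{greatest} such solution (Proposition~\ref{prop:existg}), $\bfY^\infty=\bfY$. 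Dini's theorem upgrades this to uniform-in-time convergence of each gap $Y^n_a$. No second auxiliary process and no ``continuous dependence on initial data'' are invoked.

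This leaves the reference particle $X_0$, which your sketch does not address and which occupies the bulk of the paper's proof of part~\ref{enu:asCnvg}. The paper writes $\phi_0=\phi_0^{\fkI_{b,j}}+\phi_0^{(\fkI_{b,j})^c}$ over a mesoscopic window $\fkI_{b,j}$ built from the seed indices of \eqref{eq:Ipmbj}, shows the long-range part is $O((\tilm^K_{b-1/2})^{-\alpha})$ uniformly in $n$ (Lemma~\ref{lem:phiOut}), and lets $n\to\infty$ then $b\to\infty$. This step genuinely needs the density machinery of Section~\ref{sect:exist}, not just monotonicity.

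For part~\ref{enu:LpCnvg} and part~\ref{enu:crlCnvg} your outline matches the paper's: uniform-in-$n$ moment bounds on $\sup_{s\le t}|X_0^n(s)|$ (Lemma~\ref{lem:phiIn} plus the tail bound \eqref{eq:Ktail} on $K$) give uniform integrability, and a truncation to indices $|i|\le\ell$ plus the tail bound on the maximal relevant index (Lemma~\ref{lem:Lnbd}) handle the correlation-function convergence. Your absolute-continuity remark for the indicator discontinuity is a reasonable patch; the paper simply asserts the truncated convergence.
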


\begin{remark}
Hereafter, the limit $ n\to\infty $ as in Theorem~\ref{thm:conv}\ref{enu:asCnvg}--\ref{enu:LpCnvg},
are understood to be for all $ n $ large enough such that $ \calI_n \ni i $.
The sequence $ \{n:n\in\bbZ_{>0}\} $ can in fact be replaced by any sequence tending to infinity,
but we focus on the former to simply notations.
\end{remark}

As mentioned in the preceding,
for $\beta=2$,
\cite{katori10} shows that 
$ \{ X^n_i(s): i\in\I_n,s\in(0,\infty) \} $ is determinantal with an explicit kernel function,
and that, for $\bfx^\ic\in\KTsp := \KTsp_1 \cap \KTsp_2 \cap \KTsp_3$,
\begin{align*}
	\KTsp_1 &:= \Big\{ 
		\bfx\in\Weyl : 
		  \sup_{r>0 } \Big|\sum_{i\in\I_r} \frac{1}{x_i} \ind\curBK{x_i\neq 0} \Big| <\infty 
		\Big\},
\\
	\KTsp_2 &:= 
	\bigcup_{\alpha\in(1,2)}
	\Big\{ 
			\bfx\in\Weyl : 
			  \sum_{i\in\bbZ} \frac{1}{|x_i|^\alpha} \ind\curBK{x_i\neq 0} <\infty 
		\Big\},
\\
	\KTsp_3 &:= \bigcup_{\alpha>0}
	\Big\{ 
		\bfx\in\Weyl:
		\sup_{i\in\bbZ} \Big\{ (|x_i|\vee 1)^{\alpha} \sum_{j:j\neq i} \frac{1}{|(x_j)^2-(x_i)^2|} \Big\} <\infty
	\Big\},
\end{align*}
as $n\to\infty$ the kernel function converges to
$\bbK^{\bfx^\ic}(\Cdot,\Cdot;\Cdot,\Cdot)$
given as in \cite[(2.3)]{katori10}.
Indeed, since $ \XRsp(\alpha,\rho,p) \subset \KTsp_1\cap\KTsp_2 $,
combining this result of \cite{katori10} and Theorem~\ref{thm:conv}\ref{enu:crlCnvg}
we immediately obtain

\begin{corollary}\label{cor:det}
Fixing $ \beta=2 $, we let $\bfX$ be the $\XRTsp(\alpha,\rho,p)$-valued solution starting from $ \bfx^\ic\in\XRsp(\alpha,\rho,p)\cap\KTsp_3 $.
We have that $\{X_i(s):i\in\bbZ,s\in(0,\infty)\}$ is determinantal with the kernel function $\bbK^{\bfx^\ic}(\Cdot,\Cdot;\Cdot,\Cdot)$.
\end{corollary}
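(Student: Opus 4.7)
The plan is to derive the corollary by combining the finite-dimensional determinantal construction of \cite{katori10} at $\beta=2$ with the correlation-function convergence in Theorem~\ref{thm:conv}\ref{enu:crlCnvg}, so that essentially all of the analytic work has been carried out upstream; what remains is to verify an inclusion of configuration spaces and to pass the determinantal formulas through an $n\to\infty$ limit.

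First I would verify that any $\bfx^\ic\in\XRsp(\alpha,\rho,p)\cap\KTsp_3$ actually lies in the full space $\KTsp=\KTsp_1\cap\KTsp_2\cap\KTsp_3$ of \cite{katori10}. Since $\KTsp_3$ is assumed, only the inclusion $\XRsp(\alpha,\rho,p)\subset\KTsp_1\cap\KTsp_2$ has to be checked. For $\bfx^\ic\in\XRsp(\alpha,\rho,p)$ the gap configuration $u(\bfx^\ic)$ lies in $\Ysp(\alpha,\rho)\cap\Rsp(p)$, and by \eqref{eq:Ynorm} the partial average $\av_{(0,m)}(u(\bfx^\ic))$ differs from $\rho$ by $O(|m|^{-\alpha})$; summing gives the asymptotic $x^\ic_i = x^\ic_0 + \rho\, i + O(|i|^{1-\alpha})$. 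Using this, $\I_r$ is sandwiched between $\pm r/\rho + O(r^{1-\alpha})$, and pairing $\pm i$ terms in $\sum_{i\in\I_r}1/x^\ic_i$ leaves only an asymmetric tail of length $O(r^{1-\alpha})$ times entries of order $1/r$, which is uniformly bounded in $r$; hence $\bfx^\ic\in\KTsp_1$. The same asymptotic yields $|x^\ic_i|\gtrsim|i|$ for $|i|$ large, so $\sum_i |x^\ic_i|^{-\alpha'}\ind\{x^\ic_i\ne 0\}<\infty$ for any $\alpha'\in(1,2)$, giving $\bfx^\ic\in\KTsp_2$.

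Next I would apply \cite{katori10} to each finite-dimensional solution $\bfX^n$ of \eqref{eq:DBMf}: the spacetime process $\{X^n_i(s):i\in\I_n, s\in(0,\infty)\}$ is determinantal with an explicit kernel $\bbK^n$ that converges pointwise to $\bbK^{\bfx^\ic}$ as $n\to\infty$. By the definition of a determinantal point process, for open $\calO_1,\ldots,\calO_{j_*}\subset\bbR$ and times $s_1,\ldots,s_{j_*}\in(0,\infty)$, the expectation on the left-hand side of Theorem~\ref{thm:conv}\ref{enu:crlCnvg} equals a finite sum of multiple integrals of $\det(\bbK^n(s_{k},\Cdot;s_{l},\Cdot))$ over products of the $\calO_j$'s. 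Then I would pass to $n\to\infty$ on both sides: Theorem~\ref{thm:conv}\ref{enu:crlCnvg} identifies the limit with the corresponding factorial-moment expectation for $\bfX$ and, crucially, guarantees it is finite; the pointwise convergence $\bbK^n\to\bbK^{\bfx^\ic}$ together with Hadamard's bound and this finiteness supplies the dominated-convergence argument needed to take the limit inside the determinantal integrals. Matching the two expressions identifies every factorial-moment measure of $\{X_i(s):i\in\bbZ, s\in(0,\infty)\}$ with the determinantal one generated by $\bbK^{\bfx^\ic}$ on a measure-determining family of product sets, which is the claimed determinantality.

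I expect the main technical obstacle to be the final interchange of limit and determinant. One must verify that the diagonal intensities $\bbK^n(s_j,x;s_j,x)$ are controlled uniformly in $n$ on each bounded window, so that Hadamard's inequality produces an $L^1$ dominator for the determinantal integrands; such uniform control follows from combining the diagonal case of the kernel convergence in \cite{katori10} with the finite limit supplied by Theorem~\ref{thm:conv}\ref{enu:crlCnvg}, but it has to be spelled out carefully. By contrast, the space inclusion step is notationally fiddly but essentially elementary.
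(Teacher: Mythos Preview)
Your proposal is correct and follows essentially the same route as the paper. The paper treats the corollary as an immediate consequence of the inclusion $\XRsp(\alpha,\rho,p)\subset\KTsp_1\cap\KTsp_2$ together with the kernel convergence from \cite{katori10} and Theorem~\ref{thm:conv}\ref{enu:crlCnvg}, stating only one sentence before the corollary; you have simply filled in the verification of the space inclusion and made explicit the limit-interchange step that the paper leaves to the reader (or to \cite{katori10}).
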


The rest of this paper is outlined as follows.
In Section~\ref{sect:outline} we present a proof of Theorem~\ref{thm:DBM}, 
which is detailed in Section~\ref{sect:comp}--\ref{sect:exist}.
Among these, Section~\ref{sect:comp} settles the monotonicity \eqref{eq:Ydec} and well-posedness of certain finite-dimensional \ac{SDE},
and Section~\ref{sect:unique}--\ref{sect:exist} handle the relevant propositions as indicated in their titles.
Section~\ref{sect:conv} consists of the proof of Theorem~\ref{thm:conv}. 
In Section~\ref{sect:nearEq}, 
we prove that near-equilibrium solutions (defined therein)
are $ \XRTsp(\alpha,\rho,p) $-valued, to unify the construction of \cite{osada12} with ours.

\subsection*{Acknowledgment.} 
LCT thanks Alexei Borodin for suggesting this direction of research,
Amir Dembo for many fruitful discussions,
and the anonymous reviewers for improving the presentation of this paper.
LCT is partially supported by the NSF through DMS-0709248.

\section{Proof of Theorem~\ref{thm:DBM}}
\label{sect:outline}

Throughout this paper we use lower-case English and Greek letters
such as $ x,y,\alpha,\gamma,u $ to denote deterministic variables or functions,
among which $ i,j,k,\ell,m,n $ denote integers, and $ a,b $ denote half integers.
We use upper-case English letters such as $ X,Y,I,J $ to denote random variables,
use the calligraphic font (e.g.\ $ \calA,\calI $) to denote deterministic sets,
and use the Fraktur font (e.g.\ $\fkA,\fkI$) to denote random sets.
We let $ c=c(t,k,\ldots) $ denote a generic deterministic positive finite constant 
that depends only on the designated variables.

The first step is to reduce the equation of particles, \eqref{eq:DBM}, 
to the equation of the \emph{gaps}.
To this end, we consider the interaction of the gaps
\begin{align}\label{eq:phifS}
	\fS_a(\bfy) 
	&:= 
	\fS_a(u(\bfx))
	:=
	\phi_{a+1/2}(\bfx) - \phi_{a-1/2}(\bfx),
%	\quad \forall \
%	\bfx \in \Xsp(\alpha,\rho).
%\end{align}
%A simple calculation shows that
%\begin{align}
\\
	\label{eq:fS}
%	\fS_a: (0,\infty)^\hZ \to (0,\infty),
%	\quad
%	\fS_a(\bfy) 
	&= \tfrac{1}{y_a} - \f_a(y_a,\bfy),
\end{align}
consisting of the (Bessel-type) repulsion terms $ 1/y_a $ and the compression terms $ \f_a $
defined as
\begin{align}
	\label{eq:psi}
	\f_a: [0,\infty)\times (0,\infty)^\hZ \to [0,\infty),
	\ \
	\f_a(y,\bfz)
	:=
	\left\{\begin{array}{l@{,}l}
		\displaystyle
		\frac12 \sum_{ i:|i-a|>1 }
		\frac{ y }{ z_{(a,i)} (y+z_{(a,i)}) }
		&\text{ for } y>0,
	\\
		0	&\text{ for } y=0,
	\end{array}\right.
\end{align}
where $ z_{\calI} := \sum_{a\in\calI} z_a $ and $ (a,i):=(i,a) $ (as mentioned before).
We have the following equation for $(X_0,\bfY):=u(\bfX)$:
\begin{align}
	\label{eq:DBM0}
	X_0(t) &= X_0(0) + B_0(t) 
	+ 
	\beta \int_0^t \phi_0(\bfY(s)) ds,
\\
	\label{eq:DBMg}
	Y_a(t) &= Y_a(0) + W_a(t) + \beta \int_0^t \fS_a\BK{\bfY(s)} ds, \quad a\in\hZ,
\end{align}
where $\bfW(t):=u(\bfB(t))$,
and, by abuse of notation,
\begin{align*}
	\phi_0(\bfy) := \phi_0(\tilu^{-1}(0,\bfy))
	=
	\sum_{i=1}^\infty 
	\BK{ \frac{1}{2y_{(-i,0)}} - \frac{1}{2y_{(0,i)}} }.
\end{align*}
Clearly, \eqref{eq:DBM} is equivalent to \eqref{eq:DBM0}--\eqref{eq:DBMg} through the bijection $\tilu$, 
and one easily obtains the following
\begin{proposition}\label{prop:conversion}
\begin{enumerate}[label=(\alph*)]
	\item[]
	\item
	If $\bfY$ is an $\YTsp(\alpha,\rho)$-valued solution of \eqref{eq:DBMg}, 
	defining $ X_0\in C([0,\infty)) $ by \eqref{eq:DBM0},
	we have that $\tilu^{-1}(X_0,\bfY)$ is a $\XTsp(\alpha,\rho)$-valued solution
	of \eqref{eq:DBM}. 
	Further, if $ \bfY $ is $ (\YTsp(\alpha,\rho)\cap\RTsp(p)) $-valued,
	then $\tilu^{-1}(X_0,\bfY)$ is $ \XRTsp(\alpha,\rho,p) $-valued;
	if $ \bfY $ is $\Wfil$-adapted, then $\tilu^{-1}(X_0,\bfY)$ is $ \Bfil $-adapted.
	
	\item
	Conversely, if $\bfX$ is an $\XTsp(\alpha,\rho)$-valued
	solution of \eqref{eq:DBM},
	then $ u(\bfX) $ is a $ \YTsp(\alpha,\rho) $-valued solution of \eqref{eq:DBMg}.
	Further, if $ \bfX $ is $ \XRTsp(\alpha,\rho,p) $-valued,
	then $ u(\bfX) $ is $ (\YTsp(\alpha,\rho)\cap\RTsp(p)) $-valued;
	if $ \bfX $ is $ \Bfil $-adapted, then so is $ u(\bfX) $.
\end{enumerate}
\end{proposition}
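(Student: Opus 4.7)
The plan is to exploit the bijectivity of $\tilu$ and the telescoping identities $W_a = B_{a+1/2} - B_{a-1/2}$ and $\fS_a = \phi_{a+1/2} - \phi_{a-1/2}$ to translate between the particle equation \eqref{eq:DBM} and the gap equation \eqref{eq:DBMg}. The space membership claims are essentially tautological from the definitions $\XTsp(\alpha,\rho) := \tilu^{-1}(C([0,\infty))\times\YTsp(\alpha,\rho))$ and $\XRTsp(\alpha,\rho,p) := \tilu^{-1}(C([0,\infty))\times(\YTsp(\alpha,\rho)\cap\RTsp(p)))$, so the real content is the equivalence of the drift equations.

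For part (a), given $\bfY \in \YTsp(\alpha,\rho)$ solving \eqref{eq:DBMg} and $X_0$ defined by \eqref{eq:DBM0}, I would set $\bfX := \tilu^{-1}(X_0,\bfY)$, so that for $i \geq 1$, $X_i = X_0 + \sum_{a \in \hZ \cap (0,i)} Y_a$ (and symmetrically for $i \leq -1$). Summing the SDEs \eqref{eq:DBM0} and \eqref{eq:DBMg} telescopes: the noise $B_0 + \sum_{a \in \hZ\cap(0,i)} W_a = B_0 + \sum_a (B_{a+1/2} - B_{a-1/2}) = B_i$, and the drift $\phi_0(\bfY) + \sum_{a} \fS_a(\bfY) = \phi_0 + \sum_a (\phi_{a+1/2} - \phi_{a-1/2}) = \phi_i(\bfX)$ by \eqref{eq:phifS}. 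This yields \eqref{eq:DBM} for each $i$. Adaptedness transfers through $\tilu$ since $\bfW = u(\bfB)$ is $\Bfil$-adapted and, conversely, $\bfB$ is recoverable from $(B_0,\bfW)$ in a $\Wfil \vee \sigma(B_0)$-measurable way.

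Part (b) is more direct: given an $\XTsp(\alpha,\rho)$-valued $\bfX$ solving \eqref{eq:DBM}, setting $Y_a := X_{a+1/2} - X_{a-1/2}$ and subtracting the two particle equations gives \eqref{eq:DBMg} with $W_a := B_{a+1/2} - B_{a-1/2}$, using \eqref{eq:phifS} to identify the drift. That $u(\bfX) \in \YTsp(\alpha,\rho)$ (and in $\RTsp(p)$ when $\bfX \in \XRTsp(\alpha,\rho,p)$) is immediate from the definition of these spaces, and adaptedness is preserved since each $Y_a$ is a deterministic linear functional of $\bfX$.

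The main thing to check — and what prevents this from being entirely trivial — is that all the infinite series involved converge and can be legitimately telescoped. Specifically, one needs (i) the existence of $\phi_i(\bfX(t))$ as a locally integrable function, which is guaranteed by Remark~\ref{rmk:Xsp} for $\XTsp(\alpha,\rho)$-valued processes; (ii) the convergence of the paired sum defining $\phi_0(\bfy)$, which follows from $\bfy(s) \in \Ysp(\alpha,\rho)$ via the identity $\frac{1}{2y_{(-i,0)}} - \frac{1}{2y_{(0,i)}} = \frac{y_{(0,i)} - y_{(-i,0)}}{2 y_{(-i,0)} y_{(0,i)}}$ combined with the control $|\av_{(0,\pm i)}(\bfy) - \rho| \leq \norm{\bfy}|i|^{-\alpha}$ from \eqref{eq:Ynorm}; and (iii) the telescoping identity $\phi_0 + \sum_{a\in\hZ\cap(0,i)}\fS_a = \phi_i$, which requires reordering the conditionally convergent series in \eqref{eq:phi} and is justified by the same quantitative bounds. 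Once these routine but necessary estimates are recorded, both directions of the proposition follow from the purely algebraic telescoping computation above.
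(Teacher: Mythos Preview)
Your proposal is correct and is precisely the routine verification the paper intends: the paper does not give a proof at all, stating only that ``\eqref{eq:DBM} is equivalent to \eqref{eq:DBM0}--\eqref{eq:DBMg} through the bijection $\tilu$, and one easily obtains'' the proposition. Your telescoping argument and the convergence checks (i)--(iii) are exactly what this entails; the only minor quibble is that the adaptedness in part (a) follows more directly from the inclusion $\Wfil_t\subset\Bfil_t$ (since $\bfW=u(\bfB)$) together with the fact that $X_0$ in \eqref{eq:DBM0} is built from $B_0$ and $\bfY$, rather than from any recoverability of $\bfB$ from $(B_0,\bfW)$.
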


\noindent
With this proposition, it now suffices to prove

\begin{proposition}\label{prop:DBMg}
For any given $ \bfy^\ic\in\YTsp(\alpha,\rho) $,
there exists a $ \YTsp(\alpha,\rho) $-valued, $ \Wfil $-adapted solution $ \bfY $ of \eqref{eq:DBMg}.
Moreover, if $ \bfy^\ic\in\Rsp(p) $, then $ \bfY\in\RTsp(p) $,
and $ \bfY $ is the unique $ (\YTsp(\alpha,\rho)\cap\RTsp(p)) $-valued solution in the pathwise sense.
\end{proposition}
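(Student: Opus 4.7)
The plan is to build the solution via monotone finite-dimensional approximation, using the cooperative structure of the drift $\fS_a$ as the main engine. Reading off \eqref{eq:phifS}--\eqref{eq:psi}, $\fS_a(\bfy)=\tfrac{1}{y_a}-\f_a(y_a,\bfy)$ is nonincreasing in the diagonal coordinate $y_a$ (the Bessel term $1/y_a$ and the term $-\f_a$ both decrease in $y_a$, since $\f_a$ increases in $y_a$) and nondecreasing in every off-diagonal coordinate $y_b$, $b\neq a$ (since each $y_b$ enters $\f_a$ only through the sums $z_{(a,i)}$, which appear in the denominator). Under a common Brownian driver $\bfW$, this Kamke-type cooperativity should yield the comparison principle \eqref{eq:Ydec} for finite-dimensional truncations: for two such solutions with ordered initial data, the ordering is preserved componentwise in time. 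The well-posedness of the finite-dimensional truncations and this monotonicity are the content of Section~\ref{sect:comp}.

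Next, for each $n$ I would construct two finite-dimensional families $\bfY^{\text{up},n}$ and $\bfY^{\text{lw},n}$ on indices $|a|\leq n$, obtained by treating the outer gaps as fixed data in a way that either overestimates or underestimates $\fS_a$. A natural choice is to drop the long-range compression from outside $[-n,n]$ for the upper approximation (effectively sending outer gaps to $+\infty$, which makes the drift strictly larger than $\fS_a$), and to add a maximal compressive contribution compatible with the density bound of $\Ysp(\alpha,\rho)$ for the lower one. By the comparison principle, $n\mapsto \bfY^{\text{up},n}$ is componentwise nonincreasing, $n\mapsto \bfY^{\text{lw},n}$ is componentwise nondecreasing, and any candidate $\YTsp(\alpha,\rho)$-valued solution $\bfY$ of \eqref{eq:DBMg} is sandwiched between them on each block $|a|\leq n$. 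Passing to limits yields monotone limits $\bfY^{\text{up},\infty}\geq \bfY^{\text{lw},\infty}$.

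The main step, and the hardest part, is to show that these two limits coincide, and that the common limit is a $\YTsp(\alpha,\rho)$-valued (and, under the extra hypothesis $\bfy^\ic\in\Rsp(p)$, also $\RTsp(p)$-valued) solution of \eqref{eq:DBMg}. The mechanism is quantitative decay of the tail: the influence of the outer data on the drift at a fixed inner index $a$ is controlled by $\sum_{|i|>n}\frac{y_a}{z_{(a,i)}(y_a+z_{(a,i)})}$, and using the $|m|^{-\alpha}$ bound on averaged density fluctuations built into \eqref{eq:Ynorm}, this tail decays in $n$ at a rate that, via an It\^o/Gronwall comparison against the bracketing solutions, forces $\bfY^{\text{up},n}-\bfY^{\text{lw},n}\to 0$ uniformly on finite time intervals. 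Propagating the $\YTsp(\alpha,\rho)$ bound forward in time uses the Bessel repulsion $1/y_a$ (to rule out gap collapse) together with the comparison principle (to keep the averages $\av_{(0,m)}(\bfY(s))$ close to $\rho$). This identification and the resulting existence statement constitute Section~\ref{sect:exist}.

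For pathwise uniqueness, the same sandwich $\bfY^{\text{lw},n}\leq \bfY\leq \bfY^{\text{up},n}$ applies to any $(\YTsp(\alpha,\rho)\cap\RTsp(p))$-valued solution, so the convergence of the brackets forces it to equal the common limit. The role of the extra condition $\bfy^\ic\in\Rsp(p)$ is to supply uniform $L^p$ control on the gaps, needed to make the tail estimates of $\f_a$ summable in the Gronwall step driving the identification $\bfY^{\text{up},\infty}=\bfY^{\text{lw},\infty}$; this is Section~\ref{sect:unique}. The chief technical obstacle throughout is that the tail of $\f_a$ is not absolutely summable: its convergence relies on the density cancellation enforced by the $\Ysp(\alpha,\rho)$ condition, so a substantial amount of the work consists of propagating these density bounds pathwise under the random drift, rather than treating $\f_a$ as a standard bounded perturbation.
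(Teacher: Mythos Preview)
Your identification of the cooperative structure of $\fS_a$ and the resulting comparison principle is correct and is indeed the engine of the paper. Your upper approximation---truncating the compression to a finite window, equivalently setting outer gaps to $+\infty$---is essentially what the paper uses in Section~\ref{sect:conv} to prove the finite-to-infinite convergence (Lemma~\ref{lem:YrDecr}). But the scheme as you describe it has a genuine gap on the \emph{lower} side, and the identification step you propose would not go through as stated.

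The lower approximation is not well-defined. ``Adding a maximal compressive contribution compatible with $\Ysp(\alpha,\rho)$'' does not single out a drift: for fixed $a$, the tail of $\f_a$ can be made large by pushing nearby outer gaps toward $0$ while keeping the averaged density near $\rho$, and there is no uniform supremum. Even granting some concrete choice, the monotonicity $n\mapsto\bfY^{\text{lw},n}$ nondecreasing requires that replacing fixed outer data by dynamic data \emph{decreases} the compression, which you have no way to know a priori. More seriously, nothing in your scheme prevents the lower family from collapsing: as the paper stresses in Remark~\ref{rmk:singular}, the Bessel repulsion $\beta/y_a$ alone does \emph{not} rule out finite-time collision in infinite dimensions, so $\bfY^{\text{lw},\infty}$ could hit $0$ and fail to be a solution at all. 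The paper avoids constructing a lower object entirely. It builds only the \emph{greatest} solution, via the infinite-dimensional Picard iteration \eqref{eq:iter:} (monotone in the iteration index, not in a spatial cutoff), and proves positivity of the limit by a \emph{global} argument---conservation of the average spacing $\kappa(\bfy(t))$ via \eqref{eq:global}---carried out first for the shift-invariant lattice initial data and then transferred by comparison (Lemmas~\ref{lem:YZlw}--\ref{lem:Yiter}, Proposition~\ref{prop:bd>0}).

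Your identification/uniqueness step via ``It\^o/Gronwall on $\bfY^{\text{up},n}-\bfY^{\text{lw},n}$'' also does not work directly: the drift $y\mapsto 1/y$ is not Lipschitz near $0$, so any Gronwall comparison presupposes exactly the uniform lower bound on gaps that is in question. The paper's uniqueness argument (Section~\ref{sect:unique}) is structurally different: it takes two \emph{ordered} solutions $\bfYlw\leq\bfYup$, derives the exact telescoping identity \eqref{eq:uniEq:est} for $E_{(i_1,i_2)}=\sum_{a\in(i_1,i_2)}(\Yup_a-\Ylw_a)$ in terms of boundary fluxes $L^\pm_i$, and then shows $\int L^\pm_{i}(s)\,ds\to 0$ along well-chosen subsequences $i\to\pm\infty$. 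This is where $\Rsp(p)$ is actually used---not for summability of the tail of $\f_a$, but via a H\"older inequality to control the size of ``bad'' index sets (Lemmas~\ref{lemma:Lest:L1}--\ref{lem:fkG:exists}). The ordering hypothesis is free because the constructed solution is the greatest one.
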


We establish Proposition~\ref{prop:DBMg} in two steps:
the existence, as in Proposition~\ref{prop:exist},
and
the uniqueness, as in Proposition~\ref{prop:unique}.
Defining the partial orders
\begin{align}
	&
	\label{eq:order}
	\bfy \leq \bfy' \in [0,\infty]^\hZ
	\text{ if and only if }
	 y_a \leq y'_a,
	 \ \forall a\in\hZ,
\\
	&
	\label{eq:orderT}
	\bfy(\Cdot) \leq \bfy'(\Cdot)
	\text{ if and only if }
	\bfy(t) \leq \bfy'(t),
	 \ \forall t\geq 0,
\end{align}
we call $\bfY$ the greatest $\calS$-valued solution of \eqref{eq:DBMg} if,
for any $\calS$-valued weak solution $\bfY'$ defined on a common probability space with $\bfY'(0) \leq \bfy^\ic$,
we have $\bfY'(\Cdot) \leq \bfY(\Cdot)$ almost surely.
\begin{proposition}[existence] \label{prop:exist}
For any $\bfy^\ic\in\Ysp(\alpha,\rho)$,
there exists a $\YTsp(\alpha,\rho)$-valued, 
$\Wfil$-adapted solution $\bfY$ of \eqref{eq:DBMg} starting form $\bfy^\ic$,
which is the greatest $\YTsp(\alpha,\rho)$-valued solution.
Further, if $ \bfy^\ic\in\Rsp(p) $, then $ \bfY\in\RTsp(p) $.
\end{proposition}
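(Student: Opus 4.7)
The plan is to construct $\bfY$ as a pointwise monotone limit of finite-dimensional approximants, exploiting the gap monotonicity \eqref{eq:Ydec} of Section~\ref{sect:comp}. For each integer $n\geq 1$, consider the finite-dimensional analog of \eqref{eq:DBMg} indexed by a truncated gap set $\calJ_n:=(-n,n)\cap\hZ$, with $\f_a$ in \eqref{eq:psi} correspondingly truncated to involve only gaps in $\calJ_n$. By the well-posedness result of Section~\ref{sect:comp}, this finite-dimensional system admits a unique strong solution $\bfY^{(n)}=(Y^{(n)}_a)_{a\in\calJ_n}$ starting from $(y^\ic_a)_{a\in\calJ_n}$ and driven by $(W_a)_{a\in\calJ_n}$. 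Since enlarging the index set from $\calJ_n$ to $\calJ_{n+1}$ strictly increases the compression experienced by each interior gap, \eqref{eq:Ydec} gives $\bfY^{(n+1)}(\Cdot)\leq\bfY^{(n)}(\Cdot)$ on $\calJ_n$ almost surely, and the pointwise limit $Y_a(t):=\lim_{n\to\infty}Y^{(n)}_a(t)\in[0,\infty)$ is well-defined and $\Wfil$-adapted.

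To upgrade this limit to a bona fide $\YTsp(\alpha,\rho)$-valued solution requires uniform-in-$n$ control on $\sup_{s\in[0,t]}\norm{\bfY^{(n)}(s)}$. The approach is to test \eqref{eq:DBMg} against particle-average statistics: for any finite interval $\calI=(b_-,b_+)\cap\hZ$, the drift summed over $\calI$ telescopes by \eqref{eq:phifS} to boundary contributions $\phi_{b_+}(\bfY^{(n)})-\phi_{b_-}(\bfY^{(n)})$, whose size is controlled by $\norm{\bfY^{(n)}}$ itself through a dyadic decomposition of \eqref{eq:phi}; meanwhile the Brownian contribution to $\av_\calI(\bfY^{(n)}(t))-\rho$ is of order $|\calI|^{-1/2}$ by independence of increments. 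A Gronwall-type self-improving inequality then yields a bound on $\bbE(\sup_{s\in[0,t]}\norm{\bfY^{(n)}(s)})^{p'}$ depending only on $t$, $\norm{\bfy^\ic}$ and $p'$, and in the presence of $\bfy^\ic\in\Rsp(p)$ the analogous bound is obtained for $\sup_{m}\av^p_{(0,m)}(\bfY^{(n)}(s))$. These estimates pass to the limit by Fatou, placing $\bfY$ in $\YTsp(\alpha,\rho)$ (and in $\RTsp(p)$ under the additional hypothesis), and forcing $Y_a(t)>0$ since otherwise the Bessel-type repulsion $1/Y^{(n)}_a$ would generate an unbounded drift incompatible with the norm estimate.

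Given the uniform bounds, $\fS_a(\bfY^{(n)}(s))\to\fS_a(\bfY(s))$ locally uniformly in $s$ by dominated convergence, using that the $\f_a$ tails are uniformly summable and that $1/Y^{(n)}_a$ stays bounded on compact time intervals once strict positivity is in hand; passing to the limit in the integral equation \eqref{eq:DBMg} then verifies that $\bfY$ is a solution. The greatest-solution property follows by a second application of \eqref{eq:Ydec}: any $\YTsp(\alpha,\rho)$-valued weak solution $\bfY'$ with $\bfY'(0)\leq\bfy^\ic$ on the same probability space carries the full (hence stronger) compression than the truncated $\bfY^{(n)}$, so $\bfY'(\Cdot)\leq\bfY^{(n)}(\Cdot)$ on $\calJ_n$ almost surely, and letting $n\to\infty$ yields $\bfY'\leq\bfY$. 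The main obstacle is the uniform-in-$n$ norm estimate of the second paragraph: the long-range interaction $\f_a$ is only borderline summable and the repulsion $1/Y^{(n)}_a$ can be arbitrarily large, so the telescoping structure of $\sum_a\fS_a$ together with the specific form of the seminorm $\norm{\Cdot}$ must be exploited carefully to close the bound without cumulative error in $n$.
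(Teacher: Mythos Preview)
Your construction via spatial truncation is a genuinely different route from the paper's. The paper does not truncate in space; it runs an infinite-dimensional iteration \eqref{eq:iter:} in which, given the previous iterate, each $Y^{(n)}_a$ solves a one-dimensional SDE with compression $\f_a(\cdot,\bfY^{(n-1)})$ frozen. The monotonicity \eqref{eq:Ydec} you cite refers to that iteration, not to spatial truncation. For equally-spaced initial data the paper exploits shift-invariance and the ergodic theorem (Lemmas~\ref{lem:shiftInv}--\ref{lem:YZlw}); general $\bfy^\ic\in\Ysp(\alpha,\rho)$ is then reached by a second monotone limit over truncated initial conditions $\bfy^\ic\vee\bfgamma$, $\gamma\downarrow 0$ (Proposition~\ref{prop:existGener}). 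Your finite-dimensional scheme does appear in Section~\ref{sect:conv}, but only \emph{after} $\bfY$ has been constructed: there $\bfY$ itself furnishes the a priori lower bound ensuring the spatial-truncation limit stays positive (Lemma~\ref{lem:Yrconv}).

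That last point is precisely where your sketch has a genuine gap. You argue that $Y_a(t)=0$ would make $1/Y^{(n)}_a$ produce unbounded drift incompatible with the norm estimate, but $\norm{\cdot}$ controls only macroscopic averages $\av_{(0,m)}$ and says nothing about a single gap. As Remark~\ref{rmk:singular} emphasizes, in infinite dimensions the repulsion $1/y_a$ is exactly balanced by the leading terms of $\f_a$, so $\int_0^t\fS_a(\bfY^{(n)})\,ds$ can stay bounded even as $Y^{(n)}_a\to 0$, provided neighboring gaps shrink in tandem---and your norm bound does not rule this out. The paper's positivity proof (Proposition~\ref{prop:bd>0}) is accordingly indirect: it first obtains a global density lower bound $\bfY^{(\infty)}\in\YTspll(\gamma)$, uses Lemma~\ref{lem:goodset} to locate seeds $J^\pm$ with $h_{(J^\pm,\pm\infty)}\geq\gamma/2$, and only then compares on the finite window $(J^-,J^+)$ with a finite-dimensional system (Lemma~\ref{lem:DBMcomp}) whose solution is strictly positive. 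Your Gronwall step for $\norm{\bfY^{(n)}}$ faces the same obstruction: bounding the telescoped boundary terms $\phi_{b_\pm}$ needs lower bounds on $Y_{(b_\pm,j)}$ for all $j$, including nearby ones not controlled by the norm, which is why the paper builds the mesoscopic density machinery of Section~\ref{sect:exist} (Propositions~\ref{prop:denss}--\ref{prop:dens}) rather than closing a direct Gronwall loop.
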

\begin{proposition}[uniqueness] \label{prop:unique}
Let $\bfYup$ and $\bfYlw$ be $(\YTsp(\alpha,\rho)\cap\RTsp(p))$-valued weak solutions of \eqref{eq:DBMg}
defined on a common probability space,
starting from a common initial condition $\bfy^\ic$.
If $\bfYlw(\Cdot)\leq\bfYup(\Cdot)$ almost surely,
we have $\bfYlw(\Cdot)=\bfYup(\Cdot)$ almost surely.
\end{proposition}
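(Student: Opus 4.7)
The plan is to derive a pathwise, deterministic linear integral inequality for $D_a(t) := \Yup_a(t) - \Ylw_a(t) \ge 0$ and close it via a weighted Gronwall argument. Because $\bfYup$ and $\bfYlw$ are driven by the same Brownian motion $\bfW$, subtracting the two copies of \eqref{eq:DBMg} cancels the martingale part and produces the purely pathwise identity
\begin{equation*}
D_a(t) \;=\; \beta \int_0^t \bigl[\fS_a(\bfYup(s)) - \fS_a(\bfYlw(s))\bigr]\, ds, \qquad D_a(0) = 0,
\end{equation*}
for every $a \in \hZ$, almost surely. The task then reduces to showing $D_a \equiv 0$.

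I would decompose the integrand along the linear interpolation $\bfY^r(s) := r\bfYup(s) + (1-r)\bfYlw(s)$, $r \in [0,1]$. Using $\fS_a = 1/y_a - \f_a$ from \eqref{eq:fS} together with the fundamental theorem of calculus, one gets
\begin{equation*}
\fS_a(\bfYup) - \fS_a(\bfYlw)
\;=\; -\!\int_{\Ylw_a}^{\Yup_a}\!\Bigl[\tfrac{1}{y^2} + \partial_y \f_a(y,\bfYup)\Bigr] dy
\;+\; \int_0^1 \sum_{b \ne a}\bigl(-\partial_{y_b}\f_a(\Ylw_a,\bfY^r)\bigr)\, D_b\, dr.
\end{equation*}
A direct differentiation of \eqref{eq:psi} shows $\partial_y \f_a > 0$ and $\partial_{y_b}\f_a < 0$ for $b \ne a$, so the first bracket is nonpositive (a self-dissipative term, which I would simply discard) while the second has the nonnegative kernel $K_{a,b}(\omega,s) := \int_0^1(-\partial_{y_b}\f_a(\Ylw_a(s),\bfY^r(s)))\, dr \ge 0$. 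Substituting back yields the linear pathwise inequality $D_a(t) \le \beta \int_0^t \sum_{b \ne a} K_{a,b}(\omega,s)\, D_b(s)\, ds$.

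To close, I would sum against polynomial weights $e_a := (1+|a|)^{-q}$ for a suitable $q$, interchange summations, and obtain $S(t) \le \beta \int_0^t C(\omega,s) S(s)\, ds$ for $S(t) := \sum_a e_a D_a(t)$; combined with $S(0) = 0$, Gronwall forces $S \equiv 0$ and hence $D_a \equiv 0$. The key input — and the principal obstacle — is the uniform kernel bound
\begin{equation*}
\sup_{b \in \hZ}\;\sup_{s \in [0,t]}\;\frac{1}{e_b}\sum_{a \ne b} e_a\, K_{a,b}(\omega,s) \;\le\; C(\omega,t) \;<\; \infty \quad \text{a.s.}
\end{equation*}
Here one uses the explicit estimate $|\partial_{y_b}\f_a(y,\bfY^r)| \sim y\,|a-b|^{-2}$ (for well-separated $a,b$ in configurations of uniform density), which is borderline for the available polynomial weights; the $\omega$-dependent prefactor $C(\omega,t)$ must be extracted from the uniform $p$-th moment averages of the gaps encoded in $\RTsp(p)$. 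This is precisely where the stronger regularity class $\XRTsp(\alpha,\rho,p)$ enters the uniqueness statement (as opposed to merely $\XTsp(\alpha,\rho)$), paralleling the role of $\Rsp(p)$ in the existence proof, Proposition~\ref{prop:exist}. The finiteness $S(t) < \infty$ needed to apply Gronwall follows from the $\YTsp(\alpha,\rho)$-bound on $\bfYup, \bfYlw$ combined with the decay of $e_a$.
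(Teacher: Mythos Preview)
Your linearization and the sign analysis are correct, and the inequality
\[
D_a(t)\;\le\;\beta\int_0^t\sum_{b\neq a}K_{a,b}(\omega,s)\,D_b(s)\,ds
\]
is valid. The gap is in the step you flag as ``borderline'': the kernel bound $K_{a,b}\lesssim y_a|a-b|^{-2}$ relies on $Y^r_{(a,i)}\gtrsim |i-a|$, and this is \emph{not} available uniformly in $a$. The hypothesis $\bfYlw\in\YTsp(\alpha,\rho)$ controls only $\av_{(0,m)}(\bfYlw)$, i.e.\ densities measured from the origin; for $a$ at distance $m$ from $0$ one has $\Ylw_{(a,i)}=\rho(i-a)+O(m^{1-\alpha})$, so the linear lower bound fails whenever $|i-a|\lesssim m^{1-\alpha}$. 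Nothing in $\RTsp(p)$ repairs this, since that condition bounds \emph{upper} $p$-th moment averages and places no lower bound on local sums. Concretely, a block of $\sim m^{1-\alpha}$ gaps of size $\delta$ near position $m$ is compatible with both $\YTsp(\alpha,\rho)$ and $\RTsp(p)$ for any $\delta>0$, and on such a block $K_{a,b}$ is of order $\delta^{-2}$, so $\sup_b e_b^{-1}\sum_a e_a K_{a,b}$ cannot be bounded by any finite $C(\omega,t)$ without further input. The weighted Gronwall closure therefore does not go through as stated.

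The paper's proof avoids this obstruction by a different mechanism. Rather than seeking a kernel bound valid at \emph{every} site, it sums $D_a$ over a window $(i_1,i_2)$; the interaction telescopes (via $\fS_a=\phi_{a+1/2}-\phi_{a-1/2}$) into purely boundary terms $L^\pm_{i_1},L^\pm_{i_2}$, and one then shows $\int L^\pm_{i}\,ds\to 0$ along a \emph{single} well-chosen subsequence $i\to\pm\infty$. The selection of these ``good'' boundary indices (Lemmas~\ref{lemma:Lest:L1}--\ref{lem:fkG:exists}) is where $\RTsp(p)$ genuinely enters, via a H\"older argument and a combinatorial ``toppling'' estimate. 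The point is that one needs local density control at only one index per scale, not uniformly; your Gronwall scheme implicitly demands the latter.
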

\noindent
Indeed, Proposition~\ref{prop:DBMg} follows
by combining Proposition~\ref{prop:exist}--\ref{prop:unique}.
In particular, the pathwise uniqueness follows by applying Proposition~\ref{prop:unique}
for $\bfYup=\bfY$ and $\bfYlw=\bfY'$, 
where $ \bfY $ is the greatest solution as in Proposition~\ref{prop:exist},
and $\bfY'$ is an arbitrary weak solution with $\bfY'(0)=\bfY(0)$.

Proposition~\ref{prop:exist}
is established in two steps:
by first considering the special case $ \bfy^\ic\in [\gamma,\infty)^\hZ $, $ \gamma>0 $,
and then the general case $ \bfy^\ic\in\Ysp(\alpha,\rho) $.
For the former case,
we construct the solution of \eqref{eq:DBMg} by the following iteration scheme,
\begin{subequations}\label{eq:iter:}
\begin{align}
	&
	\label{eq:iter0}
	Y^{(0)}_a(t) = y^\ic_a + W_a(t) + \beta \int_0^t \frac{1}{Y^{(0)}_a(s)} ds, \quad a\in\hZ,
\\
	&
	\label{eq:iter}
	Y^{(n)}_a(t) = y^\ic_a + W_a(t) 
\\
	&
	\notag	
	+ \beta \int_0^t \BK{ \frac{1}{Y^{(n)}_a(s)} - \f_a(Y^{(n)}_a(s),\bfY^{(n-1)}(s)) } ds,
	\quad a\in\hZ, \ n\in\bbZ_{>0}.
\end{align}
\end{subequations}
That is, we let $Y_a^{(0)}$ be the Bessel process (driven by $W_a$),
and for $n\geq 1$, we let $ Y_a^{(n)} $ be the solution of 
the following \emph{one-dimensional} \ac{SDE}
\begin{align}\label{eq:oneD}
	Y(t) = Y(0) + W_a(t) + \beta \int_0^t \BK{ \frac{1}{Y(s)}  - \f_a(Y(s),\bfZ(s)) } ds,
\end{align}
for \emph{given} $ \bfZ=\bfY^{(n-1)} $.
Letting
\begin{align}
	&
	\label{eq:Yspl}
	\Yspl(\gamma)
	:=
	\Big\{
		\bfy\in(0,\infty)^\hZ:
		\liminf_{|m|\to\infty} \av_{(0,m)} (\bfy) \geq \gamma
	\Big\},
\\
	&
	\label{eq:YTspl}
	\YTspl(\gamma)
	:=
	\Big\{
		\bfy(\Cdot)\in C_+([0,\infty))^\hZ:
		\liminf_{|m|\to\infty}
		\inf_{s\in[0,t]} \av_{(0,m)}(\bfy(s)) \geq \gamma,
		\forall t\geq 0
	\Big\},
\\
	&
	\label{eq:Yspgg}
	\Yspl:=\cup_{\gamma>0}\Yspl(\gamma),
	\quad
	\YTspl:=\cup_{\gamma>0}\YTspl(\gamma),
\end{align}
in Section~\ref{sect:exist:special} we prove

\begin{proposition}\label{prop:existg}
Fix $ \gamma>0 $. 
For any given $\bfy^\ic\in[\gamma,\infty)^\hZ$,
there exists a $ \YTsp(\gamma) $-valued, $ \Wfil $-adapted sequence 
$ \{\bfY^{(n)}\}_{n\in\bbZ_{\geq 0}} $ satisfying \eqref{eq:iter:}.
Further, such a sequence is \emph{decreasing}, i.e.\
\begin{align}\label{eq:Ydec}
	\bfY^{(0)}(\Cdot) \geq \bfY^{(1)}(\Cdot) \geq \bfY^{(2)}(\Cdot) \geq \ldots,
\end{align}
almost surely.
Defining the $\Wfil$-adapted process $ Y^{(\infty)}_a(t):= \lim_{n\to\infty} Y^{(n)}_a(t) $,
we have that $ \bfY^{(\infty)} $ is the greatest $ \YTspl $-valued solution of \eqref{eq:DBMg}.
If $ \bfy^\ic\in\Rsp(p) $, then $ \bfY^{(\infty)}\in\RTsp(p) $.
\end{proposition}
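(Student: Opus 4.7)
The plan is to construct the iterates $\{\bfY^{(n)}\}_{n\geq 0}$ one at a time, use the monotonicity \eqref{eq:Ydec} (established in Section~\ref{sect:comp}) to pass to the pointwise limit $\bfY^{(\infty)}$, and then verify it is the greatest $\YTspl$-valued solution of \eqref{eq:DBMg}.

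First I would construct $\bfY^{(0)}$: since \eqref{eq:iter0} decouples into independent one-dimensional equations $dY = dW_a + (\beta/Y)dt$, each $Y^{(0)}_a$ is a Bessel-type process, which for $\beta\geq 1$ is of dimension $\geq 3$ and thus admits a unique strong, $\Wfil$-adapted, strictly positive solution with $Y^{(0)}_a(0)=y^\ic_a\geq\gamma$. Inductively, given a $\Wfil$-adapted $\YTspl$-valued $\bfY^{(n-1)}$, the coefficient $\f_a(\cdot,\bfY^{(n-1)}(s))$ is locally bounded in $(s,y)$ thanks to the lower bound on the averaged gaps, so the one-dimensional SDE \eqref{eq:oneD} with $\bfZ=\bfY^{(n-1)}$ admits a unique strong, $\Wfil$-adapted solution $Y^{(n)}_a$ by the finite-dimensional well-posedness of Section~\ref{sect:comp}. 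The monotonicity \eqref{eq:Ydec} is then supplied by Section~\ref{sect:comp}; the key algebraic input is that the drift $\beta/y-\beta\f_a(y,\bfY^{(n-1)}(s))$ is non-decreasing in each coordinate of $\bfY^{(n-1)}$ (because $\f_a$ is decreasing in each coordinate of its second argument), so that a one-dimensional comparison theorem propagates the ordering from $\bfY^{(n-1)}\leq\bfY^{(n-2)}$ to $\bfY^{(n)}\leq\bfY^{(n-1)}$, with base case $\bfY^{(1)}\leq\bfY^{(0)}$ coming from $\f_a\geq 0$. Thus $Y^{(\infty)}_a:=\lim_n Y^{(n)}_a$ exists pointwise, is $\Wfil$-adapted, and satisfies $\bfY^{(\infty)}\leq\bfY^{(0)}$.

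The main obstacle, I expect, is verifying that $\bfY^{(\infty)}\in\YTspl$, i.e.\ that the lower bounds on $\av_{(0,m)}\bfY^{(n)}(s)$ are uniform in $n$ and survive the limit. My plan is to sum \eqref{eq:iter} over $a\in(0,m)$: the Brownian increment contributes $O(|m|^{1/2})$ with high probability, while a Fubini/telescoping manipulation on $\sum_{a\in(0,m)}\f_a(Y^{(n)}_a,\bfY^{(n-1)})$ localises the long-range compression contributions to boundary terms near $a=1/2$ and $a=m-1/2$, which are controlled by the upper bound $\bfY^{(n)}\leq\bfY^{(0)}$ and a moment estimate on the Bessel processes. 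This should produce a positive lower bound on $\av_{(0,m)}\bfY^{(n)}(s)$ uniform in $n,m,s$. Once $\bfY^{(\infty)}\in\YTspl$ is secured, passing $n\to\infty$ in \eqref{eq:iter} is routine: monotone convergence handles $1/Y^{(n)}_a\uparrow 1/Y^{(\infty)}_a$, while termwise convergence combined with uniform integrability (via $\bfY^{(n)}\leq\bfY^{(0)}$ and the uniform lower bound) handles $\f_a(Y^{(n)}_a,\bfY^{(n-1)})$.

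Finally, to show $\bfY^{(\infty)}$ is the greatest $\YTspl$-valued solution, for any $\YTspl$-valued weak solution $\bfY'$ of \eqref{eq:DBMg} on the same probability space with $\bfY'(0)\leq\bfy^\ic$, I would inductively prove $\bfY'\leq\bfY^{(n)}$: the base case uses $\f_a\geq 0$ to dominate $\bfY'$ by the pure-Bessel $\bfY^{(0)}$, while the inductive step uses the same monotonicity of $\f_a$ in its second slot together with the one-dimensional comparison of Section~\ref{sect:comp}, now applied to $\bfY'\leq\bfY^{(n-1)}$. Sending $n\to\infty$ gives $\bfY'\leq\bfY^{(\infty)}$. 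For the last assertion, $\bfY^{(\infty)}\leq\bfY^{(0)}$ and the hypothesis $\bfy^\ic\in\Rsp(p)$ reduce the $\RTsp(p)$ claim to a uniform-in-$m$ $L^p$-moment bound on $\av^p_{(0,m)}\bfY^{(0)}$, which follows from the Bessel structure of $\bfY^{(0)}$.
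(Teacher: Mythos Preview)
Your overall architecture matches the paper's, and the construction of the iterates, the monotonicity via \eqref{eq:psi:mono:y}--\eqref{eq:psi:mono:z}, the comparison argument for ``greatest solution'', and the $\RTsp(p)$ bound are all essentially as in the paper. However, your plan for the central step --- showing $\bfY^{(\infty)}\in\YTspl$ --- has two genuine gaps.

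First, the telescoping bound on $\sum_{a\in(0,m)}\f_a(Y^{(n)}_a,\bfY^{(n-1)})$ does localise to boundary terms (this is \eqref{eq:resum1}), but those terms have $Y^{(n-1)}_{(i_\pm,i')}$ in the \emph{denominator}, so an upper bound $\bfY^{(n)}\leq\bfY^{(0)}$ does not control them; you need a lower bound on $\bfY^{(n-1)}$, which is exactly what you are trying to prove. Inductively you can show $\bfY^{(n)}\in\YTspl(\gamma)$ for each fixed $n$, but the implied constants depend on $n$ and the argument does not yield the uniform-in-$n$ bound needed for $\bfY^{(\infty)}$. The paper resolves this by first treating the shift-invariant initial condition $\bfz^\ic=\bfgamma$ (Lemmas~\ref{lem:shiftInv}--\ref{lem:YZlw}): shift-invariance plus the Birkhoff--Khinchin ergodic theorem guarantee that $\av_{(0,m)}(\bfZ^{(n)}(s))$ and $\av_{(0,m)}(\bfZ^{(\infty)}(s))$ both converge as $|m|\to\infty$, and uniform integrability then allows the exchange of limits $n\to\infty$ and $|m|\to\infty$. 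For general $\bfy^\ic\geq\bfgamma$ the bound is inherited by the comparison $\bfY^{(n)}\geq\bfZ^{(n)}$ (Lemma~\ref{lem:Yiter}).

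Second, even once you know $\liminf_{|m|\to\infty}\inf_{s\in[0,t]}\av_{(0,m)}(\bfY^{(\infty)}(s))\geq\gamma$, this is only membership in $\YTspll(\gamma)$ (see \eqref{eq:YTspll}), not in $\YTspl(\gamma)$: you still need $\inf_{s\in[0,t]}Y^{(\infty)}_a(s)>0$ for each fixed $a$. As Remark~\ref{rmk:singular} explains, the Bessel repulsion $\beta/y_a$ alone does \emph{not} prevent this in infinite dimensions, so your monotone-convergence step for $\int_0^t 1/Y^{(n)}_a\,ds$ is not yet justified. The paper handles this separately in Proposition~\ref{prop:bd>0}: using the density lower bound one finds random indices $J^\pm$ with $h_{(J^\pm,\pm\infty)}(\bfundY^{(\infty)}(0,t))\geq\gamma/2$, rewrites \eqref{eq:iter} on the finite window $(J^-,J^+)$ as an instance of \eqref{eq:DBMfc} with a uniformly bounded external force, and then invokes the finite-dimensional comparison Lemma~\ref{lem:DBMcomp} to bound $\bfY^{(n)}$ below by a fixed $C_+$-valued process.
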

\noindent
For the general case $\bfy^\ic\in\Ysp(\alpha,\rho)$,
we consider the truncated initial condition $ (\bfy^\ic\vee\bfgamma):=(y^\ic_a\vee\gamma)_{a\in\hZ} $,
$ \gamma>0 $, and let $\bfYg$ be the $\YTspl$-valued solution starting from $ (\bfy^\ic\vee\bfgamma) $
given by Proposition~\ref{prop:existg}.
As $ \bfYg $ is the greatest solution, for any decreasing $ \{\gamma_1>\gamma_2>\ldots \} $,
the sequence $ \{\bfY^{\vee \gamma_k}\}_{k} $ is decreasing.
In Section~\ref{sect:exist}, we prove
\begin{proposition}\label{prop:existGener}
Let $ \bfy^\ic\in\Ysp(\alpha,\rho) $ and $ \bfYg\in\YTspl(\gamma)$ be as in the preceding.
Fix an arbitrary decreasing sequence $ 1\geq \gamma_1>\gamma_2>\ldots \to 0 $.
Defining the $ \Wfil $-adapted process $ Y_a(t) := \lim_{n\to\infty} Y^{\vee\gamma_n}_a(t) $,
we have that $ \bfY $ is the greatest $ \XTsp(\alpha,\rho) $-valued solution of \eqref{eq:DBMg}.
\end{proposition}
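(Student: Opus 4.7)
The plan is a four-step monotone-limit argument, leveraging the greatest-solution property of Proposition~\ref{prop:existg}.

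\emph{Step 1 (monotonicity).} Since both $\bfY^{\vee\gamma_n}$ and $\bfY^{\vee\gamma_{n+1}}$ are built on the same probability space carrying the common Brownian motion $\bfW$, and $\bfY^{\vee\gamma_{n+1}}$ is a $\YTspl$-valued weak solution of \eqref{eq:DBMg} starting from $\bfy^\ic \vee \bfgamma_{n+1} \leq \bfy^\ic \vee \bfgamma_n$, the greatest-solution property of $\bfY^{\vee\gamma_n}$ yields $\bfY^{\vee\gamma_{n+1}}(\Cdot) \leq \bfY^{\vee\gamma_n}(\Cdot)$. The $\Wfil$-adapted, monotone limit $Y_a(t) := \lim_n Y_a^{\vee\gamma_n}(t) \in [0,\infty)$ is therefore well-defined.

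\emph{Step 2 ($\YTsp(\alpha,\rho)$-valued).} I next establish $\bfY \in \YTsp(\alpha,\rho)$, which requires both the norm bound $\sup_{s\in[0,t]}\norm{\bfY(s)}<\infty$ and strict positivity $Y_a(t)>0$. For the norm bound, the pivotal ingredient is the telescoping $\sum_{a \in (0,m)} \fS_a(\bfy) = \phi_m - \phi_0$ coming from \eqref{eq:phifS}: summing \eqref{eq:DBMg} over $a\in(0,m)$ for $\bfY^{\vee\gamma_n}$ gives
\begin{align*}
\av_{(0,m)}\bigl(\bfY^{\vee\gamma_n}(s)\bigr) - \av_{(0,m)}\bigl(\bfy^\ic \vee \bfgamma_n\bigr) = \tfrac{1}{|m|}\!\!\sum_{a \in (0,m)}\!\! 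W_a(s) + \tfrac{\beta}{|m|} \int_0^s \bigl[\phi_m - \phi_0\bigr] dr,
\end{align*}
whose Brownian sum is handled via Doob's maximal inequality, and whose drift is estimated by a uniform-in-$n$ bound on $\phi_m$ along the approximate trajectory, itself obtained by bootstrapping from the $\Ysp(\alpha,\rho)$-regularity of $\bfy^\ic$. Fatou then transfers the bound from $\bfY^{\vee\gamma_n}$ to $\bfY$. For positivity, once the norm bound is in place the compression $\f_a(y,\bfY)$ is locally bounded, so each $Y_a$ obeys a one-dimensional Bessel-with-bounded-perturbation SDE of the form \eqref{eq:oneD} and thus never reaches zero.

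\emph{Step 3 (limit in the SDE) and Step 4 (greatest-solution).} I then pass to the limit in \eqref{eq:DBMg} satisfied by $\bfY^{\vee\gamma_n}$: $y^\ic_a \vee \gamma_n \to y^\ic_a$ (since $y^\ic_a>0$), and each summand in $\f_a$ converges pointwise while being dominated by $(Y_{(a,i)})^{-2} \lesssim (\rho|i-a|)^{-2}$, summable in $i$ thanks to $\bfY \in \YTsp(\alpha,\rho)$; dominated convergence then concludes. For Step 4, any $\YTsp(\alpha,\rho)$-valued weak solution $\bfY'$ with $\bfY'(0) \leq \bfy^\ic$ automatically lies in $\YTspl(\gamma)$ for every $\gamma<\rho$ (its norm bound forces $\av_{(0,m)}(\bfY'(s)) \to \rho$ uniformly on compact time intervals), so Proposition~\ref{prop:existg} applied to $\bfY^{\vee\gamma_n}$ gives $\bfY'(\Cdot) \leq \bfY^{\vee\gamma_n}(\Cdot)$; letting $n \to \infty$ completes the proof.

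The principal obstacle is Step 2: controlling the norm uniformly in $n$ couples the drift $\phi_m - \phi_0$ back to the very norm being controlled, demanding a careful bootstrap (and, for $\alpha$ near $1$, some extra care with the Brownian fluctuations, which decay only like $|m|^{-1/2}$). Moreover, as Remark~\ref{rmk:singular} cautions, the infinite-dimensional Bessel repulsion alone does not rule out collisions, so the strict-positivity argument must rest essentially on the global $\YTsp(\alpha,\rho)$-control of $\bfY$ rather than on local repulsion.
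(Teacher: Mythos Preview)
Your Steps~1 and~4 match the paper; the gap is Step~2, and it is genuine. The proposed direct estimate of the drift $\phi_m(\bfY^{\vee\gamma_n}(s))$ cannot succeed as stated, because $\phi_m(\bfy)$ contains \emph{near-neighbor} terms such as $\tfrac{1}{2y_{(m,m+1)}}$ and $\tfrac{1}{2y_{(m-1,m)}}$ that are not controlled by any average-density information: the $\Ysp(\alpha,\rho)$-norm bounds $\av_{(0,m)}(\bfy)$ but says nothing about individual gaps near index $m$. Along the approximants $\bfY^{\vee\gamma_n}$, these gaps are positive for each fixed $n$, but there is no uniform-in-$n$ (or uniform-in-$m$) lower bound, so the ``bootstrap'' you describe has no seed to start from. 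Relatedly, your positivity argument is circular: you invoke local boundedness of $\f_a(y,\bfY)$ to run a Bessel-type comparison, but $\f_a(y,\bfz)$ involves $(z_{(a,a\pm 2)})^{-1}$-type terms whose boundedness already requires the very positivity you are trying to prove.

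The paper resolves this by working not with $(0,m)$ directly but with a mesoscopic partition $\{\A_{b,k}\}_{b\in\hZ}$ of size $|\A_{b,k}|\sim k|m|^{1-\alpha}$ (Section~\ref{sect:exist}). The key device is Lemma~\ref{lem:goodset}: once one has the block-average lower bound $\av_{\A_{b,k}}(\bfundY^{\vee\gamma_n})\geq\tfrac{\rho}{3}$, one can locate \emph{seed indices} $I^\pm_{b}$ at which $h_{(I^\pm_{b},\pm\infty)}(\bfundY^{\vee\gamma_n})\geq\tfrac{\rho}{3}$, i.e.\ $Y^{\vee\gamma_n}_{(I^\pm_b,j)}\geq\tfrac{\rho}{3}|j-I^\pm_b|$ for \emph{all} $j$ on the appropriate side. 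Choosing the interval endpoints in \eqref{eq:dens:lw}--\eqref{eq:dens:up} to be these seeds (rather than arbitrary $m$) makes $\barf_{\calI}$ and $\undf_{\calI}$ genuinely controllable; the mismatch between the seed interval and $(0,m)$ is absorbed into boundary terms $\lambda^{\calK'}_{\calK}$ of size $O(|m|^{-\alpha})$. This yields the uniform density estimate of Proposition~\ref{prop:dens}, after which Proposition~\ref{prop:bd>0} (a finite-dimensional truncation, not a one-dimensional Bessel comparison) gives positivity, and Lemma~\ref{lem:den2YTsp} upgrades to the full $\YTsp(\alpha,\rho)$ norm. Your outline needs this seed-index mechanism; without it the drift estimate does not close.
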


%-----uniqueness
As for Proposition~\ref{prop:unique},
letting 
\begin{align}\label{eq:E}
	E_{(i_1,i_2)}(t) := \sum_{a\in(i_1,i_2)} (\Yup_a(t)-\Ylw_a(t)),
\end{align}
with $ \bfYlw(\Cdot) \leq \bfYup(\Cdot) $, we have
$
	\absBK{\Yup_{a}(t)-\Ylw_{a}(t)}
	\leq
	E_{(i_1,i_2)]}(t)
	\leq
	E_{(-\infty,\infty)}(t),
$
$ \forall a\in (i_1,i_2) $.
With this, in Section~\ref{sect:unique} we prove
\begin{proposition}\label{prop:unique2}
For any $ t>0 $,
$
	\sup_{s\in[0,t]} E_{(-\infty,\infty)}(s) 
	=0,
$
almost surely,
\end{proposition}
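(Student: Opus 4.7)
The plan is to combine a telescoping identity for $E_{(i_1,i_2)}(t)$ obtained by summing \eqref{eq:DBMg}, with a careful bound on the resulting boundary $\phi$-differences, and to close by a Gronwall-type estimate as $|i_1|,|i_2|\to\infty$.

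First, since $\bfYup$ and $\bfYlw$ both satisfy \eqref{eq:DBMg} on the common probability space driven by the same Brownian motion $\bfW$ and share the initial condition $\bfy^\ic$, the stochastic integrals cancel in $D_a(t):=\Yup_a(t)-\Ylw_a(t)\geq 0$, giving the absolutely continuous evolution
\begin{align*}
D_a(t)=\beta\int_0^t\bigl[\fS_a(\bfYup(s))-\fS_a(\bfYlw(s))\bigr]\,ds.
\end{align*}
Summing over $a\in(i_1,i_2)\cap\hZ$ and invoking the telescoping $\sum_a\fS_a=\phi_{i_2}-\phi_{i_1}$ implicit in \eqref{eq:phifS}, I arrive at
\begin{align*}
E_{(i_1,i_2)}(t)=\beta\int_0^t\bigl[F_{i_2}(s)-F_{i_1}(s)\bigr]\,ds,\quad F_i:=\phi_i(\bfX^{\text{up}})-\phi_i(\bfX^{\text{lw}}),
\end{align*}
where $F_i$ is well-defined by the shift-invariance of $\phi_i$ and hence depends only on the gap processes $\bfYup,\bfYlw$.

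Second, I would derive a pathwise bound on $F_i$. Expanding $\phi_i=\tfrac12\lim_{k}\sum_{j}\tfrac{1}{x_i-x_j}$ via $\tfrac{1}{a}-\tfrac{1}{b}=\tfrac{b-a}{ab}$ together with the telescoping $X_{i+m}-X_i=\sum_{b\in(i,i+m)}Y_b$ yields the explicit formula
\begin{align*}
F_i=\tfrac12\lim_{k\to\infty}\sum_{m=1}^k\biggl[\frac{E_{(i,i+m)}}{\Yup_{(i,i+m)}\Ylw_{(i,i+m)}}-\frac{E_{(i-m,i)}}{\Yup_{(i-m,i)}\Ylw_{(i-m,i)}}\biggr].
\end{align*}
By the $\YTsp(\alpha,\rho)$ density estimate, each denominator is of order $m^2$ uniformly in $i$ and $s\in[0,t]$, but a termwise absolute bound gives only a divergent $\sum_m m^{-1}$, so one must pair up the $m$-th terms via an Abel-type resummation to extract the cancellation between left and right contributions, using the $\RTsp(p)$ integrability ($p>1$) to control the remainders. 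The desired bound is of the form $|F_i(s)|\leq C\, E_{(-\infty,\infty)}(s)+\epsilon_{|i|}(s)$ with $\epsilon_{|i|}(s)\to 0$ as $|i|\to\infty$.

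Finally, plugging this into the drift identity and letting $|i_1|,|i_2|\to\infty$ (justified by monotone convergence on $D_a\geq 0$ and the $\RTsp(p)$ control ensuring $E_{(-\infty,\infty)}(s)<\infty$ a.s.), I obtain
\begin{align*}
\sup_{s\in[0,t]}E_{(-\infty,\infty)}(s)\leq C\int_0^t\sup_{u\in[0,s]}E_{(-\infty,\infty)}(u)\,ds.
\end{align*}
Gronwall together with $E_{(-\infty,\infty)}(0)=0$ then forces $\sup_{s\in[0,t]}E_{(-\infty,\infty)}(s)=0$ almost surely.

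\textbf{Main obstacle.} The decisive step is the boundary estimate on $F_i$: the defining series is only conditionally convergent (structurally a discrete Hilbert-type transform of $D_a$ weighted by particle distances), so the cancellation between the $j<i$ and $j>i$ contributions must be extracted via a careful resummation rather than a crude absolute-value bound. Both the $\YTsp(\alpha,\rho)$ polynomial density control (ensuring $Y_{(i-m,i)}\sim m\rho$) and the $\RTsp(p)$ integrability are indispensable; neither alone suffices. A subordinate concern is justifying $E_{(-\infty,\infty)}<\infty$ a.s.\ and interchanging limits with sup/integral, both handled using the uniform-$m$ gap-average bound in $\RTsp(p)$.
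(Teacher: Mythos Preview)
Your derivation of the telescoping identity
\[
E_{(i_1,i_2)}(t)=\beta\int_0^t\bigl[F_{i_2}(s)-F_{i_1}(s)\bigr]\,ds,\qquad F_i=L^+_i-L^-_i,
\]
matches the paper's \eqref{eq:uniEq:est}. The gap is in the next step: the bound $|F_i(s)|\le C\,E_{(-\infty,\infty)}(s)+\epsilon_{|i|}(s)$ with a usable (finite, $i$-independent) constant $C$ is \emph{not obtainable} from the hypotheses. Rewriting
\[
L^+_i(s)=\tfrac12\sum_{a>i}D_a(s)\sum_{j\ge a+\frac12}\frac{1}{\Yup_{(i,j)}(s)\,\Ylw_{(i,j)}(s)},
\]
the weight on $D_{i+1/2}$ dominates $\bigl(\Yup_{i+1/2}\Ylw_{i+1/2}\bigr)^{-1}$, and neither $\YTsp(\alpha,\rho)$ nor $\RTsp(p)$ gives any lower bound on individual gaps $\Ylw_a$ (only on large averages $\av_{(0,m)}$). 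Thus the implied Gronwall constant is essentially $\sup_a\bigl(\Ylw_a(s)\bigr)^{-2}$, which is not controlled: along $|a|\to\infty$ the gaps may become arbitrarily small while all the stated norms remain finite. For the same reason, your assertion that ``each denominator is of order $m^2$ uniformly in $i$'' fails for small $m$; that is precisely what the paper has to manufacture by hand. A secondary issue is that $E_{(-\infty,\infty)}(s)<\infty$ is not known a priori (the $\YTsp$ bound only gives $E_{(0,m)}\le N|m|^{1-\alpha}$, not summability), so the Gronwall argument would be applied to a potentially infinite function.

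You also misdiagnose the obstruction. No Abel resummation or left/right cancellation is needed: the paper bounds $L^+_i$ and $L^-_i$ \emph{separately} (both are nonnegative). For the long-range piece ($|j|>3m$ with $i\in[\pm m,\pm2m]$), the $\YTsp(\alpha,\rho)$ estimate $E_{(0,j)}\le N|j|^{1-\alpha}$ already makes the series absolutely convergent as $\sum|j|^{-1-\alpha}$. The genuine difficulty is the \emph{short-range} denominator control, which the paper solves not by Gronwall but by abandoning uniformity in $i$: it constructs, for each $m$, a nonempty ``good'' set $\fkG^\pm_{m,k}\subset[\pm m,\pm2m]$ on which $h_{(i,\pm3m)}(\bfundYlw)\ge\rho/3$ \emph{and} the ``bad'' indices (where $D_a$ is large) occur with low frequency, and then uses H\"older with the $\RTsp(p)$ bound to get $L^{\pm}_{i,m}\le c(1+P^{1/p})m^{-\alpha/(3p')}\log m$ at those $i$ only (Lemmas~\ref{lemma:Lest:L1}--\ref{lem:fkG:exists}). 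Since \eqref{eq:uniEq:est} only requires the boundary terms to vanish along \emph{some} sequence $i\to\pm\infty$, this suffices and closes the argument directly, without any Gronwall step.
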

\noindent
from which Proposition~\ref{prop:unique} follows immediately.

\subsection{Outline of the Proof of Proposition~\ref{prop:existg}--\ref{prop:unique2}}
The key step of proving Proposition~\ref{prop:existg} 
is to establish the monotonicity
\eqref{eq:Ydec} of $ \{\bfY^{(n)}\}_{n} $.
This, as well as many other monotonicity results 
(e.g.\ that $ \bfY^{(\infty)} $ as in Proposition~\ref{prop:existg} is the greatest solution),
are consequences of the following simple observation:
\begin{align}
	&
	\label{eq:psi:mono:y}
	\f_a(y,\bfz) \leq \f_a(y',\bfz), \text{ if } y \leq y',
\\
	&
	\label{eq:psi:mono:z}
	\f_a(y,\bfz) \geq \f_a(y,\bfz'), \text{ if } \bfz \leq \bfz',
\end{align}
which is clear from \eqref{eq:psi}.
A basic tool we use to leverage \eqref{eq:psi:mono:y}--\eqref{eq:psi:mono:z}
into the monotonicity of $ \{\bfY^{(n)}\}_{n} $
is the following comparison principle for \emph{deterministic}, one-dimensional integral equations.
Let 
%$ C_+([0,\infty))([t',t'']):= C([t',t''],(0,\infty)) $ and 
\begin{align*} %\label{eq:orderT}
	\bfy \leq_{[t',t'']} \bfy' \text{ if and only if } \bfy(t) \leq \bfy(t), \forall t\in[t',t'']
\end{align*}
denote the restriction of \eqref{eq:orderT} onto $ [t',t''] $.
\begin{lemma}\label{lem:comp}
Fixing $t' \leq t'' \in [0,\infty)$, 
we let $w\in C([t',t''])$,
and let $\fup,\flw\in C((0,\infty)\times[t',t''])$
be locally Lipschitz functions in the first variable.
That is, given any compact $\calK\subset(0,\infty)$,
there exists $c(\calK)>0$ such that
\begin{align*}
	\absBK{ \fup (y,t)-\fup (y',t) },
	\
	\absBK{ \flw (y,t)-\flw (y',t) } 
	\leq 
	c(\calK) \absBK{y-y'},
\end{align*}
for all $y,y'\in\calK$ and $t\in[t',t'']$.
If $\yup,\ylw\in C_+([t',t''])$ solve the follows integral equations
\begin{align}
	\label{eq:prop:comp:y}
	\yup(t) &= \yup(t') + (w(t)-w(t')) + \int_{t'}^t \fup(\yup(s),s) ds, \quad \forall \ t\in[t',t''],
\\
	\label{eq:prop:comp:z}
	\ylw(t) &= \ylw(t') + (w(t)-w(t')) + \int_{t'}^t \flw(\ylw(s),s) ds, \quad \forall \ t\in[t',t''],
\end{align}
and if $\flw(y,\Cdot) \leq_{[t',t'']} \fup(y,\Cdot)$, $\forall y\in(0,\infty)$,
and $\ylw(t') \leq \yup(t')$,
then 
\begin{align*}
	\ylw \leq_{[t',t'']} \yup.
\end{align*}
\end{lemma}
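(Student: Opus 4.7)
The plan is a classical Gronwall-type comparison argument. The key preliminary observation is that since $\yup,\ylw\in C_+([t',t''])$ and $[t',t'']$ is compact, both trajectories are contained in a common compact $\calK\subset(0,\infty)$, which is bounded away from $0$. The local Lipschitz hypothesis then supplies a single Lipschitz constant $c=c(\calK)$ for both $\fup$ and $\flw$ in the $y$-variable, \emph{uniformly} in $s\in[t',t'']$. This uniform Lipschitz constant is what makes Gronwall applicable directly.

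Next, I would subtract \eqref{eq:prop:comp:y} from \eqref{eq:prop:comp:z}. The common term $w(t)-w(t')$ cancels, and setting $D(t):=\ylw(t)-\yup(t)$ I split the resulting drift difference as
\begin{align*}
	\flw(\ylw(s),s) - \fup(\yup(s),s)
	= \bigl[\flw(\ylw(s),s)-\fup(\ylw(s),s)\bigr]
	+ \bigl[\fup(\ylw(s),s)-\fup(\yup(s),s)\bigr].
\end{align*}
The first bracket is non-positive by the hypothesis $\flw(y,\Cdot)\leq_{[t',t'']}\fup(y,\Cdot)$ evaluated at $y=\ylw(s)$, and by the uniform Lipschitz bound the second bracket is at most $c|D(s)|$ in absolute value. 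Combining with $D(t')\leq 0$, this gives the integral inequality $D(t)\leq c\int_{t'}^{t} |D(s)|\,ds$ on $[t',t'']$.

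To upgrade this to $D\leq 0$ on all of $[t',t'']$ despite the $|D|$ on the right, I would argue by contradiction: if $D(t_0)>0$ for some $t_0$, define $t_1:=\sup\{t\in[t',t_0]:D(t)\leq 0\}$, which by continuity satisfies $D(t_1)=0$, and $D>0$ on $(t_1,t_0]$ by construction. Restricting the integral identity to the subinterval $[t_1,t]$ with $t\leq t_0$, the initial term vanishes and $|D|=D$ throughout, yielding
\begin{align*}
	D(t) \leq c \int_{t_1}^{t} D(s)\,ds, \qquad t\in[t_1,t_0].
\end{align*}
Gronwall's inequality then forces $D\equiv 0$ on $[t_1,t_0]$, contradicting $D(t_0)>0$.

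I do not foresee a substantive obstacle: this is a deterministic, one-dimensional comparison statement, and the hypothesis that $\yup,\ylw$ are continuous and strictly positive on a compact interval is exactly what converts the local Lipschitz bound into a uniform one. The only point demanding care is the sign-tracking around a first crossing where $D$ would become positive, which the contradiction step resolves cleanly.
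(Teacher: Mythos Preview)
Your proof is correct and follows precisely the ``standard ODE arguments using Gronwall's inequality'' that the paper invokes; the paper in fact omits the proof entirely, so your write-up fills in exactly what the authors had in mind. The key points---extracting a uniform Lipschitz constant from $C_+([t',t''])$-compactness, splitting the drift difference, and handling the $|D|$ via a first-crossing contradiction---are all standard and correctly executed.
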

\noindent
With $ \fup(\Cdot,s) $ and $ \flw(\Cdot,s) $ being locally Lipschitz,
Lemma~\ref{lem:comp} is proven by standard ODE arguments
using Gronwall's inequality. We omit the proof.
Equipped with the monotonicity of $ \{\bfY^{(n)}\}_n $,
the next step is to take the limit $ n\to\infty $ in \eqref{eq:iter},
and show that the r.h.s.\ converges to the appropriate limit.
The major challenge here is to control $ \int_0^t \tfrac{\beta}{Y^{(\infty)}_a(s)} ds $,
which we achieve by showing
\begin{align}\label{eq:ftcolide}
	\inf_{s\in[0,t]} Y^{(\infty)}_a(s) >0,
	\quad
	\text{ almost surely, for all } t\geq 0.
\end{align}
\begin{remark}\label{rmk:singular}
For any $ \beta\geq 1 $,
the non-existence of finite time collisions, \eqref{eq:ftcolide},
cannot be achieved solely by the local Bessel-type repulsion $ (\beta/y_a) $. 
To see this, 
rewrite the interaction $ \beta \fS_a(\bfy) $ (as in \eqref{eq:phifS}) as
\begin{align*}
	\beta\fS_a(\bfy)
	=
	\beta\BK{ \tfrac{1}{y_a} - \tfrac{1}{2y_{a+1}} - \tfrac{1}{2y_{a-1}} + (\text{terms involving multiple gaps}) }.
\end{align*}
Estimating the strength of the first three terms
(which dominate when particles come close together)
by their coefficients,
we find that the term $ (\beta/y_a) $ 
comes just enough to balance $\beta 2^{-1}[(y_{a+1})^{-1}+(y_{a-1})^{-1}]$.
One may continue this estimation to higher orders.
By grouping terms according to the number of gaps involved,
one finds that the strength of positive and negative terms always balance.
This differs from the finite-dimensional case,
where a residual term contributes positively when summing over all gaps.
\end{remark}

The idea of proving \eqref{eq:ftcolide}
is to utilize the \emph{global} property of conservation of average spacing
$
	\kappa(\bfy(t)) := \lim_{(i_1,i_2)\to(-\infty,\infty)} \av_{(i_1,i_2)} (\bfy(t)),
$
assuming such a limit exists.
To see the intuition of such a quantity being conserved,
note that for a generic solution $ \bfX $ of \eqref{eq:DBM} we have
\begin{align}
	\notag
	\av_{(i_1,i_2)} (\bfY(s)) \Big|_{s=0}^{s=t}
	&= 
	\frac{ X_{i_2}(s)-X_{i_1}(s) }{ i_2-i_1 } \Big|^{s=t}_{s=0}
\\
	\label{eq:global}
	&=
	\frac{ B_{i_2}(t)-B_{i_1}(t) }{ i_2-i_1 }
	+
	\beta \int_0^t \BK{ \frac{\phi_{i_2}(\bfX(s))}{i_2-i_1} - \frac{\phi_{i_1}(\bfX(s))}{i_2-i_1} } ds,
\end{align}
where $ \bfY(t):= u(\bfX(t)) $.
Letting $ (i_1,i_2)\to(-\infty,\infty) $,
assuming $ |\phi_{i_1}(\bfX(s))|, |\phi_{i_2}(\bfX(s))| \ll |i_2-i_1| $,
we find that $ \kappa(\bfY(t))=\kappa(\bfY(0)) $, $ \forall t> 0 $, i.e.\ the average spacing is conserved.
With $ \bfY^{(n)} $ satisfying \eqref{eq:iter},
following the preceding type of argument, we show that 
$ \kappa(\bfY^{(\infty)}(t)) \geq \kappa(\bfY^{(\infty)}(0)) = \rho$, $ \forall t>0 $ (see Lemma~\ref{lem:YZlw}--\ref{lem:Yiter}),
which roughly speaking implies
$ \av_{(m,m')} (\bfY^{(\infty)}(t)) \geq |m-m'|/c $, $ c<\infty $,
outsides of large windows.
This then allows to control the strength of $ \f_a(\bfY^{(\infty)}(t)) $ outsides of a certain large window,
whereby reducing the problem to finite dimensions.

The main step of proving Proposition~\ref{prop:existGener} is to show $ \bfY\in\YTsp(\alpha,\rho) $.
To this end, in Section~\ref{sect:exist},
we partition $ \hZ $ into certain mesoscopic intervals $ \A_{b,k} $, $ b\in\hZ $, (see \eqref{eq:A})
and simultaneously estimate $ \av_{\A_{b,k}}(\bfYgn(s)) $, $ \forall n\in\bbZ_{>0}, b\in\hZ $.
This yields that the mesoscopic average of $ \bfY(s) $ over $ \A_{b,k} $ is at least $ \tfrac{\rho}{2} $ (see Proposition~\ref{prop:dens}).
Using this as a `seed',
we estimate the global density $ \av_{(0,m)}(\bfY(s)) $, $ |m|\gg 1 $, via \eqref{eq:global}
to obtain $ \bfY\in\YTsp(\alpha,\rho) $.

To prove Proposition~\ref{prop:unique2}, in Section~\ref{sect:unique},
we derive the following equation 
\begin{align}\label{eq:uniEq:est}
	E_{(i_1,i_2)}(t) 
	= 
	E_{(i_1,i_2)}(t') 
	+
	\beta \int_{t'}^{t} 
	\BK{ 
		 L^+_{i_2}(s) - L^-_{i_2}(s) 
		- L^+_{i_1}(s) + L^-_{i_1}(s)
		}ds,
	\
	\forall t\geq t',
\end{align}
that describes $ E_{(i_1,i_2)}(t) $ in terms of 
certain boundary interactions $ L^\pm_i(s) $, defined as
\begin{align}\label{eq:L}
	 L_i^\pm(s):=
	\frac12 \sum_{j\in(i,\pm\infty)} 
	\frac{ \Yup_{(i,j)}(s)-\Ylw_{(i,j)}(s) }{ \Yup_{(i,j)}(s) \Ylw_{(i,j)}(s)}.
\end{align}

With $ E_{(i_1,i_2)}(0)=0 $,
equipped with \eqref{eq:uniEq:est},
in Section~\ref{sect:unique},
we prove Proposition~\ref{prop:unique2} by showing $ \int_0^t L^\pm_{i_k}(s) ds \to 0 $,
along some suitable subsequence $ i_k \to \pm\infty $.
To see the intuition of this,
note that for each $j$, the denominator of the $j$-th term in \eqref{eq:L} 
is approximately $\rho |j -i|^2$.
As for the numerator, with $\bfYlw, \bfYup\in\Ysp(\alpha,\rho)$, we have
\begin{align*}
	\Big| \av_{(0,m)} \BK{ \bfYup(s) - \bfYlw(s) } \Big| |m|^{\alpha'} 
	\xrightarrow{|m|\to\infty} 0,
\end{align*}
for all $\alpha'<\alpha$,
suggesting that the numerator is at most $\sum_{a\in(i,j)} |a|^{-\alpha'}$.
Combining these bounds yields $L^\pm_{i}(s) \to 0$ as $|i|\to\pm\infty$.

\section{Comparison and Monotonicity}
\label{sect:comp}

We begin by establishing the monotonicity \eqref{eq:Ydec}.
Recall the definition of $ \Ysp(\gamma) $ and $ \YTsp(\gamma) $ from \eqref{eq:Yspl}--\eqref{eq:YTspl}.
\begin{proposition}\label{prop:iterDecr}
Fixing $ \bfy^\ic, \bfz^\ic \in \Ysp(\gamma) $, $ \gamma>0 $,
we let $ \{\bfY^{(i)}\}_{i=0}^{n} $ and $ \{\bfZ^{(i)}\}_{i=0}^{n} $ be $ \YTspl(\gamma) $-valued sequences
satisfying \eqref{eq:iter:},
with $ \bfY^{(i)}(0)=\bfy^\ic $ and $ \bfZ^{(i)}(0)=\bfz^\ic $, $ i=0,\ldots, n $.
\begin{enumerate}[label=(\alph*)]
	\item The sequence $ \{\bfY^{(i)}\}_{i=0}^{n} $ is decreasing, 
		$ \bfY^{(0)}(\Cdot) \geq \ldots \geq \bfY^{(n)}(\Cdot) $.
	\item If $ \bfy^\ic \geq \bfz^\ic $,
		we have $ \bfY^{(i)}(\Cdot) \geq \bfZ^{(i)}(\Cdot) $, for $ i=0,\ldots,n $.
\end{enumerate}
\end{proposition}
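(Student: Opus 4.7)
The plan is to prove both parts of Proposition~\ref{prop:iterDecr} by induction on $n$, applying the pathwise comparison principle (Lemma~\ref{lem:comp}) to each coordinate $a \in \hZ$ separately. The key observation is that once we fix a realization of $\bfW$ and treat $\bfY^{(n-1)}$ as a given $\YTspl(\gamma)$-valued input, each $Y^{(n)}_a$ satisfies the one-dimensional integral equation \eqref{eq:oneD} driven by the single Brownian motion $W_a$, with deterministic coefficient
\begin{align*}
	f^{(n)}_a(y, s) := \beta\bigl( y^{-1} - \f_a(y, \bfY^{(n-1)}(s)) \bigr).
\end{align*}
All monotonicity claims then reduce to comparisons of such coefficients across different choices of the input $\bfY^{(n-1)}$ or $\bfZ^{(n-1)}$.

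For part (a), I would induct on $n$. The base case $n=1$ vs.\ $n=0$ compares $f^{(0)}_a(y,s) = \beta y^{-1}$ with $f^{(1)}_a(y,s) = \beta(y^{-1} - \f_a(y, \bfY^{(0)}(s)))$; since $\f_a \geq 0$ by \eqref{eq:psi}, we have $f^{(1)}_a \leq f^{(0)}_a$ pointwise, so Lemma~\ref{lem:comp} yields $Y^{(1)}_a(\Cdot) \leq Y^{(0)}_a(\Cdot)$ for every $a$. For the inductive step, assuming $\bfY^{(n-1)}(\Cdot) \geq \bfY^{(n)}(\Cdot)$, the antimonotonicity \eqref{eq:psi:mono:z} of $\f_a$ in its vector argument gives $\f_a(y, \bfY^{(n-1)}(s)) \leq \f_a(y, \bfY^{(n)}(s))$, hence $f^{(n)}_a \geq f^{(n+1)}_a$ pointwise. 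With matching initial data $y^\ic_a$, Lemma~\ref{lem:comp} delivers $Y^{(n+1)}_a \leq Y^{(n)}_a$ for every $a$.

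Part (b) follows the same template. At $n=0$, both $Y^{(0)}_a$ and $Z^{(0)}_a$ solve the Bessel-type equation \eqref{eq:iter0} with the same coefficient $\beta/y$ and with $y^\ic_a \geq z^\ic_a$, so Lemma~\ref{lem:comp} gives $\bfY^{(0)}(\Cdot) \geq \bfZ^{(0)}(\Cdot)$. Inductively, if $\bfY^{(n-1)} \geq \bfZ^{(n-1)}$, then \eqref{eq:psi:mono:z} yields $\f_a(y, \bfY^{(n-1)}(s)) \leq \f_a(y, \bfZ^{(n-1)}(s))$, so the drift of $Y^{(n)}_a$ dominates that of $Z^{(n)}_a$ pointwise in $(y,s)$; together with $y^\ic_a \geq z^\ic_a$, Lemma~\ref{lem:comp} yields $\bfY^{(n)} \geq \bfZ^{(n)}$.

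The main technical point to verify is the hypothesis of Lemma~\ref{lem:comp} that $f^{(n)}_a(\Cdot, s)$ is locally Lipschitz in $y$ uniformly in $s \in [0,t]$ for each $t < \infty$. Since $\bfY^{(n-1)} \in \YTspl(\gamma)$, the definition \eqref{eq:YTspl} yields a uniform lower bound of the form $Y^{(n-1)}_{(a,i)}(s) \geq c|i-a|$ for $|i-a|$ large and $s \in [0,t]$; this makes both $\f_a(y, \bfY^{(n-1)}(s))$ and its $y$-derivative $\tfrac{1}{2}\sum_{i:|i-a|>1}(y + Y^{(n-1)}_{(a,i)}(s))^{-2}$ converge absolutely and uniformly in $s$ and in $y$ over any compact $\calK \subset (0,\infty)$. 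This yields the required Lipschitz estimate on $f^{(n)}_a$, after which the pathwise application of Lemma~\ref{lem:comp} closes both inductions.
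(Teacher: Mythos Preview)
Your proposal is correct and follows essentially the same approach as the paper's proof: coordinate-wise induction on $n$ via the deterministic comparison principle (Lemma~\ref{lem:comp}), using $\f_a \geq 0$ for the base case and the antimonotonicity \eqref{eq:psi:mono:z} for the inductive step, after verifying the uniform Lipschitz property of $y \mapsto \f_a(y,\bfZ(s))$ from the $\YTspl(\gamma)$ assumption. The only cosmetic difference is that you phrase the Lipschitz estimate via the explicit derivative $\tfrac12\sum_{i:|i-a|>1}(y+z_{(a,i)})^{-2}$, whereas the paper writes the finite difference \eqref{eq:psiLip:} directly; both amount to the same bound $\tfrac12\sum_{i:|i-a|>1}(z_{(a,i)}(s))^{-2}<\infty$.
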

\begin{proof}
We begin by showing that
\begin{align}\label{eq:psiLip}
	y \mapsto \f_a(y,\bfz(s)) \text{ is uniform Lipschitz over } [0,\infty)\times[0,t],
	\quad
	\forall t \geq 0,\
	\forall \bfz \in \YTsp. 
\end{align}
To this end, with $ \psi_a $ defined as in \eqref{eq:psi}, we estimate the expression
\begin{align}\label{eq:psiLip:}
	\f_a(y,\bfz(s)) - \f_a(y',\bfz(s))
	=
	\frac12 \sum_{i:|i-a|>1} 
	\frac{ y'-y }{ (y+z_{(a,i)}(s)) (y'+z_{(a,i)}(s)) }.
\end{align}
With $ y,y'\geq 0 $,
we bound the r.h.s.\ by $ 2^{-1}|y-y'|\sum_{i:|i-a|>1}  ( z_{(a,i)}(s) )^{-2} $,
which converges uniformly over $ [0,t] $, for any $ \bfz\in\YTspl(\gamma) $. 
Hence \eqref{eq:psiLip} follows.

We now prove $ \bfY^{(i-1)}(\Cdot) \geq \bfY^{(i)}(\Cdot) $ by induction on $ i $.
For $ i=1 $, by \eqref{eq:psiLip},
we have that $ -\psi_a(\Cdot,\bfY^{(0)}(t)) $ is uniformly Lipschitz.
With $ -\psi_a(y_a,\bfY^{(0)}(t) ) \leq 0 $,
and $ Y^{(0)}_a$ and $ Y^{(1)}_a $ solving the respective equations \eqref{eq:iter0} and \eqref{eq:iter},
applying Lemma~\ref{lem:comp} for $ \yup=Y^{(0)}_a $ and $ \ylw=Y^{(1)}_a $,
we conclude $ \bfY^{(0)}(\Cdot) \geq \bfY^{(1)}(\Cdot) $.
Assuming $ \bfY^{(i-1)}(\Cdot) \geq \bfY^{(i)}(\Cdot) $, $ i>1 $,
by \eqref{eq:psi:mono:z} we have
$ -\psi_a(y,\bfY^{(i-1)}(t)) \geq -\psi_a(y,\bfY^{(i)}(t)) $.
With $ Y^{(i)}_a $ and $ Y^{(i+1)}_a $ solving \eqref{eq:iter},
applying Lemma~\ref{lem:comp} for $ \yup=Y^{(i)}_a $ and $ \ylw=Y^{(i+1)}_a $
we conclude $ \bfY^{(i)}(\Cdot) \geq \bfY^{(i+1)}(\Cdot) $.
This completes the proof of (a).

As for (b), 
the case $ i=0 $ follows directly by applying Lemma~\ref{lem:comp}.
For $ i>0 $, by \eqref{eq:psi:mono:z},
we have that
\begin{align*}
	\bfZ^{(i)}(\Cdot) \leq \bfY^{(i)}(\Cdot) 
	\text{ implies }
	- \f_a(y,\bfZ^{(i)}(s)) \leq - \f_a(y,\bfY^{(i)}(s)),
\end{align*}
so, by induction, the case $ i>0 $ 
follows by the preceding comparison argument.
\end{proof}

Next, we establish a backward lower-semicontinuouity for
a generic process of the form \eqref{eq:BesComp:Y}.
To this end,
we consider $\bfQ^{t_1}:= (Q^{t_1}_a)_{a\in\hZ}\in (C([t_1,\infty))\cap C_+(t_1,\infty) )^\hZ $,
\begin{align}\label{eq:Q}
	Q^{t_1}_a(t) = W_a(t) - W_a(t_1) + \int_{t_1}^t \frac{\beta}{Q^{t_1}_a(s)} ds, \quad t\geq t_1,
\end{align}
the Bessel process starting from $0$ at $t_1$,
and let
$
	Q_a^{t_1,t_2} := \sup_{t\in[t_1,t_2]} Q^{t_1}_a(t).
$
Indeed, for
$ \hZ_1 := \tfrac12 + 2\hZ$ and $\hZ_2 := 1+\hZ_1 $,
\begin{align}\label{eq:Qiid}
	\{Q^{t_1}_a(\Cdot)\}_{a\in\hZ_i}, i=1,2,
	\text{ are i.i.d.\ collections of processes.}
\end{align}
Hence, by the Law of Large Numbers, we have
\begin{align}\label{eq:QLLN}
	\lim_{|m|\to\infty } \av^p_{(0,m)} \BK{ \bfQ^{t_1,t_2}  } 
	= 
	\bbE\big( (Q^{t_1,t_2}_{1/2})^p \big) := q(t_2-t_1,p) < \infty.
\end{align}
Hereafter, for generic processes $ Y(\Cdot) $ and $ \bfY(\Cdot) $,
we adopt the notations
\begin{align*}
	&
	\barY(t',t''):=\sup_{s\in[t',t'']} Y(s),&
	&
	\undY(t',t''):=\inf_{s\in[t',t'']} Y(s),
\end{align*}
$\bfbarY(t',t'') := (\barY_a(t',t''))_{a\in\hZ}$
and $\bfundY(t',t'') := (\undY_a(t',t''))_{a\in\hZ}$.

\begin{lemma}\label{lem:BesComp}
Let $a\in\hZ$, $Y^*\in C_+([0,\infty))$, $F \in L^1_{\text{loc}}([0,\infty))$,
$\{\scrG_t\}_{t\geq 0}$ be a filtration such that
$Y^*$, $F$ and $\bfW$ are $\scrG$-adapted and 
that $\bfW$ is a Brownian motion with respect to $\scrG$.
If $F \geq 0$ and if $Y^*$ solves the equation
\begin{align}\label{eq:BesComp:Y}
	Y^*(t) = Y^*(0) + W_a(t) + \beta \int_0^t \BK{ \frac{1}{Y(s)} - F(s) } ds,
	\quad a\in\hZ,
\end{align}
then, for all $t'\leq t''\in[0,\infty)$, we have
\begin{align}
	\label{eq:BesComp}
	&
	Y^*(t'') - \undY^*(t',t'') = \sup_{s\in[t',t'']} (Y^*(t'') - Y^*(s)) 
	\leq Q^{t'}_a(t''),
\\
	\label{eq:BesComp:sup}
	&
	\sup_{s<t\in[t',t'']} \BK{ Y^*(t) - Y^*(s) }
	\leq 	Q_a^{t',t''} := \sup_{t\in[t',t'']} Q^{t'}_a(t),
\end{align}
almost surely.
\end{lemma}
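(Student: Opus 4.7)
The plan is to reduce both inequalities to a classical comparison with a Bessel process, using the observation that shifting $Y^*$ down by its running minimum produces a nonnegative process whose drift is dominated pointwise by that of a Bessel process starting from $0$.

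For \eqref{eq:BesComp}, continuity of $Y^*$ and compactness of $[t',t'']$ let me pick $s^*\in[t',t'']$ achieving $\undY^*(t',t'')$. Define $\hat Y(t):=Y^*(t)-Y^*(s^*)$ on $[s^*,t'']$; then $\hat Y(s^*)=0$ and $\hat Y\geq 0$ by minimality. Subtracting $Y^*(s^*)$ from \eqref{eq:BesComp:Y} gives
\begin{align*}
\hat Y(t)=\BK{W_a(t)-W_a(s^*)}+\beta\int_{s^*}^t \BK{ \frac{1}{Y^*(s^*)+\hat Y(r)}-F(r) } dr.
\end{align*}
Because $F\geq 0$ and $Y^*(s^*)>0$, the drift of $\hat Y$ at value $y>0$ is bounded above by $\beta/y$, which matches the drift of the Bessel process $Q^{s^*}_a$ driven by $W_a$ and started from $0$ at $s^*$. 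The classical Bessel comparison then yields $\hat Y(t)\leq Q^{s^*}_a(t)$ for $t\in[s^*,t'']$. Next, since $Q^{t'}_a(s^*)\geq 0 = Q^{s^*}_a(s^*)$ and both $Q^{t'}_a$ and $Q^{s^*}_a$ solve the same Bessel SDE on $[s^*,\infty)$ with the same Brownian driver, monotonicity of Bessel in its initial condition gives $Q^{t'}_a(t)\geq Q^{s^*}_a(t)$ for $t\geq s^*$. Chaining the two inequalities at $t=t''$ proves \eqref{eq:BesComp}. For \eqref{eq:BesComp:sup}, I apply \eqref{eq:BesComp} with $t''$ replaced by each $t\in[t',t'']$ to obtain $Y^*(t)-\undY^*(t',t)\leq Q^{t'}_a(t)$, then take the supremum over $t$.

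The main obstacle is rigorously justifying $\hat Y\leq Q^{s^*}_a$, since both processes equal $0$ at $s^*$ where the drift $\beta/y$ is singular. I would argue by contradiction. Suppose $h:=\hat Y-Q^{s^*}_a$ exceeds $0$ at some $t_0\in(s^*,t'']$, and set $\tau:=\sup\{t\in[s^*,t_0]:h(t)\leq 0\}$, so by continuity $h(\tau)=0$ and $h>0$ on $(\tau,t_0]$. Since $\beta\geq 1$ puts the Bessel dimension at $\geq 3$, one has $Q^{s^*}_a(r)>0$ for all $r>s^*$, so on $(\tau,t_0]$ both $\hat Y(r)>Q^{s^*}_a(r)>0$. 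The Brownian parts of $\hat Y$ and $Q^{s^*}_a$ cancel in $h$, leaving $h$ absolutely continuous with
\begin{align*}
h'(r)=\beta\BK{ \frac{1}{Y^*(s^*)+\hat Y(r)}-\frac{1}{Q^{s^*}_a(r)} }-\beta F(r)<0,
\end{align*}
the strict negativity coming from $Y^*(s^*)+\hat Y(r)>\hat Y(r)>Q^{s^*}_a(r)$ together with $F\geq 0$. Integrating, $h(t_0)=h(\tau)+\int_\tau^{t_0} h'(r)\,dr<0$, contradicting $h(t_0)>0$; integrability near $\tau=s^*$ is ensured by the $(r-s^*)^{-1/2}$ behavior of $1/Q^{s^*}_a(r)$ near the start. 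Alternatively, one can approximate by shifting the initial condition of $Q^{s^*}_a$ up by $\delta>0$, apply Lemma~\ref{lem:comp} on time intervals where both processes are bounded below, and let $\delta\to 0$.
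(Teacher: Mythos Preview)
Your strategy—comparing the increment of $Y^*$ to a Bessel process started from zero, then using monotonicity in the starting time to replace $Q^{s}_a$ by $Q^{t'}_a$—is the same as the paper's. The paper routes the comparison through the intermediate Bessel process $Y^{s_1}$ starting from $Y^*(s_1)>0$ (applying Lemma~\ref{lem:comp} twice and then comparing the resulting drift integrals), whereas you compare the shifted process $\hat Y$ directly to $Q$ via a pathwise ODE argument; both routes reach the same intermediate bound $Y^*(t'')-Y^*(s_1)\leq Q^{s_1}_a(t'')$.

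The real gap is your choice of the random argmin $s^*$. This is not a stopping time—it depends on the entire trajectory on $[t',t'']$—so the object $Q^{s^*}_a$, a Bessel process started from zero at an anticipating random time and driven by the \emph{same} $W_a$, is not covered by the paper's setup and is delicate to construct. In particular, your contradiction step needs $Q^{s^*}_a(r)>0$ for all $r>s^*$ almost surely, and your monotonicity step needs $Q^{s^*}_a\leq Q^{t'}_a$ on $[s^*,t'']$; both facts are established only for each \emph{fixed} starting time, and passing to a random non-stopping $s^*$ would require building a simultaneous Bessel flow $(s,t)\mapsto Q^s_a(t)$ with the right joint regularity, which you have not done. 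The paper sidesteps this completely: it fixes a \emph{deterministic} $s_1\in(t',t'')$, proves $Y^*(t'')-Y^*(s_1)\leq Q^{t'}_a(t'')$ almost surely for each such $s_1$, and only then takes the supremum over $s_1\in(t',t'')\cap\bbQ$ using continuity of $Y^*$. Your pathwise contradiction argument in fact goes through verbatim for deterministic $s_1$ (note it never actually uses $\hat Y\geq 0$ globally, only on the set where $h>0$), so this single modification closes the gap. Your derivation of \eqref{eq:BesComp:sup} from \eqref{eq:BesComp} matches the paper.
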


\begin{proof}
To the end of showing \eqref{eq:BesComp}, 
fixing $s_1\in (t',t'') $,
we consider the process $Y^{s_1} \in C_+([s_1,\infty)) $ defined as
\begin{align} \label{eq:BesComp:Ys}
	Y^{s_1}(t) = Y^*(s_1) + W_a(t) - W_a(s_1) 
	+ \beta \int_{s_1}^t \frac{1}{Y^{s_1}(s)} ds, \quad t\geq s_1,
\end{align}
which is a Bessel process starting from $Y^*(s_1) $ at time $ s_1 $. 
With $ Y^* $ and $ Y^{s_1} $ satisfying \eqref{eq:BesComp:Y} and \eqref{eq:BesComp:Ys},
applying Lemma~\ref{lem:comp}
(for $ [t',t'']=[s_1,t''] $, $ \yup = Y^{s_1} $, $ \ylw=Y^* $,
$ \fup(y,s)=\beta/y $ and $ \flw(y,s) = \beta(1/y-F(s)) $),
we obtain $ Y^*(\Cdot) \leq_{[s_1,t'']} Y^{s_1}(\Cdot) $,
and therefore, with $ Y^*(s_1)=Y^{s_1}(s_1) $,
\begin{align}\label{eq:BesComp:1}
	Y^*(t'') - Y^*(s_1) \leq Y^{s_1}(t'') - Y^{s_1}(s_1).
\end{align}
We next compare $ Y^{s_1} $ and $ Q^{s_1}_a $.
They solve the same equation, \eqref{eq:Q} and \eqref{eq:BesComp:Ys},
with different initial conditions $ Y^{s_1}(s_1) > 0 = Q^{s_1}_a(s_1) $.
Hence, applying Lemma~\ref{lem:comp}
for $ (t',t'')=(s_1+\varepsilon,t'') $, $ \varepsilon>0 $ 
(so that $ Y^{s_1}, Q^{s_1}_a \in C_+([s_1+\varepsilon,t'']) $),
conditioned on  $ \{Y^{s_1}(s_1+\varepsilon) \geq Q^{s_1}_a(s_1+\varepsilon)\} $,
and then sending $ \varepsilon\to 0 $,
we obtain
$
	Q^{s_1}_a \leq_{[s_1,t'']} Y^{s_1}
$
almost surely,
thereby $ \int_{s_1}^{t''} \tfrac{\beta}{Y^{s_1}(s)} ds \leq \int_{s_1}^{t''} \tfrac{\beta}{Q^{s_1}_a(s)} ds $.
Plugging this in \eqref{eq:Q} and \eqref{eq:BesComp:Ys},
we obtain
\begin{align}\label{eq:BesComp:2}
	Y^{s_1}(t'') - Y^{s_1}(s_1) \leq Q^{s_1}_a(t'') - Q^{s_1}_a(s_1) = Q^{s_1}_a(t'').
\end{align}
Next, as $ Q^{s_1}_a $ and $ Q^{t'}_a $ solve the same equation on $ [s_1,t''] $
with the initial conditions $ Q^{s_1}_a(s_1)=0 < Q^{t'}_a(s_1) $,
by the preceding comparison argument we obtain $ Q^{s_1}_a(t'') \leq Q^{t'}_a(t'') $.
Combining this with \eqref{eq:BesComp:1}--\eqref{eq:BesComp:2},
we arrive at
$
	Y^*(t'') - Y^*(s_1) \leq Q^{t'}_a(t'').
$
As this holds almost surely for each $ s_1\in (t',t'') $,
taking the infimum over $ s_1\in (t',t'') \cap\bbQ $,
using the continuity of $ Y^*(\Cdot) $, we conclude \eqref{eq:BesComp}.

As for \eqref{eq:BesComp:sup}, 
taking the supremum over  $ t'' \in[t',\widetilde{t}'']\cap\bbQ $ in \eqref{eq:BesComp},
using the continuity of $Y^*(\Cdot)$,
we obtain
\begin{align*}
	\sup_{t''\in[t',\widetilde{t}'']} \BK{ Y^*(t'') - \inf_{s\in[t',\widetilde{t}'']} Y^*(s) }
	=
	\sup_{s<t\in[t',\widetilde{t}'']} \BK{ Y^*(t) - Y^*(s) }
	\leq \sup_{t''\in[t',\widetilde{t}'']} Q^{t'}_a(t'') = Q^{t',\widetilde{t}''}_a.
\end{align*}
\end{proof}

For the rest of this section,
we establish the the well-posedness of certain finite-dimensional \ac{SDE}.
We begin with the one-dimensional equation \eqref{eq:oneD'} in the following,
which is a generalization of \eqref{eq:oneD}.
\begin{lemma}\label{lem:oneD}
Let $t'\geq 0$
and $F:[0,\infty)\times[t',\infty)\to\bbR$ be random, such that 
\begin{enumerate}
\item[]		$s\mapsto F(y,s)$ is $C([t',\infty),\bbR)$-valued and $\Wfil$-adapted for all $y\in[0,\infty)$,	
\item[]		$y\mapsto F(y,s)$ is Lipschitz, uniformly over $(y,s)\in[0,\infty)\times[t',t]$, for all $t\geq t'$,	
\item[]		$F(0,t)= 0$, for all $ t\geq t' $.
\end{enumerate}
Given any $(0,\infty)$-valued, $\Wfil_{t'}$-measurable $Y^\ic$, 
the equation 
\begin{align}\label{eq:oneD'}
	Y(t) = Y^\ic + \BK{W_a(t) - W_a(t')} + \int_{t'}^t \BK{ \frac{\beta}{Y(s)} + F(Y(s),s) } ds
\end{align}
has a $ C_+([t',\infty)) $-valued, 
$\Wfil$-adapted solution starting from $ Y^\ic $ at $ t' $, 
which is the unique $ C_+([t',\infty)) $-valued solution in the pathwise sense.
\end{lemma}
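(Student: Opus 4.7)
The plan is to combine a standard truncation of the $\beta/y$ singularity with a Bessel-type It\^o estimate that crucially uses $\beta\geq 1$. I would first build a strong solution up to the first time $Y$ is about to touch $0$, and then rule out that event.

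\emph{Step 1 (local existence and pathwise uniqueness up to hitting $0$).} For each $\gamma\in(0,Y^\ic)$, I would truncate the drift as $b^\gamma(y,s):=\beta/(y\vee\gamma)+F(y\vee 0,s)$, which is $\Wfil$-adapted, continuous in $(y,s)$, and globally Lipschitz in $y$ uniformly on any compact $s$-interval. Classical Lipschitz SDE theory (\cite[Theorem~5.2.9]{karatzas91}) yields a unique $\Wfil$-adapted strong solution $Y^\gamma\in C([t',\infty))$ starting from $Y^\ic$ at $t'$. Setting $\tau_\gamma:=\inf\{t\geq t':Y^\gamma(t)\leq\gamma\}$, the truncated drift agrees with $\beta/y+F(y,s)$ on $\{y\geq\gamma\}$, so $Y^\gamma$ solves \eqref{eq:oneD'} on $[t',\tau_\gamma]$. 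For $\gamma_1<\gamma_2$, on $[t',\tau_{\gamma_2}]$ the process $Y^{\gamma_1}$ also solves the $\gamma_2$-truncated equation (both truncations coincide with $\beta/y+F$ since $Y^{\gamma_1}\geq\gamma_2$ there), so pathwise uniqueness of the Lipschitz equation yields $Y^{\gamma_1}\equiv Y^{\gamma_2}$ on $[t',\tau_{\gamma_2}]$. Setting $\tau:=\sup_{\gamma>0}\tau_\gamma$ and $Y(t):=Y^\gamma(t)$ on $[t',\tau_\gamma]$ produces a consistent, $\Wfil$-adapted, $C_+([t',\tau))$-valued solution of \eqref{eq:oneD'}. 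Pathwise uniqueness on $[t',\tau)$ among $C_+$-valued solutions follows from Lemma~\ref{lem:comp} applied with $\fup=\flw$ equal to the drift of \eqref{eq:oneD'}.

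\emph{Step 2 (non-attainment of $0$).} The main obstacle is to show $\tau=\infty$ almost surely, and this is where $\beta\geq 1$ enters. Fix $T>t'$. Since $F(0,\cdot)=0$ and $F(\cdot,s)$ is locally Lipschitz, there exists $L_T<\infty$ with $|F(y,s)|\leq L_T\,y$ on $[0,\infty)\times[t',T]$. Applying It\^o's formula to $\log Y$ on $[t',\tau\wedge T)$ yields
\begin{align*}
\log Y(t)-\log Y^\ic
=\int_{t'}^t \frac{dW_a(s)}{Y(s)}
+\BK{\beta-\tfrac12}\int_{t'}^t \frac{ds}{Y(s)^2}
+\int_{t'}^t \frac{F(Y(s),s)}{Y(s)}\,ds.
\end{align*}
By Dambis--Dubins--Schwartz the martingale term equals $B(T_t)$ for a Brownian motion $B$, where $T_t:=\int_{t'}^t Y(s)^{-2}\,ds$. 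Using $\beta-\tfrac12\geq\tfrac12$ and $|F/Y|\leq L_T$, I obtain
\begin{align*}
\log Y(t)\geq \log Y^\ic + B(T_t)+\tfrac12\, T_t - L_T(T-t').
\end{align*}
On $\{\tau<T\}$ the left side diverges to $-\infty$ as $t\uparrow\tau$. If $T_t$ stays bounded then $B(T_t)$ is a.s.\ bounded and the right side is bounded below, a contradiction; hence $T_t\to\infty$. But then $B(T_t)+\tfrac12 T_t\to +\infty$ a.s.\ by the strong law for Brownian motion ($B(s)/s\to 0$), again a contradiction. Thus $\bbP(\tau<T)=0$ for every $T$, so $\tau=\infty$ a.s., and $Y\in C_+([t',\infty))$ is the desired global solution. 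Pathwise uniqueness on $[t',\infty)$ among $C_+$-valued solutions follows from the same application of Lemma~\ref{lem:comp} on each finite $[t',T]$.
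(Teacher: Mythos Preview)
Your overall strategy is sound and close in spirit to the paper's, but there is a computational error that breaks Step~2 at the critical value $\beta=1$. Recall that $W_a(t)=B_{a+1/2}(t)-B_{a-1/2}(t)$ is a difference of two independent standard Brownian motions, so $\langle W_a\rangle_t=2t$, not $t$. Applying It\^o's formula to $\log Y$ therefore gives
\[
\log Y(t)-\log Y^\ic
=\int_{t'}^t \frac{dW_a(s)}{Y(s)}
+(\beta-1)\int_{t'}^t \frac{ds}{Y(s)^2}
+\int_{t'}^t \frac{F(Y(s),s)}{Y(s)}\,ds,
\]
with coefficient $(\beta-1)$ rather than $(\beta-\tfrac12)$; this is exactly the computation in the paper's proof and in Remark~\ref{rmk:singular}. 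For $\beta>1$ your strong-law argument goes through verbatim with $\tfrac12$ replaced by $\beta-1>0$. For $\beta=1$, however, the drift term vanishes and the conclusion ``$B(T_t)+\tfrac12 T_t\to+\infty$'' no longer follows; you only have $\log Y(t)\geq \log Y^\ic+M_t-L_T(T-t')$. The fix is short: on $\{\tau<T\}$ you need $M_t\to-\infty$, but if $\langle M\rangle_{\tau-}<\infty$ then $M$ converges by martingale convergence, while if $\langle M\rangle_{\tau-}=\infty$ then by DDS $M_t=B(\langle M\rangle_t)$ with $\langle M\rangle_t\to\infty$, and $\limsup_{s\to\infty}B(s)=+\infty$ prevents $M_t\to-\infty$. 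Either way you reach a contradiction.

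With this correction your argument is a valid alternative to the paper's. Both proofs use $\log Y$ as a Lyapunov function and the key bound $|F(y,s)/y|\leq L_T$ coming from $F(0,\cdot)=0$ plus the uniform Lipschitz assumption (the paper's \eqref{eq:1D:Lips}). The paper proceeds by localizing at levels $r$ from above and $\delta$ from below, taking expectations, and separately ruling out explosion to $+\infty$ via comparison with the auxiliary process driven by $F_{\pmb{+}}$. Your DDS route is more direct and pathwise: non-explosion is handled implicitly by the global Lipschitz bound on the truncated drifts $b^\gamma$, and the case split on $T_t$ replaces the double limit $\delta\to0$, $r\to\infty$. Each approach has its merits; yours is shorter once the quadratic variation is corrected, while the paper's expectation argument generalizes cleanly to the multidimensional Lemma~\ref{lem:DBMfc}.
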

\begin{remark}
Equation \eqref{eq:oneD'} for $F=0$ describes the Bessel process of dimension $(\beta+1)$.
At the critical dimension $\beta+1=2$,
it seems that any $F<0$, even if uniformly bounded,
may be strong enough to drive the solution $ Y $ to $0$ within a finite time.
However, for the type of $ F $ we consider here,
with $ F(0,s)=0 $ and $ F(\Cdot,s) $ being uniformly Lipschitz, 
we have
\begin{align}\label{eq:1D:Lips}
	\sup_{s\in[t',t]} \sup_{y\geq 0} |y^{-1} F(y,s)| 
	= 
	\sup_{s\in[t',t]} \sup_{y\geq 0} \absBK{ \frac{F(y,s)-F(0,s)}{y-0} } < \infty,
	\
	\forall t>0.
\end{align}
This in particular implies that $ |F(y,s)| \to 0 $ linearly (in $ y $) as $ y\to 0 $,
which suffices for $ Y \in C_+([t',\infty)) $.
The same applies for \eqref{eq:DBMfc} in the following.
\end{remark}
\begin{proof}
To show the uniqueness,
with $ F(\Cdot,s) $ being uniformly Lipschitz, 
the only problem is $ \beta/y $ not being Lipschitz at $ y=0 $.
This problem is solved by the standard localization argument: 
by first considering the localized process $ Y(t\wedge T^\delta) $, $ T^\delta:= \inf\{t:Y(t)>\delta\} $,
proving the uniqueness for $ t\in[0,T^\delta] $
(by Gronwall's inequality),
and letting $ \delta\to 0 $, (whence $ T^\delta\to\infty $ by the assumption $ Y\in C_+([t',\infty)) $).

As for existence, 
following the standard argument (c.f.\ \cite[Lemma 4.3.3]{anderson10}),
we construct a solution $ Y^\delta $ up to the first hitting time
$ S^\delta $ of any given level $ \delta>0 $.
With pathwise uniqueness, $ Y^\delta $, $ \delta>0 $, are consistent for different values of $ \delta $,
so it suffices to show $ S^\delta \to \infty $ as $ \delta\to 0 $.
If $ F \geq 0 $, this is easily achieved by comparing 
$ Y^\delta $ and $ Q^{t'} $ on $ [t',S^{\delta}] $.
Indeed, if $ F\geq 0 $, 
by Lemma~\ref{lem:comp} we have $ Q^{t'} \leq_{[t',S^\delta]} Y^\delta $.
With $ Q^{t'} \in C_+((t',\infty)) $, this implies $ S^\delta\to\infty $.

For the general case, $ F \not\geq 0 $,
we show $ S^\delta\to\infty $
by the method of Lyapunov function (c.f.\ \cite[Lemma 4.3.3]{anderson10}).
Applying Ito's formula to the semimartingale $ \log (Y^\delta(\,\Cdot\wedge S^{\delta})) $,
we obtain
\begin{align}
	\label{eq:LyapBes}
	\log \BK{ Y^\delta (t\wedge S^\delta) }
	&= 
	\log y^\ic + \text{MG}
	+ \int_0^{t\wedge S^{\delta}} 
	\BK{
		\frac{\beta-1}{\BK{Y^\delta(s)}^2} 
		+ (Y^\delta(s))^{-1} F(Y^\delta(s),s)
		}ds,
\end{align}
where MG is a martingale with zero mean.
Further localizing \eqref{eq:LyapBes} w.r.t.\ $ T^r_1\wedge T^r_2 $,
where
$
	T_1^r := \inf \{ t\geq 0: Y^\delta(t) > r \}
$
and
$
	T_2^r := \inf \{ t\geq 0: \sup_{y\geq 0} \{|y^{-1}F(y,t)|\} > r \},
$
and taking expectation of both sides,
with $ \beta\geq 1 $, we arrive at
\begin{align*}
	\log r + (\log\delta) \bbP(S^\delta< t\wedge T^r_1\wedge T^r_2)
	\geq
	\log y^\ic - tr.
\end{align*}
From this, $ \bbP(\lim_{\delta\to 0} S^\delta=\infty) =1 $
follows by letting $ \delta\to 0 $, $ t\to\infty $ and $ r\to\infty $ in order,
\emph{provided}
$
	\bbP\BK{ \lim_{r\to\infty} T^r_1=\infty } = 1
$
and	
$	
	\bbP\BK{ \lim_{r\to\infty} T^r_2=\infty } = 1.
$
The latter follows immediately from \eqref{eq:1D:Lips}.
To show the former,
we apply the preceding construction for the case $ F\geq 0 $
to obtain the $ C_+([t',\infty)) $-valued process $ Y' $ such that
\begin{align*}
	Y'(t) = Y^\ic + W_a(t)-W_a(t') + \int_{t'}^t \BK{ \frac{\beta}{Y'(s)} + F(Y'(s),s)_{\pmb{+}} } ds,
\end{align*}
where $ F(y,s)_{\pmb{+}} $ denote the positive part of $ F(y,s) $.
Note that $ F(y,s)_{\pmb{+}} $ indeed meets the prescribed conditions of this Lemma,
and is in particular uniformly Lipschitz because
\begin{align*}
	\sup_{y\neq y'\geq 0} \frac{|F(y,s)_{\pmb{+}}-F(y',s)_{\pmb{+}}|}{|y-y'|} 
	\leq 
	\sup_{y\neq y'\geq 0} \frac{|F(y,s)-F(y',s)|}{|y-y'|}.
\end{align*}
With $ F(y,s) \leq F(y,s)_{\pmb{+}} $,
by Lemma~\ref{lem:comp} we have $ Y^\delta(\Cdot) \leq_{[t',S^\delta]} Y'(\Cdot) $,
thereby concluding $ \bbP(\lim_{r\to\infty} T^r_1=\infty) = 1 $.
\end{proof}

We next consider the equation \eqref{eq:DBMfc} as follows,
which is a finite-dimensional version of \eqref{eq:DBMg} with external forces.
For $\calA\subset\bbR$, we let
$\f^{\calA}_a(y,\bfz)$ and $\fS^\calA_a(\bfy)$ denote the restriction of 
$\f_a(y,\bfz)$ and $\fS_a(\bfy)$ onto $[0,\infty)\times(0,\infty)^{\calA\cap\hZ}$,
\begin{align}
	&
	\label{eq:psiA}
	\f^{\calA}_a(y,\bfz)
	:=
	\frac12 \sum_{i\in\calA, |i-a|>1}
	\frac{ y }{ z_{(a,i)} (y+z_{(a,i)}) },
\\
	&
	\label{eq:fSA}
	\eta^\calA_a(\bfy):=\tfrac{1}{y_a} -\f_a^\calA(y_a,\bfy),
\end{align}
which indeed satisfy the following analog of \eqref{eq:psi:mono:y}--\eqref{eq:psi:mono:z},
\begin{align}
	&
	\label{eq:psiA:mono:y}
	\f^{\calA}_a(y,\bfz) \leq \f^{\calA}_a(y',\bfz), \text{ for } y \leq y',
\\
	&
	\label{eq:psiA:mono:z}
	\f^{\calA}_a(y,\bfz) \geq \f^{\calA}_a(y,\bfz'), \text{ for } \bfz \leq \bfz'.
\end{align}
By abuse of notation,
we let $u$ and $\tilu$, defined as in \eqref{eq:u}--\eqref{eq:tilu},
act on the space $\Weyl^{[i_1,i_2]}$,
whereby $ \tilu: \Weyl^{[i_1,i_2]} \to (0,\infty)\times (0,\infty)^{(i_1,i_2)\cap\hZ} $
is also a bijection.

\begin{lemma}\label{lem:DBMfc}
Let $i_1\leq i_2\in\bbZ$, $\calI:=(i_1,i_2)\cap\hZ$, $t'\geq 0$,
$\bfZ^*\in C_+([t',\infty))^\calI$ be $ \Wfil $-adapted.
For any $\Wfil_{t'}$-measurable $\bfY^\ic\in(0,\infty)^\calI$,
the equation
\begin{align}\label{eq:DBMfc}
\begin{split}
	&
	Y_a(t) = Y^\ic_a(t') + (W_a(t)-W_a(t'))
\\
	&
	\quad
	+ 
	\beta \int_{t'}^t 
	\BK{ \fS^{\calI}_a(\bfY(s)) + Y_a(s) Z^*_a(s) } ds,
	\quad
	t\geq t', \
	a \in \calI
\end{split}
\end{align}
has a $ C_+([t',\infty))^\calI$-valued, 
$ \Wfil $-adapted solution starting from $\bfY^\ic$,
which is the unique $ C_+([t',\infty))^\calI$-valued solution in the pathwise sense.
\end{lemma}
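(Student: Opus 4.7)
My plan is to closely follow the standard finite-dimensional \ac{DBM} analysis in \cite[Lemma 4.3.3]{anderson10}; the extra multiplicative term $\beta Y_a Z^*_a$ enters only as a bounded remainder, since $\bfZ^*\in C_+([t',\infty))^\calI$ is continuous and hence bounded on every compact time interval.

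Pathwise uniqueness will follow by a standard localization. The drift $\bfy\mapsto\beta(\fS^\calI_a(\bfy)+y_a Z^*_a(s))$ is locally Lipschitz on the open set $(0,\infty)^\calI$, uniformly on each compact time interval. Given two $C_+([t',\infty))^\calI$-valued solutions $\bfY,\bfY'$ with common initial data $\bfY^\ic$, I would stop at $\tau_\delta:=\inf\{s\geq t':\min_{a\in\calI}(Y_a(s)\wedge Y_a'(s))\leq\delta\}$ and apply Gronwall's inequality to $\max_{a\in\calI}|Y_a(s)-Y_a'(s)|$ on $[t',\tau_\delta\wedge t]$, deducing coincidence there; letting $\delta\to 0$ uses the $C_+$ hypothesis.

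For existence, classical \ac{SDE} theory for locally Lipschitz coefficients produces, for each $\delta>0$, a unique $\Wfil$-adapted strong solution of \eqref{eq:DBMfc} on the stopped interval $[t',T^\delta]$ with $T^\delta:=\inf\{s\geq t':\min_a Y_a(s)\leq\delta\}$. By the uniqueness just shown these are consistent in $\delta$ and glue to a $C_+$-valued solution on $[t',T^{0+})$, where $T^{0+}:=\lim_{\delta\to 0}T^\delta$.

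The main obstacle will be showing $T^{0+}=\infty$ a.s. I would argue via a Lyapunov function in the particle picture. Working with the partial differences $X_j-X_i:=\sum_{a\in(i,j)\cap\hZ}Y_a$, whose evolution is entirely determined by \eqref{eq:DBMfc} without any anchor, Itô applied to $\Phi(\bfx):=-\sum_{i<j\in[i_1,i_2]\cap\bbZ}\log(x_j-x_i)$, together with the classical cancellation identity $\sum_{i<j\in[i_1,i_2]\cap\bbZ,\,k\neq i,j}(x_j-x_k)^{-1}(x_i-x_k)^{-1}=0$ (verified by direct triplewise summation), yields
\begin{align*}
d\Phi(\bfX)=\mathrm{MG}+(1-\beta)\sum_{i<j}\frac{ds}{(X_j-X_i)^2}+R(s)\,ds,
\end{align*}
where $R$ collects the contribution of the $\beta Y_a Z^*_a$ terms. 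Using the telescoping identity $\sum_{a\in(i,j)\cap\hZ}Y_a=X_j-X_i$ one has $|\sum_{a}Y_a Z^*_a|/(X_j-X_i)\leq\max_{a\in\calI}|Z^*_a(s)|$, so $|R(s)|$ is bounded on $[t',t]$ uniformly in $\bfY$. For $\beta\geq 1$ the quadratic-variation term is non-positive, so taking expectations at $T^\delta\wedge t$ gives a uniform-in-$\delta$ $L^1$ bound on $\Phi(\bfX(T^\delta\wedge t))$. Since $\Phi(\bfX)\to+\infty$ whenever any gap closes, Markov's inequality forces $\bbP(T^\delta<t)\to 0$ as $\delta\to 0$, whence $T^{0+}=\infty$ a.s., completing the proof.
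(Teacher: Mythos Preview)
Your overall strategy matches the paper's: the same localization-plus-Gronwall for uniqueness, the same Lyapunov function (the paper writes $\xi=-\Phi$), and essentially the same bound on the $Z^*$-contribution via $\sum_{a\in(i,j)}Y_aZ^*_a/(X_j-X_i)\leq\max_a|Z^*_a(s)|$. There is, however, a genuine gap in the final step.

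The function $\Phi=-\sum_{i<j}\log(X_j-X_i)$ is \emph{not bounded below}: it tends to $-\infty$ whenever any gap grows without bound. Your It\^o computation therefore yields only the one-sided estimate $\bbE[\Phi(\bfX(T^\delta\wedge t))]\leq C$ (and even this requires justifying that the local martingale is a true martingale, which itself needs localization), and a one-sided bound on the expectation of a sign-indefinite random variable is not an $L^1$ bound and does not feed into Markov's inequality. Concretely, on $\{T^\delta<t\}$ one gap equals $\delta$ and contributes $-\log\delta$, but the remaining terms $-\log(X_j-X_i)$ may be arbitrarily negative, so you cannot assert $\Phi(\bfX(T^\delta))\geq c(\delta)\to+\infty$ without first controlling the gaps from above; the same issue prevents any lower bound on $\Phi$ over $\{T^\delta\geq t\}$.

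The paper closes this gap by a further localization at $T^r_1:=\inf\{s:\max_aY_a(s)>r\}$ and $T^r_2:=\inf\{s:|f_2(\bfY(s),\bfZ^*(s))|>r\}$; with all gaps confined to $[\delta,|\calI|r]$ the Lyapunov inequality goes through for each fixed $r$. To then show $T^r_1\to\infty$, the paper compares $\bfY$ coordinatewise with the \emph{decoupled} upper barrier $\bfY'$ solving $dY'_a=dW_a+\beta\big((Y'_a)^{-1}+Y'_aZ^*_a\big)\,ds$, whose global $C_+$-valued existence is precisely the content of Lemma~\ref{lem:oneD}. This reduction to a one-dimensional comparison is the missing ingredient in your sketch.
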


\begin{proof}
The uniqueness follows by the same argument as in the proof of Lemma~\ref{lem:oneD}.
As for the existence,
following the proof of Lemma~\ref{lem:oneD},
we construct the solution $ \bfY^\delta $ up to the first hitting 
time $ S^\delta:= \inf\{t\geq 0: Y^\delta_a(t)<\delta,$ for some $ a\in\calI \} $,
and then using the method of Lyapunov function to show $ S^\delta\to \infty $.
Recall that $ y_{(i,j)}:=\sum_{a\in(i,j)} y_a $.
With $ \xi(\bfy) := \sum_{(i,j)\subset(i_1,i_2)} \log y_{(i,j)} $ being the Lyapunov function,
applying Ito's formula to the semimartingale $ \xi(\bfY^\delta(\,\Cdot\wedge S^{\delta})) $,
we obtain
\begin{align*}
	\xi(\bfY^\delta(t\wedge S^{\delta}))
	=
	\xi(\bfy^\ic) + \text{MG}
	+ 
	\int_0^{t\wedge S^{\delta}}
	\BK{ f_1(\bfY^\delta(s)) + f_2(\bfY^\delta(s),\bfZ^*(s)) } ds,
\end{align*}
where MG is a martingale with zero mean, and
\begin{align*}
	&
	f_1(\bfy) 
	:= 
	\sum_{(i,j)\subset(i_1,i_2)}
	\Big( 
		\frac{1}{y_{(i,j)}}
		\sum_{a\in(i,j)} 
		\beta\fS^\calI_a(\bfy)
		- \frac{1}{(y_{(i,j)})^{2}} 
	\Big),
\\
	&
	f_2(\bfy,\bfz) :=
	\sum_{(i,j)\subset(i_1,i_2)} \frac{\beta}{y_{(i,j)}} \sum_{a\in(i,j)} y_a z_a.
\end{align*}
Following \cite[p 252]{anderson10},
one obtains $f_1(\bfy) = (\beta-1)2^{-1} \sum_{i,j: i\neq j }(x_j-x_i)^{-2} >0$,
where $ \bfx := \tilu^{-1} (0,\bfy) $.
Let
$ T^r_1 := \inf \{ t\geq 0 : Y_a^\delta(t) > r, $ for some $ a\in\calI \} $,
and
$
	T^r_2 := \inf \{ t\geq 0 : f_2(\bfY^\delta(t),\bfZ^*(t)) > r \}.
$
With $ f_1(\bfy) > 0 $,
similar to the proof of Lemma~\ref{lem:oneD},
it now suffices to show 
$ \bbP( \lim_{r\to\infty} T^r_1=\infty)=1 $ 
and 
$ \bbP( \lim_{r\to\infty} T^r_2=\infty)=1 $. 
The former, similar to the proof of Lemma~\ref{lem:oneD}, 
is proven by comparing $ \bfY^\delta $ to the process $ \bfY' $,
defined as the unique solution (given by Lemma~\ref{lem:oneD}) of
\begin{align*}
	Y'_a(t) = Y'_a(t') + (W_a(t)-W_a(t'))
	+ 
	\beta \int_{t'}^t 
	\sqBK{ \BK{Y'_a(s)}^{-1} + Y'_a(s) Z^*_a(s) } ds,
	\quad
	a\in\calI.
\end{align*}
As for the latter, with $ \tfrac{1}{y_{(i,j)}} y_a \leq 1 $, $ \forall a\in(i,j) $,
we have $ |f_2(\bfy,\bfz)| \leq \beta |\calI|^3 \sum_{a\in(i,j)} |z_a| $.
From this, $ \bbP( \lim_{r\to\infty} T^r_2=\infty)=1 $ follows
since $ \bfZ^*\in C([t',\infty))^\calI $.
\end{proof}

\begin{remark}\label{rmk:DBMuniqe}
The preceding proof of pathwise uniqueness depends only on Gronwall's inequality,
so the \emph{uniqueness} in fact holds more generally for \emph{random} $ \calI=\fkI $,
where $ \fkI $ is not necessarily independent of $ \bfW $.
\end{remark}

Next,
we establish a comparison principle for the equation \eqref{eq:DBMfc}.
To this end, for $\calI\subset\hZ$, 
we let
\begin{align*}
	&
	\bfy \leq^{\calI} \bfy' \text{ if and only if } y_a \leq y'_a, \ \forall a\in\calI,
\\
	&
	\bfy \leq^{\calI}_{[t',t'']} \bfy' \text{ if and only if } y_a(t) \leq y'_a(t), \ \forall a\in\calI, \ t\in[t',t'']
\end{align*}
denote the restriction of \eqref{eq:order}--\eqref{eq:orderT} onto $ \calI $ and $ [t',t''] $.

\begin{lemma}\label{lem:DBMcomp}
Fixing $t'<t''\in[0,\infty)$, 
$I_1<I_2\in\bbZ$ (possibly random),
we let $\fkI:=(I_1,I_2)\cap\hZ$,
$\bfZup$ and $\bfZlw\in C([t',t''])^\fkI$,
and $\bfYup$, $\bfYlw$ be the $ C_+([t',t''])^\fkI$-valued solutions of \eqref{eq:DBMfc}
with the respective external forces $\bfZup$ and $\bfZlw$, i.e.\
\begin{align}
	\notag
	\Yup_a(t) = \Yup_a(t') &+ \BK{ W_a(t) - W_a(t') } 
\\
	\label{eq:DBMcomp:Y}
	&+
	\beta \int_{t'}^t \BK{ \fS^{\fkI}_a(\bfYup(s)) +\Yup_a(s) \Zup_a(s) } ds,
	\quad
	t \in [t',t''], \ a\in\fkI,
\\
	\notag
	\Ylw_a(t) = \Ylw_a(t') &+ \BK{ W_a(t) - W_a(t') } 
\\
	\label{eq:DBMcomp:Y'}
	&+
	\beta \int_{t'}^t \BK{ \fS^{\fkI}_a(\bfYlw(s)) + \Ylw_a(s) \Zlw_a(s) } ds,
	\quad
	t \in [t',t''], \ a\in\fkI.
\end{align}
If $\bfZlw \leq^\fkI_{[t',t'']} \bfZup $,
and $\bfYlw(t') \leq^\fkI \bfYup(t')$,
then
\begin{align}\label{eq:DBMcomp}
	\bfYlw \leq^\fkI_{[t',t'']} \bfYup ,
	\quad
	\text{  almost surely.}
\end{align}
\end{lemma}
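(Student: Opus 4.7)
The plan is to carry out a pathwise Gronwall argument on the sum of negative parts
\begin{align*}
	G(t) := \sum_{a\in\fkI} \BK{\Yup_a(t) - \Ylw_a(t)}_{-}.
\end{align*}
Fix $\omega$ throughout, and set $E_a := \Yup_a - \Ylw_a$. Because \eqref{eq:DBMcomp:Y} and \eqref{eq:DBMcomp:Y'} are driven by the same Brownian motion $\bfW$, the stochastic integrals cancel in the difference, so each $E_a$ is absolutely continuous on $[t',t'']$ with
\begin{align*}
	\dot E_a(t) = \beta\BK{ \fS^\fkI_a(\bfYup(t)) - \fS^\fkI_a(\bfYlw(t)) + \Yup_a(t)\Zup_a(t) - \Ylw_a(t)\Zlw_a(t) }.
\end{align*}
By hypothesis $E_a(t')\geq 0$, so $G(t') = 0$. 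It therefore suffices to establish $\dot G(t) \leq K(\omega)\,G(t)$ for some finite $K(\omega)$, after which Gronwall's inequality forces $G\equiv 0$ on $[t',t'']$, which is precisely \eqref{eq:DBMcomp}.

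To bound $\dot G$, I telescope the interaction difference through the auxiliary configuration $\widehat{\bfy}$ obtained from $\bfYlw$ by replacing its $a$-th coordinate $\Ylw_a$ with $\Yup_a$:
\begin{align*}
	\fS^\fkI_a(\bfYlw) - \fS^\fkI_a(\bfYup)
	= \sqBK{\fS^\fkI_a(\bfYlw) - \fS^\fkI_a(\widehat{\bfy})}
	+ \sqBK{\fS^\fkI_a(\widehat{\bfy}) - \fS^\fkI_a(\bfYup)}.
\end{align*}
The first bracket varies only the $a$-th coordinate. Combining \eqref{eq:psiA:mono:y} with the form $\fS^\fkI_a = y_a^{-1} - \f^\fkI_a(y_a,\Cdot)$, the map $y_a\mapsto\fS^\fkI_a(y_a,\bfy)$ is strictly decreasing, so on $\{E_a<0\}$ (where $\Yup_a<\Ylw_a$) this bracket is $\leq 0$. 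The second bracket varies only the off-diagonal coordinates. By \eqref{eq:psiA:mono:z}, $\fS^\fkI_a$ is non-decreasing in each $y_b$, $b\neq a$; and since $\bfYup,\bfYlw$ stay in a compact subset of $(0,\infty)^\fkI$ on $[t',t'']$, the partial derivatives $\partial_{y_b}\fS^\fkI_a\geq 0$ are uniformly bounded by some $C(\omega)<\infty$. The mean value theorem then yields
\begin{align*}
	\fS^\fkI_a(\widehat{\bfy}) - \fS^\fkI_a(\bfYup)
	= \sum_{b\neq a} c_b\,(\Ylw_b - \Yup_b)
	\leq \sum_{b\neq a} c_b\,(E_b)_{-}
	\leq C(\omega)\sum_{b\neq a}(E_b)_{-},
\end{align*}
with $c_b\geq 0$. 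For the external-force term, the identity $\Ylw_a\Zlw_a-\Yup_a\Zup_a = -E_a\Zup_a - \Ylw_a(\Zup_a-\Zlw_a)$ together with $\bfZlw\leq^\fkI_{[t',t'']}\bfZup$ and $\Ylw_a>0$ makes the second summand $\leq 0$, while the first contributes at most $(E_a)_{-}\sup_{[t',t'']}|\Zup_a|$ on $\{E_a<0\}$.

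Summing $-\dot E_a\cdot\ind\{E_a(t)<0\}$ over $a\in\fkI$ and combining these estimates produces $\dot G(t)\leq K(\omega)\,G(t)$ with $K(\omega) = \beta\BK{|\fkI|\,C(\omega) + \sup_{a,s}|\Zup_a(s)|} < \infty$, and Gronwall closes the argument. The main structural obstacle is the telescoping decomposition itself: because $\fS^\fkI_a$ couples every gap in $\fkI$, a direct term-by-term estimate does not deliver a right-hand side expressible purely through $(E_b)_{-}$. The cooperativity-type observation that $\fS^\fkI_a$ is \emph{decreasing} in its own coordinate (combining $y_a^{-1}$ with \eqref{eq:psiA:mono:y}) but \emph{non-decreasing} in every other coordinate (by \eqref{eq:psiA:mono:z}) is what turns the diagonal contribution into a favorable sign while leaving the off-diagonal contribution in precisely the form needed to close the Gronwall loop.
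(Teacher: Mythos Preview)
Your argument is correct and takes a genuinely different route from the paper's proof. The paper proceeds by constructing, for each deterministic interval $\calI$, an iteration sequence $\{\bfY^{(n),\calI}\}_n$ of the type \eqref{eq:iter:} (Bessel process, then successive one-dimensional equations with compression from the previous step, plus the external force $\bfZup$), shows via the one-dimensional Lemma~\ref{lem:comp} that this sequence is decreasing and bounded below by $\bfYlw$, passes to the limit $\bfY^*$, verifies that $\bfY^*$ solves \eqref{eq:DBMcomp:Y}, and then invokes the pathwise uniqueness of Lemma~\ref{lem:DBMfc} (extended to random $\fkI$ as in Remark~\ref{rmk:DBMuniqe}) to conclude $\bfY^*=\bfYup\geq\bfYlw$.

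Your approach is the classical Kamke--M\"uller comparison for cooperative systems: you work directly with the multidimensional difference and run Gronwall on the sum of negative parts. This is shorter and entirely pathwise, so it sidesteps any concern about the law of $\bfW$ conditioned on $\fkI$ (cf.\ Remark~\ref{rmk:detComp}) without needing the detour through deterministic intervals. It does, however, require the smoothness of $\fS^\fkI_a$ to bound the off-diagonal derivatives, which is available here precisely because $\fkI$ is finite and both processes are $C_+$-valued; the paper's iteration avoids differentiating $\fS^\fkI_a$ altogether and instead reuses the one-dimensional comparison that is already the workhorse of Proposition~\ref{prop:iterDecr}. Two minor points of hygiene: your ``mean value theorem'' step is really the integral form $\fS^\fkI_a(\widehat{\bfy})-\fS^\fkI_a(\bfYup)=\sum_{b\neq a}\big(\int_0^1\partial_{y_b}\fS^\fkI_a(\bfy^\theta)\,d\theta\big)(\Ylw_b-\Yup_b)$ with $c_b\geq 0$ the bracketed integral, and the a.e.\ chain rule $\tfrac{d}{dt}(E_a)_-=-\dot E_a\,\ind\{E_a<0\}$ uses that $\dot E_a=0$ a.e.\ on $\{E_a=0\}$; both are routine but worth stating.
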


\begin{remark}\label{rmk:detComp}
Note that here  
we do \emph{not} assume $W_a(\Cdot)$ conditioned on $(I_1,I_2)$
is a Brownian motion or even a martingale.
\end{remark}

\begin{proof}
For each finite, deterministic interval $\calI:=(i_1,i_2)\cap\hZ$,
we consider the iteration sequence $ \{\bfY^{(n),\calI}\}_{\bbZ_{\geq 0}} \subset C_+([t',\infty))^\calI $ as follows,
which is the analog of \eqref{eq:iter:} for \eqref{eq:DBMcomp:Y}:
\begin{subequations}\label{eq:iterf:}
\begin{align}
	&
	\label{eq:iterf0}
	Y^{{(0)},\calI}_a(t) 
	= \Yup_a(t') + (W_a(t)-W_a(t')) + \beta \int_{t'}^t \frac{1}{Y^{{(0)},\calI}_a(s)}  ds,
	\quad
	a\in\calI,
\\
	&
	\label{eq:iterf}
	Y^{{(n)},\calI}_a(t) 
	= \Yup_a(t') + (W_a(t)-W_a(t'))
\\
	\notag
	&
	\quad+ 
	\beta \int_{t'}^t
	\Bigg(
		\frac{1}{Y^{{(n)},\calI}_a(s)} 
		- \f^{\calI}_a(Y^{{(n)},\calI}_a(s), \bfY^{(n-1),\calI}(s)) 
		+ Y^{{(n)},\calI}_a(s) \Zup_a(s) 
	\Bigg)ds,
	\quad a\in\calI.
\end{align}
\end{subequations}
Such a sequence is constructed inductively
by applying Lemma~\ref{lem:oneD}
for $F(y,s)=0$ (when $ n=0 $),
and for $ F(y,s)=\psi^\calI_a(y,\bfY^{(n-1),\calI}(s))+y \Zup_a(s) $ (when $ n>0 $).
In particular, such $ F(y,s) $ indeed satisfies $ F(0,s)=0 $,
and by the same calculation as in \eqref{eq:psiLip},
$y\mapsto F(y,\bfy'(s))$ is uniform Lipschtiz continuity for $ \bfy'\in C_+([t',\infty))^\calI $.

With $ \bfY^{(n),\calI} $ solving \eqref{eq:iterf:}
and $ \bfYlw $ solving \eqref{eq:DBMcomp:Y'},
following the comparison argument as in the proof of Proposition~\ref{prop:iterDecr},
we obtain that
\begin{align}
	\label{eq:DBMcomp:YnY'}
	\bfY^{{(0)},\fkI} (\Cdot) 
	\geq^\fkI_{[t',t'']} 
	\bfY^{{(1)},\fkI} (\Cdot) 
	\geq^\fkI_{[t',t'']} 
	\ldots
	\geq^\fkI_{[t',t'']} 
	\bfYlw(\Cdot).
\end{align}
With this, defining the limiting process $ \bfY^{*} (t) := \lim_{n\to\infty } \bfY^{{(n)},\fkI} (t) $,
we have
\begin{align}\label{eq:Y*Ylw}
	\bfY^{*} \geq^\fkI_{[t',t'']} \bfYlw.
\end{align}
In \eqref{eq:iterf}, letting $ n\to\infty $,
by the dominated convergence theorem we obtain
\begin{align*}
	&
	\int_{t'}^t
	\Bigg(
		\frac{1}{Y^{{(n)},\calI}_a(s)} 
		- \f^{\calI}_a(Y^{{(n)},\calI}_a(s), \bfY^{(n-1),\calI}(s)) 
		+ Y^{{(n)},\calI}_a(s) \Zup_a(s) 
	\Bigg)ds
\\
	&
	\quad
	\longrightarrow
	\int_{t'}^t
	\BK{
		\fS^{\calI}_a(\bfY^{*}(s)) 
		- Y^{*}_a(s) \Zup_a(s) 
	}
	ds,
	\quad 
	\forall a\in\calI.
\end{align*}
Hence $ \bfY^* $ solves \eqref{eq:DBMcomp:Y}.
This automatically implies that $ \bfY^* $ is $ C([t',\infty)) $-valued,
and with \eqref{eq:Y*Ylw}, we actually have $ \bfY^*\in C_+([t',\infty)) $.
As the $ C_+([t',\infty)) $-valued solution of \eqref{eq:DBMcomp:Y} 
is unique (by Lemma~\ref{lem:DBMfc} and Remark~\ref{rmk:DBMuniqe}),
we must have $ \bfY^* = \bfYup $.
Combining this with \eqref{eq:Y*Ylw}, we conclude \eqref{eq:DBMcomp}.
\end{proof}

\section{Uniqueness, Proof of Proposition~\ref{prop:unique2}}
\label{sect:unique}

Fix $ \bfy^\ic\in\Ysp((\alpha,\rho)\cap\Rsp(p)) $
and $ \bfYlw(\Cdot) \leq \bfYup(\Cdot) \in(\YTsp(\alpha,\rho)\cap\RTsp(p)) $ 
as in Proposition~\ref{prop:unique2}.
Recall that $ E_{\calI}(s) := \sum_{a\in\calI} (\Yup_a(s)-\Ylw_a(s)) $
and that $ L^\pm_i(t) $ is defined as in \eqref{eq:L}.
We begin by proving \eqref{eq:uniEq:est}.
\begin{proof}[Proof of \eqref{eq:uniEq:est}]
By abuse of notation, we let $ \phi_i(\bfy) := \phi_i(\tilu^{-1}(0,\bfy)) $.
Summing \eqref{eq:DBMg} over $ a\in(i_1,i_2) $, and using \eqref{eq:phifS},
we obtain the equation
\begin{align}\label{eq:sumY:Eq}
	\sum_{a\in(i_1,i_2)} Y_a(s) \Big|^{s=t}_{s=t'}
	=
	( B_{i_2}(s) - B_{i_2}(s) ) \Big|^{s=t}_{s=t'}
	+
	\beta \int_{t'}^{t} \BK{ \phi_{i_2}\BK{\bfY(s)} - \phi_{i_1}\BK{\bfY(s)} } ds,
\end{align}
for a generic $ \YTsp(\alpha,\rho) $-valued solution $ \bfY $ of \eqref{eq:DBMg}.
Now, substitute $ \bfY $ for $ \bfYup $ and for $ \bfYlw $ in \eqref{eq:sumY:Eq},
and take the difference of the results.
With
$
	\phi_{i}\BK{\bfYup(s)} - \phi_i(\bfYlw(s))
	=
	L^+_{i}(s) - L^-_{i}(s),
$
we conclude \eqref{eq:uniEq:est}.
\end{proof}

Fixing $ m\in\bbZ_{>0} $ (which will be sent to $ \infty $ later),
for $ i\in [\pm m, \pm 2m] $ we decompose $ L^\pm_i(s) $ into the long-range interaction
\begin{align}\label{eq:L2}
	\tilL^{\pm}_{i,m}(s) :=
	\frac12
	\sum_{j \in(\pm 3m,\pm\infty)}
	\frac{ \Yup_{(i,j)}(s)-\Ylw_{(i,j)}(s) }{\Yup_{(i,j)}(s)\Ylw_{(i,j)}(s)},
\end{align}
and the short-range interaction
\begin{align}\label{eq:L1}
	L^{\pm}_{i,m}(s) :=
	\frac12
	\sum_{j\in(i,\pm 3m]}
	\frac{ \Yup_{(i,j)}(s)-\Ylw_{(i,j)}(s) }{\Yup_{(i,j)}(s)\Ylw_{(i,j)}(s)}.
\end{align}
The main step of the proving Proposition~\ref{prop:unique2} is the following estimates.
Recall the definition of $ q(t,1) $ from \eqref{eq:QLLN}.
\begin{lemma}\label{lem:Lbd}
\begin{enumerate}[label=(\alph*)]
	\item[]
	\item \label{enu:Llong} 
		For any $ t\geq 0 $,
		we have
		\begin{align}\label{eq:Lest:L2}
			\tilL^{\pm}_m := \sup_{s\in[0,t]} \sup_{i \in [\pm m, \pm 2m] } \curBK{ \tilL^{\pm}_{i,m}(s) } \to 0,
			\quad
			\text{ almost surely.}
		\end{align}
	\item \label{enu:Lshort} 
		For any $ t'<t''\in[0,\infty) $ such that $ q(t''-t',1)<\tfrac{\rho}{2} $,
		we have
		\begin{align}\label{eq:Lest:i}
			\lim_{m\to\infty} \inf_{i\in[\pm m, \pm 2m]} \int_{t'}^{t''} L^{\pm}_{i,m}(s) ds  = 0,
			\quad
			\text{ almost surely.}			
		\end{align}
\end{enumerate}
\end{lemma}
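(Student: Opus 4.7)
Both parts reduce to estimating the summand $(\Yup_{(i,j)} - \Ylw_{(i,j)})/(\Yup_{(i,j)}\Ylw_{(i,j)})$, with the numerator controlled by the $\YTsp(\alpha,\rho)$ norm and the denominator by that norm where possible, and by Lemma~\ref{lem:BesComp} combined with \eqref{eq:Qiid}--\eqref{eq:QLLN} otherwise. Since $\bfYlw, \bfYup \in \YTsp(\alpha,\rho)$, a finite random $K=K(t)$ satisfies $|\av_{(0,m)}(\bfy(s)) - \rho| \leq K|m|^{-\alpha}$ uniformly in $s \in [0,t]$, $m \in \bbZ\setminus\{0\}$ and $\bfy \in \{\bfYlw, \bfYup\}$. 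Via the telescoping identity $y_{(i,j)} = j\av_{(0,j)}(\bfy) - i\av_{(0,i)}(\bfy)$, I obtain, uniformly in $s \in [0,t]$, the numerator bound $0 \leq \Yup_{(i,j)}(s) - \Ylw_{(i,j)}(s) \leq 2K(i^{1-\alpha}+j^{1-\alpha})$ and the one-sided denominator bound $Y^*_{(i,j)}(s) \geq \rho(j-i) - K(i^{1-\alpha}+j^{1-\alpha})$. For part~\ref{enu:Llong}, with $i \in [m,2m]$ and $j > 3m$ one has $j - i \geq j/3$; hence for $m$ large the denominator is at least $(\rho/2)^2(j-i)^2$, the numerator is at most $4Kj^{1-\alpha}$, and the summand is $O(j^{-1-\alpha})$. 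Summing $\sum_{j > 3m} j^{-1-\alpha} = O(m^{-\alpha})$ uniformly in $i$ and $s$ gives $\tilL^+_m \to 0$ a.s.; the $-$ case is symmetric.

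For part~\ref{enu:Lshort}, non-negativity of each summand gives $\inf_i \int L^+_{i,m}\,ds \leq m^{-1}\sum_{i\in[m,2m]}\int_{t'}^{t''} L^+_{i,m}(s)\,ds$, so it suffices to show the average tends to $0$. Fubini and a summation switch yield
\[
    \tfrac{1}{m}\sum_i \int L^+_{i,m}\,ds \;=\; \tfrac{1}{2m}\int_{t'}^{t''}\sum_a (\Yup_a - \Ylw_a)(s)\,S_a(s)\,ds,
    \quad
    S_a(s) := \sum_{\substack{i<a<j\\ i\in[m,2m],\,j\leq 3m}} \frac{1}{\Yup_{(i,j)}(s)\,\Ylw_{(i,j)}(s)}.
\]
The $\YTsp$ bound gives $\sum_{a \in (m,3m)}(\Yup_a - \Ylw_a)(s) \leq 2K m^{1-\alpha}$ uniformly in $s$, so the task reduces to $S_a(s) \leq O(\log m)$ uniformly in $a$ and $s$. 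For this, Lemma~\ref{lem:BesComp} gives $\undY^*_a(t',t'') \geq Y^*_a(t') - Q^{t',t''}_a$, whence summing over $a \in (i,j)$,
\[
    \inf_{s\in[t',t'']} Y^*_{(i,j)}(s) \;\geq\; Y^*_{(i,j)}(t') - \sum_{a\in(i,j)} Q^{t',t''}_a.
\]
Coupling this with the $\YTsp$ estimate on $Y^*_{(i,j)}(t')$, a uniform strengthening of \eqref{eq:QLLN}, namely $\sup_{i \in [m,2m]}|\av_{(i,i+p)}(\bfQ^{t',t''}) - q(t''-t',1)| \leq \epsilon$ a.s.\ for $p \geq C\log m$ (obtained from Hoeffding-type bounds on the two iid collections $\{Q^{t'}_a\}_{a\in\hZ_i}$, $i=1,2$, cf.\ \eqref{eq:Qiid}, plus Borel--Cantelli), and the hypothesis $q(t''-t',1) < \rho/2$, I deduce $\inf_s Y^*_{(i,j)}(s) \geq c_1(j-i)$ for every $(i,j)$ with $j-i \geq C\log m$. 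The contribution of such ``large-gap'' pairs to $S_a(s)$ is then $\leq c_1^{-2}\sum_{p \geq C\log m} \min(p,m)\cdot p^{-2} = O(\log m)$.

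The \textbf{main obstacle} is the ``small-gap'' regime $j - i < C\log m$, where the denominator bound above degenerates. For any fixed $a$ there are only $O(\log^2 m)$ such pairs $(i,j)$ with $i < a < j$ in the admissible range, so it suffices to produce a matching almost-sure, time-integrated bound on $(\Yup_{(i,i+p)}\Ylw_{(i,i+p)})^{-1}$ per small-gap pair. I plan to derive such a bound by exploiting the intrinsic Bessel-type repulsion $\beta/\Ylw_{(i,i+p)}$ carried by the summed SDE for $\Ylw_{(i,i+p)}$---analogously to the Lyapunov-function argument within the proof of Lemma~\ref{lem:DBMfc}---via Itô's formula applied to $\log\Ylw_{(i,i+p)}$, using the $\YTsp$-norm to bound the residual cross-interaction terms. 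Combined with the uniform numerator bound $\leq 2K m^{1-\alpha}$ and the $O(\log^2 m)$ count, this should force the small-gap contribution to the averaged integral to be $o(1)$ as $m \to \infty$, completing the argument.
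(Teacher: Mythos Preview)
Your argument for Part~\ref{enu:Llong} is correct and essentially identical to the paper's: both exploit the $\YTsp(\alpha,\rho)$ norm to bound the numerator by $O(|j|^{1-\alpha})$ and the denominator by $c|j|^2$, yielding a tail sum $\sum_{j>3m}j^{-1-\alpha}=O(m^{-\alpha})$.

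For Part~\ref{enu:Lshort}, however, your averaging strategy diverges from the paper's and carries a genuine gap in the small-gap regime. Two specific problems:

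\textbf{(i) The It\^o plan fails at $\beta=1$ and lacks uniformity.} Applying It\^o's formula to $\log\Ylw_{(i,i+p)}$ produces $(\beta-1)\int(\Ylw_{(i,i+p)})^{-2}\,ds$ on the left (after absorbing the Bessel drift), so at the critical value $\beta=1$ you recover no bound whatsoever on the time-integrated inverse square. Even for $\beta>1$, the right-hand side contains $|\log\Ylw_{(i,i+p)}(t')|+|\log\Ylw_{(i,i+p)}(t'')|$, and nothing in $\YTsp(\alpha,\rho)$ prevents individual gaps $\Ylw_a(t')$ from being arbitrarily small as $a$ ranges over the growing window $(m,3m)$; there is no uniform-in-$i$ control. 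Your factorization also breaks: the numerator bound $\sum_a(\Yup_a-\Ylw_a)(s)\le 2Km^{1-\alpha}$ is pointwise in $s$, while your proposed denominator bound is time-integrated, so you cannot simply multiply them.

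\textbf{(ii) You never invoke $\RTsp(p)$.} This hypothesis is not decorative: the paper's proof of Part~\ref{enu:Lshort} uses it essentially. Rather than averaging over $i$, the paper \emph{constructs} a single good index $i_*\in\fkG^\pm_{m,k}$ for which $h_{(i_*,\pm3m)}(\bfundYlw(t',t''))\ge\rho/3$, so that $\Ylw_{(i_*,j)}(s)\ge\tfrac{\rho}{3}|j-i_*|$ for \emph{all} $j$, including $j=i_*\pm1$ --- the small-gap problem simply does not arise at $i_*$. The numerator is then split according to the ``bad'' set $\fkA_m(s)=\{a:|\Yup_a-\Ylw_a|\ge m^{-\alpha/(3p')}\}$, and H\"older's inequality with exponents $(p,p')$ together with $P:=\sup_m\av^p_{(0,m)}(\bfbarY^{\text{up}}(t',t''))<\infty$ controls the bad contribution (Lemma~\ref{lemma:Lest:L1}). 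The existence of such $i_*$ (Lemma~\ref{lem:fkG:exists}) again uses $\bfYlw\in\RTsp(p)$.

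A smaller point: your inequality $\undY^*_a(t',t'')\ge Y^*_a(t')-Q^{t',t''}_a$ is not what Lemma~\ref{lem:BesComp} gives; \eqref{eq:BesComp} bounds the backward run-up $Y^*(t'')-\undY^*(t',t'')\le Q^{t'}_a(t'')$, so the correct inequality is $\undY^*_a(t',t'')\ge Y^*_a(t'')-Q^{t'}_a(t'')$. This is fixable (and is how the paper uses it), but be careful with the direction.
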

\begin{proof}[Proof of Part\ref{enu:Llong}]
Fix arbitrary $ i\in [\pm m,\pm 2m] $ and $ s\in[0,t] $.
With $\bfYup$, $\bfYlw\in\YTsp(\alpha,\rho)$,
we have
\begin{align}\label{eq:Lest:U1}
	\sup_{s\in[0,t], j\in\bbZ } 
	\big\{ \av_{(0,j)}(\bfYup(s)-\bfYlw(s)) |j|^{\alpha} \big\}
	=: N
	<
	\infty,
\end{align}
and $ \av_{(0,j)} (\bfYlw(s)) \to \rho $,
uniformly in $ s\in[t',t''] $, as $ |j|\to\infty $.
Combining the latter with $ \Ylw_{(\pm 2m,j)}(s) = \Ylw_{(0,j)}(s) - \Ylw_{(0,\pm 2m)}(s) $,
we obtain
\begin{align*}	
	\liminf_{m\to\infty} \inf_{j\in(\pm 3m,\pm\infty)} \big\{ |j|^{-1}\undYlw_{(\pm 2m,j)}(0,t) \big\}
	\geq
	\tfrac{\rho}{3}.
\end{align*}
As $ \Ylw_a\in C_+([0,\infty)) $, $ \forall a\in\bbZ $,
from this we further deduce
\begin{align*}	
	\inf_{m\in\bbZ_{>0}} \inf_{j\in(\pm 3m,\pm\infty)} 
	\big\{ |j|^{-1}\undYlw_{(\pm 2m,j)}(0,t) \big\}
	=:
	D>0.
\end{align*}
Inserting this and \eqref{eq:Lest:U1} into \eqref{eq:L2},
we conclude
$
	\tilL^{\pm}_{i,m}(s) \leq \frac{N}{D^2} \sum_{j>3m} j^{-1-\alpha},
$
from which \eqref{eq:Lest:L2} follows.
\end{proof}

We proceed to proving Part\ref{enu:Lshort}.
The preceding argument yields a bound on $ \tilL^{\pm}_{i,m}(s) $ 
which is uniform over $ i\in[\pm m, \pm 2m] $.
Such a uniform bound cannot be achieved for the short-range interaction $ L^\pm_{i,m}(s) $,
because, for example, 
$ \bfYlw \in \YTspl(\alpha,\rho) $ does \emph{not} imply $ \bfYlw_{(i,j)}(s) > |j-i|/c $ for small $ |j-i| $.
Instead, we proceed by constructing certain `good' index set $ \fkG^\pm_{m,k}\neq\emptyset $,
such that $ L^\pm_{i,m}(s) $ is controlled for $ i\in\fkG^\pm_{m,k} $.

To construct $ \fkG^\pm_{m,k} $,
letting $p'\in(1,\infty)$ denote the H\"{o}lder conjugates of $p$,
i.e.\ $1/p+1/p'=1$,
for fixed $ s\in[0,\infty) $ and $ m\in\bbZ_{>0} $,
we consider the set
\begin{align}\label{eq:Lest:A}
	\fkA_{m}(s):= \curBK{a\in\hZ: |\Yup_a(s)-\Ylw_a(s)| \geq |m|^{-\alpha/(3p') } }
\end{align}
of `bad' indices, 
where the corresponding terms in the numerator of \eqref{eq:L1} may be large at time $ s $.
For $\calA\subset\hZ$, $i,i'\in\bbZ$, let
\begin{align*}
	g_{(i,i')}(\calA) := \sup_{j\in(i,i']} 
	\frac{ |(i,j)\cap\calA| }{ |j-i| }
\end{align*}
denote the maximal cumulative occurrence frequency of $\calA$
when searching to the right (when $ i'>i $) or left (when $ i'<i $) 
over the interval $ (i,i') $, starting from $ i $.
Consider the set 
\begin{align}
	\label{eq:Lest:B}
	&
	\fkI^\pm_{m}(s) := \curBK{ 
		i\in  [\pm m,\pm 3m]\cap\bbZ
		:
		g_{(i,\pm 3m)} (\fkA_{m}(s)) > m^{-\alpha/3}  
		}
\end{align}
of `bad' indices, 
where the occurrence of $ \fkA_{m}(s) $ may be large over the interval $ (\pm m,\pm 3m) $.
The sets $ \fkA_m(s) $ and $ \fkI^\pm_m(s) $ are constructed for a fixed $ s $.
We now fix $ t'<t'' $ as in Lemma~\ref{lem:Lbd},
let $ \calT_k := \{ t'+\frac{(t''-t')\ell}{k} \}_{\ell=1}^{k} $,
and consider the set
\begin{align}\label{eq:Lest:C}
	\fkN^\pm_{m,k}
	:= 
	\curBK{ i \in \bbZ :\frac{1}{k} \sum_{s\in\calT_k} \ind\curBK{i\in\fkI^\pm_m(s)} \leq m^{-\alpha/(3p')} },
\end{align}
consisting of `good' indices $ i $ such that $ \{ \fkI^\pm_m(s) \ni i \} $
occurs rarely alone the discrete samples $ s\in\calT_k $ of time.
The set $ \fkN^\pm_{m,k} $ is constructed for bounding the numerator in the expression \eqref{eq:L1}.
As for the denominator, we consider
\begin{align}\label{eq:h}
	&
	h_{(i,j)}(\bfy) := 
	\inf_{i'\in(i,j]\cap\bbZ} \av_{(i,i')} (\bfy),
\end{align}
and define
\begin{align}\label{eq:Lest:G}
	\fkG^\pm_{m,k}
	:= 
	\{ i\in [\pm m,\pm 2m]\cap\bbZ 
	: 
	i\in\fkN^\pm_{m,k}, h_{(i,\pm 3m)}(\bfundYlw(t',t'')) \geq \tfrac{\rho}{3} \}.
\end{align}

Let 
$
	L^{\pm,k}_{i,m} := \frac{t''-t'}{k} \sum_{s\in\calT_k} L^{\pm}_{i,m}(s)
$
denote the $ k $-th discrete approximation of $ \int_{t'}^{t''} L^{\pm}_{i,m}(s) ds $.
Having constructed $ \fkG^\pm_{m,k} $,
we proceed to establishing a bound on $ L^{\pm,k}_{i,m} $ for $ i\in\fkG^\pm_{m,k} $.
Let $P :=\sup_{m\in\bbZ} \av^p_{(0,m)} ( \bfbarY^\text{up}{(t',t'')} )$, 
which is almost surely finite as $\bfYup\in\RTsp(p)$.
\begin{lemma}\label{lemma:Lest:L1}
For all $ m,k\in\bbZ_{>0} $, there exists $ c=c(t''-t',\rho,p)<\infty $ such that
\begin{align}\label{eq:Gbd}
	 L^{\pm,k}_{i_*,m}
	\leq
	(1+P^{1/p}) \frac{c \log m }{m^{\alpha/(3p')}},
	\quad
	\forall i_*\in\fkG^\pm_{m,k}.
\end{align}
\end{lemma}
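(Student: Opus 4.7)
The plan is to combine a uniform (in $s$) pointwise bound on $L^{\pm}_{i_*,m}(s)$ coming from the $h$-condition in the definition of $\fkG^{\pm}_{m,k}$ with a refined estimate on ``good'' times $s\in\calT_k$ (with $i_* \notin \fkI^{\pm}_m(s)$), where the $\fkA_m(s)$-sparsity comes into play, and finally to dispatch ``bad'' times using the $\fkN^{\pm}_{m,k}$-frequency condition. I first unfold the condition $h_{(i_*,\pm 3m)}(\bfundYlw(t',t''))\geq \rho/3$: since $\undYlw_a(t',t'')\leq Y^{\text{lw}}_a(s)$ for $s\in[t',t'']$, and the integer-indexed partial sums $\sum_{a\in(i_*,i')}$ differ from the half-integer sums $\sum_{a\in(i_*,j)}$, $j\in\hZ$, by at most one term, this gives $Y^{\text{lw}}_{(i_*,j)}(s)\geq c\rho|j-i_*|$ uniformly in $s\in[t',t'']$ and $j\in(i_*,\pm 3m]$, and by $\bfYlw\leq\bfYup$ the same lower bound holds for $Y^{\text{up}}_{(i_*,j)}(s)$. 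Rewriting
\begin{align*}
 L^{\pm}_{i_*,m}(s)=\tfrac12\sum_{j\in(i_*,\pm 3m]}\Bigl(\tfrac{1}{Y^{\text{lw}}_{(i_*,j)}(s)}-\tfrac{1}{Y^{\text{up}}_{(i_*,j)}(s)}\Bigr),
\end{align*}
dropping the negative term yields the crude pointwise-in-$s$ bound $L^{\pm}_{i_*,m}(s)\leq c\rho^{-1}\log m$.

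Next, writing $Y^{\text{up}}_{(i_*,j)}(s)-Y^{\text{lw}}_{(i_*,j)}(s)=\sum_{a\in(i_*,j)}D_a(s)$ with $D_a:=\Yup_a-\Ylw_a\geq 0$, substituting the denominator lower bound, and swapping summation in $j$ and $a$, I reduce matters to estimating $\sum_{a\in(i_*,i_*\pm 3m)}D_a(s)/|a-i_*|$. For $s\in\calT_k$ with $i_*\notin\fkI^{\pm}_m(s)$, the definition of $\fkI^{\pm}_m(s)$ supplies $|(i_*,j)\cap\fkA_m(s)|\leq m^{-\alpha/3}|j-i_*|$ for every $j\in(i_*,\pm 3m]$. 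Splitting $D_a=D_a\ind_{\fkA_m(s)^c}+D_a\ind_{\fkA_m(s)}$ and using $D_a<m^{-\alpha/(3p')}$ off $\fkA_m(s)$, the off-$\fkA$ contribution is at most $cm^{-\alpha/(3p')}\log m$. The on-$\fkA$ contribution, $\sum_{a\in\fkA_m(s)}\overline{Y^{\text{up}}_a(t',t'')}/|a-i_*|$, is controlled by a weighted H\"older inequality with weight $|a-i_*|^{-1}$: Abel summation against the density bound $m^{-\alpha/3}$ shows $\sum_{a\in\fkA_m(s)}|a-i_*|^{-1}\leq cm^{-\alpha/3}\log m$, so the sparsity factor contributes $c(m^{-\alpha/3}\log m)^{1/p'}$, while the $\ell^p$-factor $(\sum_a (\overline{Y^{\text{up}}_a})^p/|a-i_*|)^{1/p}$ is bounded in terms of $P^{1/p}$ via the definition of $P$ and Abel summation. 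The combined bound on the $\fkA_m(s)$ contribution is $cP^{1/p}\log m/m^{\alpha/(3p')}$.

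To finish, I combine the two regimes in the time average. The number of bad times $s\in\calT_k$ with $i_*\in\fkI^{\pm}_m(s)$ is at most $km^{-\alpha/(3p')}$ by $i_*\in\fkN^{\pm}_{m,k}$, so retaining the uniform bound $L^{\pm}_{i_*,m}(s)\leq c\log m/\rho$ from the first paragraph, their contribution to $L^{\pm,k}_{i_*,m}$ is at most $c(t''-t')\log m\cdot m^{-\alpha/(3p')}$; the good times contribute $c(t''-t')(1+P^{1/p})\log m/m^{\alpha/(3p')}$ from the second paragraph; summing produces \eqref{eq:Gbd} with $c=c(t''-t',\rho,p)$.

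The main obstacle is the weighted H\"older step on the $\fkA_m(s)$ contribution: the weight $|a-i_*|^{-1}$ must be split between the $\ind_{\fkA_m(s)}$-factor and the $\overline{Y^{\text{up}}_a}$-factor in just the right way so that (i) the Abel summation against the density $m^{-\alpha/3}$ produces the decay $m^{-\alpha/(3p')}$ with only a $(\log m)^{1/p'}$ overhead, and (ii) the $P^{1/p}$-factor from the $\ell^p$ side is produced without an unwanted polynomial-in-$m$ inflation. The remaining ingredients---the uniform pointwise bound, the $j\leftrightarrow a$ swap, and the Abel summations---are straightforward once the H\"older step is calibrated.
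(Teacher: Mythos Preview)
Your proposal is correct and follows essentially the same strategy as the paper: split $s\in\calT_k$ into good and bad times via $\fkN^\pm_{m,k}$, use the crude $c\log m$ bound on bad times weighted by their low frequency $\leq m^{-\alpha/(3p')}$, and on good times combine the $h$-lower bound on the denominator with the $\fkA_m(s)$-sparsity and H\"older to control the numerator. The only cosmetic difference is that the paper applies unweighted H\"older on each interval $(i_*,j)$ separately---bounding the numerator by $c(1+P^{1/p})m^{-\alpha/(3p')}|j-i_*|$ and then summing $\sum_j 1/|j-i_*|\leq c\log m$---whereas you first swap the $j$- and $a$-sums to reduce to $\sum_a D_a/|a-i_*|$ and then apply a single weighted H\"older; both routes use the same inputs and land on the same bound.
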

\begin{proof}
Fixing $ k,m\in\bbZ_{>0} $ and $ i_*\in\fkN^\pm_{m,k} $,
we let $ c<\infty $ denote a generic constant depending only on $ t''-t',\rho,p $.
We begin by bounding the expression $ L^{\pm}_{i_*,m}(s) $, for $ s\in\calT_k $,
to which end we consider separately the two cases 
\begin{enumerate*}[label=\itshape\roman*\upshape)]
\item $ \{ \fkI^\pm_m(s) \not\ni i_* \} $; 
and \item $ \{ \fkI^\pm_m(s) \ni i_* \} $.
\end{enumerate*}

\textit{i})
In \eqref{eq:L1},
using $ \bfYlw(s)\leq\bfYup(s) $ and $ h_{(i_*,\pm 3m)}(\bfundYlw(t',t'')) \geq \tfrac{\rho}{3} $,
we bound the denominator from below by $ (|j-i_*|\rho/3)^{2} $.
As for the numerator, 
we divide $ \Yup_{(i_*,j)}(s)-\Ylw_{(i_*,j)}(s)=\sum_{a\in (i,j) } (\Yup_{a}(s)-\Ylw_{a}(s)) $
into two sums subject to the constraints 
$ \{a\notin\fkA_m(s)\} $ and $ \{a\in\fkA_m(s)\} $.
The former sum, by \eqref{eq:Lest:A}, is bounded by 
$ m^{-\alpha/(3p')} |j-i_*|. $ 
As for the latter, we apply the H\"{o}lder inequality to obtain
\begin{align*}
	&
	\sum_{a\in(i_*,j)} 
	\BK{ |\Yup_{a}(s)-\Ylw_{a}(s)| } 
	\BK{ \ind\curBK{a\in\fkA_m(s)} }
\\
	&
	\quad \leq
	\Big( \sum_{a\in(i_*,j)} \Yup_{a}(s)^p \Big)^{1/p}
	\Big( \sum_{a\in(i_*,j)} \ind\curBK{a\in\fkA_m(s)} \Big)^{1/p'}
\\
	&
	\quad \leq
	\BK{ |j-i_*|P }^{1/p}
	\BK{ g_{(i_*,\pm 3m)}(\fkA_m(s)) \ |j-i_*|}^{1/p'}.
\end{align*}
With $ i_*\notin\fkI^\pm_m(s) $, we have $ g_{(i_*,\pm 3m)}(\fkA_m(s)) \leq m^{-\alpha/3} $,
so the last expression is further bounded by $ cP^{1/p} m^{-\alpha/(3p')}|j-i_*|$.
Combining the preceding bounds yields
\begin{align}\label{eq:L1bd:1}
	L^{\pm}_{i_*,m}(s) 
	\leq
	c (1+P^{1/p}) m^{-\alpha/(3p')}
	\sum_{j\in(i_*,\pm 3m]} \frac{ |j-i_*| }{ |j-i_*|^2 }
	\leq	
	c (1+P^{1/p}) m^{-\alpha/(3p')} \log m.
\end{align}

\textit{ii})
Using $ \bfYup(s) \geq \bfYlw(s) $ in \eqref{eq:L1},
we bound the $ j $-th term by $ 1/\Ylw_{(i_*,j)}(s) $.
This, with $ h_{(i_*,\pm 3m)}(\bfundYlw(t',t'')) \geq \tfrac{\rho}{3} $,
is further bounded by $ (|j-i_*|\rho/3)^{-1} $.
Consequently,
\begin{align}\label{eq:L1bd:2}
	L^{\pm}_{i_*,m}(s) 
	\leq
	c \log(m+1).
\end{align}

Although the bound \eqref{eq:L1bd:2} is undesired ($ \to\infty $ as $ m\to\infty $),
the corresponding case $ \{s\in\calT_k : \fkI^\pm_m(s) \ni i_* \} $ occurs at low frequency $ \leq m^{-\alpha/(3p')} $.
Hence
\begin{align}\label{eq:L1bd:22}
	\frac{t''-t'}{k} \sum_{s\in\calT_k} \ind\curBK{ \fkI^\pm_m(s)\ni i_* } L^{\pm}_{i_*,m}(s) 
	\leq
	c \log(m+1) m^{-\alpha/(3p')}.
\end{align}
Averaging \eqref{eq:L1bd:1} over $ s\in\calT_k $ for $ \{s\in\calT_k :\fkI^\pm_m(s) \not\ni i_* \} $,
and combining the result with \eqref{eq:L1bd:22},
we conclude \eqref{eq:Gbd}.
\end{proof}

Next, we show that $ \fkG_{m,k}^\pm $ is nonempty for all large enough $ m $.
\begin{lemma}\label{lem:fkG:exists}
We have 
$
	\liminf\limits_{ m\to \infty} \big( \inf\limits_{k\in\bbZ_{>0}} |\fkG_{m,k}^\pm| \big) \geq 1,
$
almost surely.
\end{lemma}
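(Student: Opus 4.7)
The plan is to prove $|\fkG^\pm_{m,k}|\geq 1$ by splitting the complement in $[\pm m,\pm 2m]\cap\bbZ$ into the density-failure set
\begin{align*}
    \fkH_m := \{i\in[\pm m,\pm 2m]\cap\bbZ : h_{(i,\pm 3m)}(\bfundYlw(t',t''))<\rho/3\}
\end{align*}
(depending only on $\bfundYlw$, not on $k$) and the frequency-failure set $\fkG''_{m,k} := [\pm m,\pm 2m]\cap\bbZ\setminus\fkN^\pm_{m,k}$. Since $\fkG^\pm_{m,k}$ is the complement of $\fkH_m\cup\fkG''_{m,k}$ in $[\pm m,\pm 2m]$, it suffices to establish that, a.s., $|\fkH_m|+|\fkG''_{m,k}|\leq m-1$ for all $m\geq M(\omega)$, uniformly in $k$.

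\emph{Step 1: $|\fkG''_{m,k}|=o(m)$ uniformly in $k$.} I carry out a two-stage Chebyshev/maximal argument. The telescoping bound $\sum_{a\in(\pm m,\pm 3m]}(\Yup_a-\Ylw_a)(s)\leq cm^{1-\alpha}$, valid since $\bfYup,\bfYlw\in\YTsp(\alpha,\rho)$ and $\Yup\geq\Ylw$, combined with Chebyshev at level $m^{-\alpha/(3p')}$, gives $|\fkA_m(s)|\leq c\,m^{1-\alpha+\alpha/(3p')}$. A weak-$(1,1)$ one-sided Hardy--Littlewood maximal inequality on $\bbZ$ at threshold $m^{-\alpha/3}$ then yields $|\fkI^\pm_m(s)|\leq m^{\alpha/3}|\fkA_m(s)|$. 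Finally, summing the defining inequality of $\fkN^\pm_{m,k}$ over $i\in\bbZ$ and using $\frac{1}{k}\sum_{s\in\calT_k}|\fkI^\pm_m(s)|\leq\sup_s|\fkI^\pm_m(s)|$ gives
\begin{align*}
    |\fkG''_{m,k}|\leq c\,m^{1-2\alpha/3+2\alpha/(3p')}.
\end{align*}
Since $p'>1$, the exponent is strictly less than $1$, and the estimate is uniform in $k$.

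\emph{Step 2: $|\fkH_m|\leq(1-c_\star)\,m$ a.s., for some deterministic $c_\star>0$.} The key input is Lemma~\ref{lem:BesComp}, which, applied to $\Ylw_a$ (solving \eqref{eq:BesComp:Y} with $F=\psi_a\geq 0$), yields the pathwise $\undYlw_a(t',t'')\geq\Ylw_a(t'')-Q^{t',t''}_a$. Summing over $a$ and dividing,
\begin{align*}
    \av_{(i,i')}(\bfundYlw(t',t''))\;\geq\;\av_{(i,i')}(\bfYlw(t''))-\av_{(i,i')}(\bfQ^{t',t''}).
\end{align*}
Using $\bfYlw(t'')\in\Ysp(\alpha,\rho)$ and \eqref{eq:QLLN} (so $\av_{(0,j)}(\bfQ^{t',t''})\to q:=q(t''-t',1)<\rho/2$ a.s.), the condition $h_{(i,\pm 3m)}(\bfundYlw(t',t''))\geq\rho/3$ reduces, up to a uniformly small error, to the condition that $i$ be a right-to-left running minimum of
\begin{align*}
    U_j := \Ylw_{(0,j)}(t'') - Q^{t',t''}_{(0,j)} + (2\rho/3-q)\,j.
\end{align*}
The sequence $U$ has average drift bounded below by $5\rho/3-2q>2\rho/3>0$, its fluctuation coming from the i.i.d.\ family $\{Q^{t',t''}_a-q\}_a$ (split into even/odd via \eqref{eq:Qiid}) and the deterministic $|V_j|=|\Ylw_{(0,j)}(t'')-\rho j|\leq cm^{1-\alpha}$. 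Classical ladder-epoch/renewal theory (Spitzer's formula) then yields that the density of right-to-left running minima of $U$ in $[m,2m]$ converges a.s.\ to a strictly positive constant $c_\star$ depending only on the step law of $Q^{t',t''}_a$ and on $\delta=2\rho/3-q$, giving the stated bound on $|\fkH_m|$.

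Combining Steps 1 and 2, $|\fkG^\pm_{m,k}|\geq c_\star m-o(m)\geq 1$ a.s.\ for $m\geq M(\omega)$ and all $k$, as required. \emph{The main obstacle} lies in Step 2: the ladder estimate must produce a linear-in-$m$ lower bound on the density of running minima of $U$ in spite of the deterministic perturbation $|V_j|=O(m^{1-\alpha})$, which can overwhelm the drift $\delta$ on short sub-intervals of length $\lesssim m^{1-\alpha}/\delta$. This is handled by isolating the initial transient of the walk $W_j:=Q^{t',t''}_{(0,j)}-qj$ of length $O(m^{1-\alpha})$ and invoking that, once past this transient, $W$ has classical renewal behavior with positive record density uniformly in the bounded deterministic perturbation $V$.
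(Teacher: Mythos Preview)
Your Step~1 is correct and essentially mirrors the paper's count of $|(\fkN^\pm_{m,k})^c|$. The gap is in Step~2. You invoke Spitzer's ladder theory for the walk $U_j$, but its increments are $\Ylw_a(t'')-Q^{t',t''}_a+\text{const}$, and these are \emph{not} i.i.d.: the process $\bfYlw(t'')$ is driven by the same Brownian family $\bfW$ as $\bfQ^{t'}$, so $\Ylw_a(t'')$ is correlated both across $a$ and with $Q^{t',t''}_a$. Calling $V_j=\Ylw_{(0,j)}(t'')-\rho j$ ``deterministic'' is therefore not legitimate---conditioning on $\bfYlw(t'')$ destroys the product structure of $\bfQ$---and the transient argument you sketch does not address this dependence. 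As written, the claimed positive density of running minima of $U$ is unsupported.

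The paper sidesteps this entirely. Rather than bounding $|\fkH_m|$, it invokes the deterministic Lemma~\ref{lem:goodset}: if $\av_{(\pm m,j)}(\bfy)>\gamma$ for every $j\in[\pm 2m,\pm 3m]$, then some $i_*\in[\pm m,\pm 2m)$ satisfies $h_{(i_*,\pm 3m)}(\bfy)\geq\gamma$. This reduces Step~2 to checking averages from the \emph{fixed} base point $\pm m$, which follows directly from $\bfYlw\in\YTsp(\alpha,\rho)$ (giving $\av_{(\pm m,j)}(\bfYlw(t''))\to\rho$), the LLN \eqref{eq:QLLN} (giving $\av_{(\pm m,j)}(\bfQ^{t'}(t''))\to q<\rho/2$), and the pathwise bound $\undYlw_a\geq\Ylw_a(t'')-Q^{t'}_a(t'')$ from Lemma~\ref{lem:BesComp}. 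To fuse the two constraints defining $\fkG^\pm_{m,k}$, the paper applies Lemma~\ref{lem:goodset} not to $\bfundYlw(t',t'')$ but to the truncation $\tilY^\pm_a:=\undYlw_a(t',t'')\,\ind\{a\in\fkN^\pm\pm\tfrac12\}$: the mass lost in this truncation is controlled by your Step~1 bound together with $\bfYlw\in\RTsp(p)$ and H\"older's inequality, and the resulting $i_*$ automatically lies in $\fkN^\pm$ since $\tilY^\pm_{(i_*,i_*\pm 1)}>0$. No renewal theory is needed, and only a single good index is produced.
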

\noindent
With $ \fkG^\pm_{m,k} $ defined as in \eqref{eq:Lest:G},
proving $ |\fkG_{m,k}^\pm| \geq 1 $ requires finding $ i\in[\pm m, \pm 2m) $ 
such that $ h_{(i,\pm 3m)}(\bfy) \geq \frac{\rho}{3} $ for $ \bfy=\bfundYlw(t',t'') $.
This is conveniently reduced to estimating $ \av_{[\pm m,j)}(\bfy) $, $ j\in [\pm 2m,\pm 3m] $, by the following lemma.
\begin{lemma}\label{lem:goodset}
Let $\bfy\in [0,\infty]^\hZ$,
$i^+_1<i^+_2\leq i^+_3$ and $i^-_3\leq i^-_2 < i^-_1$,
where $i^\pm_1,i^\pm_2\in\bbZ$ and $i^\pm_3\in\bbZ\cup\{\pm\infty\}$.
If, for some $\gamma\in(0,\infty)$,
\begin{align}\label{eq:lem:goodset:assum+}
 	\av_{(i^\pm_1,i)} (\bfy) > \gamma , \quad\forall \ i\in[i^\pm_2,i^\pm_3]\cap\bbZ,
\end{align} 
then there exists $i^\pm_*\in[i^\pm_1,i^\pm_2)\cap\hZ$ 
such that $h_{(i^\pm_*,i^\pm_3)}(\bfy)\geq\gamma$.
\end{lemma}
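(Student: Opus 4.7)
The plan is to rephrase the hypothesis on local averages as a slope condition on a scalar ``excess partial-sum'' function, and then to select $i^\pm_*$ as a minimizer of that function. I sketch the $+$ case; the $-$ case follows verbatim after reversing the direction of summation.

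Define $f\colon [i^+_1,i^+_3]\cap\bbZ \to \bbR$ by
\[
    f(i) := \sum_{a\in(i^+_1,i)\cap\hZ} y_a \;-\; \gamma\,(i-i^+_1).
\]
Then $f(i^+_1)=0$, and for any integers $i<i'$ in $[i^+_1,i^+_3]$ the identity $|(i,i')\cap\hZ|=i'-i$ gives
\[
    \av_{(i,i')}(\bfy)-\gamma \;=\; \frac{f(i')-f(i)}{i'-i},
\]
so $\av_{(i,i')}(\bfy)\geq\gamma$ if and only if $f(i')\geq f(i)$. In particular \eqref{eq:lem:goodset:assum+} is equivalent to $f(i)>0$ for every $i\in[i^+_2,i^+_3]\cap\bbZ$.

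Now let $i^+_*$ be any minimizer of $f$ on the finite nonempty set $[i^+_1,i^+_2)\cap\bbZ$. For $i'\in(i^+_*,i^+_2)\cap\bbZ$, minimality gives $f(i')\geq f(i^+_*)$; for $i'\in[i^+_2,i^+_3]\cap\bbZ$, the hypothesis together with $f(i^+_*)\leq f(i^+_1)=0$ gives $f(i')>0\geq f(i^+_*)$. Combining the two ranges, $f(i')\geq f(i^+_*)$ for every $i'\in(i^+_*,i^+_3]\cap\bbZ$, which by the slope equivalence above translates to $\av_{(i^+_*,i')}(\bfy)\geq\gamma$ for every such $i'$, i.e.\ $h_{(i^+_*,i^+_3)}(\bfy)\geq\gamma$ by the definition \eqref{eq:h}.

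For the $-$ case, set $f(i):=\sum_{a\in(i,i^-_1)\cap\hZ} y_a-\gamma(i^-_1-i)$ on $[i^-_3,i^-_1]\cap\bbZ$ and choose $i^-_*$ as an argmin of $f$ on $(i^-_2,i^-_1]\cap\bbZ$; the identical two-range comparison delivers $h_{(i^-_*,i^-_3)}(\bfy)\geq\gamma$, where $[i^-_1,i^-_2)$ is read in the reversed-endpoint sense $(i^-_2,i^-_1]$ as in the convention of \eqref{eq:av}. There is no real obstacle here: the argument is purely combinatorial and relies only on identifying local averages with finite-difference slopes of a scalar function, followed by a standard argmin selection. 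The only subtlety is matching the half-integer/integer conventions in the definition of $\av$ and $h$, which is handled by the identity $|(i,i')\cap\hZ|=i'-i$ for integer endpoints.
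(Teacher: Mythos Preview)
Your proof is correct and follows essentially the same approach as the paper's: both recast the average condition as a slope condition on the partial-sum function $i\mapsto \sum_{a\in(i^+_1,i)} y_a$ (the paper compares this graph to the line of slope $\gamma$ through $(i^+_1,0)$, which is exactly your $f$ before subtracting the linear part), and then pick $i^+_*$ from $[i^+_1,i^+_2)\cap\bbZ$. The only cosmetic difference is the selection rule---you take an argmin of $f$, while the paper takes the \emph{last} index at which the graph is not strictly above the line---but both choices work for the same reason, and your two-range verification $(i^+_*,i^+_2)$ versus $[i^+_2,i^+_3]$ is in fact slightly more explicit than the paper's writeup.
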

\begin{proof} 
Without lost of generality we consider only the $ + $ case.
Let $ f $ be the counting function
$ f: \bbZ \to \bbR $, $i\mapsto \sum_{a\in(i^+_1,i)} y_a$,
and let $ \calL:= \{(x,\gamma(x-i^+_1)): x\in\bbR\} $ 
denote the straight line of slope $ \gamma $ passing through $ (i^+_1,0) $.
By \eqref{eq:lem:goodset:assum+},
the graph of $ f $ is above $ \calL $ for all $ i\in[i^+_2,i^+_3]\cap\bbZ $.
Hence, letting
\begin{align*}
	i^+_*:= \sup \curBK{ i\in[i^+_1,i^+_2] \cap\bbZ : (i,f(i)) \text{ is not above } \calL }
	\geq i^+_1,
\end{align*}
we clearly have $ \frac{f(i)-f(i^+_*)}{i-i^+_*} \geq \gamma $,
for all $ i\in[i^+_2,i^+_3]\cap\bbZ $,
which is equivalent to $h_{(i^+_*,i^+_3)}(\bfy)\geq\gamma$.
\end{proof}

\begin{proof}[Proof of Lemma~\ref{lem:fkG:exists}]
Fixing $ m,k\in\bbZ_{>0} $,
to simply notations, 
we omit the dependence on $ m,k $ of the index sets
(e.g.\ $ \fkN^\pm := \fkN^\pm_{m,k} $) and let
$ 
	\tilY^{\pm}_a := \undYlw_a(t',t'') \ind\{a\in\fkN^\pm\pm\frac12\}.
$
We show
\begin{align}\label{eq:fkG:goal}
	\av_{(\pm m,\pm j)}( \bftilY^\pm ) > \tfrac{\rho}{3}, \ \forall j\in [\pm 2m,\pm 3m],
	\quad
	\forall
	\text{ large enough } m.
\end{align}
This, by Lemma~\ref{lem:goodset} for $ (i^\pm_1,i^\pm_2,i^\pm_3) = (\pm m,\pm 2m, \pm 3m) $,
implies the existence of $ I^\pm\in[\pm m,\pm 2m)\cap\bbZ $ such that
$ h_{(I^\pm,\pm 3m)}(\bftilY^\pm) \geq \tfrac{\rho}{3} $.
For such $ I^\pm $, we have $ h_{(I^\pm,\pm 3m)}(\bfundYlw(t',t'')) \geq \frac{\rho}{3} $
and $ \tilY_{(I^\pm,I^\pm\pm 1)} \geq \frac{\rho}{3} >0 $.
The later implies $ I^\pm\in\fkN^\pm $, and therefore $ I^\pm\in\fkG^\pm $.
Hence, it suffices to prove \eqref{eq:fkG:goal}.

To the end of showing \eqref{eq:fkG:goal},
with $ \bftilY^\pm $ defined as in the preceding,
we begin by estimating $ |(\fkN^\pm)^c| $.
To this end, as $ \fkN^\pm $ is defined in terms of $ \fkA(s) $ and $ \fkI^\pm(s) $,
we first establish bounds on $ | \fkA(s) \cap (\pm m, \pm 3m) | $ and $ |\fkI^\pm(s)| $.
Fixing $ s\in[t',t''] $,
with $ N $ as in \eqref{eq:Lest:U1} and $ \fkA(s) $ as in \eqref{eq:Lest:A},
we have
\begin{align}\label{eq:Lest:Abd}
	| \fkA(s) \cap (\pm m, \pm 3m) | \leq | \fkA(s) \cap (0, \pm 3m) |
	\leq
	\tfrac{(3m)^{1-\alpha} N }{m^{-\alpha/(3p')}}
	\leq
	(3m)^{1-\frac{2\alpha}{3}} N.
\end{align}
Proceeding to bounding $ |\fkI^\pm_m(s)| $,
we require the following inequality:
for any finite $\calA\subset\hZ$, $n\in\bbZ_{>0}$, we have
\begin{align}\label{eq:topple}
	|\calI^\pm_n| \leq n|\calA|,
	\quad
	\text{ where }
	\calI^\pm_n
	:=
	\curBK{ i \in \bbZ : g_{(i,\pm\infty)}(\calA) > n^{-1} } \subset \hZ.	
\end{align}
To prove this inequality,
we image a pile of $ n $ particles at each site of $ \calA $,
and topple the particles to the left (for $ + $) or right (for $ - $) in any order,
so that each sites of $ \hZ $ contains at most one particle.
Letting $ \calA^{\pm}_n\subset\hZ $ denote the resulting set of particles,
we clearly have $ \calI^\pm_n \subset (\calA^{\pm}_n\mp\frac12) $ and $ |\calA^{\pm}_n| = n |\calA| $,
thereby concluding \eqref{eq:topple}.
Now, with $ \fkI^\pm(s) $ as in \eqref{eq:Lest:B},
combining \eqref{eq:Lest:Abd} and 
\eqref{eq:topple} for $ \calA= \fkA(s)\cap(\pm m,\pm 3m) $ and $n=\ceilBK{m^{\alpha/3}}$,
we arrive at
\begin{align*} %\label{eq:Lest:Bbd}
	|\fkI^\pm(s)|
	\leq
	\ceilBK{ m^{\alpha/3} } |\fkA(s)\cap(\pm m,\pm 3m)|
	\leq
	6N m^{1-\frac{\alpha}{3}}.
\end{align*}
Now, with $ \fkN^\pm $ as in \eqref{eq:Lest:C}, we have
$
	\ind\{ i\notin\fkN^\pm \}
	\leq
	m^{\frac{\alpha}{3p'}} \tfrac{1}{k} \sum_{s\in\calT_k} \ind\{ i\in\fkI^\pm(s) \}.
$
Summing both sides over $ i\in\bbZ $, we arrive at
\begin{align}\label{eq:Lest:Cbd}
	|(\fkN^\pm)^c|
	\leq
	\frac{1}{k} \sum_{s\in\calT_k} |\fkI^\pm(s)| m^{\frac{\alpha}{3p'}}
	\leq
	6N m^{1-\alpha'},
\end{align}
where $ \alpha':= \frac{\alpha}{3}(1-\frac{1}{p'})>0 $.

We proceed to proving \eqref{eq:fkG:goal}.
Fix $ j\in[\pm 2m,\pm 3m] $.
With $ \bftilY^\pm $ defined as in the proceeding,
we have
\begin{align*}
	\av_{(\pm m, j)}(\bftilY^\pm)
	=
	\av_{(\pm m, j)}(\bfundYlw(t',t''))
	-
	\frac{1}{|j\mp m|} \sum_{(\pm m,j)} \undYlw_{a}(t',t'') \ind\curBK{ a\in(\fkN^\pm)^c\pm\tfrac12 }.
\end{align*}
For the last term, with $ \frac{1}{|j\mp m|}|(\fkN^\pm)^c| \leq 6N m^{-\alpha'}$ (by \eqref{eq:Lest:Cbd})
and $ \bfYlw\in\RTsp(p) $, we have
\begin{align*}
	\frac{1}{|j\pm m|} \sum_{(\pm m,j)} \undYlw_{a}(t',t'') \ind\curBK{ a\in(\fkN^\pm)^c\pm\tfrac12 } 
	\xrightarrow{m\to\infty} 0,
	\
	\text{ uniformly in } j\in[\pm 2m,\pm 3m].
\end{align*}
Consequently, to prove \eqref{eq:fkG:goal}, we may and shall replace $ \bftilY^\pm $ with $ \bfundYlw(t',t'') $.
Applying the continuity estimate \eqref{eq:BesComp} for $ Y^*=\Ylw_a $, 
we have
\begin{align*}
	\av_{(\pm m,j)} ( \bfundYlw(t',t'') )
	&\geq	
	\av_{(\pm m,j)} ( \bfYlw(t'') ) - \av_{(\pm m,j)} (\bfQ^{t'}(t'')).
\end{align*}
With $ \bfYlw\in\YTsp(\alpha,\rho) $, 
the first term on the r.h.s.\ converges to $ \rho $ as $ m\to\infty $,
uniformly in $ j\in[\pm 2m,\pm 3m] $.
With $ q(t''-t',1)\leq\frac{\rho}{2} $, by \eqref{eq:QLLN},
the last term contributes $ \geq - \frac{\rho}{2} $ as $ m\to\infty $.
Combining the preceding we conclude \eqref{eq:fkG:goal}.
\end{proof}

Based on Lemma~\ref{lemma:Lest:L1}--\ref{lem:fkG:exists},
we now prove Lemma~\ref{lem:Lbd}\ref{enu:Lshort}.
\begin{proof}[Proof of Lemma~\ref{lem:Lbd}\ref{enu:Lshort}]
By Lemma~\ref{lemma:Lest:L1}--\ref{lem:fkG:exists},
we have that
\begin{align*}
	\inf_{i\in[\pm m, \pm 2m]} 
	L^{\pm,k}_{i,m} \leq (1+P^{1/p})c m^{-\alpha/(3p')} \log (1+m).
\end{align*}
Since the constant $ c $ does not depend on $ k $,
upon letting $k\to\infty$, by the continuity of $\Yup_a(\Cdot)$ and $\Ylw_a(\Cdot)$,
the l.h.s.\ tends to $ (\inf_{i\in[\pm m, \pm 2m]} \int_{t'}^{t''} L^{\pm}_{m,i}(s) ds) $.
Consequently, further letting $ m\to\infty $, we complete the proof.
\end{proof}

\begin{proof}[Proof of Proposition~\ref{prop:unique2}]
Fixing arbitrary $ t>0 $,
we partition $ [0,t] $ into $ j_* $ equally spaced subintervals 
$ [t_{j-1},t_{j}] $, $ j=1,\ldots,j_* $, so that $ q(t/j_*,1)<\tfrac{\rho}{2} $
(for $ q(t,1) $ as in \eqref{eq:QLLN}).
By \eqref{eq:uniEq:est}, we have
\begin{align*}
	\barE{(i_1,i_2)}(t_{j-1},t_{j}) 
	\leq
	E_{(i_1,i_2)}(t_{j-1})
	+
	\beta \int_{t_{j-1}}^{t_{j}} \BK{ L^+_{i_2}(s) + L^-_{i_1}(s) } ds,
\end{align*}
where $ \barE{(i_1,i_2)}(t_{j-1},t_{j}) := \sup_{s\in[t_{j-1},t_j]} E_{(i_1,i_2)}(s) $ 
by our convention.
Letting $ (i_1,i_2)\to(-\infty,\infty) $ 
and combining the result for $ j=1,\ldots,j_* $,
we obtain
\begin{align}\label{eq:unique:pre}
	\barE{(-\infty,\infty)}(0,t) 
	\leq
	\beta 
	\sum_{j=1}^{j_*}
	\liminf_{i\to\infty}
	\int_{t_{j-1}}^{t_{j}} \BK{ L^+_i(s) + L^-_{-i}(s) } ds.
\end{align}
Now, with $ L^\pm_i(s) = L^\pm_{i,m}(s) + \tilL^{\pm}_{i,m}(s) $,
we have
\begin{align*}
	\inf_{i\in[\pm m, \pm 2m]} \int_{t_{j-1}}^{t_{j}} L^\pm_i(s) ds
	\leq
	\inf_{i\in[\pm m, \pm 2m]} \int_{t_{j-1}}^{t_{j}} L^\pm_{i,m}(s) ds
	+
	\sup_{i\in[\pm m, \pm 2m]} \int_{t_{j-1}}^{t_{j}}\tilL^\pm_{i,m}(s) ds.	
\end{align*}
Applying Lemma~\ref{lem:Lbd} to bound the r.h.s., letting $ m\to\infty $,
and plugging the result in \eqref{eq:unique:pre},
we thus conclude $ \barE{(-\infty,\infty)}(0,t) =0 $.
\end{proof}

\section{Existence, Proof of Proposition~\ref{prop:existg}}
\label{sect:exist:special}

Fix $ \gamma>0 $ and $ \bfy^\ic\in[\gamma,\infty)^\hZ $ as in Proposition~\ref{prop:existg}.
We consider first the special case of \emph{equally spaced} initial condition,
$ \bfz^\ic := \bfgamma = (\ldots,\gamma,\gamma,\ldots) $,
and construct the corresponding iteration sequence 
$ \{\bfZ^{(n)}\}_{n\in\bbZ_{\geq 0}} $.
For $ n=0 $, $ \bfZ^{(0)} $ is the $ \Wfil $-adapted
Bessel process (as in \eqref{eq:iter0}) starting at $ \bfgamma $.
Recalling $ \Yspl(\gamma) $ and $ \YTspl(\gamma) $
are defined as in \eqref{eq:Yspl}--\eqref{eq:YTspl},
we check that $ \bfZ^{(0)} $ is $ \YTspl(\gamma) $-valued.
\begin{lemma}\label{lem:iter0YT}
We have $ \bfZ^{(0)} \in \YTspl(\gamma) $.
\end{lemma}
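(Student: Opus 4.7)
The plan is to control the infimum over $s \in [0,t]$ by a time-discretization plus drift argument, and then apply the strong law of large numbers on the i.i.d.\ subcollections of indices $\hZ_1$ and $\hZ_2$.

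First I would fix $t > 0$ and $\delta > 0$ and partition $[0,t]$ into $0 = t_0 < t_1 < \cdots < t_k = t$ of mesh $\leq \delta$. Since the Bessel drift $\beta/Z^{(0)}_a(s)$ in \eqref{eq:iter0} is non-negative, on each subinterval $[t_{j-1},t_j]$ and for every $s$ in it,
\begin{align*}
Z^{(0)}_a(s) \geq Z^{(0)}_a(t_{j-1}) + \bigl( W_a(s) - W_a(t_{j-1}) \bigr) \geq Z^{(0)}_a(t_{j-1}) + \widehat{W}_{a,j},
\end{align*}
where $\widehat{W}_{a,j} := \inf_{s \in [t_{j-1},t_j]} (W_a(s) - W_a(t_{j-1})) \leq 0$. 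Averaging over $a \in (0,m)\cap\hZ$ and minimizing over the finitely many intervals would then give
\begin{align*}
\inf_{s \in [0,t]} \av_{(0,m)}(\bfZ^{(0)}(s)) \geq \min_{1 \leq j \leq k} \Bigl\{ \av_{(0,m)}(\bfZ^{(0)}(t_{j-1})) + \av_{(0,m)}(\widehat{W}_{\cdot,j}) \Bigr\}.
\end{align*}

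Next I would invoke the same reasoning as in \eqref{eq:Qiid}: the Brownian motions $\{W_a\}_{a \in \hZ_i}$, $i=1,2$, are i.i.d.\ because consecutive elements of $\hZ_i$ lie at distance $\geq 2$ and hence share no underlying $B_j$. Since each $Z^{(0)}_a$ is a deterministic functional of $W_a$ alone, the same i.i.d.\ property transfers to $\{Z^{(0)}_a\}_{a\in\hZ_i}$ and to $\{\widehat{W}_{a,j}\}_{a\in\hZ_i}$. Applying the strong LLN on each parity separately would then yield, almost surely as $|m|\to\infty$,
\begin{align*}
\av_{(0,m)}(\bfZ^{(0)}(t_{j-1})) \to \bbE[Z^{(0)}_{1/2}(t_{j-1})] \geq \gamma, \qquad \av_{(0,m)}(\widehat{W}_{\cdot,j}) \to \bbE[\widehat{W}_{1/2,j}] \geq -c\sqrt{\delta},
\end{align*}
where the first lower bound uses $\bbE[W_{1/2}(s)]=0$ together with the non-negativity of the Bessel drift in \eqref{eq:iter0}, and the second follows from the reflection principle applied to the Brownian minimum. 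Integrability for the LLN comes from $\bbE[Z^{(0)}_{1/2}(s)^2] < \infty$ (standard squared-Bessel computation) and $\bbE[\widehat{W}_{1/2,j}^{\,2}] \leq c\delta$.

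Combining the previous two displays would give
\begin{align*}
\liminf_{|m|\to\infty} \inf_{s\in[0,t]} \av_{(0,m)}(\bfZ^{(0)}(s)) \geq \gamma - c\sqrt{\delta} \quad \text{a.s.,}
\end{align*}
and letting $\delta \to 0$ along a countable sequence would conclude $\bfZ^{(0)} \in \YTspl(\gamma)$. The main obstacle is handling the continuous-time infimum over $s$; the discretization-plus-drift step is what reduces it to an LLN at the finitely many deterministic times $\{t_{j-1}\}$, and the arbitrariness of $\delta$ absorbs the Brownian fluctuation error within each subinterval. That each $Z^{(0)}_a$ takes values in $C_+([0,\infty))$ is automatic, since \eqref{eq:iter0} is a Bessel equation of dimension $2\beta+1\geq 3$ starting from $\gamma>0$.
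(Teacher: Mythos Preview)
Your argument is correct, but it takes a considerably longer route than the paper's. The paper observes that the gap noises telescope: since $W_a(s)=B_{a+1/2}(s)-B_{a-1/2}(s)$, averaging the equation \eqref{eq:iter0} over $a\in(0,m)$ collapses the noise to $|m|^{-1}(B_m(s)-B_0(s))$, and the nonnegative Bessel drift gives directly
\[
\inf_{s\in[0,t]}\av_{(0,m)}(\bfZ^{(0)}(s)) - \gamma \;\geq\; -\sup_{s\in[0,t]}|m|^{-1}|B_m(s)-B_0(s)|,
\]
which tends to zero as $|m|\to\infty$ by an elementary Borel--Cantelli bound on a single Brownian motion. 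No time discretization and no law of large numbers are needed. Your approach instead treats the $Z^{(0)}_a$ as an i.i.d.\ ensemble (over each parity class) and controls the continuous-time infimum by discretizing and absorbing the local Brownian fluctuation; this is sound and would extend to situations where the driving noises do not telescope, but here it overlooks the simplest structural feature of the problem and incurs the extra $\sqrt{\delta}$ error that then has to be removed by a second limit.
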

\begin{proof}
Fix arbitrary $ t\geq 0 $.
With $ Z^{(0)}_a $ satisfying \eqref{eq:iter0} and $ Z^{(0)}_a(0)=\gamma $,
averaging \eqref{eq:iter0} over $a\in(0,m)$ using $W_a(t)=B_{a+1/2}(t)-B_{a-1/2}(t)$, we obtain
\begin{align*}
	\inf_{s\in[0,t]} \curBK{ \av_{(0,m)} \BK{ \bfZ^{(0)}(s) } } - \gamma 
	\geq
	-\sup_{s\in[0,t]} |m|^{-1} \absBK{ B_m(s) - B_0(s) }.
\end{align*}
Upon letting $|m|\to\infty$, the r.h.s.\ tends to zero,
whereby $ \bfZ^{(0)} \in \YTspl(\gamma) $ follows.
\end{proof}
\noindent
For $ n>0 $, we construct the $ \Wfil $-adapted, $ \YTspl(\gamma) $-valued process $ \bfZ^{(n)} $ 
by induction on $ n $, using Lemma~\ref{lem:oneD}.
That is, fixing $ n>0 $, for each $ a\in\hZ $,
we let $ Z^{(n)}_a $ be the unique solution of \eqref{eq:oneD'}
for $ F(y,s) = - \f_a(y,\bfZ^{(n-1)}(s)) $,
assuming $ \bfZ^{(n-1)} $ is the $ \Wfil $-adapted, $ \YTspl(\gamma) $-valued process
satisfying \eqref{eq:iter:}.
For Lemma~\ref{lem:oneD} to apply,
we indeed have that $ F(0,s)=0 $, that $ F(y,s) $ is $ \Wfil $-adapted (since $ \bfZ^{(n-1)}(s) $ is),
and that $ F(\Cdot,s) $ is uniformly Lipschitz, by \eqref{eq:psiLip}.
This yields the unique $ \Wfil $-adapted, $ C_+([0,\infty))^\hZ $-valued process $ \bfZ^{(n)} $.

To complete the construction, we show that $ \bfZ^{(n)} $ is also $ \YTspl(\gamma) $-valued.
To this end, we first establish the \emph{shift-invariance} of $ \bfZ^{(n)} $.
We say $ \bfZ: [0,\infty) \to [0,\infty)^\hZ $ is shift-invariant if 
$
	\bfZ(\Cdot) \stackrel{\text{distr}}{=} (Z_{a+i}(\Cdot))_{a\in\hZ}
	:=\theta_i\BK{\bfZ(\Cdot)},
$
$  \forall i\in\bbZ $.
\begin{lemma}\label{lem:shiftInv}
The processes $ \bfZ^{(0)},\ldots, \bfZ^{(n)} $, constructed in the preceding,
are shift-invariant.
\end{lemma}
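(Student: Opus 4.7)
The plan is to prove the joint statement by induction on $n$, leveraging two ingredients: the strong uniqueness provided by Lemma~\ref{lem:oneD} (which makes each $\bfZ^{(n)}$ a deterministic functional of the driving noise $\bfW$), and the shift-equivariance of the interaction $\f_a$. Specifically, a direct change of summation index in \eqref{eq:psi} shows that
\begin{align*}
	\f_{a+k}(y,\bfz) = \f_a(y, \theta_k\bfz),
	\qquad \forall\, k\in\bbZ,\ a\in\hZ,\ y\geq 0,\ \bfz\in(0,\infty)^\hZ,
\end{align*}
since $z_{(a+k, i+k)} = (\theta_k\bfz)_{(a,i)}$. At the level of the driving noise, the i.i.d.\ property of the family $\{B_i\}_{i\in\bbZ}$ gives $\theta_k\bfW \stackrel{\text{distr}}{=} \bfW$ as processes, since $W_{a+k} = B_{a+k+1/2}-B_{a+k-1/2}$ is built from the shifted family $\{B_{i+k}\}_i$.

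For the base case $n=0$, the pathwise uniqueness of the Bessel SDE \eqref{eq:iter0} identifies $Z_a^{(0)}$ as a measurable functional $\Phi(W_a)$ of the single Brownian motion $W_a$, with $\Phi$ independent of $a$ because the initial condition $\gamma$ is the same for all coordinates. Hence $\theta_k\bfZ^{(0)} = (\Phi(W_{a+k}))_{a\in\hZ} = \Phi\circ\theta_k(\bfW) \stackrel{\text{distr}}{=} \Phi(\bfW) = \bfZ^{(0)}$.

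For the inductive step, I will actually prove the stronger statement that the pair $(\bfW,\bfZ^{(0)},\ldots,\bfZ^{(n)})$ is jointly shift-invariant, since the equation for $Z^{(n)}_a$ depends on the whole trajectory of $\bfZ^{(n-1)}$ and not merely its marginal distribution. Assume the joint statement holds through $n-1$. By Lemma~\ref{lem:oneD}, $Z^{(n)}_a$ is the unique $\Wfil$-adapted solution of \eqref{eq:oneD'} with $F(y,s)=-\f_a(y,\bfZ^{(n-1)}(s))$, and writing out the same equation for $Z^{(n)}_{a+k}$ and using the identity $\f_{a+k}(y,\bfz)=\f_a(y,\theta_k\bfz)$ displayed above, one sees that the process $(Z^{(n)}_{a+k})_{a\in\hZ} = \theta_k\bfZ^{(n)}$ satisfies the very same system of one-dimensional SDE but driven by $\theta_k\bfW$ and with compression term computed from $\theta_k\bfZ^{(n-1)}$. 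By the pathwise uniqueness in Lemma~\ref{lem:oneD}, this means $\theta_k\bfZ^{(n)}$ is the measurable functional of $(\theta_k\bfW,\theta_k\bfZ^{(n-1)})$ obtained by applying the same solution map used to produce $\bfZ^{(n)}$ from $(\bfW,\bfZ^{(n-1)})$. Combining this with the inductive hypothesis $(\theta_k\bfW,\theta_k\bfZ^{(0)},\ldots,\theta_k\bfZ^{(n-1)}) \stackrel{\text{distr}}{=} (\bfW,\bfZ^{(0)},\ldots,\bfZ^{(n-1)})$ yields the corresponding joint identity through index $n$.

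The only delicate point is making the functional/measurability argument rigorous: one needs to know that the solution map from $(\bfW,\bfZ^{(n-1)})$ to $\bfZ^{(n)}$ produced by Lemma~\ref{lem:oneD} is a fixed measurable map, independent of any probabilistic structure, so that applying it to the shifted input indeed yields the shifted output. This is standard once pathwise uniqueness is in hand (via a Yamada--Watanabe-type argument), and no genuinely new estimate is required; the rest of the argument is a bookkeeping check of shift-equivariance and an application of the joint inductive hypothesis.
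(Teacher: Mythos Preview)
Your proposal is correct and follows essentially the same approach as the paper: both prove a strengthened inductive statement recording joint shift-invariance with the driving noise $\bfW$, both use the identity $\f_{a+k}(y,\bfz)=\f_a(y,\theta_k\bfz)$, and both close the induction via a Yamada--Watanabe-type argument from the pathwise uniqueness of Lemma~\ref{lem:oneD}. The only cosmetic differences are that the paper carries just $(\bfZ^{(j)},\bfW)$ in the hypothesis rather than the full tuple $(\bfZ^{(0)},\ldots,\bfZ^{(j)},\bfW)$, and it phrases the conclusion as uniqueness in law (passing through finite windows $(i_1,i_2)$ and letting them exhaust $\hZ$) rather than invoking a measurable solution map; both routes are equivalent here.
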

\begin{proof}
We prove by induction on $j$ the stronger statement
\begin{align}\label{eq:tranInv:induc}
	\BK{ \bfZ^{(j)}(\Cdot), \bfW(\Cdot) }
	\stackrel{\text{distr}}{=}
	\BK{ \theta_i\bfZ^{(j)}(\Cdot), \theta_i\bfW(\Cdot) },
	\quad
	\forall i\in\bbZ.
\end{align}
This is clear for $j=0$. 
For $ j>0 $,
with $ \f_a(y,\bfz) $ as in \eqref{eq:psi}, we have $\f_{a+i}(y,\bfz)=\f_{a}(y,\theta_i\BK{\bfz(s)})$.
Using this in \eqref{eq:iter}, we obtain
\begin{align*}
	Z^{(j)}_{a+i}(t) = \gamma + W_{a+i}(t) + 
	\beta \int_0^t \BK{ \tfrac{1}{Z^{(j)}_{a+i}(s)} -  
	\f_{a} \BK{ Z^{(j)}_{a+i}(s), \theta_i\BK{\bfZ^{(j-1)}(s)} } } ds.
\end{align*}
Combining this with the induction hypothesis,
we then deduce that $ \bfZ := \theta_i(\bfZ^{(j)}) $ solves
\begin{align}\label{eq:tranInv:uniq}
	Z_a(t) = \gamma + W'_a(t) 
	+ \beta \int_0^t \Big( \tfrac{1}{Z_a(s)} -& \f_a(Z_a(s),\bfZ'(s)) \Big) ds, 
	\quad
	a\in\hZ,
\\
	&
	\notag
	\text{ for } (\bfZ',\bfW') \stackrel{\text{distr}}{=} (\bfZ^{(i-1)},\bfW).
\end{align}
Indeed, $ \bfZ^{(j)} $ also solves \eqref{eq:tranInv:uniq}.
By Lemma~\ref{lem:oneD},
for each $ a\in\hZ $, the solution of \eqref{eq:tranInv:uniq} is unique in the \emph{pathwise} sense,
so the \emph{system} of \ac{SDE} \eqref{eq:tranInv:uniq} must also enjoy pathwise uniqueness.
Since pathwise uniqueness implies uniqueness in law
in finite dimensions (e.g.\ \cite[Proposition 5.3.20]{karatzas91}),
by first considering $ a\in \calI:=(i_1,i_2)\in\hZ $
and letting $ (i_1,i_2)\to(-\infty,\infty) $,
we obtain the uniqueness in law of \eqref{eq:tranInv:uniq}.
This completes the induction.
\end{proof}

Equipped with Lemma~\ref{lem:shiftInv},
we proceed to showing $ \bfZ^{(n)} \in \YTspl(\gamma) $.
To this end,
letting $ \fS_{(i_1,i_2)}(\bfy):=\sum_{a\in(i_1,i_2)}\fS_a(\bfy) $
(where $ \fS_a(\bfy) $ is defined as in \eqref{eq:fS}),
we will use the following readily verified identity (c.f.\ \eqref{eq:phifS})
in the proof of Lemma~\ref{lem:ZYsp}:
%For $ \bfx\in\Xsp(\alpha,\rho) $, letting $ \bfy:=u(\bfx) $,
%by \eqref{eq:phifS} we have 
\begin{align}\label{eq:resum1}
	\eta_{(i_1,i_2)}(\bfy) 
%	= 
%	\phi_{i_+}(\bfy) - \phi_{i_-}(\bfy) 
	=
	\barf_{(i_1,i_2)}(\bfy) - \undf_{(i_1,i_2)}(\bfy),
\end{align}
where $ i_- := (i_1\wedge i_2) < i_+:=(i_1\vee i_2) $ and
\begin{align}
	\label{eq:barf}
	\barf_{(i_1,i_2)}(\bfy) 
	&:=
	\sum_{i\in(i_1,i_2]} \frac{1}{2y_{(i_1,i)}}
	+
	\sum_{i\in(i_2,i_1]} \frac{1}{2y_{(i_2,i)}},	
\\
	\label{eq:undf}
	\undf_{(i_1,i_2)}(\bfy) 
	&:=
	\undffp_{(i_1,i_2)}(y_{(i_1,i_2)},\bfy) + \undffm_{(i_1,i_2)}(y_{(i_1,i_2)},\bfy),
\\	
	\label{eq:undffpm}
	&
	\undffpm_{(i_1,i_2)}(z,\bfy) 
	:= \sum_{i'\in(i_\pm,\pm\infty)} \frac{ z }{ 2(z+y_{(i,i')}) y_{(i,i')} }.	
\end{align}
Note that the expressions
$ \eta_{(i_1,i_2)}(\bfy) $,
$ \barf_{(i_1,i_2)}(\bfy) $
and
$ \undf_{(i_1,i_2)}(\bfy) $
are well-defined for all $ \bfy\in\Yspl(\gamma) $.

\begin{lemma}\label{lem:ZYsp}
Let $ \bfZ^{(0)},\ldots, \bfZ^{(n)} $, 
with $ \{\bfZ^{(i)}\}_{i=0}^{n-1}\subset \YTspl(\gamma) $ and $ \bfZ^{(n)}\in C_+([0,\infty))^\hZ $, be as in the proceeding,
we have $ \bfZ^{(n)} \in \YTspl(\gamma) $.
\end{lemma}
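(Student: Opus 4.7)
The strategy combines a monotonicity argument with the resummation identity \eqref{eq:resum1}.

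First, I establish $\bfZ^{(n)}(\Cdot) \leq \bfZ^{(n-1)}(\Cdot)$ almost surely by induction on $n$ using Lemma~\ref{lem:comp}. The base case $\bfZ^{(1)} \leq \bfZ^{(0)}$ holds since $\f_a \geq 0$ makes the drift of $Z^{(1)}_a$ pointwise at most the Bessel drift $\beta/y$ of $Z^{(0)}_a$. For the inductive step, if $\bfZ^{(n-1)}(\Cdot) \leq \bfZ^{(n-2)}(\Cdot)$, then by \eqref{eq:psi:mono:z}, $\f_a(y, \bfZ^{(n-1)}(s)) \geq \f_a(y, \bfZ^{(n-2)}(s))$, so the drift of $Z^{(n)}_a$ is pointwise at most that of $Z^{(n-1)}_a$; the local Lipschitz property \eqref{eq:psiLip}, which holds since $\bfZ^{(n-1)} \in \YTspl(\gamma)$, lets Lemma~\ref{lem:comp} conclude $\bfZ^{(n)}(\Cdot) \leq \bfZ^{(n-1)}(\Cdot)$.

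Combining this monotonicity with \eqref{eq:psi:mono:y} gives the key pointwise inequality
\begin{equation*}
\fS^{\mathrm{mix}}_a(s) := \tfrac{1}{Z^{(n)}_a(s)} - \f_a(Z^{(n)}_a(s), \bfZ^{(n-1)}(s)) \;\geq\; \fS_a(\bfZ^{(n-1)}(s)),
\end{equation*}
because $1/Z^{(n)}_a \geq 1/Z^{(n-1)}_a$ and $\f_a(Z^{(n)}_a, \bfZ^{(n-1)}) \leq \f_a(Z^{(n-1)}_a, \bfZ^{(n-1)})$. Averaging \eqref{eq:iter} over $a \in (0, m)$ via $W_a = B_{a+1/2} - B_{a-1/2}$ yields
\begin{equation*}
\av_{(0,m)}(\bfZ^{(n)}(t)) = \gamma + \frac{B_m(t) - B_0(t)}{m} + \frac{\beta}{m} \int_0^t \sum_{a \in (0, m)} \fS^{\mathrm{mix}}_a(s) \, ds,
\end{equation*}
and applying \eqref{eq:resum1} to $\bfZ^{(n-1)}$ together with $\barf_{(0,m)} \geq 0$ produces the lower bound
\begin{equation*}
\sum_{a \in (0, m)} \fS^{\mathrm{mix}}_a(s) \;\geq\; -\undf_{(0,m)}(\bfZ^{(n-1)}(s)).
\end{equation*}

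The \emph{main obstacle} is showing that $\int_0^t \undf_{(0,m)}(\bfZ^{(n-1)}(s))\,ds = o(m)$ almost surely as $|m| \to \infty$. Exploiting $\bfZ^{(n-1)} \in \YTspl(\gamma)$ (so cumulative sums $y_{(0,i)}(s) \gtrsim \gamma i$ uniformly in $s \in [0,t]$ for $i$ large), the relation $y_{(m,i)} \geq y_{(0,i)} - y_{(0,m)}$, and the Bessel integrability $\int_0^t 1/Z^{(n-1)}_a(s)\,ds < \infty$ a.s.\ (which follows from Lemma~\ref{lem:BesComp} applied to $\bfZ^{(n-1)}$), the contributions of $\undffp_{(0,m)}(y_{(0,m)}, \bfZ^{(n-1)}) = \tfrac12 \sum_{i>m} \tfrac{y_{(0,m)}(s)}{y_{(m,i)}(s) y_{(0,i)}(s)}$ and its counterpart $\undffm_{(0,m)}$ can be controlled by splitting the sum into a bulk part ($i - m \gtrsim m$), where linear growth of cumulative sums yields an $O(\log m)$ contribution, and a near-boundary part ($i - m$ small), where individual gap smallness is integrable in time. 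Combined with the standard estimate $\sup_{s \leq t} |B_m(s) - B_0(s)|/m \to 0$ a.s., this yields $\liminf_{|m| \to \infty} \inf_{s \in [0,t]} \av_{(0,m)}(\bfZ^{(n)}(s)) \geq \gamma$, proving $\bfZ^{(n)} \in \YTspl(\gamma)$.
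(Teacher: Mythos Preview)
Your proposal tracks the paper's opening moves correctly: the monotonicity $\bfZ^{(n)}\leq\bfZ^{(n-1)}$, the pointwise drift inequality, and the reduction via \eqref{eq:resum1} to showing $\int_0^t \undf_{(0,m)}(\bfZ^{(n-1)}(s))\,ds = o(|m|)$ almost surely. The gap is in that last step, specifically for the boundary term anchored at $m$.

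For $m>0$, the term $\undffp_{(0,m)}(y_{(0,m)},\bfy)=\tfrac12\sum_{j\geq 1} y_{(0,m)}\big/\big(y_{(0,m+j)}\,y_{(m,m+j)}\big)$ involves the partial sums $y_{(m,m+j)}$ \emph{anchored at $m$}. Membership $\bfZ^{(n-1)}\in\YTspl(\gamma)$ controls only sums anchored at $0$; the relation $y_{(m,m+j)}=y_{(0,m+j)}-y_{(0,m)}$ gives no useful lower bound unless $j$ is comparable to $m$, so your ``near-boundary'' window is not a few terms but $O(m)$ terms. For those terms---already the first one contributes $\tfrac12\int_0^t 1/Z^{(n-1)}_{m+1/2}(s)\,ds$---your justification ``individual gap smallness is integrable in time'' yields only that this integral is finite a.s.\ for each fixed $m$ (this follows from $Z^{(n-1)}_{m+1/2}\in C_+$, not from Lemma~\ref{lem:BesComp}, which bounds increments from above, not the process from below). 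It does not give the $o(m)$ growth as $m\to\infty$ that you need; there is no uniform-in-$m$ lower bound on individual gaps, and you have invoked nothing that controls the $m$-dependence.

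The paper resolves this by exploiting the special structure of the initial condition $\bfz^\ic=\bfgamma$: Lemma~\ref{lem:shiftInv} gives shift-invariance of $\bfZ^{(n-1)}$, so $\undffp_{(0,|m|)}(\bfZ^{(n-1)})$ has the same \emph{law} as $\undffp_{(0,-|m|)}(\bfZ^{(n-1)})$, and the latter is anchored at $0$ and hence controlled directly (this is \eqref{eq:ZYsp:psi-}). This yields convergence in law to $0$; the Birkhoff--Khinchin ergodic theorem (applicable because $\bfZ^{(n)}$ is shift-invariant with finite first moment) guarantees that $\av_{(0,m)}(\bfZ^{(n)}(s'))$ converges a.s., which upgrades the in-law vanishing of the error to a.s.\ vanishing. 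Your argument omits both shift-invariance and ergodicity, and without them the ``anchored at $m$'' boundary term cannot be handled. The paper then passes from fixed $s'$ to uniform $s\in[0,t]$ via the continuity estimate \eqref{eq:BesComp} in a separate second step.
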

\begin{proof}
Let $ V^n_{(i_1,i_2)}(s) := \av_{(i_1,i_2)} (\bfZ^{(n)}(s)) $.
With $ \bfZ^{(n)}\in C_+([0,\infty))^\hZ $,
fixing $t\geq 0$, it suffices to show $ (\liminf_{|m|\to\infty} \underline{V^{n}_{(0,m)}}(0,t)) \geq \gamma $.
We achieve this in two steps by showing
\begin{align*}
	& i)&
	&\lim_{|m|\to\infty} V^{n}_{(0,m)}(s') \geq \gamma \ \text{ almost surely, for each fixed } s'\in [0,t];
	&
\\
	& ii)&
	& \liminf_{|m|\to\infty} \underline{V^{n}_{(0,m)}}(t) \geq \gamma \ \text{ almost surely.}
\end{align*}

\textit{i})
Fixing $ s'\in[0,t] $,
we begin by deriving a lower bound on $ V^{n}_{(0,m)}(s') $.
With $\bfZ^{(n-1)}(\Cdot)\geq\bfZ^{(n)}(\Cdot)$ (by Proposition~\ref{prop:iterDecr}),
by \eqref{eq:psi:mono:y} we have
\begin{align*}
	\tfrac{1}{Z^{(n)}_a(s)} - \f_a(Z^{(n)}_a(s),\bfZ^{(n-1)}_a(s))
	\geq
	\tfrac{1}{Z^{(n-1)}_a(s)} - \f_a(Z^{(n-1)}_a(s),\bfZ^{(n-1)}_a(s))
	=
	\fS_a(\bfZ^{(n-1)}(s)).
\end{align*}
Inserting this into \eqref{eq:iter}, summing the result over $ a\in(0,m) $,
and dividing both sides by $ |m| $, 
with $ \sum_{a\in(0,m)} W_a(s') = B_{m}(s') - B_0(s') $, $ Z^{(n)}_a(0)=\gamma $ and \eqref{eq:resum1},
we have
\begin{align}
	\label{eq:YZsp:eq}
	V^{n}_{(0,m)}(s')
	\geq
	\gamma + 
	|m|^{-1}(B_{m}(s') - B_0(s')) 
	- \frac{\beta}{|m|} \int_0^{s'} \undf_{(0,m)}(\bfZ^{(n-1)}(s)) ds.
\end{align}
As $ \lim_{|m|\to\infty} (|m|^{-1}(B_{m}(s') - B_0(s')))= 0 $ almost surely,
it clearly suffices to show
\begin{align}\label{eq:ZYsp:fpm:}
	\int_0^{s'} |m|^{-1} \undf_{(0,m)}(\bfZ^{(n-1)}(s)) ds \longrightarrow 0~
	\text{ almost surely, as } |m|\to\infty.
\end{align}
With $\{Z^{(n)}_a(s')\}_{a\in\hZ}$ being shift-invariant (by Lemma~\ref{lem:shiftInv})
and having a finite first moment (since $ \bfZ^{(n)}(s') \leq \bfZ^{(0)}(s') $),
by the Birkhoff--Khinchin ergodic theorem,
we have that $ V^{n}_{(0,m)}(s') $ converges almost surely 
(to a possibly random limit) as $ |m| \to \infty $.
Using this, we further reduce showing \eqref{eq:ZYsp:fpm:} to showing
\begin{align}\label{eq:ZYsp:fpm}
	\int_0^{s'} |m|^{-1} \undf_{(0,m)}(\bfZ^{(n-1)}(s)) ds \Longrightarrow 0,
	\text{ as } |m|\to\infty,
\end{align}
where $ \Rightarrow $ denotes convergence in law.

We proceed to showing \eqref{eq:ZYsp:fpm}.
This, with \eqref{eq:undf},
amounts to estimating $ \undfpm_\calI(\bfy) := \undffpm_{\calI}( y_{\calI}, \bfy ) $,
for $ \calI := (0,m) $ and $ \bfy=\bfZ^{(n-1)}(s) $.
With $ Z^{(n-1)}_a $ satisfying \eqref{eq:iter},
by \eqref{eq:BesComp:sup} we have that 
$ 
	Z^{(n-1)}_a(s') 
	\leq 
		Z^{(n-1)}_a(0) + Q^{0,s'}_a 
	=
		\gamma + Q^{0,s'}_a.
$
Combining this with \eqref{eq:QLLN},
we have
\begin{align*}
	N := \sup \curBK{  \overline{ V^{n-1}_{(0,m)}} (0,s'): m\in\bbZ } < \infty.
\end{align*}
With $\bfZ^{(n-1)}\in\YTspl(\gamma)$, we have
$
	D := 
	\inf \Big\{
		\underline{ V^{n-1}_{(i,0)} } \big(0,s'\big), i\neq 0
	\Big\}
	>0.
$
With
\begin{align*}
	\undfm_{(0,|m|)}(\bfy)
	=
	\sum_{i=1}^\infty \frac{y_{(0,|m|)}}{y_{(-i,|m|)}y_{(-i,0)}},
	\quad
	\undfp_{(0,-|m|)}(\bfy)
	=
	\sum_{i=1}^\infty \frac{y_{(-|m|,0)}}{y_{(0,i)}y_{(-|m|,i)}},	
\end{align*}
so by the preceding bounds we then have
\begin{align}\label{eq:ZYsp:psi-}
	\int_{0}^{s'} |m|^{-1} \undfmp_{(0,\pm|m|)} \BK{\bfZ^{(n-1)}(s)} ds
	\leq  
	s' \sum_{i=1}^\infty \frac{N}{2(iD+|m|N)iD}
	\xrightarrow{\text{a.s.}}
	0,
	\text{ as }|m|\to \infty.
\end{align}
Next, using the shift-invariance of $\bfZ^{(n-1)}$, we have
\begin{align*}
	\undfmp_{(0,\pm|m|)} \BK{ \bfZ^{(n-1)}(s) }
	\stackrel{\text{distr}}{=}
	\undfmp_{(0,\pm|m|)} \BK{ \theta_{\mp|m|}(\bfZ^{(n-1)}(s)) }
	=
	\undfmp_{(0,\mp|m|)} \BK{ \bfZ^{(n-1)}(s) }.
\end{align*}
Combining this with \eqref{eq:ZYsp:psi-} yields
\begin{align*} %\label{eq:ZYsp:psi+}
	\int_{0}^{s'} |m|^{-1} \undfmp_{(0,\mp|m|)} \BK{ \bfZ^{(n-1)}(s) } ds
	\Longrightarrow
	0,
	\quad \text{ as } |m|\to\infty.
\end{align*}
From this and \eqref{eq:ZYsp:psi-} we conclude \eqref{eq:ZYsp:fpm},
thereby completing the proof of (\textit{i}).

\textit{ii})
With (\textit{i}),
this is achieved by a continuity estimate based on \eqref{eq:BesComp}.
To this end, partition $[0,t]$ into $j_*$ 
equally spaced subintervals $0=t_0<\ldots<t_{j_*}=t$.
For each $a\in\hZ$, with $ Z^{(n)}_a $ satisfying \eqref{eq:iter},
we apply \eqref{eq:BesComp} for $ Y^*=Z^{(n)}_a $.
Averaging the result over $a\in(0,m)$, we obtain 
\begin{align}\label{eq:Zlw:tj}
	\underline{ V^{n}_{(0,m)} }(t_{j-1},t_j)
\geq
	V^{n}_{(0,m)}(t_j) 
	- \av_{(0,m)} \BK{ \bfQ^{t_{j}}(t_{j-1}) }.
\end{align}
Letting $|m|\to\infty$, by (\textit{i}) and \eqref{eq:QLLN},
we have
\begin{align*}
	\liminf_{|m|\to\infty} \underline{ V^{n}_{(0,m)} }(t_{j-1},t_j)
\geq
	\gamma - q(t/j_*,1).
\end{align*}
Combining this for $ j=1,\ldots,j_* $,
using the readily verified inequality
\begin{align*}
	\liminf_{|m|\to\infty} \underline{f_m}(0,t)
	\geq
	\min_{j=1}^{j_*}
	\Big\{
		\liminf_{|m|\to\infty} \underline{f_m}(t_{j-1},t_j)
	\Big\},
	\quad
	f_m(\Cdot) : [0,\infty) \to \bbR,
\end{align*}
we thus conclude
$
	(\liminf_{|m|\to\infty} \underline{ V^{n}_{(0,m)} }(0,t)) 
	\geq 
	\gamma - q(t/j^*,1),
$
almost surely. 
With $j^*$ being arbitrary, 
the proof is completed upon letting $ j^*\to\infty $, (whence $q(t/j^*,1)\to 0$).
\end{proof}

Having constructed the iteration sequence $ \{\bfZ^{(n)}\}_{n} $ for $ \bfz^\ic=\bfgamma $,
with $ \bfZ^{(n)}(\Cdot) \geq \bfZ^{(n+1)}(\Cdot) $ (by Proposition~\ref{prop:iterDecr}),
we let $ Z^{(\infty)}_a(t) := \lim_{n\to\infty} Z^{(n)}_a(t) \geq 0 $ denote the limiting process.
We next establish a lower bound on the average spacing of $ \bfZ^{(\infty)} $.
\begin{lemma}\label{lem:YZlw}
We have $ \bfZ^{(\infty)}\in\YTspll(\gamma) $ almost surely,
where
\begin{align}\label{eq:YTspll}
	\YTspll(\gamma) :=
	\Big\{  
		\bfy(\Cdot): [0,\infty) \to [0,\infty)^\hZ:
		\liminf_{|m|\to\infty} \inf_{s\in[0,t]} \av_{(0,m)} (\bfy(s)) \geq \gamma, \forall t \geq 0
	\Big\}.
\end{align}
\end{lemma}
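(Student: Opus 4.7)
My plan is to adapt the two-step structure of the proof of Lemma~\ref{lem:ZYsp} to the monotone pointwise limit $\bfZ^{(\infty)}$. Abbreviate $V^n_{(0,m)}(s):=\av_{(0,m)}(\bfZ^{(n)}(s))$ for $n\in\bbZ_{\geq 0}\cup\{\infty\}$; since $\bfZ^{(n)}\downarrow\bfZ^{(\infty)}$ pointwise and the sum over $(0,m)\cap\hZ$ is finite, $V^n_{(0,m)}(s)\downarrow V^\infty_{(0,m)}(s)$ as $n\to\infty$.

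\textbf{Step 1 (fixed-time density).} I would first establish that, for each fixed $s'\in[0,t]$, $\lim_{|m|\to\infty} V^\infty_{(0,m)}(s')\geq\gamma$ almost surely. The inductive construction \eqref{eq:iter:} presents each $\bfZ^{(n)}$ as a shift-equivariant measurable functional of $\bfW$, so Lemma~\ref{lem:shiftInv} upgrades to joint shift-invariance of the family $\{\bfZ^{(n)}\}_{n\geq 0}$ together with $\bfW$, and this stationarity persists for $\bfZ^{(\infty)}$ under the decreasing pointwise limit. Since $0\leq Z^{(\infty)}_a(s')\leq Z^{(0)}_a(s')$ with the latter Bessel and integrable, Birkhoff--Khinchin applied to the joint stationary system gives, for every $n\in\bbZ_{\geq 0}\cup\{\infty\}$,
\begin{align*}
    V^n_{(0,m)}(s') \longrightarrow \bbE\bigl[\,Z^{(n)}_{1/2}(s')\,\big|\,\mathscr{I}\,\bigr] \qquad\text{a.s.\ as }|m|\to\infty,
\end{align*}
where $\mathscr{I}$ is the common shift-invariant $\sigma$-algebra. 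The proof of Lemma~\ref{lem:ZYsp}(i) supplies $\bbE[Z^{(n)}_{1/2}(s')\mid\mathscr{I}]\geq\gamma$ a.s.\ for each finite $n$. Conditional monotone convergence, applied to $Z^{(n)}_{1/2}(s')\downarrow Z^{(\infty)}_{1/2}(s')\geq 0$, then transports the bound to $n=\infty$: $\bbE[Z^{(\infty)}_{1/2}(s')\mid\mathscr{I}]=\lim_n\bbE[Z^{(n)}_{1/2}(s')\mid\mathscr{I}]\geq\gamma$.

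\textbf{Step 2 (uniform-in-time density).} I would upgrade Step~1 to a bound on $\inf_{s\in[0,t]}V^\infty_{(0,m)}(s)$ by a partition argument mirroring part (ii) of Lemma~\ref{lem:ZYsp}. Partition $[0,t]$ into $j_*$ equal subintervals $0=t_0<\cdots<t_{j_*}=t$. For each $n$, $Z^{(n)}_a$ solves \eqref{eq:iter} with nonnegative correction $F=\f_a(\,\cdot\,,\bfZ^{(n-1)})$, so Lemma~\ref{lem:BesComp} gives $Z^{(n)}_a(t_j)-\underline{Z^{(n)}_a}(t_{j-1},t_j)\leq Q^{t_{j-1}}_a(t_j)$. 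Infima commute, so $\underline{Z^{(\infty)}_a}(t_{j-1},t_j)=\inf_{s,n}Z^{(n)}_a(s)=\lim_n\underline{Z^{(n)}_a}(t_{j-1},t_j)$, and the same inequality survives $n\to\infty$. Averaging over $a\in(0,m)\cap\hZ$ and invoking the elementary bound $|m|^{-1}\sum_a\inf_s Y_a(s)\leq\inf_s|m|^{-1}\sum_a Y_a(s)$ yields
\begin{align*}
    \underline{V^\infty_{(0,m)}}(t_{j-1},t_j) \,\geq\, V^\infty_{(0,m)}(t_j) - \av_{(0,m)}(\bfQ^{t_{j-1}}(t_j)).
\end{align*}
Letting $|m|\to\infty$, combining Step~1 at $s'=t_j$ with \eqref{eq:QLLN}, taking the minimum across $j=1,\ldots,j_*$, and finally sending $j_*\to\infty$ (so $q(t/j_*,1)\to 0$) gives $\liminf_{|m|\to\infty}\underline{V^\infty_{(0,m)}}(0,t)\geq\gamma$, which is the defining condition for $\bfZ^{(\infty)}\in\YTspll(\gamma)$.

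\textbf{Main obstacle.} The nontrivial step is Step~1: the monotonicity $V^\infty_{(0,m)}\leq V^n_{(0,m)}$ points in the \emph{wrong} direction for pushing the $\YTspl(\gamma)$-bound from $\bfZ^{(n)}$ to $\bfZ^{(\infty)}$ via Fatou or monotone convergence on the spatial variable $m$. The ergodic theorem is the indispensable bridge: it converts Ces\`aro density convergence into an almost-sure identification with a conditional expectation, against which monotone convergence in $n$ operates in the correct direction. Step~2 is then routine, because the Bessel comparison \eqref{eq:BesComp} descends through decreasing pointwise limits without requiring that $\bfZ^{(\infty)}$ itself solve an equation of the form \eqref{eq:BesComp:Y}.
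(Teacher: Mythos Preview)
Your proposal is correct and follows essentially the same two-step architecture as the paper: ergodic theorem plus monotone convergence in $n$ for the fixed-time bound, then the Bessel continuity estimate \eqref{eq:BesComp} passed to the limit for the uniform-in-time bound. The only noteworthy difference is in Step~1: the paper works with \emph{unconditional} expectations, invoking uniform integrability of $\{V^n_{(0,m)}(s)\}_m$ (via the domination $Z^{(n)}_a\leq\gamma+Q^{0,s}_a$) to identify $\bbE(V^n(s))=\bbE(Z^{(n)}_{1/2}(s))$, and then combines $\bbE(V^n(s))\to\bbE(V^\infty(s))$ with the pointwise ordering $V^n(s)\geq V^\infty(s)$ to force $V^n(s)\to V^\infty(s)$ almost surely. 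Your route---identifying the ergodic limit directly as the conditional expectation $\bbE[Z^{(n)}_{1/2}(s')\mid\mathscr{I}]$ and applying conditional monotone convergence---is a genuine streamlining that bypasses the uniform-integrability detour, but it is the same underlying mechanism. Step~2 is identical to the paper's.
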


\begin{proof}
Fixing $ t\geq 0 $,
we let $ V^\infty_\calI(s) := \av_{\calI} (\bfZ^{(\infty)}(s)) $,
and recall that $ V^n_{\calI}(s) := \av_{\calI} (\bfZ^{(n)}(s)) $.
As already mentioned in the proof of Lemma~\ref{lem:ZYsp},
since $ \bfZ^{(n)} $ is shift-invariant for $ n\in\bbZ_{>0} $ 
and (hence) for $ n=\infty $,
and since each $ Z^{(n)}_a $ as a finite mean for $ n\in\bbZ_{>0}\cup\{\infty\} $
(because $\bfZ^{(\infty)}(s) \leq \bfZ^{(0)}(s) $),
by the Birkhoff--Khinchin ergodic theorem, the limits 
\begin{align*}
	V^n(s) :=\lim_{|m|\to\infty} V^{n}_{(0,m)}(s),
	\quad
	V^\infty(s) :=\lim_{|m|\to\infty} V^{\infty}_{(0,m)}(s),
\end{align*}
exists almost surely.

As in the proof of Lemma~\ref{lem:ZYsp},
we proceed by first proving
$ V^\infty(s) \geq \gamma $ almost surely,
for any fixed $ s\in[0,t] $.
With
$
	Z^{(\infty)}_a(s) \leq Z^{(n)}_a(s) \leq Z^{(0)}_a(s)
	\leq
	\gamma + Q^{0,s}_a,
$
we have that
$
	\{ V^{n}_{(0,m)}(s) \}_{m\in\bbZ}
$
is uniformly integrable, for $ n\in\bbZ_{\geq 0}\cup\{\infty\} $.
Consequently,
we have
\begin{align*}
	\bbE(V^n(s)) 
	= 
	\lim_{|m|\to\infty}  
	\bbE\Big( V^n_{(0,m)}(s) \Big)	
	= 
	\lim_{|m|\to\infty}  
	\bbE\Big( \av_{(0,m)} (\bfZ^{(n)}(s)) \Big)
	=
	\bbE( Z^{(n)}_{1/2}(s) ),
	\
	\forall  n\in\bbZ_{\geq 0}\cup\{\infty\}.
\end{align*}
With $ Z^{(n)}_{1/2}(s) \searrow Z^{(\infty)}_{1/2}(s) $,
we thus conclude $ \bbE(V^n(s) ) \to \bbE(V^\infty(s)) $.
Combining this with $ V^n(s) \geq V^\infty(s) \geq 0 $ 
(as $ \bfZ^{(n)}(s) \geq \bfZ^{\infty}(s) \geq \mathbf{0} $),
we further obtain that $ V^n(s) \to V^\infty(s) $ almost surely.
By Lemma~\ref{lem:ZYsp}, $ V^\infty(s) \geq \gamma $ almost surely,
so $ V^\infty(s) \geq \gamma $ almost surely.

Now, letting $n\to\infty$ in \eqref{eq:Zlw:tj}, 
we obtain
\begin{align*}
	\underline{ V^\infty_{(0,m)} }(t_{j-1},t_j)
\geq
	\underline{ V^\infty_{(0,m)} }(t_j)
	- \av_{(0,m)} ( \bfQ^{t_{i}}(t_{j-1}) ).
\end{align*}
With this and $ V^\infty(t_j) \geq \gamma $,
the proof is completed by following the same continuity 
argument as in the proof of Lemma~\ref{lem:ZYsp}(\textit{ii}).
\end{proof}

Now, we turn to the initial condition $ \bfy^\ic\in [\gamma,\infty)^\hZ $
and construct the corresponding iteration sequence and limiting process.
\begin{lemma}\label{lem:Yiter}
Let $ \bfy^\ic\in [\gamma,\infty)^\hZ $ be as in the preceding.
There exists a $ \YTspl(\gamma) $-valued, $ \Wfil $-adapted,
decreasing sequence $ \{\bfY^{n}\}_{n\in\bbZ_{\geq 0}} $ satisfying \eqref{eq:iter:}.
Further, with $ Y^{(\infty)}_a(t) := \lim_{n\to\infty} Y^{(n)}_a(t) \geq 0 $
denoting the limiting process,
we have $ \bfY^{(\infty)} \in \YTspl'(\gamma) $.
\end{lemma}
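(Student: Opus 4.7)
The plan is to reduce the general initial condition $\bfy^\ic\in[\gamma,\infty)^\hZ$ to the equally-spaced case $\bfz^\ic=\bfgamma$ already handled in Lemmas~\ref{lem:iter0YT}--\ref{lem:YZlw}. Since $\bfy^\ic\geq\bfgamma$ componentwise, the comparison principle will propagate the pointwise inequality $\bfY^{(n)}(\Cdot)\geq\bfZ^{(n)}(\Cdot)$ through the iteration scheme, thereby transferring the lower bounds from the $\bfZ$-sequence to the $\bfY$-sequence.

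I would carry out a joint induction on $n$, simultaneously constructing $\bfY^{(n)}$ and establishing both $\bfY^{(n)}\in\YTspl(\gamma)$ and $\bfY^{(n)}(\Cdot)\geq\bfZ^{(n)}(\Cdot)$ almost surely. For $n=0$, let $Y^{(0)}_a$ be the $\Wfil$-adapted Bessel process \eqref{eq:iter0} started at $y^\ic_a$; the deterministic pathwise comparison Lemma~\ref{lem:comp} applied to this one-dimensional equation with $y^\ic_a\geq\gamma=Z^{(0)}_a(0)$ yields $Y^{(0)}_a(\Cdot)\geq Z^{(0)}_a(\Cdot)$, and $\bfY^{(0)}\in\YTspl(\gamma)$ then follows from Lemma~\ref{lem:iter0YT} since the defining $\liminf$-condition is monotone in $\bfy$. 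For $n\geq 1$, with the induction hypotheses $\bfY^{(n-1)}\in\YTspl(\gamma)$ and $\bfY^{(n-1)}\geq\bfZ^{(n-1)}$, I would invoke Lemma~\ref{lem:oneD} with $F(y,s)=-\beta\f_a(y,\bfY^{(n-1)}(s))$ to construct the $\Wfil$-adapted $Y^{(n)}_a\in C_+([0,\infty))$: the requisite $F(0,s)=0$ and uniform Lipschitz continuity in $y$ are supplied by \eqref{eq:psi} and \eqref{eq:psiLip}. The monotonicity \eqref{eq:psi:mono:z} together with $\bfY^{(n-1)}\geq\bfZ^{(n-1)}$ gives
\begin{align*}
-\f_a(y,\bfY^{(n-1)}(s))\;\geq\;-\f_a(y,\bfZ^{(n-1)}(s)),
\end{align*}
so the drift for $Y^{(n)}_a$ dominates that for $Z^{(n)}_a$ pointwise; Lemma~\ref{lem:comp} then yields $Y^{(n)}_a(\Cdot)\geq Z^{(n)}_a(\Cdot)$. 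Combined with $\bfZ^{(n)}\in\YTspl(\gamma)$ from Lemma~\ref{lem:ZYsp}, this closes the induction.

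The decreasing property \eqref{eq:Ydec} now follows from Proposition~\ref{prop:iterDecr}(a), which applies since $\{\bfY^{(i)}\}_{i=0}^n\subset\YTspl(\gamma)$ has been verified; hence the pointwise monotone limit $Y^{(\infty)}_a(t):=\lim_{n\to\infty}Y^{(n)}_a(t)\geq 0$ exists and is $\Wfil$-adapted. Passing to the limit in $\bfY^{(n)}\geq\bfZ^{(n)}$ gives $\bfY^{(\infty)}\geq\bfZ^{(\infty)}$, and with Lemma~\ref{lem:YZlw} providing $\bfZ^{(\infty)}\in\YTspll(\gamma)$ and the $\liminf$-bound again monotone in $\bfy$, we conclude $\bfY^{(\infty)}\in\YTspll(\gamma)$. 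The only subtle point is avoiding circularity: Proposition~\ref{prop:iterDecr}(b) cannot be invoked directly to obtain $\bfY^{(n)}\geq\bfZ^{(n)}$, because it presumes both sequences already $\YTspl(\gamma)$-valued; the joint induction circumvents this by applying Lemma~\ref{lem:comp} pathwise at each step, which is the mechanism underlying Proposition~\ref{prop:iterDecr}(b) but does not itself require a priori $\YTspl$-membership.
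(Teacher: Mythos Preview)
Your proof is correct and follows essentially the same approach as the paper: reduce to the equally-spaced case via the componentwise comparison $\bfY^{(n)}\geq\bfZ^{(n)}$, then inherit the $\YTspl(\gamma)$ and $\YTspll(\gamma)$ memberships from Lemmas~\ref{lem:ZYsp} and~\ref{lem:YZlw}. Your observation about the potential circularity in invoking Proposition~\ref{prop:iterDecr}(b) as a black box is astute---the paper's brief phrase ``by induction on $n$ using Proposition~\ref{prop:iterDecr}'' sweeps this under the rug---and your joint-induction remedy, applying Lemma~\ref{lem:comp} directly at each step, is exactly the mechanism underlying that proposition's proof; so this is a clarification rather than a different route.
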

\begin{proof}
To construct such a sequence $ \{\bfY^{n}\}_{n} $,
as seen from the proceeding construction of $ \{\bfZ^{n}\}_n $,
it suffices to show $ \bfY^{(n)}\in\YTspl(\gamma) $.
This follows directly by induction on $ n $ using Proposition~\ref{prop:iterDecr},
which assures $ \bfY^{(n)}(\Cdot) \geq \bfZ^{(n)}(\Cdot) $.
Letting $ n\to\infty $ in the previous inequality,
we obtain $ \bfY^{(\infty)}(\Cdot) \geq \bfZ^{(\infty)}(\Cdot) $,
thereby concluding $ \bfY^{(\infty)} \in \YTspl'(\gamma) $.
\end{proof}

With $ \bfY^{(\infty)} $ constructed as in the preceding,
we proceed to showing that $ \bfY^{(\infty)} $ is in fact a $ \YTspl(\gamma) $-valued solution.
This is done in a slightly more general context as follows.

\begin{proposition}\label{prop:bd>0}
Let $ \{\bfY^{[n]}\}_{n\in\bbZ_{>0}} $ 
be a nonnegative decreasing sequence such that either 
\begin{enumerate}[label=\itshape\roman*\upshape)]
\item \eqref{eq:iter} holds; 
or \item  $ \bfY^{[n]} $ solves \eqref{eq:DBMg} for all $ n $.
\end{enumerate}
Let $ Y^{[\infty]}_a(t) := \lim_{n\to\infty} Y^{[n]}_a(t) $ denote the limiting process.
If $ \bfY^{[\infty]}\in\YTspll(\gamma) $, $ \gamma>0 $, and $ \bfY^{[\infty]}(0) \in (0,\infty)^\hZ$,
then $\bfY^{[\infty]}$ is a $\YTspl(\gamma)$-valued solution of \eqref{eq:DBMg}.
\end{proposition}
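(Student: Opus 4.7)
The conclusion asks for two things beyond the hypothesis $\bfY^{[\infty]}\in\YTspll(\gamma)$: each $Y^{[\infty]}_a$ takes values in $C_+([0,\infty))$ (continuity and strict positivity), and $\bfY^{[\infty]}$ satisfies the \ac{SDE} \eqref{eq:DBMg}. Both reduce to the non-collision estimate
\[
\inf_{s\in[0,t]} Y^{[\infty]}_a(s) > 0\quad\text{a.s., for all } a\in\hZ,\ t\geq 0,
\]
after which one passes to the limit $n\to\infty$ in \eqref{eq:iter} (case~i) or \eqref{eq:DBMg} (case~ii) by dominated convergence, which simultaneously delivers the \ac{SDE} and the continuity of $t\mapsto Y^{[\infty]}_a(t)$.

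The plan for the non-collision estimate is a large-window localization. Fix $a_0\in\hZ$ and $t>0$. The hypothesis $\bfY^{[\infty]}\in\YTspll(\gamma)$ supplies a random radius $R$ with $a_0\in\calI:=(-R,R)\cap\hZ$ such that $\inf_{s\in[0,t]} Y^{[\infty]}_{(a,i)}(s)\geq \tfrac{\gamma}{4}|i-a|$ for every $a\in\calI$ and every $i\notin\calI$ with $|i-a|$ sufficiently large. By the monotonicity $\bfY^{[n]}\geq\bfY^{[\infty]}$ in the definition of $Y^{[\infty]}$, the same lower bound holds with $\bfY^{[n]}$ in place of $\bfY^{[\infty]}$ uniformly in $n$. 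Consequently the far-field part of the compression
\[
\f_{a}^{\text{far}}(y,\bfY^{[n]}(s))
:=
\tfrac12\sum_{i\notin\calI,\,|i-a|>1}\frac{y}{Y^{[n]}_{(a,i)}(s)\bigl(y+Y^{[n]}_{(a,i)}(s)\bigr)}
\leq y\,Z^{*}_{a}(s),\quad
Z^{*}_{a}(s):=\tfrac12\sum_{i\notin\calI,\,|i-a|>1}\frac{1}{Y^{[\infty]}_{(a,i)}(s)^{2}},
\]
is dominated by an $n$-independent continuous function $yZ^*_a(s)$, with $Z^*_a$ bounded on $[0,t]$ uniformly in $a\in\calI$.

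I next compare with the finite-dimensional \ac{SDE} provided by Lemma~\ref{lem:DBMfc},
\[
\tilde Y_a(t)=Y^{[\infty]}_a(0)+W_a(t)+\beta\int_0^t\bigl(\fS^\calI_a(\tilde\bfY(s))-\tilde Y_a(s)\,Z^*_a(s)\bigr)ds,\quad a\in\calI,
\]
with its unique $C_+([0,t])^\calI$-valued solution $\tilde\bfY$. Writing $\psi_a=\psi^\calI_a+\f_a^{\text{far}}$, the restriction $\bfY^{[n]}|_\calI$ satisfies an integral equation whose interaction pointwise dominates that of $\tilde\bfY$ in the required sense. Adapting the iterative comparison underlying the proof of Lemma~\ref{lem:DBMcomp} (Picard iterates for $\tilde\bfY$ started from $\bfY^{[\infty]}|_\calI$, together with Lemma~\ref{lem:comp} applied coordinatewise and using both monotonicity of $-\psi^\calI_a$ in the configuration and the inequality $yZ^*_a\geq\f_a^{\text{far}}(y,\bfY^{[n]})$) yields $\tilde Y_a\leq Y^{[n]}_a$ on $[0,t]$ for all $a\in\calI$. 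Passing to the limit $n\to\infty$ gives $\tilde Y_a\leq Y^{[\infty]}_a$, and since $\inf_{s\in[0,t]}\tilde Y_a(s)>0$, we obtain the non-collision estimate at $a=a_0$; exhausting $\hZ$ by a countable sequence $R\to\infty$ handles all $a$.

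The remaining passage to the limit in the equation for $Y^{[n]}_a$ is then routine: $\beta/Y^{[n]}_a\leq\beta/Y^{[\infty]}_a$ is bounded on $[0,t]$ by the non-collision estimate, the near-field part of $\f_a$ is a finite sum that converges pointwise, and the far-field part is uniformly dominated by $yZ^*_a$. Dominated convergence gives \eqref{eq:DBMg} for $\bfY^{[\infty]}$ (treating case~(i) and case~(ii) identically, with $\bfY^{[n-1]}$ or $\bfY^{[n]}$ in $\psi_a$ respectively, both converging to $\bfY^{[\infty]}$), and continuity of $t\mapsto Y^{[\infty]}_a(t)$ follows from the resulting integral representation. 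The main obstacle throughout is the non-collision step: one must convert the \emph{macroscopic} average-density information carried by $\YTspll(\gamma)$ into a \emph{pointwise} lower bound on every gap, uniformly in $n$, which is exactly what the large-window split combined with the finite-dimensional comparison via Lemmas~\ref{lem:DBMfc}--\ref{lem:DBMcomp} accomplishes.
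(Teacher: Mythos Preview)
Your overall strategy---localize to a finite window, treat the exterior compression as an external force, compare with the finite-dimensional \ac{SDE} of Lemma~\ref{lem:DBMfc} via Lemma~\ref{lem:DBMcomp}, then pass to the limit by dominated convergence---matches the paper's. The gap is in how the window is chosen.

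You take $\calI=(-R,R)$ and assert that $\YTspll(\gamma)$ yields $\inf_{s\in[0,t]} Y^{[\infty]}_{(a,i)}(s)\geq\tfrac{\gamma}{4}|i-a|$ for $a\in\calI$, $i\notin(-R,R)$ with $|i-a|$ \emph{sufficiently large}, but then define $Z^*_a(s)$ as a sum over \emph{all} $i\notin(-R,R)$ with $|i-a|>1$. For $a$ at the right edge (say $a=R-\tfrac12$, $R\in\bbZ$) and $i=R+1$, one has $Y^{[\infty]}_{(a,i)}(s)=Y^{[\infty]}_{R+1/2}(s)$, a single gap lying \emph{outside} the window whose strict positivity on $[0,t]$ is precisely what you are trying to prove. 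Thus $Z^*_a$ is not known to be finite, and Lemma~\ref{lem:DBMcomp} requires a bounded external force for \emph{every} $a$ in the window, not just the central index $a_0$. Enlarging $R$ only relocates the problem to the new boundary; the argument is circular as it stands.

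The paper breaks this circularity with two ingredients you do not invoke. First, the Bessel backward-continuity estimate \eqref{eq:BesComp} of Lemma~\ref{lem:BesComp}, applied to each $Y^{[n]}_a$ and then sent to the limit, converts the hypothesis on $\inf_{s}\av_{(0,m)}(\bfY^{[\infty]}(s))$ (which is all that $\YTspll(\gamma)$ controls) into a lower bound on $\av_{(a_*,m)}(\bfundY^{[\infty]}(0,t))$ for large $|m|$, losing only $q(t,1)$; one assumes $q(t,1)<\gamma/2$ and partitions $[0,t]$ for the general case. Second, and crucially, Lemma~\ref{lem:goodset} is used to \emph{select} the window endpoints $J^\pm$ so that $h_{(J^\pm,\pm\infty)}(\bfundY^{[\infty]}(0,t))\geq\tfrac{\gamma}{2}$. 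This guarantees $\undY^{[\infty]}_{(J^\pm,J^\pm\pm i)}(0,t)\geq\tfrac{\gamma}{2}i$ for \emph{every} $i\geq 1$, including $i=1$, so the gaps immediately outside the window are already bounded away from zero and the external force is uniformly bounded below by the constant $c^*=-\sum_{i\geq 1}(i\gamma/2)^{-2}$. Your macroscopic-to-pointwise conversion needs exactly this good-set selection of the boundary; an arbitrary radius $R$ does not suffice.
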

\begin{proof}
Fixing $ t \geq 0 $ and $ a_*\in\hZ $, 
we begin by showing
\begin{align}\label{eq:bd>0}
	\undY^{[\infty]}_{a_*}(0,t) >0,
	\quad
	\text{ almost surely}.
\end{align}
This is achieved by first showing that
there exists $ J^\pm\in (a_*,\pm\infty)\cap\bbZ $ such that
\begin{align}\label{eq:Y>0:h}
	h_{(J^\pm,\pm\infty)} ( \bfundY^{[\infty]}(0,t)  ) \geq \tfrac{\gamma}{2},
\end{align}
(where $ h_\calI(\bfy) $ is as in \eqref{eq:h}),
and then, using \eqref{eq:Y>0:h}
to reduce the problem to finite dimensions,
whereby showing that $\undY^{[\infty]}_a(0,t)  >0 $, $ \forall a\in (J^-,J^+) $.
Without lost of generality,
we assume $ t $ is small enough such that $q(t,1)<\gamma/2$,
since the general case follows by partition $ [0,t] $ into small enough subintervals.
With $Y^{[n]}_a$ solving an equation of the type \eqref{eq:BesComp:Y},
applying \eqref{eq:BesComp} for $Y^*=Y^{[n]}_a$, we obtain
\begin{align*}
	\av_{(a_*,m)} ( \bfundY^{[n]}(0,t) ) 
	\geq 
	\av_{(a_*,m)} (\bfY^{[n]}(t))  - \av_{(a_*,m)} (\bfQ^{0}(t) ).
\end{align*}
Sending $n\to\infty$ and $ |m|\to\infty $ in order,
with $ \bfY^{[\infty]}(t) \in \YTspll(\gamma) $ and \eqref{eq:QLLN},
we obtain
\begin{align*}
	\liminf_{|m|\to\infty} \curBK{ \av_{(a_*,m)} ( \bfundY^{[\infty]}(0,t) ) }
	\geq 
	\gamma - q(t,1) > \tfrac{\gamma}{2}.
\end{align*}
From this we obtain some random $I^\pm \in (a_*,\pm\infty) \cap\bbZ$ such that
$
	\av_{[a_*,i)} (\bfundY^{[\infty]}(0,t) )
	> 
	\tfrac{\gamma}{2},
$
$\forall i\in (-\infty,I^-]\cup[I^+,\infty) $.
Combining this with Lemma~\ref{lem:goodset} for $(i^\pm_1,i^\pm_2,i^\pm_3)=(a_*\pm\tfrac12,I^\pm,\pm\infty)$
we obtain the desired $J^\pm\in ( a_*,\pm\infty) \cap\bbZ$ satisfying \eqref{eq:Y>0:h}.

Equipped with \eqref{eq:Y>0:h},
we proceed to truncating the equation of $ \bfY^{[n]} $, \eqref{eq:iter} or \eqref{eq:DBMg}, 
at the finite window $\fkJ:=(J^-,J^+)$.
To this end, we express \eqref{eq:iter} and \eqref{eq:DBMg} 
as a system of finite-dimensional equations with external forces
(i.e.\ \eqref{eq:DBMfc}), as
\begin{align}\label{eq:lem:Y>0:YnEq}
\begin{split}
	&
	Y^{[n]}_a(t) = Y^{[n]}_a(0) + W_a(t)
\\
	&\quad 
	+ 
	\beta
	\int_{0}^t 
	\big(
		\fS_a^\fkI (Y^{[n]}_a(s),\bfY^{[n]}(s))
		+ Y^{[n]}_a(s) Z^{**}_a(s) 
	\big) 
	ds,
	\quad \forall \ a\in\fkJ,
\end{split}
\end{align}
where the external force $ Z^{**}_a(s) := z^{**,\fkI}_a(\bfY^{[n]}(s),\bfY^{[n-1]}_a(s)) $ 
takes the form
\begin{align*}
	z^{**,\calA}_a(\bfy,\bfy') := 
	\left\{\begin{array}{l@{,}l}
			z^{1,\calA}_a(\bfy,\bfy') 
			:= \tfrac{1}{y} \BK{ \f_a^\calA(y,\bfy) -\f_a^\calA(y,\bfy') - \f_a^{\calA^c}(y,\bfy')} 
		 	&
		 	\text{ for } \eqref{eq:iter},
		 \\
			 z^{2,\calA}_a(\bfy)
			 :=
		 	- \tfrac{1}{y} \f_a^{\fkI^c}(y,\bfy)
		 	&
		 	\text{ for } \eqref{eq:DBMg}.
	\end{array}\right.
\end{align*}
With $\{\bfY^{[n]}\}$ being decreasing,
by \eqref{eq:psiA:mono:z} we have
\begin{align*}
	&
	\f_a^\calA(y,\bfY^{[n]}(s)) - \f_a^\calA(y,\bfY^{[n-1]}(s)) \geq 0, &
	&
	\f_a^{\calA^c}(y,\bfY^{[n-1]}(s)) \leq \f_a^{\calA^c}(y,\bfY^{[n]}(s)),
\end{align*} 
so
$ 
	z^{1,\calA}_a(\bfY^{[n]}_a(s),\bfY^{[n-1]}(s)) 
	\geq
	z^{2,\calA}_a(\bfY^{[n]}(s)).
$
Further,
with $ \f_a^{\fkI^c}(y,\bfy) $ as in \eqref{eq:psiA},
we have 
$ 
	z^{2,\fkI}_a(\bfy) 
	\geq 	
	-\tfrac12
	\sum_{\sigma=\pm} \sum_{i=1}^\infty 
	(y_{(J^\sigma,J^\sigma+\sigma i)})^{-2}.
$
Using \eqref{eq:Y>0:h}, we thus conclude
\begin{align*}
	&
	Z^{**}_a(s) 
	\geq	
	z^{2,\fkI}_a(\bfY^{[n]}(s))
	\geq
	- \sum_{i=1}^\infty (i\gamma/2)^{-2} =: c^* > -\infty.
\end{align*}
With this,
letting $ \bfY^{(i_1,i_2)} $ be the $ C_+([0,\infty))^{(i_1,i_2)\cap\hZ} $-valued
solution of \eqref{eq:DBMfc} for $ Z^{*}_a(s) = c^* $,
by Lemma~\ref{lem:DBMcomp} we have
$ \bfY^{[n]} \geq^\fkI_{[0,t]} \bfY^{\fkI} \in C_+([0,\infty))^\fkI $.
As $ a_*\in\fkI $, letting $ n\to\infty $, we conclude \eqref{eq:bd>0}.

By \eqref{eq:bd>0}, we now have
\begin{align}\label{eq:Yinf:YTspll}
	\bfY^{[\infty]} \in \YTspll\cap \curBK{y:[0,\infty)\to\bbR \ : \undy(0,t) >0, \forall t>0 }^{\hZ}.
\end{align}
With this, we now let $ n\to\infty $
in the equation for $ \bfY^{[n]} $, \eqref{eq:iter} or \eqref{eq:DBMg}.
By the dominated convergence theorem, we have
\begin{align*}
	&
	\lim_{n\to\infty} \int_0^ t \frac{1}{Y_a^{[n]}(s)} ds
	=
	\int_0^ t \frac{1}{Y_a^{[\infty]}(s)} ds < \infty,
\\
	&
	\lim_{n\to\infty} \int_0^ t \f_a(Y^{[n]}_a,\bfY^{[n-1]}(s)) ds
	=
	\int_0^t \f_a(Y^{[\infty]}_a,\bfY^{[\infty]}(s)) ds < \infty,
\\
	&
	\lim_{n\to\infty} \int_0^ t \f_a(Y^{[n]}_a,\bfY^{[n]}(s)) ds
	=
	\int_0^t \f_a(Y^{[\infty]}_a,\bfY^{[\infty]}(s)) ds < \infty,
\end{align*}
so $ \bfY^{[\infty]} $ solves \eqref{eq:DBMg}.
This automatically implies that $ \bfY^{[\infty]} \in C([0,\infty))^\hZ $.
With \eqref{eq:Yinf:YTspll},
we thus conclude that $ \bfY^{[\infty]} \in \YTspl(\gamma) $.
\end{proof}

\begin{proof}[Proof of Proposition~\ref{prop:existg}]
Combining Lemma~\ref{lem:Yiter} and Proposition~\ref{prop:bd>0},
we conclude that $ \bfY^{(\infty)} $ is a $ \YTspl(\gamma) $-valued solution of \eqref{eq:DBMg}.
To show that it is the greatest solution,
we consider a generic a $ \YTspl $-valued solution 
$ \bfY' $ with $ \bfY'(0) \leq \bfy^\ic $.
With $ \bfY' $ and $ \bfY^{(n)} $ solving the respective equations,
\eqref{eq:DBMg} and \eqref{eq:iter:},
following the comparison argument as in the proof of Proposition~\ref{prop:iterDecr},
we obtain that $ \bfY'(\Cdot) \leq \bfY^{(n)}(\Cdot) $, $ \forall n\in\bbZ_{\geq 0} $.
Upon letting $ n\to\infty $ we obtain $ \bfY'(\Cdot) \leq \bfY^{(\infty)}(\Cdot) $.
Next, assuming $ \bfy^\ic\in\RTsp(p) $,
with $ Y^{(\infty)}_a $ solving \eqref{eq:DBMg},
applying \eqref{eq:BesComp:sup} for $ Y^*=Y^{(\infty)}_a $,
we have
$	
	(Y^{(\infty)}_a(t))^p \leq (y^\ic_a + Q^{0,t}_a)^p
	\leq 2^p ((y^\ic_a)^p + (Q^{0,t}_a)^p).
$
From this and \eqref{eq:QLLN}, we conclude $ \bfY^{(\infty)} \in \RTsp(p) $.
\end{proof}

\section{Existence: Proof of Proposition~\ref{prop:existGener}}
\label{sect:exist}
Fixing $ \bfy^\ic\in\Ysp(\alpha,\rho) $
and a sequence $ 1\geq \gamma_1 \geq \gamma_2 \geq \ldots \to 0 $,
we let $ \bfYgn $ be as in Proposition~\ref{prop:existGener}.
Let $ m_i:= \floorBK{ i^{1/\alpha} } $, for $ i\geq 0 $, and $ m_{i}:=-m_{|i|} $ for $ i<0 $.
For any fixed $ k\in\bbZ_{>0} $, 
we construct a partition $ \{ \AL_{b,k} \}_{b\in\hZ} $ of $ \hZ $ by letting
$ \tilm^k_i := m_{ki} $,
\begin{align}\label{eq:A}
	\A_{b,k} := (\tilm^k_{b-1/2},\tilm^k_{b+1/2}),
	\quad
	\AL_{b,k} := \A_{b,k} \cap \hZ.
\end{align}
This partition is constructed so that $ |\AL_{k,b}| \sim k(\tilm^k_{|b|+1/2})^{1-\alpha} $.
More precisely, with
$	
	|\AL_{b,k}|
	=
	\tilm^k_{|b|+1/2} - \tilm^{k}_{|b|-1/2}
$
and 
$
	\floorBK{y-x} \leq \floorBK{y} - \floorBK{x} \leq \ceilBK{ y-x },
$
$ \forall x\leq y\in[0,\infty) $,
we have
\begin{align} 
	\label{eq:ALwbd}
	|\AL_{b,k}|
	&\geq
	\floorBK{ \tfrac{k^{\frac{1}{\alpha}}}{2\alpha} |b|^{\frac{1-\alpha}{\alpha}} }  
	\geq
	\floorBK{ \tfrac{k}{\alpha 2^{1/\alpha}} (m_{k(|b|+1/2)})^{1-\alpha} }, 
\\
	\label{eq:AUpbd}
	|\AL_{b,k}|
	&\leq
	\ceilBK{ \tfrac{k^{\frac{1}{\alpha}}}{\alpha}  (|b|+\tfrac12)^{\frac{1-\alpha}{\alpha}} }  
	\leq
	\ceilBK{ \tfrac{k}{\alpha 2^{1/\alpha}} (m_{k(|b|+1/2)}+1)^{1-\alpha} }.
\end{align}
With \eqref{eq:ALwbd}--\eqref{eq:AUpbd} and
$
	\av_{\A_{b,k}}(\bfy) 
	= 
	\tfrac{1}{|\AL_{b,k}|} 
	\big| \sum_{a\in(0,\tilm^k_{b+1/2})}(\bfy) - \sum_{a\in(0,\tilm^k_{b-1/2})}(\bfy) \big|,
$
we have
\begin{align}\label{eq:avbd}
	\big| \av_{(0,m)}(\bfy) - \rho \big| 
	\leq 
	\tfrac{1}{|\AL_{b,k}|} \norm{y} 
	\BK{ |\tilm^k_{b+1/2}|^{1-\alpha} + |\tilm^k_{b-1/2}|^{1-\alpha} }
	\leq
	\tfrac{c}{k} \norm{y}, 
\end{align}
where $ \norm{\bfy} $ is defined as in \eqref{eq:Ynorm}.
Hereafter, we assume $ k\in\bbZ_{>0} $ is large enough so that 
$ \{\A_{b,k}\}_b $ is nondegenerated: 
i.e.\ $ \A_{b,k}\neq\emptyset $, $ \forall b\in\hZ $.

Recall that $ \bfY(t):=\lim_{n\to\infty} \bfYgn(t) $.
The main step of the proof is to establish lower bound on $ \av_{\A_{b,k}}(\bfY(s)) $, uniform in $ s\in[0,t] $.
To this end, we will repeatedly use the following inequalities \eqref{eq:dens:lw}--\eqref{eq:dens:up}.
Recall $ \barf_\calI(\bfy) $ and $ \undf_\calI(\bfy) $ are defined as in \eqref{eq:barf}--\eqref{eq:undf}.
\begin{lemma}\label{lem:denseq}
Let $ \bfY^* $ be a $ \YTspl $-valued solution of \eqref{eq:DBMg},
$ \calK \subset \calI \subset \calK' \subset\hZ $ be nested intervals,
and $ s'<s'' \in [0,t] $.
We have that
\begin{align}
	\label{eq:dens:lw}
	&
	\av_{\calK} ( \bfY^*(s'') )
	\geq
	\av_{\calK} ( \bfY^*(s') )
	- \frac{\beta}{|\calK\cap\hZ|} 
	\int^{s''}_{s'} \undf_{ \calI }(\bfY^*(s))  ds - \hatB^{\calK'}_{\calK}(t)
	- 
	\bdy^{\calK'}_{\calK}(\bfQ^{s',s''}),
\\
	\label{eq:dens:up}
	&
	\av_{\calK} ( \bfY^*(s'') )
	\leq
	\av_{\calK} ( \bfY^*(s') )
	+ \frac{\beta}{|\calK\cap\hZ|} 
	\int^{s''}_{s'} \barf_{ \calI }(\bfY^*(s))  ds + \hatB^{\calK'}_{\calK}(t)
	+ 
	\bdy^{\calK'}_{\calK} ( \bfY^*(s') ),
\end{align}
where
\begin{align*}
	&
	\hatB^{\calK'}_{\calK}(t) 
	:= 
	\frac{4}{|\calK\cap\bbZ|}
	\sup_{ j \in \overline{\calK'}\setminus\calK } \overline{|B_j|}(0,t),&
	&
	\bdy_{\calK}^{\calK'}(\bfy) 
	:= 
	\frac{1}{|\calK\cap\bbZ|} \sum_{a\in\calK'\setminus\calK} y_a,
\end{align*}
and $ \overline{\calK'} $ denotes the closure of $ \calK' $.
\end{lemma}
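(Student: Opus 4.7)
The plan is to derive both \eqref{eq:dens:lw} and \eqref{eq:dens:up} from a single master identity obtained by summing the gap \ac{SDE} \eqref{eq:DBMg} over $a\in\calI\cap\hZ$ (the intermediate interval) rather than over $\calK$, since summation over $\calI$ is what makes the interaction telescope cleanly. Writing $\calI=(i_-,i_+)$, the sum $\sum_{a\in\calI\cap\hZ}\fS_a$ telescopes to $\phi_{i_+}-\phi_{i_-}$, which by \eqref{eq:resum1} equals $\barf_\calI(\bfY^*(s))-\undf_\calI(\bfY^*(s))$; together with $\sum_{a\in\calI\cap\hZ}W_a(t)=B_{i_+}(t)-B_{i_-}(t)$ this yields
\begin{align*}
\sum_{a\in\calI\cap\hZ}\bigl(Y^*_a(s'')-Y^*_a(s')\bigr)
= \bigl(B_{i_+}-B_{i_-}\bigr)\Big|_{s'}^{s''}
+\beta\int_{s'}^{s''}\bigl(\barf_\calI-\undf_\calI\bigr)\bigl(\bfY^*(s)\bigr)\,ds.
\end{align*}
Splitting $\sum_{a\in\calI\cap\hZ}=\sum_{a\in\calK\cap\hZ}+\sum_{a\in(\calI\setminus\calK)\cap\hZ}$ and dividing by $|\calK\cap\hZ|$ turns the left side into $\av_\calK(\bfY^*(s''))-\av_\calK(\bfY^*(s'))$ plus an explicit ``shell'' correction $\frac{1}{|\calK\cap\hZ|}\sum_{(\calI\setminus\calK)\cap\hZ}(Y^*_a(s')-Y^*_a(s''))$, producing an exact identity from which both bounds follow by appropriately dominating three error terms.

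\textbf{Estimating the three error terms.} (i) For the Brownian endpoints, $|B_{i_\pm}(s'')-B_{i_\pm}(s')|\le 2\sup_{s\le t}|B_{i_\pm}(s)|$, and since $i_\pm\in\overline{\calI}\setminus\calK\subset\overline{\calK'}\setminus\calK$, summing and dividing by the appropriate normalization gives exactly $\hatB^{\calK'}_\calK(t)$ (the factor $4$ accounting for two endpoints and the triangle inequality). (ii) For the shell sum, observe from \eqref{eq:fS} that $\fS_a=(Y^*_a)^{-1}-\f_a$ with $\f_a\ge 0$, so $Y^*_a$ is of the form \eqref{eq:BesComp:Y} with nonnegative $F=\f_a(\Cdot,\bfY^*(\Cdot))$; hence \eqref{eq:BesComp:sup} applies and gives $Y^*_a(s'')-Y^*_a(s')\le Q^{s',s''}_a$. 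Summing and enlarging the index set from $\calI\setminus\calK$ to $\calK'\setminus\calK$—legitimate because $Q_a\ge 0$ and $\calI\subset\calK'$—produces the $-\bdy^{\calK'}_\calK(\bfQ^{s',s''})$ contribution needed in the lower bound. For the upper bound the trivial $Y^*_a(s'')\ge 0$ gives $Y^*_a(s')-Y^*_a(s'')\le Y^*_a(s')$, yielding $+\bdy^{\calK'}_\calK(\bfY^*(s'))$. (iii) For the interaction, drop the term $\barf_\calI\ge 0$ to get \eqref{eq:dens:lw}, and drop $-\undf_\calI\le 0$ to get \eqref{eq:dens:up}.

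\textbf{Main obstacle.} Once the telescoping identity is set up, the argument is almost entirely mechanical. The one genuinely substantive point is the applicability of \eqref{eq:BesComp:sup}, which demands $F\ge 0$; this holds automatically because of the structural decomposition $\fS_a=(Y^*_a)^{-1}-\f_a$ with $\f_a\ge 0$, but it is crucial—without it one could not control the one-sided shell displacement by a Bessel comparison, and in particular the lower bound would fail. The remaining subtleties are purely bookkeeping: keeping track of whether the shell enlargement $\calI\setminus\calK\to\calK'\setminus\calK$ goes in the direction compatible with the inequality, and reconciling the two normalizations $|\calK\cap\hZ|$ and $|\calK\cap\bbZ|$, whose mismatch of at most one unit is harmlessly absorbed into the constant $4$ defining $\hatB^{\calK'}_\calK(t)$.
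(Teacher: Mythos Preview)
Your proposal is correct and follows essentially the same approach as the paper's proof: sum \eqref{eq:DBMg} over $a\in\calI$, telescope the noise and use \eqref{eq:resum1} on the interaction, split off the $\calK$-sum, bound the shell increment above via \eqref{eq:BesComp:sup} and below via $Y^*_a(s'')>0$, and drop the sign-definite piece of the interaction. The only minor remark is on your final sentence: the discrepancy between $|\calK\cap\hZ|$ and $|\calK\cap\bbZ|$ is not really ``absorbed'' into the constant $4$; rather, since $|\calK\cap\bbZ|\le|\calK\cap\hZ|$, replacing the former by the latter in the denominators of $\hatB^{\calK'}_\calK$ and $\bdy^{\calK'}_\calK$ only weakens the inequalities, which is harmless.
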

\begin{proof}
Let $ \calI:=(i_1,i_2)\cap\hZ $.
With $ \bfY^* $ satisfying \eqref{eq:DBMg}, we have
\begin{align}\label{eq:dens:eqq}
	\sum_{a\in\calI} Y^*_a(s) \Big|^{s=s''}_{s=s'}
	=
	\beta
	\int^{s''}_{s'} \fS_{ \calI }(\bfY^*(s))  ds 
	+
	(B_{i_2}(s) - B_{i_1}(s)) \Big|^{s=s''}_{s=s'}.
\end{align}
Expressing the l.h.s.\ as
\begin{align*}
	\sum_{a\in\calK} Y^*_a(s'') - \sum_{a\in\calK} Y^*_a(s')
	+
	\sum_{a\in\calI\setminus\calK} Y^*_a(s) \Big|^{s=s''}_{s=s'}.	
\end{align*}
By \eqref{eq:BesComp:sup} we have
$ 
	\sum_{a\in\calI\setminus\calK} Y^*_a(s) |^{s=s''}_{s=s'}
	\leq
	\sum_{a\in\calK'\setminus\calK} Q^{s',s''}_a,	
$
and with $ Y^*_a(s'') > 0 $,
we clearly have
$ 
	\sum_{a\in\calI\setminus\calK} Y^*_a(s) |^{s=s''}_{s=s'}
	\geq
	-\sum_{a\in\calK'\setminus\calK} Y^*_a(s').	
$
Combining these with \eqref{eq:dens:eqq},
and dividing both sides by $ |\calK\cap\bbZ| $,
with \eqref{eq:resum1}, we conclude \eqref{eq:dens:lw}--\eqref{eq:dens:up}.
\end{proof}

Recalling the definition of $ q(t,1) $ from \eqref{eq:QLLN},
we begin by establishing the following preliminary estimate.
\begin{proposition}\label{prop:denss}
Fix $ t<\infty $ and let $ \tau<\infty $ be such that $ q(\tau,1) = \tfrac{\rho}{400} $.
For any $ t_*\in[0,t-\tau] $, if there exists $ K_*\in\bbZ_{>0} $ such that
\begin{align}\label{eq:denss:assum}
	\av_{\A_{b,k}}(\bfY(t_*)) > \tfrac{3\rho}{4},
	\quad
	\forall b\in\hZ,
	\ k \geq K_*,
\end{align}
then there exists some $ K\in\bbZ_{>0} $, satisfying the tail bound
\begin{align}\label{eq:Ktail}
	\bbP( K \geq k ) \leq \exp\BK{ -k^{1/\alpha}/c },
	\text{ where }
	c=c(\bfy^\ic,\alpha,\rho,t,\beta)<\infty,
\end{align}
such that
\begin{align}\label{eq:denss}
	\av_{\A_{b,k}} \BK{ \bfY(s) } \geq \tfrac{\rho}{2},
	\quad
	\forall s\in[t_*,t_*+\tau],\ b\in\hZ,
	\ k \geq K_*\vee K.
\end{align}
\end{proposition}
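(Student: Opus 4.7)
The plan is to apply the deterministic density lower bound \eqref{eq:dens:lw} to each iterate $\bfYgn$---rather than directly to $\bfY$, for which $\bfY\in\YTspl$ has not yet been established---and then pass to the limit $n\to\infty$ by monotone convergence. The monotonicity $\bfYgn\geq\bfY$ provides the seed $\av_{\A_{b,k}}(\bfYgn(t_*))\geq\av_{\A_{b,k}}(\bfY(t_*))>\tfrac{3\rho}{4}$ at time $t_*$ for every $n$. We apply \eqref{eq:dens:lw} with $\calK=\A_{b,k}$, $\calI$ a mesoscopic enlargement of the form $\bigcup_{|b'-b|\leq L}\A_{b',k}$ for some fixed $L\in\bbZ_{>0}$, and $\calK'$ only slightly larger than $\calI$.

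The first task is to package the two boundary contributions in \eqref{eq:dens:lw} into a single random threshold $K$ satisfying the tail estimate \eqref{eq:Ktail}. By \eqref{eq:ALwbd} both $|\calK|$ and $|\calK'|$ grow polynomially in $k^{1/\alpha}|b|^{(1-\alpha)/\alpha}$. A Gaussian tail estimate on $\sup_j\overline{|B_j|}(0,t)$, together with a union bound over $b\in\hZ$, yields $\hatB^{\calK'}_\calK(t)\leq\rho/20$ outside an event of probability $\leq\exp(-k^{1/\alpha}/c)$; similarly, the Bessel contribution $\bdy^{\calK'}_\calK(\bfQ^{s',s''})$ is controlled by combining \eqref{eq:QLLN} with a standard exponential concentration inequality for the independent collections of Bessel paths, using $q(\tau,1)=\rho/400\ll\rho$ and that $|\calK'\setminus\calK|/|\calK|$ can be made arbitrarily small by increasing $L$. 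Together these yield the desired $K$ with both boundary terms bounded by $\rho/20$ uniformly in $b\in\hZ$ and $n\in\bbZ_{>0}$ whenever $k\geq K_*\vee K$.

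The main obstacle is the interaction integrand $\undf_\calI(\bfYgn(u))$: by \eqref{eq:undffpm} it depends on the gaps $y_{(i_\pm,i')}$ of $\bfYgn$ outside $\calI$, so a useful bound on it requires the very type of density lower bound that Proposition~\ref{prop:denss} aims to produce. I close this circle by bootstrapping in time. Define
\begin{align*}
T^n:=\inf\Big\{s\in[t_*,t_*+\tau]\,:\,\av_{\A_{b',k}}(\bfYgn(s))<\tfrac{\rho}{2}\text{ for some }b'\in\hZ\Big\},
\end{align*}
with $T^n:=t_*+\tau$ if the set on the right is empty. For $s<T^n$, the inequality $\av_{\A_{b',k}}(\bfYgn(s))\geq\tfrac{\rho}{2}$ holds for every $b'\in\hZ$, so summing the gaps over the mesoscopic cells that make up $(i_\pm,i')$ yields $y_{(i_\pm,i')}(\bfYgn(s))\geq c\rho|i'-i_\pm|$ up to corrections from at most two partial cells at the endpoints. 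Substituting into \eqref{eq:undffpm} and summing the resulting $\log|\calI|$-type series gives $|\calK|^{-1}\undf_\calI(\bfYgn(s))\leq C\log|\calI|/(\rho|\calK|)$, which by \eqref{eq:ALwbd} is at most $\rho/(20\beta\tau)$ once $k\geq K_*\vee K$ (enlarging $K$ if necessary to absorb the logarithmic factor).

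Combining the three estimates in \eqref{eq:dens:lw}, for $s<T^n$ and $k\geq K_*\vee K$ we obtain $\av_{\A_{b,k}}(\bfYgn(s))>\tfrac{3\rho}{4}-3\cdot\tfrac{\rho}{20}>\tfrac{\rho}{2}$, uniformly in $b\in\hZ$. Continuity of $\bfYgn$ rules out $T^n<t_*+\tau$, so $T^n=t_*+\tau$ almost surely; hence $\av_{\A_{b,k}}(\bfYgn(s))\geq\rho/2$ for all $s\in[t_*,t_*+\tau]$, $b\in\hZ$, $n\in\bbZ_{>0}$, $k\geq K_*\vee K$, and \eqref{eq:denss} follows by monotone convergence as $n\to\infty$.
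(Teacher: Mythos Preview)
Your overall scheme---work with the iterates $\bfYgn$, run a bootstrap via a first-failure stopping time, then pass to the limit---matches the paper's strategy. But two steps in your argument do not go through as written, and the paper's proof spends most of its effort precisely on these points.

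\textbf{The stopping time over infinitely many cells.} Your $T^n$ is an infimum over all $b\in\hZ$, and you use ``continuity of $\bfYgn$'' to force $T^n=t_*+\tau$. This fails for two reasons. First, even though $\av_{\A_{b,k}}(\bfYgn(t_*))>\tfrac{3\rho}{4}$ for each $b$, the infimum over $b$ of the first hitting time of level $\tfrac{\rho}{2}$ may well be $t_*$ itself, so you cannot assume $T^n>t_*$. Second, even if $T^n>t_*$, there need be no single $b$ whose average equals $\tfrac{\rho}{2}$ at time $T^n$: the near-violators could run off to $|b|\to\infty$. The paper explicitly flags this (``it is not even clear, a-priori, whether $T_k>t_*$'') and resolves it by a \emph{spatial truncation}: it uses the shift-invariant solution $\bfZ$ started from $(\gamma_n,\gamma_n,\ldots)$ to produce far-away barriers $M_\pm$ with $h_{(M_\pm,\pm\infty)}(\bfundY^{\vee\gamma_n}(0,t))\geq Z'>0$, restricts the stopping time to the finitely many cells inside $(M_-,M_+)$, and only at the end lets the truncation parameter $\ell\to\infty$.

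\textbf{The interaction bound near the endpoints of $\calI$.} You take $\calI$ to be a fixed union of cells and assert $y_{(i_\pm,i')}\geq c\rho|i'-i_\pm|$ ``up to corrections from at most two partial cells.'' But those corrections are exactly what breaks the estimate: for $i'$ within the adjacent cell, the partial sum $y_{(i_\pm,i')}$ has no lower bound from the cell-average hypothesis, and the corresponding terms $\frac{z}{(z+y_{(i_\pm,i')})y_{(i_\pm,i')}}\sim \frac{1}{y_{(i_\pm,i')}}$ are not summable. The paper fixes this by not using the cell boundaries as endpoints of $\calI$: it invokes Lemma~\ref{lem:goodset} to produce \emph{seeds} $I^\pm_b$ inside the neighboring cells for which $h_{(I^\pm_b,\cdot)}(\bfundY^{\vee\gamma_n})\geq\tfrac{\rho}{3}$, i.e.\ $y_{(I^\pm_b,i')}\geq\tfrac{\rho}{3}|i'-I^\pm_b|$ from the seed outward. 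With $\calI=(I_-,I_+)$ chosen this way (and using the barriers $M_\pm$ beyond the truncated window), the sum in $\undf_\calI$ is genuinely controlled by $\frac{c}{|\AL_{b,k}|}\log|\AL_{b,k}|$.
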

\begin{proof}
Throughout this proof we let 
$c<\infty $ denote a generic finite constant 
depending only on $ \bfy^\ic,\alpha,\rho,t,\beta $.

Fixing arbitrary $ n\in\bbZ_{>0} $,
we let $ S_{b,k} := \inf\{s \geq t_*: \av_{\A_{b,k}}(\bfYgn(s))<\tfrac{\rho}{2}\} $
and $ T_k := (t_*+\tau)\wedge (\inf_{b\in\hZ} S_{b,k}) $.
With $ \bfY(t):= \lim_{n\to\infty} \bfYgn(t) $,
proving \eqref{eq:denss} amount to proving $ T_{k} = t_*+\tau $,
for all $ k\geq K $, where $ K\in\bbZ_{>0} $ satisfies \eqref{eq:Ktail}.
However, as $ T_{k} $ involves infinitely many $ \A_{b,k} $, $ b\in\hZ $,
it is not even clear, a-priori, whether $ T_k>t_* $.
We circumvent this problem by truncating $ T_k $ as follows.
Consider the $ \YTspl(\gamma_n) $-valued solution $ \bfZ $ 
of \eqref{eq:DBMg} starting from $(\ldots,\gamma_n,\gamma_n,\ldots) $,
given by Proposition~\ref{prop:existg}.
With $ \bfZ $ being shift-invariant (by Lemma~\ref{lem:shiftInv}),
fixing arbitrary $ \ell\in\bbZ_{>0} $,
by the Birkhoff--Khinchin ergodic theorem,
we have
$
	\lim_{m\to\infty}
	\av_{(\pm \ell,\pm m)}\BK{ \bfundZ(0,t) }
	=
	Z > 0,
$
so $ \av_{(\pm \ell ,\pm m)}\BK{ \bfundZ(0,t) } > \tfrac{Z}{2}:=Z' $
for all $ |m| $ large enough, $ |m| \geq M_0 $.
With this, applying Lemma~\ref{lem:goodset} 
for $ (i^\pm_1,i^\pm_2,i^\pm_3) = (\pm \ell, \pm M_0, \pm\infty) $,
we obtain $ M_{\pm} \in[\pm\ell,\pm\infty)\cap\bbZ $ such that
\begin{align}\label{eq:dens:Z'}
	h_{(M_{\pm},\pm\infty)}(\bfundY^{\vee\gamma_n}(0,t) ) 
	\geq
	h_{(M_{\pm},\pm\infty)}(\bfundZ(0,t))
	\geq Z' > 0.
\end{align}
%such that $ h_{(M_{\pm}\pm\infty)}(\bfundZ(0,t)) \geq Z' $.
%Further, with $ \bfZ(\Cdot) \leq \bfYgn(\Cdot) $ (since $ \bfZ(0)\leq\bfYgn(0) $),
%we have
%\begin{align}\label{eq:dens:Z'}
%	h_{(M_{\pm},\pm\infty)}(\bfundY^{\vee\gamma_n}(0,t) ) \geq Z' > 0.
%\end{align}
%
Having constructed $ M_{\pm} $,
we set 
\begin{align*}
	\{b\in\hZ: \A_{b,k}\subset (M_{-},M_{+}) \} =: (J_{-},J_{+}) \cap\hZ,
	\quad
	J_{-} \leq J_{+} \in\bbZ,
\end{align*}
and define the truncation of $ T_k $ as
$
	\tilT_{k} := (t_*+\tau) \wedge ( \inf_{ b\in (J_{-},J_{+}) }  S_{b,k} ).
$
Instead of proving $ T_k = t_*+\tau $,
we prove $ \tilT_{k} = t_*+\tau $ for all large enough $ \ell $
(which yields $ T_k = t_*+\tau $ upon letting $ \ell\to\infty $),
or equivalently
\begin{align}\label{eq:denss:goal}
	\av_{\A_{b,k}} (\bfYgn(\tilT_{k})) \geq \tfrac{\rho}{2},
	\quad
	\forall b \in (J_{-},J_{+}),
	\
	k \geq K,
	\
	\text{ large enough } \ell,
\end{align}
where $ K $ satisfies the tail bound \eqref{eq:Ktail}.

To the end of proving \eqref{eq:denss:goal}, 
fixing arbitrary $ b \in (J_-,J_+) $,
we let $ \tilA := (\A_{b-1,k}\cup\A_{b,k}\cup\A_{b+1,k}) $ denote the union of three consecutive intervals,
and apply \eqref{eq:dens:lw}, 
for arbitrary fixed $ \calI:=(i_1,i_2)\cap\hZ $ satisfying $ \A_{b,k} \subset (i_1,i_2) \subset \tilA $, 
to obtain
\begin{align}
	\label{eq:dens:avbd}
	\av_{\A_{b,k}} ( \bfYgn(\tilT_{k}) )
	>
	\frac{3\rho}{4}
	- \beta 
		\int^{\tilT_{k}}_{t_*} \frac{\undf_{\calI}(\bfYgn(s))}{|\AL_{b,k}|}  ds
	- \hatB^{\tilA}_{\A_{b,k}}(t)
	- \bdy^{\tilA}_{\A_{b,k}}(\bfQ^{t_*,t_*+\tau}).
\end{align}
With $ \hatB^{\tilA}_{\A_{b,k}}(t) $ defined as in Lemma~\ref{lem:denseq},
letting 
\begin{align}\label{eq:tilB}
	\tilB^k_b(t) 
	:= 
	\sup \curBK{ 
		\overline{ |B_j| }(0,t):
		j\in[\tilm^k_{b-1/2},\tilm^k_{b+1/2}]
		},
\end{align}
we clearly have 
\begin{align}\label{eq:hatBbd}
	\hatB^{\tilA}_{\A_{b,k}}(t) \leq \tfrac{2}{|\AL_{b,k}|} (\tilB^k_{b-1}(t)+\tilB^k_{b+1}(t)).
\end{align}
Further, by \eqref{eq:ALwbd}--\eqref{eq:AUpbd}, we have
\begin{align}\label{eq:Anbr}
	|\AL_{b\pm 1,k}|/ |\AL_{b,k}| \leq 16,
	\quad
	\forall b\in\hZ, \ k\in\bbZ_{>0},
\end{align}
so
\begin{align}\label{eq:tilQbd}
	\bdy^{\tilA}_{\A_{b,k}}(\bfQ^{t_*,t_*+\tau})
	\leq
	16 ( \av_{\A_{b+1,k}}(\bfQ^{t_*,t_*+\tau}) + \av_{\A_{b-1,k}}(\bfQ^{t_*,t_*+\tau}) ).
\end{align}
As $ \{B_i(\Cdot)\}_i $ is i.i.d.,
following standard arguments we show that, there exists $ K_0 \in \bbZ_{>0} $, 
satisfying the tail bound \eqref{eq:Ktail}, such that
\begin{align}
	\label{eq:denss:claim1}
	&\tilB^k_{b}(t) \leq |\AL_{b,k}|^{\frac12},
	\quad\quad
	\av_{\A_{b,k}} \BK{ \bfQ^{t_*,t_*+\tau} } \leq 2 q(\tau,1) = \tfrac{\rho}{200},
\\
	\label{eq:denss:claim2}
	&
	\av_{\A_{b,k}}(\bfbarY^{\vee\gamma_n}(0,t)) \leq 2+\rho+2q(t,1) = c(\rho,t),
	\quad\quad
	\forall b\in\hZ, \ k\geq K_0.	
\end{align}

Deferring the proof of \eqref{eq:denss:claim1}--\eqref{eq:denss:claim2} until after this proof,
we proceed to bounding the interaction term $ \tfrac{1}{|\AL_{k,b}|} \undf_{\calI}(\bfYgn(s)) $ for the suitable $ \calI $.
The endpoints of such $ \calI $ will be chosen from certain `seeds' $ I^{\pm}_{b} $,
which we now construct.
Fix $ k\geq K_*\vee K_0 $.
By the continuity estimate \eqref{eq:BesComp}
and the bound \eqref{eq:denss:claim1},
we have
$	\av_{\A_{b,k}}(\bfundY^{\vee\gamma_n}(t_*,\tilT_{k}))
	\geq
	\av_{\A_{b,k}}(\bfYgn(\tilT_{k})) - \tfrac{\rho}{200}.
$
Further, as $ s \mapsto \av_{\A_{b,k}}(\bfYgn(s)) $ is continuous,
by \eqref{eq:denss:assum} we have $ \av_{\A_{b,k}}(\bfYgn(\tilT_{k})) \geq \tfrac{\rho}{2} $, so
$	
	\av_{\A_{b,k}}(\bfundY^{\vee\gamma_n}(t_*,\tilT_{k}))
	\geq
	\tfrac{\rho}{2} - \tfrac{\rho}{200} > \tfrac{\rho}{3}.
$
With this, applying Lemma~\ref{lem:goodset}
for $ (i^\pm_1,i^\pm_2,i^\pm_3)=(\tilm^k_{b\mp 1/2},\tilm^k_{b\pm 1/2},\tilm^k_{J_\pm}) $,
we obtain $ I^{\pm}_{b}\in [\tilm^k_{b\mp 1/2},\tilm^k_{b\pm 1/2})\cap\bbZ $
such that
\begin{align}\label{eq:denss:h}
	 h_{ (I^{\pm}_{b},\tilm^k_{J_\pm}) }(\bfundY^{\vee\gamma_n}(t_*,\tilT_{k})) \geq \tfrac{\rho}{3}.
\end{align}
Having constructed $ I^{\pm}_b $,
we set $ \calI = \fkI := (I_-,I_+) $, where
\begin{align}\label{eq:denss:J}
	I_+ := \left\{\begin{array}{c@{,}l}
		I^{+}_{b+1}		&\text{ if } b+2 <J_+,	\\
		M_{+}			&\text{ otherwise},
	\end{array}\right.
\quad
	I_- := \left\{\begin{array}{c@{,}l}
		I^{-}_{b-1}		&\text{ if } b-2>J_-,	\\
		M_{-}			&\text{ otherwise},
	\end{array}\right.
\end{align}
and proceed to bounding $ \tfrac{1}{|\AL_{k,b}|} \undf_{\fkI}(\bfYgn(s)) $.

Letting $ \bfundY^* := \bfundY^{\vee\gamma_n}(t_*,\tilT_{k}) $ and $ \bfbarY^* := \bfbarY^{\vee\gamma_n}(t_*,\tilT_{k}) $,
with $ \undf_{\calI}(\bfy) $ and $ \undffpm_{\calI}(z,\bfy) $ defined as in \eqref{eq:undf}--\eqref{eq:undffpm},
we have
$
	\undf_{\fkI}(\bfYgn(s)) 
	\leq 
	\sum_{\sigma=\pm} \undffs_{ \fkI } ( \barY^{*}_\fkI, \bfundY^* ).
$
Further, by \eqref{eq:denss:claim2} and \eqref{eq:Anbr} we have
$
	\barY^*_\fkI \leq \barY^*_{\tilA} \leq c |\tilA\cap\hZ| \leq c |\AL_{b,k}|.
$
Using this to bound $ \undffs_{ \fkI } ( \barY^{*}_\fkI, \bfundY^* ) $, we obtain
\begin{align}
	\label{eq:denss:fS:}
	\frac{ \undf_{\fkI}(\bfYgn(s)) }{ |\AL_{b,k}| } 
	\leq
	\sum_{\sigma=\pm} \frac{  \undffs_{ \fkI } \BK{ c |\AL_{b,k}| , \bfundY^* } }{ |\AL_{b,k}| }	
	= 
	\sum_{\sigma=\pm} \sum_{i\in(I_\sigma,\sigma \infty)} \frac{ c }{ (|\AL_{b,k}|+\undY^*_{(I_\sigma,i)}) \undY^*_{(I_\sigma,i)} }.
\end{align}
With \eqref{eq:dens:Z'} and \eqref{eq:denss:h}, we have
\begin{align}\label{eq:denss:H}
	\undY^*_{(I_\pm,i)} 
	&\geq
	\tfrac{\rho}{3} \Big( (i-I_\pm)_{{\pmb{\pm}}} \wedge (\tilm^k_{J_\pm}-I_\pm)_{{\pmb{\pm}}} \Big) 
	+ (i-M_\pm)_{{\pmb{\pm}}} Z' ,
\end{align}
where $ (\ldots)_{{\pmb{\pm}}} $ denote the partitive/negative part.
By \eqref{eq:denss:J},
$ \AL_{J_\pm \mp 1/2,k} \subset (I_\pm,\tilm^k_{J_\pm}) $ if $ I_\pm\neq M_\pm $,
and	by \eqref{eq:Anbr} we have $ |\AL_{J_\pm \pm 1/2,k}| \leq c |\AL_{J_\pm\mp1/2,k}| $,
so
\begin{align*}
	\Big( (i-I_\pm)_{{\pmb{\pm}}} \wedge (\tilm^k_{J_\pm}-I_\pm)_{{\pmb{\pm}}} \Big)
	&\geq
	\tfrac{1}{c} \Big( (i-I_\pm)_{{\pmb{\pm}}} \wedge (\tilm^k_{J_\pm\pm 1}-I_\pm)_{{\pmb{\pm}}} \Big)
\\
	&\geq
	\tfrac{1}{c}  \Big( (i-I_\pm)_{{\pmb{\pm}}} \wedge (M_\pm-I_\pm)_{{\pmb{\pm}}} \Big).
\end{align*}
Combining this with \eqref{eq:denss:H}, and inserting the result into \eqref{eq:denss:fS:},
using the readily verified inequality
\begin{align*}
	\sum_{i=1}^\infty \frac{1}{(x+(i\wedge\mu)+(i-\mu)_{\pmb{+}}z)((i\wedge\mu)+(i-\mu)_{\pmb{+}}z)}
	\leq
	\frac{c}{x} \log (x+1) + \frac{c}{xz} \log \Big( \frac{1+\frac{\mu+x}{z}}{1+\frac{\mu}{z}} \Big),
\end{align*}
for $ x=c|\AL_{b,k}| >0 $, $ \mu=|M_\pm-I_\pm| \geq 0 $ and $ z=cZ'>0 $,
we arrive at
\begin{align*}
	\tfrac{ 1 }{ |\AL_{b,k}| } \undf_{\fkJ}(\bfYgn(s))
	\leq
	\tfrac{c}{|\AL_{b,k}|} \log(|\AL_{b,k}|) + cR^+_{b} + cR^-_{b},
\end{align*}
where $ R^\pm_{b} := (|\AL_{b,k}| Z')^{-1} \log ( U^{\pm}_{b} ) $ and
$
	U^{\pm}_{b} :=
	\frac{1+(|\AL_{b,k}|+\frac{1}{c}|M_{\pm}-I_{\pm}|)/Z'}{1+\frac{1}{c}|M_{\pm}-I_{\pm}|/Z'}.
$
With $ U^{\pm}_{b} \leq  1+ c\frac{|\AL_{b,k}|}{|M_{\pm}-I_{\pm}|} $
and $ U^{\pm}_{b} \leq  1+ |\AL_{b,k}|/Z' $,
we have 
\begin{align}\label{eq:dness:R}
	R^\pm_{b} \leq 
	c \
	\min
	\Big\{
		\frac{1}{|M_{\pm}-I_\pm| Z' },
		\frac{ \log(1+|\AL_{b,k}|/Z') }{ |\AL_{b,k}|Z' } 
	\Big\}.
\end{align}
Further, by \eqref{eq:ALwbd} and \eqref{eq:denss:J},
we have that $ |\AL_{b,k}| \geq \tfrac{1}{c} |\AL_{b\pm 2,k}| \geq \tfrac{1}{c} k (|I_\pm|+1)^{1-\alpha} $.
Using this to replace $ |\AL_{b,k}| $ with $ (|I_\pm|+1)^{1-\alpha} $ in \eqref{eq:dness:R},
and letting $ \ell\to\infty $ (whereby $ |M_{\pm}|\to\infty $),
we find that
$
	\sup\{ R^\pm_{b}: b\in(J_-,J_+), k\in\bbZ_{>0} \} \to 0,
$
as $ \ell \to \infty $.
Consequently, 
\begin{align}\label{eq:denss:fS}
	\beta \int^{\tilT_{k}}_{t_*} \tfrac{1}{|\AL_{b,k}|} \undf_{\fkI}(\bfYgn(s))  ds	
	\leq 
	\tfrac{c}{|\AL_{b,k}|} \log(|\AL_{b,k}|)
	+\frac{\rho}{100},
\end{align}
for all large enough $ \ell $ (i.e.\ $ \ell\geq L$, $L=L(Z')<\infty $).

Now, inserting the preceding bounds, 
\eqref{eq:hatBbd}, \eqref{eq:tilQbd}--\eqref{eq:denss:claim2} and \eqref{eq:denss:fS},
into the estimate \eqref{eq:dens:avbd},
we arrive at
\begin{align*}
	\av_{\A_{b,k}} ( \bfYgn(\tilT_{k}) )
	>
	\frac{3\rho}{4}
	- c \frac{ \log(|\AL_{b,k}|) }{|\AL_{b,k}|} 
	- \frac{\rho}{100}
	- \frac{ 2(|\AL_{b-1,k}|^{1/2} +|\AL_{b+1,k}|^{1/2}) }{|\AL_{b,k}|}
	- \frac{2\rho}{ 25 },
\end{align*}
for all $ k \geq K_*\vee K_0 $ and large enough $ \ell $.
By \eqref{eq:Anbr}, we have 
$  
	\frac{ |\AL_{b-1,k}|^{1/2} +|\AL_{b+1,k}|^{1/2} }{|\AL_{b,k}|}
	\leq 
	8|\AL_{b,k}|^{-1/2}
$.
Hence,
with $ |\AL_{b,k}| \geq  |\AL_{1/2,k}| \geq \tfrac{1}{c} k^{1/\alpha} $,
the r.h.s.\ is indeed
$
	\geq \tfrac{\rho}{2}
$
for all large enough $ k $, i.e.\ $ k \geq k_0$, where $k_0=k_0(\rho)<\infty $.
From this we conclude \eqref{eq:denss:goal} for $ K:= K_0\vee k_0 $.
\end{proof}
\begin{proof}[Proof of \eqref{eq:denss:claim1}--\eqref{eq:denss:claim2}]
We let $ c=c(\bfy^\ic,\alpha,\rho,t,\beta)<\infty $ denote a generic finite constant and
\begin{align*}
	&
	K_B := \inf \curBK{ 
		k\in\bbZ_{>0}: 
		\tilB^{k'}_b \leq |\AL_{b,k'}|^{\frac12},
		\
		\forall b\in\hZ, 
		\
		k'\geq k
		},
\\
	&
	K_Q := \inf \curBK{ 
		k\in\bbZ_{>0}: 
		\av_{\A^{k'}_b}(\bfQ^{t_*,t_*+\tau}) \leq 2q(\tau,1),
		\
		\forall b\in\hZ, 
		\
		k'\geq k
		}.
\end{align*}
Indeed, $ \bbP( \overline{|B_j|}(0,t) \geq |\AL_{b,k'}|^{1/2} ) \leq \exp(-\tfrac{1}{c} |\AL_{b,k'}| ) $.
Summing this inequality over 
\begin{align*}
	\{ (j,b,k'):
	j\in [ \tilm^{k'}_{b-1/2}, \tilm^{k'}_{b+1/2} ],
	b\in\hZ,
	k'\geq k
	\}
\end{align*}
using \eqref{eq:ALwbd}, 
we obtain $ \bbP( K_B \geq k) \leq c\exp(-\tfrac{1}{c} k^{1/\alpha}) $.
As for $ K_Q $, 
with \eqref{eq:Qiid} and $ \bbE(Q^{t_*,t_*+\tau}_{1/2}) = q(\tau,1) $,
we have the large deviation upper bound
$
	\bbP( \av_{\A_{b,k'}}(\bfQ^{t_*,t_*+\tau}) \geq 2 q(\tau,1) ) \leq \exp(\tfrac{1}{c}|\AL_{b,k'}|).
$
Summing this inequality over all $ b\in\hZ $ and $ k'\geq k $
as in the preceding,
we conclude $ \bbP(K_Q \geq k ) \leq c \exp(\tfrac{1}{c} k^{1/\alpha}) $.

Turning to establishing \eqref{eq:denss:claim2},
following the preceding argument we have 
$ \bbP(K'_Q \geq k ) \leq c \exp(\tfrac{1}{c} k^{1/\alpha}) $,
where
$
	K'_Q := \inf \{ 
		k\in\bbZ_{>0}: 
		\av_{\A_{b,k}}(\bfQ^{0,t}) \leq 2q(t,1),
		\
		\forall b\in\hZ, 
		\
		k'\geq k
		\}.
$
Combining this with \eqref{eq:BesComp:sup},
we obtain 
$ 
	\av_{\A_{b,k}}(\bfbarY^{\vee\gamma_n}(0,t)) \leq \av_{\A_{b,k}}(\bfY^{\vee\gamma_n}(0)) + 2q(t,1), 
$
$\forall b\in\hZ$, $k\geq K'_Q $.
Further, with $ \bfYgn(0)\leq \bfgamma_n +\bfy^\ic $, by \eqref{eq:avbd} we have
$
	\av_{\A_{b,k}}(\bfY^{\vee\gamma_n}(0))
	\leq
	\gamma_n + \rho + ck^{-1}
	\leq
	1 + \rho + ck^{-1}.
$
From this, we obtain $ k_\ic\in\bbZ_{>0} $ such that
$ 
	\av_{\A_{b,k}}(\bfbarY^{\vee\gamma_n}(0,t)) \leq 2 + \rho + 2q(t,1), 
$
$
	\forall k \geq K'_Q\vee k_\ic.
$
This completes the proof of \eqref{eq:denss:claim1}--\eqref{eq:denss:claim2}
for $ K_0 := K_B \vee K_Q \vee K'_Q \vee k_\ic $.
\end{proof}

Equipped with Proposition~\ref{prop:denss},
we proceed to proving the following uniform density estimate.
\begin{proposition}\label{prop:dens}
For any $ t\geq 0 $, there exists some $ K\in\bbZ_{>0} $,
satisfying the tail bound \eqref{eq:Ktail},
such that 
\begin{align}\label{eq:dens}
	\av_{\A_{b,k}} \BK{ \bfY(s) } \geq \tfrac{\rho}{2},
	\quad
	\forall s\in[0,t], \ b\in\hZ, n\in\bbZ_{>0}, \ k \geq K.
\end{align}
\end{proposition}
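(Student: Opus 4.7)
The plan is to iterate Proposition~\ref{prop:denss} finitely many times to cover $[0,t]$. Fix $\tau$ with $q(\tau,1)=\rho/400$ as in Proposition~\ref{prop:denss}, and set $J:=\lceil t/\tau\rceil$, a constant depending only on $t$ and $\rho$. For the base case $t_*=0$: since $\bfy^\ic\in\Ysp(\alpha,\rho)$, the estimate \eqref{eq:avbd} gives $|\av_{\A_{b,k}}(\bfy^\ic)-\rho|\leq c\norm{\bfy^\ic}/k$, so for some deterministic $K_\ic=K_\ic(\norm{\bfy^\ic},\alpha,\rho)$ we have $\av_{\A_{b,k}}(\bfy^\ic)>3\rho/4$ for all $b\in\hZ$ and $k\geq K_\ic$. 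Since $\bfYgn(0)=\bfy^\ic\vee\bfgamma_n\geq\bfy^\ic$ componentwise, this lower bound transfers to $\bfYgn(0)$ uniformly in $n$, and hence to $\bfY(0)$.

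The main obstacle is the mismatch between the conclusion of Proposition~\ref{prop:denss} (which yields $\av\geq\rho/2$) and its hypothesis at the next step (which requires $\av>3\rho/4$): iterating the proposition as stated breaks after one step. To close this gap, I would revisit the proof of Proposition~\ref{prop:denss} and extract a sharper quantitative version of the form $\av_{\A_{b,k}}(\bfY(t_*+\tau))\geq\av_{\A_{b,k}}(\bfY(t_*))-\varepsilon$, where $\varepsilon=\varepsilon(\tau,k)$ can be made arbitrarily small by shrinking $\tau$ (which reduces both the boundary Brownian contribution $\bdy(\bfQ^{t_*,t_*+\tau})$ and the interaction integral $\beta\int\undf\,ds$) and by enlarging $k$ (which reduces the martingale fluctuation $\hatB$ and the $\log|\AL_{b,k}|/|\AL_{b,k}|$ terms). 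Fixing $\varepsilon<\rho/(8J)$ in this way ensures that the per-step loss stays below $\rho/(4J)$, so the hypothesis $\av_{\A_{b,k}}(\bfY(j\tau))>3\rho/4$ is preserved at each of the $J$ time steps.

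With the sharpened version in hand, I would apply Proposition~\ref{prop:denss} inductively at $t_*=j\tau$ for $j=0,1,\ldots,J-1$, obtaining random $K^{(j)}$ each satisfying \eqref{eq:Ktail} such that $\av_{\A_{b,k}}(\bfY(s))\geq\rho/2$ on $[j\tau,(j+1)\tau]$ for $k\geq K_\ic\vee K^{(0)}\vee\cdots\vee K^{(j)}$. Taking $K:=K_\ic\vee\max_{j<J}K^{(j)}$, the tail bound \eqref{eq:Ktail} persists by a union bound over the fixed number $J$ of random variables (possibly with a worse multiplicative constant in the exponent), yielding \eqref{eq:dens}. The passage from the $\bfYgn$-estimate to the $\bfY$-estimate is automatic by the monotone convergence $\bfYgn\searrow\bfY$ inherited from the construction preceding Proposition~\ref{prop:existGener}.
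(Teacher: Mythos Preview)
Your identification of the obstacle---the mismatch between the $\rho/2$ conclusion of Proposition~\ref{prop:denss} and the $3\rho/4$ hypothesis needed at the next step---is exactly right, and the base case is fine. But your proposed resolution has a genuine scaling gap: neither shrinking $\tau$ nor enlarging $k$ (nor any combination) controls the boundary term $\bdy^{\tilA}_{\A_{b,k}}(\bfQ^{t_*,t_*+\tau})$. In the proof of Proposition~\ref{prop:denss} this term is bounded via \eqref{eq:tilQbd} by a constant times $\av_{\A_{b\pm1,k}}(\bfQ^{t_*,t_*+\tau})\approx q(\tau,1)$, with prefactor $|\AL_{b\pm1,k}|/|\AL_{b,k}|$ which by \eqref{eq:Anbr} stays bounded \emph{away from zero} uniformly in $k$. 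Enlarging $k$ therefore does not shrink it. Shrinking $\tau$ does (since $q(\tau,1)\sim c\sqrt{\tau}$ by Bessel scaling), but then $J=\lceil t/\tau\rceil\sim 1/\tau$, and the accumulated loss over $J$ steps is of order $J\sqrt{\tau}\sim t/\sqrt{\tau}\to\infty$. So you cannot achieve per-step loss $\varepsilon<\rho/(8J)\sim \rho\tau/t$.

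The paper resolves this by a two-scale coarsening trick. It re-estimates the average at time $\tau$ on the \emph{coarser} partition $\{\A_{b,k\ell}\}_b$ obtained by grouping $\ell$ consecutive blocks, but takes the boundary pieces $\A_\pm$ of the spliced interval $\tilA'$ from the original \emph{fine} partition $\{\A_{b,k}\}_b$. The boundary term then carries the prefactor $|\AL_\pm|/|\AL_{b,k\ell}|$, which by \eqref{eq:ALwbd}--\eqref{eq:AUpbd} \emph{does} tend to zero as $\ell\to\infty$, uniformly in $b$. This recovers $\av_{\A_{b,k\ell}}(\bfY(\tau))>3\rho/4$ on the coarser scale, so Proposition~\ref{prop:denss} can be reapplied with $K_*=\ell_1(k_0\vee K)$, and one iterates $\lceil t/\tau\rceil$ times with $\tau$ fixed.
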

\begin{proof}
By \eqref{eq:avbd} we have
$
	\av_{\A_{b,k}} ( \bfY(0) )
	\geq
	\rho - c k^{-1}.
$
Hence for all large enough $ k $: $ k\geq k_0=k_0(\rho,\bfy^\ic) $,
we have $ \av_{\A_{b,k}} ( \bfY(0) ) > \tfrac{3\rho}{4} $,
$ \forall b\in\hZ $.
With this and $ \tau $ as in Proposition~\ref{prop:denss}, 
applying Proposition~\ref{prop:denss} for $ t_*=0 $ and $ K_*=k_0 $,
we conclude \eqref{eq:dens} if $ t\leq \tau $.
To progress to $ t>\tau $,
we show that, actually,
$ \av_{\A_{b,k\ell}} ( \bfY(\tau) ) > \tfrac{3\rho}{4} $,
for $ k $ further chosen large enough.
This is achieved by improving the estimation following \eqref{eq:dens:avbd}.
In this estimation,
the contribution the interaction term and $ \tilB^k_b $ 
are made arbitrarily small by choosing large enough $ k $,
but the term $ \tilQ^k_b $ stays bounded away from zero.
This problem is resolved by changing $ k\mapsto k \ell $,
which %(with $ \tilm^k_i := m_{ki} $ and $ \A_{b,k} $ as in \eqref{eq:A})
corresponds to grouping $ \ell $ consecutive intervals
of $ \{\A_{b,k}\}_b $ to form a new, coarser, partition $ \{\A_{b, k\ell}\}_b $.
Fixing arbitrary $ \A_{b,k\ell} $,
we let $ \A_\pm := \A_{\ell(b\pm 1/2)\pm 1/2,k}  $ denote the neighboring `small' intervals,
and form the spliced interval $ \tilA' := \A_- \cup \A_{b,k\ell} \cup \A_+ $.
Let $ \calI' $ be such that 
$
	\A_{b,k\ell} \subset \calI' \subset
	\tilA'.
$
With such interval $ \calI' $ replacing $ \calI $ as in \eqref{eq:dens:avbd},
we obtain
\begin{align}\label{eq:dens:Q}
	\bdy^{\tilA'}_{\A_{b,k\ell}}(\bfQ^{0,\tau})
	\leq	
	\tfrac{|\AL_{+}|}{|\AL_{b,k\ell}|} \av_{\A_{+}} \BK{ \bfQ^{0,\tau} }
	+
	\tfrac{|\AL_-|}{|\AL_{b,k\ell}|} \av_{\A_{-}} \BK{ \bfQ^{0,\tau} },
\end{align}
where $ \AL_\pm:=\A_\pm\cap\hZ $.
By \eqref{eq:ALwbd}--\eqref{eq:AUpbd},
we have that $ |\AL_{\pm}|/|\AL_{b,k\ell}| \to 0 $ as $ \ell\to\infty $,
uniformly in $ b\in\hZ $,
so, the term \eqref{eq:dens:Q} is made arbitrarily small by choosing $ \ell $ large enough.
With this improvement of the estimation of \eqref{eq:dens:avbd},
we obtain that $ \av_{\A_{b,k\ell_1}}(\bfY(\tau)) > \tfrac{3\rho}{4} $,
for all $ n\in\bbZ_{>0} $, $ k \geq k_0\vee K $ and some $ \ell_1=\ell_1(\rho) $,
which then allows us to
apply Proposition~\ref{prop:denss} for $ K_*= \ell_1(k_0\vee K) $ and $ t_*=\tau $.
Iterating the preceding procedure $ i_*:= \ceilBK{ t/\tau } $ times, we conclude \eqref{eq:dens}.
\end{proof}

Recall the definition of $ \YTspll(\gamma) $ from \eqref{eq:YTspll}.
Indeed, the uniform lower bound \eqref{eq:dens} implies that $ \bfY\in \YTspl'(\rho/2) $.
Combining this with Proposition~\ref{prop:bd>0} for $ \bfY^{[n]}=\bfYgn $,
we obtain that $ \bfY $ is a $ \YTspl(\rho/2) $-valued solution of \eqref{eq:DBMg}.
We next show that the bound \eqref{eq:dens} actually implies $ \bfY\in\YTsp(\alpha,\rho) $.
\begin{lemma}\label{lem:den2YTsp}
Let $ \bfY^* $ be a $ \YTspl(\gamma) $-valued solution of \eqref{eq:DBMg}, $ \gamma>0 $,
starting from $ \bfy^\ic\in\YTsp(\alpha,\rho) $.
If, for each $ t\geq 0 $, there exists $ K'\in\bbZ_{>0} $
such that \eqref{eq:dens} holds for $ \bfY^* $,
then in fact $ \bfY^*\in\YTsp(\alpha,\rho) $.
\end{lemma}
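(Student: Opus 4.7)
The plan is to use the ``global equation'' obtained by summing \eqref{eq:DBMg} over $a\in(0,m)\cap\hZ$ and invoking \eqref{eq:resum1}:
\begin{equation*}
m\,\av_{(0,m)}(\bfY^*(s))=m\,\av_{(0,m)}(\bfy^\ic)+(B_m(s)-B_0(s))+\beta\int_0^s\bigl[\barf_{(0,m)}-\undf_{(0,m)}\bigr](\bfY^*(r))\,dr.
\end{equation*}
Since $\bfy^\ic\in\Ysp(\alpha,\rho)$ gives $|\av_{(0,m)}(\bfy^\ic)-\rho|\cdot|m|^\alpha\leq\norm{\bfy^\ic}$, the goal reduces to showing that the Brownian increment and the interaction integral, each divided by $|m|^{1-\alpha}$, remain a.s.\ uniformly bounded in $s\in[0,t]$ and $|m|$ large (for bounded $|m|$ the trivial upper bound from \eqref{eq:BesComp:sup} and \eqref{eq:QLLN} suffices). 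The Brownian piece is handled by sub-Gaussian tails of $\sup_{s\in[0,t]}|B_m(s)|$ combined with Borel--Cantelli, using $\alpha<1$.

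For the interaction integral, I would first promote the hypothesis \eqref{eq:dens} to a time-uniform mesoscopic lower bound on the infimum envelope $\underline{\bfY^*}(0,t):=(\inf_{s\in[0,t]}Y^*_a(s))_{a\in\hZ}$, namely $\av_{\A_{b,k}}(\underline{\bfY^*}(0,t))\geq\rho/3$ for all $b\in\hZ$ and $k\geq K(\omega,t)$; this follows from the continuity estimate \eqref{eq:BesComp} and the Borel--Cantelli control of $\av_{\A_{b,k}}(\bfQ^{0,t})$ exactly as in the proof of Proposition~\ref{prop:denss}. A symmetric use of \eqref{eq:BesComp:sup} yields the complementary upper bound $\av_{\A_{b,k}}(\overline{\bfY^*}(0,t))\leq c(\omega,t)$. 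Lemma~\ref{lem:goodset} then produces random ``global'' seeds $J^-<0<J^+$ with $h_{(J^\pm,\pm\infty)}(\underline{\bfY^*}(0,t))\geq\rho/3$, and applied locally at the mesh scale adapted to each large $|m|$ it gives analogous seeds near $m$; together these provide linear lower bounds $y_{(i_1,i_2)}(r)\geq c(i_2-i_1)$ uniformly in $r\in[0,t]$ whenever $i_2-i_1$ exceeds the local mesh size $|\AL_{b^*(m),k}|\asymp k\,m^{1-\alpha}$.

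Using these bounds, $\undf_{(0,m)}(\bfY^*(r))$ is estimated as in the calculation leading to \eqref{eq:denss:fS}, giving an $O(\log m)$ bound. For $\barf_{(0,m)}(\bfY^*(r))=\tfrac12\sum_{i=1}^m(y_{(0,i)}(r))^{-1}+\tfrac12\sum_{i=0}^{m-1}(y_{(i,m)}(r))^{-1}$, the first sum is split at $J^+$: the $i>J^+$ portion contributes $O(\log m)$ via the seed lower bound, while the finitely many $i\leq J^+$ terms have integrals $\int_0^t(y_{(0,i)}(r))^{-1}\,dr<\infty$ a.s.\ by continuity of $y_{(0,i)}\in C_+([0,t])$. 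The second sum is handled symmetrically using the local seed near $m$. Summing, $\int_0^t[\barf_{(0,m)}+\undf_{(0,m)}](\bfY^*(r))\,dr=o(|m|^{1-\alpha})$ a.s., completing the proof.

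\emph{The main obstacle} lies in the short-range portion of $\sum_{i=0}^{m-1}(y_{(i,m)}(r))^{-1}$ for $i$ within one mesh of $m$: the linear lower bound from a distant seed degrades there, so one must invoke \eqref{eq:dens} at the scale $k$ adapted to $m$ to place a seed close to $m$, and then control the residual $\asymp k\,m^{1-\alpha}$ short-range time-integrals by a uniform (in $i$) constant. This last step requires an additional moment-type estimate on the individual gap-sum processes $y_{(i,m)}$, obtained either by Ito's formula on $\log y_{(i,m)}$ or by further comparison with suitable Bessel-type processes built from Lemma~\ref{lem:oneD}.
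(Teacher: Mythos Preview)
Your overall strategy matches the paper's: reduce to bounding $|m|^{\alpha-1}$ times the interaction integral in the summed equation, control the Brownian piece by Borel--Cantelli, and use the mesoscopic density bound \eqref{eq:dens} together with Lemma~\ref{lem:goodset} to manufacture seeds that give linear lower bounds on partial sums of gaps.

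However, the obstacle you identify at the end is a genuine gap in your argument, and your suggested fix (It\^o's formula on $\log y_{(i,m)}$ or an ad hoc comparison) is not convincing: getting $\int_0^t (Y^*_{(i,m)}(s))^{-1}ds \leq C$ uniformly over the $\asymp K m^{1-\alpha}$ short-range indices $i$ and over all large $m$ would require substantial additional work. The paper avoids this difficulty entirely, and the mechanism is worth noting. Rather than summing \eqref{eq:DBMg} over exactly $(0,m)$, the paper invokes Lemma~\ref{lem:denseq}, which allows the summation interval $\calI$ to be any interval with $(0,m)\subset\calI\subset\calK'$ (the discrepancy $\calI\setminus(0,m)$ is absorbed into the boundary terms $\hatB^{\calK'}_{\calK}$ and $\bdy^{\calK'}_{\calK}$, which are $O(m^{-\alpha})$ after division by $|m|$). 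The paper then partitions $[0,t]$ into subintervals of length $\leq\tau$ and, for each subinterval $[t_{j-1},t_j]$, chooses the endpoints of $\calI_j$ and $\calI'_j$ to be \emph{seeds themselves}: specifically $\calI'_j=(I^{+}_{b^--1,j},I^{-}_{b^++1,j})$ for $\barf$ and $\calI_j=(I^{-}_{b^--1,j},I^{+}_{b^++1,j})$ for $\undf$, with the seed directions matched to the structure of each sum. With this choice every denominator $y_{(I^\pm,\ell)}$ appearing in $\barf_{\calI'_j}$ or $\undf_{\calI_j}$ is controlled from below by $\tfrac{\rho}{3}|\ell-I^\pm|$ on $[t_{j-1},t_j]$, so both interactions are $O(\log|m|)$ without any residual short-range terms. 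The time partition is also essential for producing the seeds in the first place: the continuity estimate \eqref{eq:BesComp} only converts \eqref{eq:dens} into a lower bound on $\av_{\A_{b,K}}(\bfundY^*(t_{j-1},t_j))$ when $q(t_j-t_{j-1},1)$ is small, so a single pair of ``global'' seeds for all of $[0,t]$ is not available.
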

\begin{proof}
Fixing arbitrary $ t\geq 0 $, 
as $ \bfy^\ic \in\YTsp(\alpha,\rho) $,
proving $ \bfY^*\in\YTsp(\alpha,\rho) $ amounts to proving
\begin{align}\label{eq:Ynorm:bd}
	\sup_{s\in[0,t]} \Big\{ \sup_{m\in\bbZ} \big| \av_{(0,m)}\BK{\bfY^*(s)-\bfy^\ic} \big| \ |m|^{\alpha} \Big\}
	<\infty.
\end{align}
To this end, we assume without lost of generality $ |m| \geq \tilm^K_1 $.
Let $ K_0 $ be as in \eqref{eq:denss:claim1}--\eqref{eq:denss:claim2} and let $ K:=K'\vee K_0 $.
Set $ \calK:=(0,m) $, let $ \cup_{b=b^-}^{b^+} \A_{b,K} $, $ b^-<b^+ $,
be the smallest such interval that contains $ \calK $
and let $ \calK':= (\tilm^K_{b^--1/2},\tilm^K_{b^++1/2}) $.
Partition $ [0,t] $ into $ j $ equally spaced subintervals $ [t_{j-1},t_{j}] $,
$ j=1,\ldots,j_* $, each with length $ t/j_*\leq \tau $,
where $ \tau $ is as in Proposition~\ref{prop:denss}.
Applying \eqref{eq:dens:lw}--\eqref{eq:dens:up}
for $ [s',s'']=[t_{j-1},t_j] $ and $ \calI_j, \calI'_j\subset\bbR $ 
such that $ \calK \subset \calI_j,\calI'_j \subset \calK' $, we obtain
\begin{align}
	&
	\label{eq:denlw:(0,m)}
	\inf_{s\in[t_{j-1},t_j]}
	\av_{(0,m)} ( \bfY^*(s) )
	\geq
	\av_{(0,m)} ( \bfY^*(t_{j-1}) )
	- \frac{\beta}{|m|} 
	\int_{t_{j-1}}^{t_j} \undf_{ \calI_j }(\bfY^*(s))  ds - R_m,
\\
	&
	\label{eq:denup:(0,m)}
	\sup_{s\in[t_{j-1},t_j]}
	\av_{(0,m)} ( \bfY^*(s) )
	\leq
	\av_{(0,m)} ( \bfY^*(t_{j-1}) )
	+ \frac{\beta}{|m|} 
	\int_{t_{j-1}}^{t_j} \barf_{ \calI'_j }(\bfY^*(s))  ds + R_m,
\end{align}
where
$
	R_m := 
	\frac{1}{|m|}
	(
		\hatB^{\calK'}_{\calK}(t)
		+ \bdy^{\calK'}_{\calK}(\bfQ^{t_{j-1},t_j})
		+ \bdy^{\calK'}_{\calK}(\bfY^*(t_{j-1}))
	).
$
With $ \hatB^{\calK'}_{\calK}(t), \bdy^{\calK'}_{\calK}(\bfy) $
defined as in Lemma~\ref{lem:denseq}, 
$ \tilB^K_b(t) $ as in \eqref{eq:tilB},
and $ \calK,\calK' $ as in the preceding,
we clearly have that 
\begin{align*}
	R_m 
	\leq 
	\frac{1}{|m|} \sum_{b:|b-b_\pm|\leq 1 } 
	\BK{	
		\tilB^{K}_{b}(t)
		+ |\AL_{b,k}|\av_{\A_{b,k}} (\bfQ^{t_{j-1},t_{j}})
		+ |\AL_{b,k}|\av_{\A_{b,k}} (\bfY^*(t_{j-1}))
		}.
\end{align*}
Further applying \eqref{eq:denss:claim1}--\eqref{eq:denss:claim2},
we have that 
$ 
	R_m 
	\leq 
	\frac{c}{|m|} (|\tilm^K_{b_++3/2}|^{1-\alpha} + |\tilm^K_{b_--3/2}|^{1-\alpha}) 
	\leq 
	c |m|^{-\alpha} 
$.
Plugging this in \eqref{eq:denlw:(0,m)}--\eqref{eq:denup:(0,m)},
and combining the result for $ j=1,\ldots,j_* $,
we arrive at
\begin{align}
	&
	\label{eq:denlw:(0,m):}
	\inf_{s\in[0,t]}
	\av_{(0,m)} ( \bfY^*(s) )
	\geq
	\av_{(0,m)} ( \bfy^\ic )
	-  
	\sum_{j=1}^{j_*}
	\beta  
	\int_{t_{j-1}}^{t_j} \frac{1}{|m|}\undf_{ \calI_j }(\bfY^*(s))  ds 
	- c(t) |m|^{-\alpha},
\\
	&
	\label{eq:denup:(0,m):}
	\sup_{s\in[0,t]}
	\av_{(0,m)} ( \bfY^*(s))
	\leq
	\av_{(0,m)} ( \bfy^\ic )
	+ 
	\sum_{j=1}^{j_*} 
	\beta
	\int_{t_{j-1}}^{t_j}  \frac{1}{|m|} \barf_{ \calI'_j }(\bfY^*(s))  ds 
	+
	 c(t) |m|^{-\alpha}.
\end{align}

Proceeding to bounding the interaction terms, we fix $ j\in\{1,\ldots,j_*\} $.
With $ \av_{\A^K_b}(\bfY^*(t_j)) \geq \tfrac{\rho}{2} $, $ \forall b\in\hZ $,
and $ q(t_{j}-t_{j-1},1) \leq q(\tau,1) $,
following the same procedure of obtaining \eqref{eq:denss:h},
we obtain 
\begin{align}\label{eq:Ipmbj}
	I^{\pm}_{b,j}\in [\tilm^K_{b\mp 1/2},\tilm^K_{b\pm 1/2})\cap\bbZ
	\quad
	\text{such that }
	h_{(I^{\pm}_{b,j},\pm\infty)}(\bfundY^*(t_{j-1},t_j)) \geq \tfrac{\rho}{3},
	\ \forall b\in\hZ.
\end{align}
Now, set $ \calI_j= \fkI:= (I^{-}_{b^--1,j},I^{+}_{b^++1,j}) $ 
and $ \calI'_j= \fkI':= (I^{+}_{b^--1,j},I^{-}_{b^++1,j}) $.
Using \eqref{eq:Ipmbj} to bound $ \barf_{\fkI'}(\bfY^*(s)) $ (as in \eqref{eq:barf}),
we obtain
\begin{align}\label{eq:barfbd}
	\tfrac{ 1 }{ |m| } \barf_{\fkI'}(\bfY^*(s))	
	\leq 
	\tfrac{c}{|m|} \log(|\tilm^K_{b^--3/2}|+|\tilm^K_{b^++3/2}|)
	\leq
	\tfrac{c}{|m|} \log |m|,
	\quad
	\forall s\in[t_{j-1},t_j].
\end{align}
Next, by \eqref{eq:BesComp:sup} we have 
$ \barY^*_{\fkI}(0,t) \leq y^\ic_{\fkI} + Q^{0,t}_{\fkI} $.
With $ \undf_\calI(\bfy) $ and $ \undffpm_\calI(z,\bfy) $ as in \eqref{eq:undf}--\eqref{eq:undffpm},
we have 
\begin{align}\label{eq:undffbd}
	\undf_{\fkJ}(\bfY^*(s)) 
	\leq 
	\undffp_{ \fkI }( y^\ic_{\fkI} + Q^{0,t}_{\fkI} , \bfY^*(s))
	+\undffm_{ \fkI }( y^\ic_{\fkI} + Q^{0,t}_{\fkI} , \bfY^*(s)),
	\quad
	\forall s\in[t_{j-1},t_j].		
\end{align}
Further, by $ \fkI_j\subset\calK' $, $ \bfy^\ic\in\Ysp(\alpha,\rho) $ and \eqref{eq:denss:claim2},
we have $ y^\ic_{\fkI} + Q^{0,t}_{\fkI} \leq c(\rho+q(t,1)) |\calK'\cap\hZ| \leq c(\rho+q(t,1)) |m| $.
Using this and \eqref{eq:Ipmbj} to bound $ \undfpm_{ \fkI }( y^\ic_{\fkI} + Q^{0,t}_{\fkI} , \bfY^*(s)) $,
we obtain
\begin{align}\label{eq:undffbd:}
	&
	\tfrac{1}{|m|} \undffpm_{ \fkI }( y^\ic_{\fkI} + Q^{0,t}_{\fkI} , \bfY^*(s))	
	\leq 
%	\tfrac{c}{|m|} \log(|\tilm^K_{b^--3/2}|+|\tilm^K_{b^++3/2}|)
%	\leq
	\tfrac{c}{|m|} \log |m|,
	\quad
	\forall s\in[t_{j-1},t_j].
\end{align}
Combining the preceding bounds \eqref{eq:barfbd}, \eqref{eq:undffbd} and \eqref{eq:undffbd:}
on the interaction terms,
and inserting the result into \eqref{eq:denlw:(0,m):}--\eqref{eq:denup:(0,m):}, 
we conclude \eqref{eq:Ynorm:bd}.
\end{proof}
\begin{proof}[Proof of Proposition~\ref{prop:existGener}]
As stated in the preceding, 
$ \bfY $ is a $ \YTspl(\rho/2) $-valued solution of \eqref{eq:DBMg}.
With this, combining Proposition~\ref{prop:dens} and Lemma~\ref{lem:den2YTsp}
we obtain that $ \bfY\in\YTsp(\alpha,\rho) $.
To show that $ \bfY $ is the greatest solution,
let $ \bfY' $ be a $ \YTsp(\alpha,\rho) $-valued solution with $ \bfY'(0) \leq \bfy^\ic $.
By Proposition~\ref{prop:existg}, $ \bfYgn $ the greatest $ \YTsp $-valued solution,
so, with $ \bfY'(0) \leq (\bfy^\ic\vee\bfgamma_n) = \bfYgn(0) $, 
we must have $ \bfY'(\Cdot) \leq \bfYgn(\Cdot) $.
Letting $ n\to\infty $ we conclude $ \bfY'(\Cdot) \leq \bfY(\Cdot) $.
\end{proof}

\section{Proof of Theorem~\ref{thm:conv}}
\label{sect:conv}
\subsection{Proof of Part\ref{enu:asCnvg}}
Fixing $\bfx^\ic\in\XRsp(\alpha,\rho,p)$,
we let $i^\pm_n$, $\I_n$, $\bfX\in\XRTsp(\alpha,\rho,p) $ and $ \bfX^n\in C([0,\infty))^{\I_n\cap\bbZ} $
be as in Theorem~\ref{thm:conv}.
We consider the corresponding gap processes:
$ \bfy^\ic := u(\bfx^\ic) \in (\Ysp(\alpha,\rho)\cap\Rsp(p)) $,
$ \bfY := u(\bfX) \in (\Ysp(\alpha,\rho)\cap\Rsp(p)) $,
$ \bfY^n := u(\bfX^n) \in C_+([0,\infty))^{\I_n\cap\hZ} $
and let
\begin{align*}
	\bfYR{n}(t) := \BK{\YR{n}{a}(t) }_{a\in\hZ} \in [0,\infty]^\hZ,
	\quad
	\YR{n}{a}(t) 
	:= 
	Y^n_a(t) \ind\curBK{a\in\I_n} 
	+
	\infty \ind\curBK{a\notin\I_n}.
\end{align*}
That is, $ \bfYR{n} $ is constructed from $ \bfY^n $
by declaring all gaps to be $ \infty $ outsides of $ \I_n $.
Hereafter we adapt the convention $ \tfrac{1}{\infty}:=0 $,
so in particular
\begin{align}
	\label{eq:DBMf:}
	&
	X^n_i(t) = x^\ic_i + B_i(t) + \beta \int_0^t \phi_i(\bfYR{n}(s)) ds,
	\quad
	\forall i \in \I_n,
\end{align}
where, recall that (by abuse of notation) 
$ \phi_i(\bfy) := \phi_i( \tilu^{-1}(0,\bfy) ). $
We begin by showing that $ \bfYR{n} $ is decreasing.
\begin{lemma}\label{lem:YrDecr}
We have that, almost surely,
$ \bfYR{1}(\Cdot) \geq \bfYR{2}(\Cdot) \geq \ldots \geq \bfY(\Cdot) $.
\end{lemma}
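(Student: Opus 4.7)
The plan is to apply the comparison principle Lemma~\ref{lem:DBMcomp} on the common gap index set $\I_n \cap \hZ$. For any $n' \geq n$, both $\bfY^{n'}$ restricted to $\I_n \cap \hZ$ and $\bfY^n$ can be realized as solutions of the same finite-dimensional equation~\eqref{eq:DBMfc} on $\fkI = \I_n \cap \hZ$, driven by the same Brownian motion $\bfW$ and starting from the identical initial condition $(y^\ic_a)_{a\in\I_n\cap\hZ}$, but with different external forces. The idea is that adding particles outside $\I_n$ always contributes additional compression, which translates into a nonpositive external force.

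For $\bfYR{n'}(\Cdot) \leq \bfYR{n}(\Cdot)$, a direct calculation from \eqref{eq:psiA}--\eqref{eq:fSA} gives, for $a \in \I_n \cap \hZ$,
\begin{align*}
    \fS_a^{\I_{n'}\cap\hZ}(\bfy)
    = \fS_a^{\I_n\cap\hZ}(\bfy)
      - y_a \cdot \tfrac{1}{2}\!\!\sum_{i \in \I_{n'}\setminus\I_n,\, |i-a|>1}\!\! \frac{1}{y_{(a,i)}(y_a+y_{(a,i)})}.
\end{align*}
Hence $\bfY^{n'}|_{\I_n\cap\hZ}$ satisfies \eqref{eq:DBMfc} with external force $Z^{n',n}_a(s) \leq 0$, while $\bfY^n$ satisfies it with zero external force. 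Applying Lemma~\ref{lem:DBMcomp} with $\bfYup = \bfY^n$, $\bfYlw = \bfY^{n'}|_{\I_n\cap\hZ}$, $\bfZup \equiv 0$ and $\bfZlw = Z^{n',n}$, we conclude $Y^{n'}_a \leq Y^n_a$ on $\I_n\cap\hZ$; outside $\I_n\cap\hZ$, the inequality $\YR{n'}{a} \leq \YR{n}{a} = \infty$ is trivial.

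For the comparison $\bfY(\Cdot) \leq \bfYR{n}(\Cdot)$, the analogous manipulation applied to \eqref{eq:DBMg} gives, for $a\in\I_n\cap\hZ$,
\begin{align*}
    \fS_a(\bfY(s)) = \fS_a^{\I_n\cap\hZ}(\bfY(s)) + Y_a(s)\, Z^{n}_a(s),
    \quad
    Z^n_a(s) := -\tfrac{1}{2}\!\!\sum_{i\notin\I_n,\,|i-a|>1}\!\! \frac{1}{Y_{(a,i)}(s)(Y_a(s)+Y_{(a,i)}(s))}.
\end{align*}
The only nontrivial point is verifying the regularity of $Z^n_a$, namely continuity in $s$, $\Wfil$-adaptedness, and absolute convergence of the infinite sum. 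Since $\bfY \in \YTsp(\alpha,\rho)$ implies $Y_{(a,i)}(s) \geq c |i-a|$ uniformly for $s\in[0,t]$ and $|i-a|$ large (as in the arguments leading to \eqref{eq:Lest:L2}), the $i$-th summand is $O(|i-a|^{-2})$, giving uniform convergence on compact time intervals and hence continuity. With this, another application of Lemma~\ref{lem:DBMcomp}, with $\bfYup = \bfY^n$ ($\bfZup \equiv 0$) and $\bfYlw = \bfY|_{\I_n\cap\hZ}$ ($\bfZlw = Z^n \leq 0$), yields $Y_a \leq Y^n_a$ on $\I_n \cap \hZ$, hence $\bfY(\Cdot) \leq \bfYR{n}(\Cdot)$.

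The main obstacle is the infinite-sum regularity of $Z^n_a$ in the second comparison, which is ultimately a straightforward consequence of the defining density estimate for $\YTsp(\alpha,\rho)$; the remaining content---algebraically identifying the decomposition of $\fS_a$ into $\fS_a^{\fkI}$ plus a multiplicative external force and invoking the comparison lemma---is routine.
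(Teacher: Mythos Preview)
Your proof is correct and follows essentially the same approach as the paper: both identify the restriction of the larger system to $\I_n\cap\hZ$ as a solution of \eqref{eq:DBMfc} with a nonpositive external force coming from $\f_a^{(\I_n)^c}$, and then invoke Lemma~\ref{lem:DBMcomp}. The paper's version is terser (comparing only consecutive $n$ with $n+1$ and omitting the regularity check for $Z^n_a$), but your added detail on the uniform convergence of the infinite sum is a welcome verification that the hypotheses of Lemma~\ref{lem:DBMcomp} are indeed met.
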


\begin{proof}
For the gap process $ \bfY^n $, by \eqref{eq:DBMf:}, we have 
\begin{align}\label{eq:DBMgf:}
	Y^n_a(t) = y^\ic_i + W_a(t) + \beta \int_0^t \fS^{\I_n}_a(\bfY^n(s)) ds,
	\quad
	\forall a \in \I_n,
\end{align}
where $ \fS^{\I_n}_a(\bfy) $ is defined as in \eqref{eq:fSA}.
With this, applying Lemma~\ref{lem:DBMcomp} for $ \fkI=\calL_n\cap\hZ $,
$ \bfYup=\bfY^n $, $ \bfZup = \mathbf{0} $,
$ \bfYlw=\bfY^{n+1} $, $ \Zlw_a(s) = -(Y^{n+1}_a(s))^{-1} \f^{ \I_{n+1}\setminus \I_{n} }_a(Y^{n+1}_a,\bfY^{n+1}) $,
we conclude $ \bfY^n(\Cdot) \geq^{\I_n} \bfY^{n+1}(\Cdot) $,
whereby $ \bfYR{n}(\Cdot) \geq \bfYR{n+1}(\Cdot) $.
Similarly, $ \bfYR{n}(\Cdot) \geq \bfY(\Cdot) $ follows by applying Lemma~\ref{lem:DBMcomp},
for $ \fkI=\I_{n}\cap\hZ $.
\end{proof}
An immediate consequence of Lemma~\ref{lem:YrDecr}
is the following convergence of the gap process.
\begin{lemma}\label{lem:Yrconv}
For any fixed $ t\geq 0 $, $ a\in\hZ $ and $ p'\geq 1 $,
we have 
\begin{align}
	&
	\label{eq:YRasCnvg}
	\sup_{s\in[0,t]} |\YR{n}{a}(s) - Y_a(s) | \xrightarrow{\text{a.s.}} 0,
\\
	&
	\label{eq:YRLpCnvg}
	\bbE \Big( \sup_{s\in[0,t]} |\YR{n}{a}(s) - Y_a(s) | \Big)^{p'} \to 0,
\end{align}
as $ n\to\infty $.
\end{lemma}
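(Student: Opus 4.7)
The plan is to exploit the monotone decrease $\bfYR{1}(\Cdot)\geq\bfYR{2}(\Cdot)\geq\ldots\geq\bfY(\Cdot)$ from Lemma~\ref{lem:YrDecr} to define the pointwise limit $Y^\infty_a(s) := \lim_{n\to\infty} \YR{n}{a}(s) \geq Y_a(s)$, which is finite since $a \in \I_n$ eventually, to show that $\bfY^\infty$ coincides with $\bfY$ almost surely, and then invoke Dini's theorem to upgrade pointwise convergence to uniform convergence on the compact interval $[0,t]$. The identification $\bfY^\infty = \bfY$ will be established by showing $\bfY^\infty$ is an $(\YTsp(\alpha,\rho)\cap\RTsp(p))$-valued solution of \eqref{eq:DBMg} with initial data $\bfy^\ic$, and then applying Proposition~\ref{prop:unique} to the pair $\bfY \leq \bfY^\infty$.

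Verifying the properties of $\bfY^\infty$ parallels the machinery of Sections~\ref{sect:exist:special}--\ref{sect:exist}. The crucial input is strict positivity, $\undY^\infty_a(0,t)>0$, for which I would mimic Proposition~\ref{prop:bd>0}: combine $\bfY^\infty\geq\bfY$ with the continuity estimate \eqref{eq:BesComp} applied to $Y_a$ (which satisfies \eqref{eq:BesComp:Y} with nonnegative $F=\f_a$) to obtain $\liminf_{|m|\to\infty}\av_{(a_*,m)}(\bfundY^\infty(0,\tau))\geq\rho-q(\tau,1)$ for $\tau$ small, then use Lemma~\ref{lem:goodset} to produce a random window $\fkI=(J^-,J^+)\cap\hZ$ with $h_{(J^\pm,\pm\infty)}(\bfundY^\infty(0,\tau))\geq\rho/2$. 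For $n$ so large that $\fkI\subset\I_n$, this $h$-bound supplies a uniform-in-$n$ lower bound $\YR{n}{(J^\pm,i)}\geq (\rho/2)|i-J^\pm|$ on the denominators appearing in $\f^{\I_n\setminus\fkI}_a$, so the external force $Z^*_a = -\f^{\I_n\setminus\fkI}_a(Y^{\star n}_a,\bfYR{n})/Y^{\star n}_a$ is bounded below by some deterministic $c^*>-\infty$; Lemma~\ref{lem:DBMcomp} then compares $\bfYR{n}$ on $\fkI$ with a $C_+([0,\tau])^\fkI$-valued process, giving $Y^\infty_a>0$ on $[0,\tau]$, and concatenation over subintervals yields positivity on all of $[0,t]$.

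Once positivity holds, passing to the limit in \eqref{eq:DBMgf:} (viewed as \eqref{eq:DBMg} for $\bfYR{n}$ under the convention $1/\infty=0$) is routine: $1/Y^{\star n}_a\nearrow 1/Y^\infty_a$ integrates by monotone convergence, while $\f_a(Y^{\star n}_a,\bfYR{n}(s))\to\f_a(Y^\infty_a,\bfY^\infty(s))$ integrates by dominated convergence, with dominator $\f_a(\YR{1}{a}(s),\bfY(s))$ finite and $s$-integrable thanks to $\bfY\in\YTsp(\alpha,\rho)$. This shows $\bfY^\infty$ solves \eqref{eq:DBMg}. Membership in $\RTsp(p)$ follows from $Y^\infty_a \leq \YR{1}{a}\leq y^\ic_a + Q^{0,t}_a$ (via \eqref{eq:BesComp:sup} applied to $\YR{1}{a}$) together with $\bfy^\ic\in\Rsp(p)$ and the Bessel LLN \eqref{eq:QLLN}. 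Membership in $\YTsp(\alpha,\rho)$ follows because the density bound \eqref{eq:dens} for $\bfY$ supplied by Proposition~\ref{prop:dens} transfers to $\bfY^\infty$ through $\bfY^\infty\geq\bfY$, whereupon Lemma~\ref{lem:den2YTsp} applies. Proposition~\ref{prop:unique} now forces $\bfY^\infty=\bfY$.

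With $\bfY^\infty = \bfY$ in hand, both convergences follow easily. For \eqref{eq:YRasCnvg}, Dini's theorem applied to the continuous decreasing sequence $s\mapsto\YR{n}{a}(s)$ with continuous limit $s\mapsto Y_a(s)$ on the compact $[0,t]$ yields uniform convergence almost surely. For \eqref{eq:YRLpCnvg}, the dominator $\sup_{s\in[0,t]}\YR{1}{a}(s) + \sup_{s\in[0,t]}Y_a(s) \leq 2y^\ic_a + 2Q^{0,t}_a$ (again via \eqref{eq:BesComp:sup}) lies in every $L^{p'}$, since the Bessel running maximum has moments of all orders; the dominated convergence theorem concludes. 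The main obstacle is the positivity step, where one must ensure that the lower bound on $Z^*_a$ is uniform in $n$ so that it survives the passage to $\bfY^\infty$, despite the formal value $\infty$ carried by $\bfYR{n}$ outside $\I_n$.
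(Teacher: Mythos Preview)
Your proof is correct but takes a longer route than the paper. Two differences. First, you identify strict positivity $\inf_{s\in[0,t]}Y^\infty_a(s)>0$ as ``the main obstacle'' and re-run the machinery of Proposition~\ref{prop:bd>0} to obtain it; in fact it is immediate from $\bfY^\infty\geq\bfY$ together with $\bfY\in C_+([0,\infty))^\hZ$, so there is no obstacle. Second, to conclude $\bfY^\infty=\bfY$ you upgrade $\bfY^\infty$ all the way to $\YTsp(\alpha,\rho)\cap\RTsp(p)$ so as to invoke the uniqueness Proposition~\ref{prop:unique}; the paper instead appeals to the greatest-solution property built into the construction of $\bfY$ (any $\YTspl$-valued solution with initial data $\leq\bfy^\ic$ lies below each $\bfYgn$, hence below $\bfY$), which needs only $\bfY^\infty\in\YTspl$---again immediate from $\bfY^\infty\geq\bfY$---and bypasses Lemma~\ref{lem:den2YTsp} and the $\RTsp(p)$ check entirely. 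The Dini and dominated-convergence closing steps are the same in both proofs. A minor slip: your dominators involving $\YR{1}{a}$ are infinite whenever $a\notin\I_1$ (in particular $\f_a(\infty,\bfY(s))$ diverges); replace $1$ by a fixed $n'$ with $a\in\I_{n'}$, as the paper does.
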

\begin{proof}
By Lemma~\ref{lem:YrDecr}, the limiting process 
$ \bfY^{\infty}(t) := \bfYR{n}(t) $ exists,
and satisfies $ \bfY^{\infty}(\Cdot) \geq \bfY(\Cdot) \in \YTspl(\rho) $.
With this, letting $ n\to\infty $ in \eqref{eq:DBMgf:} for any fixed $ a\in\hZ $,
using the dominated convergence theorem,
one easily sees that the terms on the r.h.s.\ converges
to the corresponding terms (for $ \bfY^\infty $),
whereby concluding that $ \bfY^\infty $ is in fact a $ \YTspl(\rho) $-valued solution of \eqref{eq:DBMg},
(c.f.\ Proof of Proposition~\ref{prop:bd>0}).
However, by Proposition~\ref{prop:existg},
$ \bfY $ is the greatest such solution,
so we must have $ \bfY^{\infty} = \bfY $.
For any fixed $ a\in\hZ $,
this implies $ \bbP( \lim_{n\to\infty} \YR{n}{a}(s) = Y_a(s))=1 $, $ \forall s\geq 0 $.
As $ \{\YR{n}{a}\}_{n} \subset C([0,\infty)) $ is decreasing, 
by Dini's theorem, this further implies the desired uniform convergence of \eqref{eq:YRasCnvg}.
Fixing arbitrary large enough $ n'\in\bbZ_{>0} $ such that $ \I_{n'}\ni a $,
we have $ \barYR{n}{a}(0,t) \leq \barYR{n'}{a}(0,t) $, $ \forall n\geq n' $.
With this and \eqref{eq:YRasCnvg},
by the dominated convergence theorem we conclude \eqref{eq:YRLpCnvg}.
\end{proof}

By \eqref{eq:YRasCnvg}, it suffices to prove \eqref{eq:asCnvg} for the special case $ i=0 $.
To this end,
we rewrite \eqref{eq:DBM0} and \eqref{eq:DBMf:} for $ i=0 $ as
\begin{align}
	\label{eq:DBM0::}
	X_0(s') - X_0(s)
	&= 
	 B_0(s') - B_0(s) + \beta \int_{s}^{s'} \phi^{\calI}_0(\bfY(s)) ds
	+
	\beta \int_{s}^{s'} \phi^{\calI^c}_0(\bfY(s)) ds,
\\
	\label{eq:DBM0:}
	X^n_0(s') - X^n_0(s) 
	&= 
	B_0(s') - B_0(s) + \beta \int_{s}^{s'} \phi^{\calI}_0(\bfYR{n}(s)) ds
	+
	\beta \int_{s}^{s'} \phi^{\calI^c}_0(\bfYR{n}(s)) ds,
\end{align}
for $ s<s' $ and generic $ \calI := [i^-,i^+]\ni 0 $,
where $ \phi^{\calI}_0(\bfy) := \sum_{i\in\calI\setminus\{0\}} \frac{-\sign(i)}{2y_{(0,i)}} $
and $ \phi^{\calI^c}_0(\bfy) := \phi_0(\bfy) - \phi^{\calI}_0(\bfy) $
denote the interaction within and outside of $ \calI $, respectively.

Now, fixing $ t>0 $, 
$ K\in\bbZ_{>0} $ be as in Proposition~\ref{prop:dens},
and $ I^\pm_{b,j} $ be as in \eqref{eq:Ipmbj} for $ \bfY^*=\bfY $,
where $ j=1,\ldots,j_* $ indexes the equally spaced partition
$ 0=t_0<\ldots<t_{j_*}=t $ with $ t/j_{*} \leq \tau $ 
and $ \tau $ is as in Proposition~\ref{prop:denss}.
Taking the difference of \eqref{eq:DBM0::}--\eqref{eq:DBM0:} 
for $ (s,s')=(t_{j-1},t_j) $ and $ \calI=\fkI_{b,j}:=[I^-_{b,j},I^+_{b,j}] $,
and combing the result for $ j=1,\ldots,j_* $,
we arrive at
\begin{align}
\begin{split}
	&
	\sup_{s\in[0,t]} |X_0(s)-X^n_0(s)|
	\leq
	\beta 
	\sum_{j=1}^{j_*}
	\int_{t_{j-1}}^{t_{j}} \big| \phi^{\fkI_{b,j}}_0(\bfY(s)) - \phi^{\fkI_{b,j}}_0(\bfYR{n}(s)) \big| ds
\\
	&
	\label{eq:X0diff}
	\quad
	+
	\beta \sum_{j=1}^{j_*}
	\int_{t_{j-1}}^{t_{j}} 
		\Big( 
			\big| \phi^{(\fkI_{b,j})^c}_0(\bfY(s))\big| 
			+ \big|\phi^{(\fkI_{b,j})^c}_0(\bfYR{n}(s)) | 
		\Big)ds.
\end{split}
\end{align}
By \eqref{eq:YRasCnvg},
the first term on the r.h.s.\ of \eqref{eq:X0diff}
tends to $ 0 $ as $ n\to\infty $, for each fixed $ b<\infty $.
With this, it suffices to estimate the last term in \eqref{eq:YRasCnvg}.
\begin{lemma}\label{lem:phiOut}
Let $ t>0 $, $ K\in\bbZ_{>0} $, $ I^\pm_{b,j} $ and $ \fkI_{b,j}:=[I^-_{b,j},J^+_{b,j}] $ 
be as in the preceding.
There exists $ c=c(t,\bfy^\ic,\rho) $ such that
\begin{align}
	\label{eq:phiOut:bd}
	&
	\sup_{ s\in[t_{j-1},t_j] }  
	\big| \phi^{(\fkI_{b,j})^c}_0(\bfY(s)) \big|
	\leq
	c (\tilm^K_{b-1/2})^{-\alpha} + c (b-1/2)^{-\alpha},	
\\
	\label{eq:phiOut:bd:}
	&
	\sup_{ s\in[t_{j-1},t_j] }  
	\big| \phi^{(\fkI_{b,j})^c}_0(\bfYR{n}(s)) \big|
	\leq
	c (\tilm^K_{b-1/2})^{-\alpha} + c (b-1/2)^{-\alpha},
\end{align}
for all $ b\in\hZ\cap(2,\infty) $, $ j=1,\ldots,j_* $, $ n\in\bbZ_{>0} $.
\end{lemma}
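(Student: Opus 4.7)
The strategy for both \eqref{eq:phiOut:bd} and \eqref{eq:phiOut:bd:} is to exploit the near-cancellation between the positive and negative tails of $\phi_0$. The quantitative input for $\bfY$ is that, setting $V_m(s) := \av_{(0,m)}(\bfY(s))$, one has
\[
  |V_m(s) - \rho| \leq N \, |m|^{-\alpha}, \qquad \forall s \in [0,t],\ m \in \bbZ\setminus\{0\},
\]
where $N := \sup_{s\in[0,t]} \norm{\bfY(s)} < \infty$ by $\bfY\in\YTsp(\alpha,\rho)$.

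For \eqref{eq:phiOut:bd}, I expand
\[
  \phi^{(\fkI_{b,j})^c}_0(\bfY(s)) = -\sum_{i > I^+_{b,j}} \frac{1}{2 Y_{(0,i)}(s)} + \sum_{i > |I^-_{b,j}|} \frac{1}{2 Y_{(0,-i)}(s)},
\]
and assume WLOG $I^+_{b,j} \leq |I^-_{b,j}| =: M$. Decompose into a symmetric part $\sum_{i > M} \tfrac{1}{2}\bigl(\tfrac{1}{Y_{(0,-i)}} - \tfrac{1}{Y_{(0,i)}}\bigr)$ and a boundary part of at most $M - I^+_{b,j} \leq |\AL_{b,K}|$ unpaired positive-side terms. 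For the symmetric part,
\[
  \frac{1}{Y_{(0,i)}} - \frac{1}{Y_{(0,-i)}} = \frac{V_{-i}(s) - V_i(s)}{i\, V_i(s) V_{-i}(s)};
\]
the numerator is $O(N i^{-\alpha})$ and, for $i$ past a threshold depending on $N/\rho$ (absorbed into $c$), the denominator is $\geq i\rho^2/4$, so each term is $O(i^{-1-\alpha})$. The tail sum from $i > M \geq \tilm^K_{b-1/2}$ is then $\leq c(\tilm^K_{b-1/2})^{-\alpha}$. Each boundary term is $\leq 2/(i\rho) \leq c/\tilm^K_{b-1/2}$, so their total is $\leq c|\AL_{b,K}|/\tilm^K_{b-1/2}$; by \eqref{eq:ALwbd}--\eqref{eq:AUpbd} this reduces to $\leq c/b \leq c(b-1/2)^{-\alpha}$ (using $\alpha\leq 1$).

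For \eqref{eq:phiOut:bd:}, I run the same decomposition with $\bfYR{n}$ in place of $\bfY$; by the convention $1/\infty = 0$, terms with $i \notin \I_n$ drop out automatically. Monotonicity $\bfYR{n}(s) \geq \bfY(s)$ from Lemma~\ref{lem:YrDecr} preserves the lower bound $\YR{n}{(0,\pm i)}(s) \geq Y_{(0,\pm i)}(s) \geq i\rho/2$ needed for the denominator. For the cancellation step I need $|V^n_i(s) - V^n_{-i}(s)| \leq c\, i^{-\alpha}$ with $V^n_i(s) := \av_{(0,i)}(\bfYR{n}(s))$. From the identity
\[
  i\bigl(V^n_i - V^n_{-i}\bigr)(s) = (y^\ic_{(0,i)} - y^\ic_{(0,-i)}) + (B_i(s) - B_{-i}(s)) + \beta\int_0^s \bigl(\phi^{\I_n}_i - \phi^{\I_n}_{-i}\bigr)(\bfYR{n}(u))\,du,
\]
the first term is $O(i^{1-\alpha})$ since $\bfy^\ic\in\Ysp(\alpha,\rho)$, the Brownian term is pathwise negligible in $i$, and the interaction integrand is controlled by rewriting $\phi^{\I_n}_{\pm i} - \phi^{\I_n}_0$ as a shifted version of the tail sum analyzed in the previous paragraph, now based at $\pm i$ and truncated at the endpoints $i^\pm_n$ of $\I_n$.

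The main obstacle is justifying this comparison step without circularity, since the cancellation bound for $\bfYR{n}$ in effect requires a version of \eqref{eq:phiOut:bd} at shifted base-points applied to $\bfYR{n}$ itself. A cleaner route is to establish \emph{directly} a uniform-in-$n$ analog of the $\YTsp(\alpha,\rho)$-norm bound on $\bfY^n$ restricted to the interior of $\I_n$, by adapting the mesoscopic density machinery of Section~\ref{sect:exist} (specifically Proposition~\ref{prop:dens} together with the derivation of \eqref{eq:Ynorm:bd} in the proof of Lemma~\ref{lem:den2YTsp}) to the finite-dimensional DBM, invoking the continuity estimate \eqref{eq:BesComp:sup} for $Y^n_a$ and the initial condition $\bfy^\ic \in \Ysp(\alpha,\rho)\cap\Rsp(p)$. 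Once such a uniform bound is in hand, the cancellation argument of the second paragraph transfers verbatim to $\bfYR{n}$, yielding \eqref{eq:phiOut:bd:} with the same constant structure as \eqref{eq:phiOut:bd}.
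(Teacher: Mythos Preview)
Your argument for \eqref{eq:phiOut:bd} is essentially the paper's: symmetrize the tail, use $\bfY\in\YTsp(\alpha,\rho)$ for the numerator difference, use the seed property \eqref{eq:Ipmbj} for the denominator, and bound the mismatch window by counting terms against $|\AL_{b,K}|/\tilm^K_{b-1/2}$ (the paper writes this as $\log(I^\vee/I^\wedge)$, but the arithmetic is the same).

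For \eqref{eq:phiOut:bd:}, however, you are working harder than necessary, and your ``cleaner route'' remains a sketch whose boundary behavior near $i^\pm_n$ is nontrivial to handle. The paper does \emph{not} redo the mesoscopic machinery of Section~\ref{sect:exist} for $\bfY^n$. The key observation you are missing is that the interaction functionals $\barf_{\calI}(\Cdot)$ and $\undffpm_{\calI}(z,\Cdot)$ are monotone \emph{decreasing} in their $\bfy$-argument, so Lemma~\ref{lem:YrDecr} immediately gives
\[
  \barf_{\calI'_j}(\bfYR{n}(s)) \leq \barf_{\calI'_j}(\bfY(s)),
  \qquad
  \undffpm_{\calI_j}(z,\bfYR{n}(s)) \leq \undffpm_{\calI_j}(z,\bfY(s)).
\]
One then applies Lemma~\ref{lem:denseq} to $\bfY^n$ to get analogs of \eqref{eq:denlw:(0,m):}--\eqref{eq:denup:(0,m):}, transfers the interaction terms to $\bfY$ via the display above, and invokes the already-proven bounds \eqref{eq:barfbd} and \eqref{eq:undffbd:} with $\bfY^*=\bfY$. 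This yields directly $\sup_{s\in[0,t]}|Y^n_{(0,\pm m)}(s)-\rho m|\leq c\,m^{1-\alpha}$ for all $m\geq\tilm^K_1$, uniformly in $n$ --- which is exactly the numerator control you need for the cancellation step. No circularity, and no adaptation of Proposition~\ref{prop:dens} to the finite system: the seeds $I^\pm_{b,j}$ built from $\bfY$ do all the work.

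A second point your proposal elides: for $\bfYR{n}$ the tail is truncated at $i^\pm_n$, and since $i^+_n\neq|i^-_n|$ in general, symmetrization produces an \emph{additional} boundary window near the endpoints of $\I_n$. The paper handles this by a further decomposition into $\I^\wedge\setminus\fkI^\vee$ (the symmetric bulk) plus a boundary piece of width $i^\vee-i^\wedge$; the latter contributes $c\log(i^\vee/i^\wedge)\leq c(i^\vee)^{-\alpha}$, using $i^\vee/i^\wedge\leq 1+c(i^\vee)^{-\alpha}$, which follows from $\bfx^\ic\in\XRsp(\alpha,\rho,p)$ and the definition \eqref{eq:I} of $i^\pm_n$.
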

\noindent
Assuming the estimates \eqref{eq:phiOut:bd}--\eqref{eq:phiOut:bd:},
we proceed to completing the proof of \eqref{eq:asCnvg}.
Inserting \eqref{eq:phiOut:bd}--\eqref{eq:phiOut:bd:} into \eqref{eq:X0diff}
and letting $ n\to\infty $ 
yields
\begin{align*}
	\limsup_{n\to\infty} \Big\{ \sup_{s\in[0,t]} |X_0(s)-X^n_0(s)| \Big\}
	\leq
	c (\tilm^K_{b-1/2})^{-\alpha},
\end{align*}
for any fixed $ b\in(2,\infty)\cap\hZ $.
From this, further letting $ b\to\infty $,
we conclude \eqref{eq:asCnvg} for $ i=0 $ and hence for all $ i\in\bbZ $.

\begin{proof}[Proof of Lemma~\ref{lem:phiOut}]
Fixing such $ b,n,j $ and $ s\in[t_{j-1},t_j] $,
we let $ \fkI := \fkI_{b,j} $ and
$ c=c(t,\bfy^\ic,\rho)<\infty $ denote a generic finite constant.
To prove \eqref{eq:phiOut:bd},
we express $ \fkJ^c $ as $ (\fkI^\vee)^c \cup \fkB $,
where $ \fkI^\vee $ is the `symmetrized' interval 
and $ \fkB $ is the `boundary' interval, defined as
\begin{align*}
	\fkI^\vee := [-I^\vee, I^\vee],
	\text{ where }
	I^\vee := (I^+_{b,j} \vee |I^-_{-b,j}|),
	\quad
	\fkB := \fkI^\vee\setminus\fkI.
\end{align*}
With this, we similarly decompose $ \phi^{\fkI^c}(\bfY(s)) $ as
\begin{align}\label{eq:phiDecmp}
	\phi^{\fkI^c}_0(\bfY(s)) 
	=
	\phi^{(\fkI^\vee)^c}_0(\bfY(s)) + \phi^{\fkB}_0(\bfY(s)).
\end{align}
Proceeding to bounding the interactions on the r.h.s., we clearly have
\begin{align}\label{eq:temp1}
	\phi^{(\fkI^\vee)^c}_0(\bfY(s))
	=
	\sum_{ I^\vee \leq m } \frac{ Y_{(-m,0)}(s) - Y_{(0,m)}(s) }{ 2Y_{(0,m)}(s) Y_{(-m,0)}(s) },
\quad
	|\phi^{ \fkB }_0(\bfY(s))|
	\leq
	\sum_{ I^\wedge \leq |m| \leq I^\vee } \frac{1}{2Y_{(0,m)}(s)}.
\end{align}
By \eqref{eq:Ipmbj}, we have
\begin{align}\label{eq:temp3}
	Y_{(0,m)}(s) \geq \tfrac{\rho}{3} ( |m| - |I^\pm_{\pm1/2,j} | ) \geq \tfrac{1}{c} |m|,
	\quad
	\forall
	|m| \geq I^{\wedge},
\end{align}
where the last inequality follows since $ b>2 $.
With $ \bfY \in \YTsp(\alpha,\rho) $,
we have $ |Y_{(-m,0)}(s) - Y_{(0,m)}(s)| \leq c|m|^{1-\alpha} $.
Using this and \eqref{eq:temp3} in \eqref{eq:temp1}, we obtain
\begin{align}
	\notag
	|\phi^{(\fkI^\vee)^c}_0(\bfY(s))|
	&\leq
	c \sum_{ I^\vee \leq m } \frac{ m^{1-\alpha} }{ m^2 }
	\leq
	c (I^{\vee})^{-\alpha}
	\leq
	c (\tilm^K_{b-1/2})^{-\alpha},
\\
	\label{eq:temp2}
	|\phi^{ \fkB }_0(\bfy)|
	&\leq
	\sum_{ I^\wedge \leq |m| \leq I^\vee } \frac{c}{|m|}
	\leq
	c \log \big( I^\vee/I^\wedge \big).
\end{align}
With $ \tilm^K_i := \floorBK{ (Ki)^{1/\alpha} } $,
we have
\begin{align}\label{eq:Iratio} 
	1 
	\leq
	(I^\vee/I^\wedge) 
	\leq 
	(\tilm^K_{b+1/2}/\tilm^K_{b-1/2})
	\leq 
	1 + c (b-1/2)^{-\alpha}.
\end{align}
Inserting \eqref{eq:Iratio} into \eqref{eq:temp2}
then yields $ |\phi^{ \fkB }_0(\bfy)| \leq c (b-1/2)^{-\alpha} $,
from which \eqref{eq:phiOut:bd}.

Turning to proving \eqref{eq:phiOut:bd:},
recalling $ \I_n := [i^-_n,i^+_n] $,
we let $ \I^\wedge := (-i^\wedge, i^\wedge) $,
where $ i^\wedge := (i^+_{n} \wedge |i^-_{n}|) $.
With
$ 	
	\phi^{\fkI^c}_0(\bfYR{n}(s))
	=
	\phi^{\I_n \setminus \fkI }_0(\bfYR{n}(s)),
$
similar to \eqref{eq:phiDecmp},
we decompose $ \phi^{\fkI^c}_0(\bfYR{n}(s)) $ as
$
	\phi^{\fkI^c}_0(\bfYR{n}(s))
	=
	\phi^{\I^\wedge \setminus \fkI^\vee }_0(\bfYR{n}(s)) + \phi^{\fkB'}_0(\bfYR{n}(s)),
$
where 
$
	\fkB' := (\I_n \setminus \fkI)\setminus (\I^\wedge \setminus \fkI^\vee ),
$
is the boundary interval.
To further bound the interactions on the r.h.s., 
similar to \eqref{eq:temp1} we have
\begin{align*}
	\phi^{\I^\wedge \setminus \fkI^\vee }_0(\bfYR{n}(s))
	&=
	\sum_{ I^\vee \leq m \leq i^\wedge } 
	\frac{ \YR{n}{(-m,0)}(s) - \YR{n}{(0,m)}(s) }{ 2\YR{n}{(-m,0)}(s)\YR{n}{(0,m)}(s) },
\\
	|\phi^{ \fkB }_0(\bfYR{n}(s))|
	&\leq
	\sum_{ (i^\wedge\vee I^\wedge )\leq |m| \leq i^\vee } \frac{1}{ 2\YR{n}{(0,m)}(s) }
	+
	\sum_{  I^\wedge \leq |m| \leq I^\vee } \frac{1}{ 2\YR{n}{(0,m)}(s)  },
\end{align*}
where $ i^\vee_n := (i^+_{n} \vee |i^-_{n}|). $
With $ \bfYR{n}(s) \geq \bfY(s) $, from \eqref{eq:temp3} we further obtain
\begin{align}
	\label{eq:phiOut:N:}
	|\phi^{\I^\wedge \setminus \fkI^\vee }_0(\bfYR{n}(s))|
	&\leq
	c
	\sum_{ I^\vee \leq m \leq i^\wedge } 
	\frac{ 1 }{ m^2 } |\YR{n}{(-m,0)}(s) - \YR{n}{(0,m)}(s)|,
\\
	\notag
	|\phi^{ \fkB }_0(\bfYR{n}(s))|
	&\leq
	\sum_{ (i^\wedge\vee I^\wedge )\leq |m| \leq i^\vee } \frac{ c }{ |m| }
	+ 
	\sum_{  I^\wedge \leq |m| \leq I^\vee } \frac{ c }{ |m| }
\\
	\label{eq:phiOut:N}
	&
	\quad\quad
	\leq
	c\ind_\curBK{I^\wedge<i^\vee} \log \big( i^\vee/i^\wedge \big)
	+ c \log \big( I^\vee/I^\wedge \big).
\end{align}
With $ \bfx^\ic\in\XRsp(\alpha,\rho,p) $ and $ i^\pm_n $ as in \eqref{eq:I},
we have $ 1\leq i^\vee/i^\wedge \leq 1 + c(i^\vee)^{-\alpha} $.
Using this and \eqref{eq:Iratio} in \eqref{eq:phiOut:N},
we conclude 
$
	|\phi^{ \fkB }_0(\bfYR{n}(s))| 
	\leq 
	c (\tilm^K_{b-1/2})^{-\alpha} + c(b-1/2)^{-\alpha} 
$.

It remains only to bound the expression \eqref{eq:phiOut:N:}.
To this end, we fix $ I^\vee \leq m \leq i^\wedge $.
With $ \bfY^n $ solving \eqref{eq:DBMgf:},
similar to \eqref{eq:denlw:(0,m):}--\eqref{eq:denup:(0,m):}, we have
\begin{align*}
	\inf_{s\in[0,t]}
	\av_{(0,\pm m)} ( \bfY^n(s))
	\geq
	\av_{(0,\pm m)} ( \bfy^\ic )
	-
	\sum_{j=1}^{j_*} 
	\beta
	\int_{t_{j-1}}^{t_j} \frac{1}{m} \undf_{ \calI_j }(\bfYR{n}(s))  ds 
	-
	 c m^{-\alpha},
\\
	\sup_{s\in[0,t]}
	\av_{(0,\pm m)} ( \bfY^n(s))
	\leq
	\av_{(0,\pm m)} ( \bfy^\ic )
	+ 
	\sum_{j=1}^{j_*} 
	\beta
	\int_{t_{j-1}}^{t_j} \frac{1}{m} \barf_{ \calI'_j }(\bfYR{n}(s)) ds 
	+
	 c m^{-\alpha},
\end{align*}
for any $ \calI_j, \calI'_j $ such that $ (0,\pm m) \subset \calI_j,\calI'_j \subset \I_n $.
Hence, with $ \bfy^\ic\in\Ysp(\alpha,\rho) $,
\begin{align}
	\notag
	\sup_{s\in[0,t]} |Y^n_{(0,\pm m)}(s)- & m \rho|
	\leq
	c m^{1-\alpha}
%\\
%	&
%	\quad
%	\label{eq:phiOut:Eq}
	+
	\sum_{j=1}^{j_*} 
	\beta
	\int_{t_{j-1}}^{t_j} \undf_{ \calI_j }(\bfYR{n}(s))  ds 
	+
	\sum_{j=1}^{j_*} 
	\beta
	\int_{t_{j-1}}^{t_j} \barf_{ \calI'_j }(\bfYR{n}(s)) ds.
\end{align}
Proceeding to bounding the interaction terms,
by \eqref{eq:BesComp:sup} we have $ \barYR{n}{\calI_j}(0,t) \leq y^\ic_{\calI_j} + Q^{0,t}_{\calI_j} $,
so
\begin{align*}
	\undf_{ \calI_j }(\bfYR{n}(s)) 
	\leq
	\undffpm_{ \calI_j }(y^\ic_{\calI_j} + Q^{0,t}_{\calI_j}, \bfYR{n}(s)),
	\quad
	\forall s\in[t_{j-1},t_j],
\end{align*}
where $ \undffpm_{\calI_j}(z,\bfy) $ is defined as in \eqref{eq:undffpm}.
With $ \bfY(\Cdot) \leq \bfYR{n}(\Cdot) $,
we further obtain
\begin{align*}
	\undffpm_{ \calI_j }(y^\ic_{\calI_j} + Q^{0,t}_{\calI_j}, \bfYR{n}(s))
	\leq
	\undffpm_{ \calI_j }(y^\ic_{\calI_j} + Q^{0,t}_{\calI_j}, \bfY(s)),
	\quad
	\barf_{ \calI'_j }(\bfYR{n}(s))
	\leq
	\barf_{ \calI'_j }(\bfY(s)).	
\end{align*}
With this, applying the bounds \eqref{eq:barfbd} and \eqref{eq:undffbd:} for $ \bfY^*=\bfY $ 
on the interaction terms, we obtain
\begin{align}\label{eq:densYR}
	\sup_{s\in[0,t]} |Y^n_{(0,\pm m)}(s) - \rho m|
	\leq c m^{1-\alpha},
	\quad
	\forall m \geq \tilm^K_{1}.
\end{align}
Inserting this into \eqref{eq:phiOut:N:} yields
$ 
	|\phi^{\I^\wedge \setminus \fkI^\vee }_0(\bfYR{n}(s))| 
	\leq c/(I^\vee)^{\alpha} \leq c (\tilm^{K}_{b-1/2})^{-\alpha}  
$,
whereby completing the proof.
\end{proof}

\subsection{Proof of Part\ref{enu:LpCnvg}}
By \eqref{eq:YRLpCnvg}, it suffices to prove \eqref{eq:LpCnvg} for $ i=0 $.
This, by \eqref{eq:asCnvg} for $ i=0 $,
is further reduced to showing the boundedness of $ \{ \bbE (\overline{|X^n_0|}(0,t) )^{p'} \}_{n\in\bbZ_{>0}} $,
for all $ p'\geq 1 $.
To this end, fixing arbitrary $ p'\geq 1 $ and $ t\geq 0 $,
we combine \eqref{eq:DBM0:} and Lemma~\ref{lem:phiOut} for $ b=5/2 $
to get 
\begin{align}\label{eq:Xn0:bd}
	\overline{|X^n_0|}(0,t)
	\leq
	\overline{|B_0|}(0,t)
	+ c(1+(\tilm^K_{2})^{-\alpha})	
	+ \beta \sum_{j=1}^{j_*} \int_{t_{j-1}}^{t_j} \phi^{\fkL_j}_0(\bfYR{n}(s)) ds,
\end{align}
for $ \fkL_j := [I^-_{-5/2,j},I^+_{5/2,j}] $ and $ c=c(t,\bfy^\ic,\rho)<\infty $. 
We proceed to establishing a bound on the last term.
%To this end, recalling from \eqref{eq:fSA} the definition of $ \fS^\calI_a(\bfy) $,
%we let $ \fS_{(i_1,i_2)}^{\calI}(\bfy) := \sum_{a\in(i_1,i_2)} \fS^\calI_a(\bfy) $.
%With these notations, we have the following analog of \eqref{eq:resum1}
%\begin{align}\label{eq:phifSA}
%	\fS^{\calI}_{(i_1,i_2)}(\bfy)
%	=
%	\phi^{\calI}_{i_2}(\bfy) - \phi^{\calI}_{i_1}(\bfy),
%	\quad
%	i_1<i_2.
%\end{align}
%
\begin{lemma}\label{lem:phiIn}
Let $ \fkL_j := [I^-_{-5/2,j},I^+_{5/2,j}] $.
There exists $ c=c(t)<\infty $ such that
\begin{align}\label{eq:phiIn:bd}
	\beta
	\int_{t_{j-1}}^{t_j} | \phi^{\fkL_j}_0(\bfYR{n}(s)) | ds
	\leq
	c \tilm^K_{3} \BK{ B^* + (1 + \rho)\tilm^K_{3}  },
	\quad
	\forall
	n\in\bbZ_{>0}, j=1,\ldots,j_*.
\end{align}
where $ B^* := \sum_{i\in[\tilm^K_{-3},\tilm^K_{3}]} \overline{|B_i|}(0,t) $.
\end{lemma}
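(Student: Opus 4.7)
The plan is to dominate $|\phi^{\fkL_j}_0(\bfYR{n}(s))|$ crudely by $\tfrac12\sum_{i\in\fkL_j\setminus\{0\}}|Z_i(s)|^{-1}$, where $Z_i:=X^n_i-X^n_0$ so that $|Z_i|=\YR{n}{(0,i)}$, and then to extract the integrated sum $\sum_i\int_{t_{j-1}}^{t_j}|Z_i|^{-1}\,du$ from an Ito identity for each pair $(0,i)$.

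Subtracting \eqref{eq:DBMf:} for particles $i$ and $0$ and isolating the singular pair-$(0,i)$ contribution yields
\[
	dZ_i=d(B_i-B_0)+\beta\Bigl(\tfrac{1}{Z_i}+R_i\Bigr)dt,\qquad R_i:=-\tfrac{Z_i}{2}\sum_{k\in\I_n\setminus\{0,i\}}\tfrac{1}{(X^n_i-X^n_k)(X^n_0-X^n_k)}.
\]
Since $\beta\geq 1$ prevents collisions, $\sign(Z_i)=\sign(i)$ is preserved for all time, giving $d|Z_i|=\sign(i)\,dZ_i$ and $\sign(i)/Z_i=1/|Z_i|$. Summing over $i\in\fkL_j\setminus\{0\}$ and writing $F:=\sum_i|Z_i|$, one obtains after rearrangement
\[
	\beta\int_{t_{j-1}}^{t_j}\sum_i\tfrac{du}{|Z_i|}=F(t_j)-F(t_{j-1})-\sum_i\sign(i)\bigl[(B_i-B_0)(t_j)-(B_i-B_0)(t_{j-1})\bigr]-\beta\int_{t_{j-1}}^{t_j}\sum_i\sign(i)R_i\,du.
\]

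Two of the three terms on the right admit direct bounds. Applying the Bessel comparison \eqref{eq:BesComp:sup} to each gap $Y^n_a$ and summing over $a\in(0,i)$, combined with $\bfy^\ic\in\Ysp(\alpha,\rho)$, gives $|Z_i(s)|\leq y^\ic_{(0,i)}+Q^{0,t}_{(0,i)}\leq c(1+\rho)|i|+Q^{0,t}_{(0,i)}$; summing over $|i|\leq\tilm^K_3$ and using finiteness of $q(t,1)$ bounds $|F(s)|\leq c(1+\rho)(\tilm^K_3)^2$. Since $\fkL_j\subset[\tilm^K_{-3},\tilm^K_3]$, the Brownian-increment sum is at most $c\tilm^K_3 B^*$.

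The main obstacle is the remainder $\int\sum_i\sign(i)R_i\,du$, where the compensating factor $|Z_i|$ in $R_i$ is essential. Split the inner $k$-sum into a far part ($k\notin\fkL_j$) and a near part ($k\in\fkL_j$). For the far part, $\bfYR{n}\geq\bfY$ and Proposition~\ref{prop:dens} yield $|X^n_0-X^n_k|\geq c|k|$ and $|X^n_i-X^n_k|\geq c|k-i|$, so the $k$-series converges and contributes at most $c|Z_i|/\tilm^K_3$. The near part involves only $O(\tilm^K_3)$ indices; after exploiting antisymmetric cancellation from $\sign(i)$ (or, if cancellation is insufficient, absorbing the dangerous contribution into the left-hand side and solving the resulting linear inequality in $\sum_i\int|Z_i|^{-1}\,du$), its total summed over $i$ is also $\lesssim(1+\rho)(\tilm^K_3)^2$. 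Combining all bounds delivers \eqref{eq:phiIn:bd}.
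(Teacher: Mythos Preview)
Your strategy and the paper's share the same engine --- integrate an SDE for a difference of particles over $[t_{j-1},t_j]$ to control the time-integral of singular reciprocals --- but the choice of which pairs to use is decisive, and yours leads to a circularity you cannot close.

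The gap is in the near part of $-\sum_i\sign(i)R_i$. Symmetrizing over unordered pairs $\{i,k\}\subset\fkL_j\setminus\{0\}$, a same-sign pair contributes exactly $-\tfrac12(\tfrac{1}{|Z_i|}+\tfrac{1}{|Z_k|})$, while an opposite-sign pair $i>0>k$ contributes $+\tfrac12\bigl(\tfrac{Z_i}{(Z_i+|Z_k|)|Z_k|}+\tfrac{|Z_k|}{(Z_i+|Z_k|)Z_i}\bigr)$. Tracking the net coefficient of $\tfrac{1}{|Z_k|}$ for fixed $k<0$ gives $\tfrac12\bigl(I^+-|I^-|+1-\sum_{i>0}\tfrac{|Z_k|}{Z_i+|Z_k|}\bigr)$; when $|Z_k|$ is small (e.g.\ $k=-1$, a single gap) the last sum is negligible and the coefficient is essentially $\tfrac12(I^+-|I^-|+1)$. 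Since $I^+=I^+_{5/2,j}$ and $|I^-|=|I^-_{-5/2,j}|$ each range over a window of width $|\AL_{\pm 5/2,K}|\sim(\tilm^K_3)^{1-\alpha}$, this coefficient can exceed $1$, and the absorption step fails. The ``antisymmetric cancellation'' you invoke is exactly this same-sign/opposite-sign balance, and it leaves precisely that residual.

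The paper avoids the circularity by anchoring one endpoint at the boundary $I^+$ rather than at $0$: for each $i\in[I^-,i_*]$ it integrates the SDE for $Y^n_{(i,I^+)}$, writes $\phi_{I^+}-\phi_i=\phi^{\fkL_j}_{I^+}-\phi^{\fkL_j}_i+\eta'_{(i,I^+)}$, and discards $\phi^{\fkL_j}_{I^+}>0$ (positive because $I^+$ is the rightmost index in $\fkL_j$). The remainder $\eta'$ involves only particles outside $\fkL_j$ and is controlled by \eqref{eq:Ipmbj}, so no interior singular terms recur on the right. Summing over $i$ and using the telescoping identity $-\sum_{i\in[I^-,i_*]}\phi^{\fkL_j}_i=\sum_{\ell\leq i_*<\ell'}\tfrac{1}{2y_{(\ell,\ell')}}=:\tilphi^{\fkL_j}_{i_*}$ then bounds $|\phi^{\fkL_j}_0|\leq\tilphi^{\fkL_j}_0+\tilphi^{\fkL_j}_{-1}$.

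Two minor side-notes: your bound $|R_i^{\text{far}}|\leq c|Z_i|/\tilm^K_3$ should read $c|Z_i|$ (the $k$-series over $k\notin\fkL_j$ is $O(1)$, not $O(1/\tilm^K_3)$), and bounding $F$ via ``finiteness of $q(t,1)$'' conflates an expectation with an almost-sure bound; use \eqref{eq:densYR} instead. Both are repairable; the near-part issue is not, without changing the anchor.
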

\begin{proof}
To simply notations, fixing $ j=1,\ldots,j_* $,
we let $ I^\pm:=I^{\pm}_{\pm5/2,j} $ and $ \fkL:=[I^-,I^+] $.
Letting $ \phi^{\fkL}_i(\bfy) := \sum_{j\in\fkL\setminus\{i\}} \frac{\sign(i-j)}{2y_{(i,j)}} $
denote the restriction of $ \phi_i(\bfy) $ onto $ \fkL $,
we define 
\begin{align}\label{eq:phiIn:id}
	\tilphi^{\fkL}_{i_*}(\bfy)
	:=
	\sum_{ \ell \in [I^-,i_*]}
	\sum_{ \ell' \in (i_*,I^+] } \frac{1}{2 y_{(\ell,\ell')}}
	=
	-
	\sum_{i\in[I^-,i_*]} \phi^{\fkL}_{i_*}(\bfy),
\end{align}
where the last equality is easily verified by substituting in 
the preceding definition of $ \phi^{\fkL}_i(\bfy) $.

With $ \tilphi^{\fkL}_{i_*}(\bfy) $ defined as in \eqref{eq:phiIn:id},
we indeed have $ |\phi^{\fkL}_0(\bfy)| \leq \tilphi^{\fkL}_0(\bfy) + \tilphi^{\fkL}_{-1}(\bfy)  $,
so, instead of proving \eqref{eq:phiIn:bd},
we establish the corresponding bound on $ \beta \int_{t_{j-1}}^{t_j} \tilphi^{\fkL}_{i_*}(\bfYR{n}(s)) ds $
for $ i_*=0,-1 $.
Fix $ i_*=0, -1 $ and $ i\in [I^-,i_*] $.
With $ \bfX^n $ satisfying \eqref{eq:DBMf:},
we have
\begin{align*}
	Y^n_{(i,I^+)}(s) \Big|_{s=t_{j-1}}^{s=t_j}
	=
	\BK{ B_{I^+}(s) - B_{i}(s) } \Big|_{s=t_{j-1}}^{s=t_j}
	+
	\beta \int_{t_{j-1}}^{t_j} \fS_{(i,I^+)}(\bfYR{n}(s)) ds,
\end{align*}
where, recall that
$
	\fS_{(i,I^+)}(\bfy) := \sum_{i\in(i,I^+)} \fS_{a}(\bfy).
$
With $ \eta_a(\bfy) = \phi_{a+1/2}(\bfy)-\phi_{a-1/2}(\bfy) $,
we have
$
	\fS_{(i,I^+)}(\bfy) = \phi_{I^+}(\bfy) - \phi_{i}(\bfy),
$
for all $ \bfy\in\Ysp(\alpha,\rho) $.
Further decompose the last expression as
$
	\phi^{\fkL}_{I^+}(\bfy) - \phi^{\fkL}_{i}(\bfy) + \fS'_{(i,I^+)}(\bfy),
$
where
\begin{align*}
	\fS'_{(i,I^+)}(\bfy) 
	:=
	-\sum_{\ell >I^+} 
	\frac{ y_{(i,I^+)} }{ y_{(I^+,\ell)} ( y_{(i,I^+)} + y_{(I^+,\ell)}) }
	+\sum_{\ell <I^-} 
	\frac{ y_{(i,I^+)} }{ y_{(\ell,I^-)} ( y_{(i,I^+)} + y_{(\ell,I^-)}) }.
\end{align*}
With $ \phi^{\fkL}_{I^+}(\bfy) >0 $,
we then obtain
\begin{align}\label{eq:phiIn:eq}
	2 \barY^n_{\fkL}(0,t) + 2 B^* + \beta \int_{t_{j-1}}^{t_j} |\fS'_{(i,I^+)}(\bfYR{n}(s))| ds
	\geq
	\beta \int_{t_{j-1}}^{t_j} \BK{ - \phi^{\fkL}_i(\bfYR{n}(s)) } ds.
\end{align}
For the integral term on the l.h.s., using
$
	h_{(I^\pm,\pm \infty)}(\bfYR{n}(s))
	\geq
	h_{(I^\pm,\pm \infty)}(\bfundY(t_{j-1},t_j)) \geq \tfrac{\rho}{3},
$
$ \forall s\in[t_{j-1},t_j], $
we obtain 
\begin{align*}
	|\fS'_{(i,I^+)}(\bfYR{n}(s))| \leq c\log( Y^n_{(i,I^+)}(s) +1 ).
\end{align*}
Plugging this in \eqref{eq:phiIn:eq} yields
\begin{align*}
	\beta \int_{t_{j-1}}^{t_j} 
	\BK{ - \phi^{\fkL}_i(\bfYR{n}(s)) } ds
	\leq
	2B^* + c(\barY^n_{\fkL}(t_{j-1},t_{j}) + 1)
	\leq
	c \BK{ B^* + (\tilm^K_{3})^{1-\alpha} + \tilm^K_{3}\rho },
\end{align*}
where the last inequality follows by \eqref{eq:densYR}.
Summing this over $ i=I^-,\ldots,i_* $,
using \eqref{eq:phiIn:id},
we conclude the desired bound 
$
	\beta \int_{t_{j-1}}^{t_j} 
	\tilphi^{\fkL}_{i_*}(\bfYR{n}(s)) ds
	\leq
	c \tilm^K_{3} ( \hatB + (\tilm^K_{3})^{1-\alpha} + \rho ).
$
\end{proof}

Now, inserting \eqref{eq:phiIn:bd} into \eqref{eq:Xn0:bd},
and taking the $ p' $-th moment of both sides,
we arrive at
$
	\bbE ( \overline{|X^n_0|}(0,t) )^{p'}
	\leq
	c + c \bbE (\tilm^K_{3})^{2p'} 
	+	c\bbE( (B^*)^{2p'}).
$ 
With $ \tilm^K_i := \floorBK{ (iK)^{\frac{1}{\alpha}} } $,
by \eqref{eq:Ktail}, we have $ \bbE (\tilm^K_{3})^{2p'} <\infty $.
As for $ \bbE( (B^*)^{p'} ) $,
applying 
\begin{align}
	\notag
	\bbE \Big| \sum_{j\in[\tilm^K_{-i},\tilm^K_{i}]} F_j \Big|
	&\leq
	\bbE \Big( \sum_{m\geq 0} \ind\curBK{\tilm^K_i =m} \sum_{j\in[-m,m]} |F_j| \Big)
\\
	\label{eq:unionBd}
	&\leq
	\sum_{m\geq 0} \Bigg( \bbP\BK{\tilm^K_i \geq m} \bbE \Big(\sum_{j\in[-m,m]} |F_i| \Big)^2 \Bigg)^{1/2}
\end{align}
for $ F_j = \overline{|B_j|}(0,t) $ and $ i=3 $,
we obtain $ \bbE( (B^*)^{p'} ) \leq	c\sum_{m\geq 0} (2m+1) \BK{ \bbP( \tilm^K_{3} \geq m ) }^{1/2} $,
which, by \eqref{eq:Ktail}, is finite.
From this, we conclude the desired bound
\begin{align}\label{eq:X0Lpbd}
	\bbE ( \overline{|X^n_0|}(0,t) )^{p'}
	\leq
	c(t,p'),
	\quad
	\forall \I_n\ni 0.
\end{align}

\subsection{Proof of Part\ref{enu:crlCnvg}}
Fixing $ s_1,\ldots, s_{j_*} \in [0,t] $ and open sets $ \calO_1,\ldots,\calO_{j_*}\subset [-r,r] $,
we let
$
	L_n := \sup \{ |i|: i\in\bbZ \text{ such that } X^n_i(s)\in[-r,r], \text{ for some } s\in[0,t] \}
$
denote the maximal relevant index
and similarly define
$
	L_\infty := \sup \{ |i|: i\in\bbZ \text{ such that } X_i(s)\in[-r,r], \text{ for some } s\in[0,t] \}.
$
We begin by establishing the following tail bound on $ L_n $ and $ L $.
\begin{lemma}\label{lem:Lnbd}
For any fixed $ p'\geq 1 $, there exists $ c=c(t,r,\rho,p')<\infty $ such that
\begin{align*}
	\bbP(L_n\geq\ell) \leq c \ell^{-p'},
\end{align*}
for all $ \ell\in\bbZ_{>0} $ and $ n\in\bbZ_{>0}\cup\{\infty\} $.
\end{lemma}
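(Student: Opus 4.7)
The strategy is to convert $\{L_n\ge\ell\}$ into an excursion event for $X^n_0$ using the uniform gap-density estimate \eqref{eq:densYR}, and then invoke the moment bound \eqref{eq:X0Lpbd}. If $L_n\ge\ell$, there must exist $|i|\ge\ell$ and $s\in[0,t]$ with $X^n_i(s)\in[-r,r]$; since particles are strictly ordered and $Y^n_{(0,i)}$ is by convention the sum of gaps between $0$ and $i$ regardless of sign, $|X^n_i(s)-X^n_0(s)| = Y^n_{(0,i)}(s)$.

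First I would fix the good event $\fkG_\ell := \{\tilm^K_1 \le \ell/2\}$, with $K$ from Proposition~\ref{prop:dens}. On $\fkG_\ell$, \eqref{eq:densYR} yields $|Y^n_{(0,i)}(s)-\rho|i||\le c|i|^{1-\alpha}$ uniformly for $|i|\ge\ell$, $s\in[0,t]$, and $n\in\bbZ_{>0}$, where $c = c(t,\bfy^\ic,\rho)$ is deterministic; thus for $\ell$ exceeding a deterministic threshold $\ell_0 = \ell_0(c,\rho)$ we have $Y^n_{(0,i)}(s)\ge \rho|i|/2$. Consequently, on $\fkG_\ell\cap\{L_n\ge\ell\}$ (and $\ell\ge 4r/\rho$), the lower bound $|X^n_i(s)-X^n_0(s)|\ge\rho\ell/2$ combined with $X^n_i(s)\in[-r,r]$ forces $\overline{|X^n_0|}(0,t)\ge \rho\ell/2-r\ge \rho\ell/4$. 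A union bound then gives
\begin{align*}
\bbP(L_n\ge\ell)\le \bbP\bigl(\overline{|X^n_0|}(0,t)\ge \rho\ell/4\bigr)+\bbP(\fkG_\ell^c).
\end{align*}
Markov's inequality with exponent $p'+1$ together with \eqref{eq:X0Lpbd} bounds the first term by $c(t,p')\ell^{-(p'+1)}$, while \eqref{eq:Ktail} bounds the second term by $c\exp(-\ell/c')$. Summing yields $\bbP(L_n\ge\ell)\le c\ell^{-p'}$ for $\ell\ge\ell_0\vee(4r/\rho)$; smaller $\ell$ are handled by inflating the constant.

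For $n=\infty$, $\bfY\in\YTsp(\alpha,\rho)$ supplies $|Y_{(0,i)}(s)-\rho|i||\le c|i|^{1-\alpha}$ deterministically via \eqref{eq:Ynorm:bd}, so $\fkG_\ell$ can be taken to be the full sample space; and the moment bound $\bbE\,\overline{|X_0|}(0,t)^{p'+1}\le c(t,p'+1)$ follows by combining \eqref{eq:X0Lpbd} with the $L^{p'+1}$-convergence from Part~\ref{enu:LpCnvg}, which has already been established before Part~\ref{enu:crlCnvg} is invoked. The main obstacle is essentially bookkeeping---verifying that \eqref{eq:densYR} and \eqref{eq:X0Lpbd} do apply uniformly in $n\in\bbZ_{>0}\cup\{\infty\}$; once that is in place the lemma follows as a clean combination of existing estimates, without requiring any new probabilistic or analytic input.
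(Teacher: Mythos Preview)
Your proof is correct and follows essentially the same route as the paper: reduce $\{L_n\ge\ell\}$ to a large-excursion event for $X^n_0$ via the gap-density estimate \eqref{eq:densYR}, then apply Markov's inequality using the moment bound \eqref{eq:X0Lpbd} together with the tail bound \eqref{eq:Ktail} on $K$. The only cosmetic difference is that the paper folds $\tilm^K_1$ directly into the inequality $\ell\rho - c\ell^{1-\alpha} - r \le c\tilm^K_1 + \overline{|X^n_0|}(0,t)$ and applies Markov to the sum, rather than splitting off a separate good event $\fkG_\ell$; and the paper uses exponent $p'$ rather than $p'+1$ in the Markov step.
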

\begin{proof}
We prove only the tail bound for $ L_n $, $ n<\infty $, 
as the one for $ L_\infty $ is proven similarly.
Fixing $ p'\geq 1 $ and $ n,\ell \in\bbZ_{>0} $, we let $ c=c(t,r,\rho,p')<\infty $ denote a generic finite constant.
Indeed, $ L_n \geq \ell $ only if $ \underline{ |X^n_i| }(0,t) \leq r $ for $ i=\ell $ or $ -\ell $, whereby
\begin{align*}
	\bbP\BK{ L_n \geq \ell }
	\leq
	\bbP\BK{ \underline{|X^n_\ell|}(0,t) \leq r } + \bbP\BK{ \underline{|X^n_{-\ell}|}(0,t) \leq r }.
\end{align*}
As $ X^n_i(s) = X^n_0(s) + Y^n_{(0,i)}(s) $,
letting $ Y^*_i := \inf_{s\in[0,t]} Y^n_{(0,i)}(s) $,
we have
$
	\bbP( \underline{|X^n_{\pm\ell}|}(0,t) \leq r )
	\leq
	\bbP( Y^*_{\pm\ell} \leq r + \overline{|X^n_0|}(0,t) ).
$
Next, by \eqref{eq:densYR} we have
\begin{align*}
	Y^{*}_{\pm\ell} 
	\geq 
	(\ell\rho - c \ell^{1-\alpha}) \ind\curBK{ \tilm^K_1 \leq \ell }
	\geq
	\ell\rho - c \ell^{1-\alpha} - c \tilm^K_1.
\end{align*}
%and by \eqref{eq:BesComp:sup} we have
%$
%	Y^{*}_{\pm\ell} \leq y^\ic_{(0,\pm\ell)} + Q^{0,t}_{(0,\pm\ell)}
%	\leq 
%	c \ell + Q^{0,t}_{(0,\pm\ell)}.
%$
%Combining the preceding bounds yields
%$
%	Y^{*}_{\pm\ell} \geq \rho\ell - c( \ell^{1-\alpha} + \tilm^K_1 + Q^{0,t}_{(\tilm^K_{-1},\tilm^K_1)} ).
%$
From this we conclude
\begin{align*}
	\bbP \BK{ \underline{|X^n_{\pm\ell}|}(0,t) \leq r }
	&\leq
	\bbP \BK{ \ell - c\ell^{1-\alpha} - r \leq  c\tilm^K_1 + \overline{|X^n_0|}(0,t) }
\\
	&
	\leq
	c \ell^{-p'}
	\BK{ \bbE (\tilm^K_1)^{p'} + \bbE(\overline{|X^n_0|}(0,t))^{p'} }.
\end{align*}
By \eqref{eq:X0Lpbd}, $ \bbE(\overline{|X^n_0|}(0,t))^{p'} < \infty $,
and, with $ \tilm^K_1 := \floorBK{K^{1/\alpha}} $ and \eqref{eq:unionBd},
we have $ \bbE (\tilm^K_1)^{p'} <\infty $,
thereby completing the proof.
\end{proof}

For any $ \ell\in\bbZ_{>0}\cup\{\infty\} $,
we let $ f_\ell(\bfx) := \prod_{j=1}^{j_*} | \calO_j \cap \{x_i\}_{|i|\leq \ell} | $.
Our goal is to show $ \bbE(f_\infty(\bfX^n)) \to \bbE(f_\infty(\bfX)) $ and $ \bbE(f_\infty(\bfX)) < \infty $.
The latter follows directly from Lemma~\ref{lem:Lnbd}:
\begin{align*}
	\bbE(f_\infty(\bfX^n)) \leq \bbE(2L_n+1) \leq c(t,r,\rho) < \infty.
\end{align*}
To prove the former, we write
\begin{align}\label{eq:ff}
	\bbE(f_\infty(\bfX^n)) - \bbE(f_\infty(\bfX))
	=
	f'_1 + \big( \bbE(f_\ell(\bfX^n)) - \bbE(f_\ell(\bfX)) \big) + f'_2,
\end{align}
where
$ f'_1 := \bbE(f_\infty(\bfX^n)) - \bbE(f_\ell(\bfX^n)) $
and $ f'_2 := \bbE(f_\ell(\bfX)) - \bbE(f_\infty(\bfX)) $.
For any fixed $ \ell<\infty $,
by \eqref{eq:asCnvg},
we clearly have $ \bbE(f_\ell(\bfX^n))-\bbE(f_\ell(\bfX)) \to 0 $, as $ n\to\infty $.
Further, by Lemma~\ref{lem:Lnbd} we have
\begin{align*}
	&
	\bbE|f_\infty(\bfX^n)-f_\ell(\bfX^n)| 
	\leq 
	\bbE\BK{ (2L_n+1)\ind\curBK{ L_n \geq \ell } }
	\leq
	\tfrac{1}{2\ell+1} \bbE(2L_n+1 )^{2}
	\leq
	c(t,r,\rho) \tfrac{1}{\ell},
\\
	&
	\bbE|f_\infty(\bfX)-f_\ell(\bfX)| 
	\leq 
	\bbE\BK{ (2L_\infty+1)\ind\curBK{ L \geq \ell } }
	\leq
	\tfrac{1}{2\ell+1} \bbE(2L_\infty+1 )^{2}
	\leq
	c(t,r,\rho) \tfrac{1}{\ell}.
\end{align*}
Hence letting $ n\to\infty $ and $ \ell\to\infty $ in \eqref{eq:ff} in order,
we conclude the desired result.

\section{Regularity of Near-Equilibrium Solutions}
\label{sect:nearEq}
We begin by defining near-equilibrium solutions.
Let $\calU$ denote the space of all simple point processes on $\bbR$.
That is, the space of all $\bbZ_{\geq 0}$-valued Radon measure $\chi$ such that $\chi(\{x\})\leq 1$ for all $x\in\bbR$.
Fixing $ \beta=1,2,4 $ hereafter, let $ N\in\calU $ denote the sine process (see, e.g.\ \cite[(2.17), (9.3)]{osada12} for the definition).
With $v: \Weyl\rightarrow \calU$, $(x_i)_{i\in\bbZ} \mapsto \sum_{i\in\bbZ} \delta_{x_i}$
denoting the map from labeled configurations to unlabeled configurations,
we say a weak solution $ \bfX $ of \eqref{eq:DBM} is near-equilibrium if
there exists $ \sine\subset\calU $ such that $\bbP(N\in\sine)=1$ and that $ \bbP(v(\bfX(t))\in\sine)=1 $, for all $ t\geq 0 $.
The motivation is to relate the solutions constructed in \cite{osada12} to that of this paper.
In \cite{osada12}, a near-equilibrium solution is constructed for each initial condition $ \bfx^\ic\in\sine $. 
\begin{remark}
In \cite{osada12}, the interaction $\phi_i(\bfx)$ is defined slightly different as\\ 
$	
	\phi'_i \BK{\bfx} :=
	\lim_{r\to\infty}\sum_{|x_j-x_i|<r,  i\neq j } 
	\frac{1}{2(x_i-x_j)},
$
which is clearly equivalent to \eqref{eq:phi} for al $\bfx\in \Xsp(\alpha,\rho)$.
\end{remark}

We first show that the sine process is $ v(\XRsp(\alpha,1,p)) $-valued.
\begin{lemma}\label{lem:sine}
We have $\bbP(N \in v(\XRsp(\alpha,1,p)))=1$,
for $ \alpha\in(0,1/2) $ and $ p>1 $.
\end{lemma}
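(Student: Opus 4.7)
The plan is to construct a measurable labeling of the sine process points and then verify the two conditions $\bfY\in\Ysp(\alpha,1)$ and $\bfY\in\Rsp(p)$ separately. First I would let $X_0(\omega)$ be the smallest nonnegative point of $N(\omega)$ and order the remaining points increasingly as $\ldots<X_{-1}<X_0<X_1<\ldots$. This is well-defined on a set of full probability, since the sine process almost surely has infinitely many points in each half-line and no multiplicities. Set $\bfY:=u(\bfX)$; it then suffices to show $\bfY\in\Ysp(\alpha,1)\cap\Rsp(p)$ a.s.

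For $\bfY\in\Rsp(p)$ I would invoke that the sine process is mixing under spatial translations, whence its Palm version is ergodic under shifts of the labeled gap sequence $\{Y_a\}_{a\in\hZ}$. Since the sine-kernel gap has super-exponential tails, $\bbE^{\mathrm{Palm}}(Y_{1/2}^p)<\infty$ for every $p$. Birkhoff's ergodic theorem (after transferring the statement from the Palm measure to the original law via the standard inversion formula) yields $\av^p_{(0,m)}(\bfY)\to\bbE^{\mathrm{Palm}}(Y_{1/2}^p)<\infty$ almost surely as $|m|\to\infty$. Since $\av^p_{(0,m)}(\bfY)$ is finite for every fixed $m$, $\sup_m\av^p_{(0,m)}(\bfY)$ is a.s.\ finite.

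For $\bfY\in\Ysp(\alpha,1)$, the telescoping identity $m\av_{(0,m)}(\bfY)=X_m-X_0$ reduces the task to showing $|X_k-k|=O(|k|^{1-\alpha})$ a.s.\ as $|k|\to\infty$ (the a.s.\ finite $X_0$ contributes only a bounded correction). I would deduce this from the classical variance bound $\mathrm{Var}(N([0,t]))\leq c\log(2+t)$ for the sine process. Chebyshev gives $\bbP(|N([0,k])-k|\geq k^{\alpha'})\leq c\log k/k^{2\alpha'}$, which is summable for any $\alpha'>\tfrac12$, so Borel--Cantelli yields $|N([0,k])-k|\leq k^{\alpha'}$ eventually. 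Monotonicity and integer-valuedness of $t\mapsto N([0,t])$ interpolate the estimate to all large $t$: for $t\in[k,k+1]$ one has $|N([0,t])-t|\leq 1+(k+1)^{\alpha'}$. Inverting via $N([0,X_k])=k+1$ then gives $|X_k-k|=O(k^{\alpha'})$. Any $\alpha'\in(\tfrac12,1-\alpha)$ works, and such an $\alpha'$ exists \emph{precisely because} $\alpha<\tfrac12$; a symmetric argument handles $k<0$.

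The hard part will be the bookkeeping around the origin-based labeling and the Palm-to-law transfer for the ergodic theorem, both of which are standard but require care. The sharpness of the constraint $\alpha<\tfrac12$ reflects exactly that the Chebyshev bound on $\mathrm{Var}(N([0,t]))$ is the only quantitative input used; stronger concentration estimates for determinantal linear statistics would extend the conclusion to any $\alpha<1$, but the simpler bound is already sufficient for the stated range.
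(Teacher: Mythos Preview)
Your proposal is correct and follows essentially the same strategy as the paper: the variance bound $\mathrm{Var}(N([0,t]))\leq c\log(2+t)$ combined with Chebyshev and Borel--Cantelli for the $\Ysp(\alpha,1)$ part, and an ergodic argument plus a gap-tail bound for $\Rsp(p)$. The one notable technical difference is in the $\Rsp(p)$ step. You invoke Palm theory and the transfer formula, whereas the paper sidesteps Palm entirely by working not with the labeled gaps $Y_a$ but with the gaps $G_i$ \emph{around the integer points} $i\in\bbZ$. Since the sine process is translation-invariant, the sequence $(|G_i|)_{i\in\bbZ}$ is stationary under the \emph{original} law (no Palm needed), so Birkhoff applies directly, with the exponential-tail input $\bbE e^{\gamma|G_0|}<\infty$ supplying the $p$-th moment. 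The paper then observes that every gap of length exceeding $1$ contains an integer, so $\sum_{\calI\in\fkG'_m}|\calI|^p\leq\sum_{i=1}^m|G_i|^p$, which suffices. This avoids the Palm-to-law bookkeeping you anticipate, at the cost of one extra combinatorial observation. Both routes work; the paper's is marginally more self-contained.
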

\begin{remark}
Neither the determinantal or Pfaffian structure is directly used in the proof of Lemma~\ref{lem:sine}.
More precisely,
letting 
\begin{align*}
	G_x := \big( \inf\curBK{x': N([x',x])=0 }, \sup\curBK{x': N([x,x'])=0 } \big) \subset \bbR
\end{align*}
denoting the gap around $x$,
and $ |G_x| $ denoting the \emph{length} of $ G_x $,
in the following proof of Lemma~\ref{lem:sine},
we use \emph{only} the translation invariance and the following two properties of $N$:
\begin{align}
	&
	\label{eq:sine:var}
	\bbE\BK{ N([x_1,x_2]) - (x_2-x_1) }^2 \leq c\log(2+|x_2-x_1|), \quad \forall x_1\leq x_2\in\bbR, 
\\
	&
	\label{eq:sine:gap}
	\bbE{e^{\gamma |G_x|}} < \infty, \quad \forall \gamma\in\bbR,
\end{align}
which are proven in \cite[Section A.38]{mehta04} and \cite[Theorem 5]{valko09}, respectively.
\end{remark}
\begin{proof}
Throughout this proof, we let $ c=c(\alpha,p)<\infty $ denote a generic finite constant.
Our goal is to prove $ N\in v(\Xsp(\alpha,1)) $ and $ N\in v(\Rsp(p)) $ almost surely.
These conditions, by the duality relation
\begin{align*}
	\{\chi ([0,r]) < n \} = \{ x_{n+i_*} >r \},
	\quad
	\text{ where }
	\bfx \in \Weyl , \ x_{i_*}<0\leq x_{i_*+1}, \ \chi := v(\bfx),
\end{align*}
are equivalent to
\begin{align}
	&
	\label{eq:sine:Xsp}
	\sup_{r\in\bbR} \big| N([0,r]) - |r| \big| |r|^{\alpha-1}<\infty,
\\
	&
	\label{eq:sine:Rsp}
	\sup_{m\in\bbZ}
		\Big\{ \frac{1}{|\fkG_m|} \sum_{\calI\in\fkG_m} |\calI|^{p}  \Big\}
	< \infty,	
\end{align}
where $ \fkG_m:= \{ (\gamma,\gamma')=G_x: x\in\bbR, G_x\subset [0,m] \} $ 
denote the set of all gaps contained in $ [0,m] $.

We begin by proving \eqref{eq:sine:Xsp}.
Let $\calI_j:=[(j-1)^{1/\alpha},j^{1/\alpha})$ for $ j\in\bbZ_{>0} $
and $ \calI_{j} := -\calI_{-j} $ for $ j\in\bbZ_{<0} $.
Combining \eqref{eq:sine:var} and the Chebyshev's inequality, we obtain
\begin{align*}
	&
	\bbP\BK{ \absBK{ N(\calI_j) } \geq 2|\calI_j| } 
	\leq
	c |\calI_j|^{-2} \log|\calI_j| 
	\leq
	c |j|^{2(1-\alpha)/\alpha} \log|j|,
\\
	&
	\bbP \big( \big| N([0,j]) - |j| \big| \geq |j|^{1-\alpha} \big) \leq c |j|^{-2+2\alpha}\log|j|.
\end{align*}
With $\alpha<1/2$, the r.h.s.\ are finite when being summed over $ j\in\bbZ\setminus\{0\} $.
Consequently, by the first Borel-Cantelli lemma have
\begin{align}\label{eq:sine:BC}
	\sup_{j\in\bbZ\setminus\{0\} } N(\calI_j) \absBK{\calI_j}^{-1} <\infty,
	\quad
	\sup_{j\in\bbZ\setminus\{0\} } \Big| N([0,j]) - |j| \Big| |j|^{1-\alpha} <\infty,
\end{align}
almost surely.
Now, fixing arbitrary $r\in\bbR$,
we let $j_*\in\bbZ_{>0}$ be such that $r\in \calI_{j_*}$, 
and let $k_*\in\calI_{j_*}\cap\bbZ$ be arbitrary.
With $N([0,r]) \leq N([0,k_*]) + N(\calI_{j_*})$,
we obtain
\begin{align*}
	| N([0,r])-r | r^{\alpha-1}
	\leq
	\big| N([0,{k_*}])-k_* \big| r^{\alpha-1}
	+
	|k_*-r| r^{\alpha-1}
	+
	 N(\calI_{j_*}) r^{\alpha-1}.
\end{align*}
Further using $ |r| \geq (|j_*|-1)^{1/\alpha} $ and \eqref{eq:sine:BC},
we conclude \eqref{eq:sine:Xsp}.

Turning to \eqref{eq:sine:Rsp},
letting $\fkG'_m$ denote the set of all gaps in $ [0,m] $ with length greater than $ 1 $,
we have
\begin{align*}
	\sup_{m\in\bbZ}
		\Big\{ \frac{1}{|\fkG_m|} \sum_{\calI\in\fkG_m} |\calI|^{p}  \Big\}
	\leq
	1 + \frac{1}{|\fkG_m|} \sum_{\calI\in\fkG'_m} |\calI|^p.
\end{align*}
Further, for each $ \calI\in\fkG'_m $, we must have $ \calI\cap\bbZ\neq\emptyset $,
so
$ 
	\sum_{\calI\in\fkG'_m} (|\calI|^p) \leq \sum_{i=1}^m |G_i|.
$
With $N([0,m])-1 \leq |\fkG_m| \leq N([0,m]) $, by \eqref{eq:sine:Xsp}
we have $\lim_{|m|\to\infty} \frac{|m|}{|\fkG_m|} =1 $ almost surely.
Consequently, letting $ \bfG := (G_i)_{i\in\bbZ} $, we have
\begin{align}\label{eq:sine:Rsp:}
	\sup_{m\in\bbZ}
	\Big\{ \frac{1}{|\fkG_m|} \sum_{\calI\in\fkG_m} |\calI|^{p}  \Big\}
	\leq
	C  + C \sup_{m\in\bbZ} \{ \av^p_{[0,m]}(\bfG) \},
\end{align}
for some $ C<\infty $ almost surely.
As the sine process is translation invariant, $ \bfG $ is shift-invariant.
With this, by \eqref{eq:sine:gap} and the Birkhoff--Khinchin ergodic theorem,
we obtain that $ \lim_{|m|\to\infty} \{ \av^p_{[0,m]}(\bfG) \} = G<\infty $ almost surely,
so in particular the r.h.s.\ of \eqref{eq:sine:Rsp:} is finite almost surely.
\end{proof}

\begin{lemma}\label{lem:reg}
Any $\WeylT$-valued weak solution $\bfX$ of \eqref{eq:DBM}
such that $\bbP(\bfX(t)\in\XRsp(\alpha,\rho,p))=1$, for all $t\geq 0$,
actually takes value in $\XRsp(\alpha,\rho,p)$.
In particular, any near-equilibrium solution of \eqref{eq:DBM}
is actually the $ \XRTsp(\alpha,1,p) $-valued solution given by Theorem~\ref{thm:DBM}.
\end{lemma}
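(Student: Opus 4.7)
The assertion splits into three pieces: (a) $\bfY := u(\bfX) \in \RTsp(p)$, (b) $\bfY \in \YTsp(\alpha,\rho)$, and (c) the ``in particular'' clause. Part (a) is direct. Since each $Y_a$ satisfies an equation of the form \eqref{eq:BesComp:Y} with nonnegative drift correction $F = \f_a \geq 0$, the comparison \eqref{eq:BesComp:sup} yields $\sup_{s\in[0,t]}Y_a(s)\leq Y_a(0)+Q^{0,t}_a$, so that
\[
\sup_{s\in[0,t]}\av^p_{(0,m)}(\bfY(s))\leq 2^{p-1}\bigl(\av^p_{(0,m)}(\bfY(0))+\av^p_{(0,m)}(\bfQ^{0,t})\bigr).
\]
The first term is bounded in $m$ by the hypothesis $\bfY(0)\in\Rsp(p)$, and the second by a maximal ergodic theorem applied to the two iid families of \eqref{eq:Qiid} (common $p$-th moment $q(t,p)<\infty$).

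Part (b) is more delicate, because the Bessel bound alone is too crude: $|m|^\alpha\av_{(0,m)}(\bfQ^{0,t})\to\infty$ as $m\to\infty$, so the cancellation in $\phi_m-\phi_0$ must be exploited. Summing \eqref{eq:DBM} over $i\in(0,m]$ (equivalent to \eqref{eq:sumY:Eq}) gives
\[
Y_{(0,m)}(s)-Y_{(0,m)}(0)=\bigl(B_m(s)-B_0(s)\bigr)+\beta\int_0^s(\phi_m-\phi_0)(\bfX(u))\,du,
\]
and I multiply by $|m|^{\alpha-1}$. The Brownian term is uniformly bounded in $s\in[0,t]$ and $m\in\bbZ\setminus\{0\}$ almost surely by a Borel--Cantelli argument ($\bbP(\overline{|B_m|}(0,t)\geq |m|^{1-\alpha})\leq C_k|m|^{-2k(1-\alpha)}$ is summable for $k$ large since $\alpha<1$). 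For the interaction integral, I use the telescoping identity
\[
(\phi_m-\phi_0)(\bfx)=\sum_{i\neq 0,m}\frac{x_0-x_m}{2(x_m-x_i)(x_0-x_i)}+\frac{1}{x_m-x_0}
\]
together with the density bound $|x_j-x_i|\geq\rho|j-i|/2$ for $|j-i|$ sufficiently large to obtain $|(\phi_m-\phi_0)(\bfX(u))|\leq C\log|m|+R_m(u)$, where $R_m(u)$ collects the finitely many small-distance contributions (integrable on $[0,t]$ via $\int_0^t du/Y_a(u)<\infty$, a standard consequence of the Bessel-type equation for $Y_a$). The uniform density bound is propagated from the hypothesis $\bfY(t_j)\in\Ysp(\alpha,\rho)$ at the endpoints of a partition $0=t_0<\cdots<t_{j_*}=t$ with $q(t_j-t_{j-1},1)<\rho/8$, via \eqref{eq:BesComp:sup} (upper bound $Y_a(u)\leq Y_a(t_{j-1})+Q^{t_{j-1},t_j}_a$) and \eqref{eq:BesComp} (lower bound $Y_{(i,i')}(u)\geq Y_{(i,i')}(t_j)-Q^{t_{j-1}}_{(i,i')}(t_j)\geq\rho|i'-i|/2$ for $|i'-i|$ large, depending on $\norm{\bfY(t_j)}$ and on the iid samples $\{Q^{t_{j-1},t_j}_a\}$). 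Since $\alpha<1$, this yields $|m|^{\alpha-1}\int_0^s|(\phi_m-\phi_0)(\bfX(u))|du=o(1)$ uniformly in $s\in[0,t]$, which combined with $|m|^\alpha|\av_{(0,m)}(\bfY(0))-\rho|\leq\norm{\bfY(0)}$ completes (b).

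For (c): Lemma~\ref{lem:sine} combined with the near-equilibrium hypothesis $\bbP(v(\bfX(t))\in\sine)=1$ and the shift-invariance of $\XRsp(\alpha,1,p)$ yields $\bbP(\bfX(t)\in\XRsp(\alpha,1,p))=1$ for each $t\geq 0$, so (a)--(b) apply and give $\bfX\in\XRTsp(\alpha,1,p)$. On the same probability space, driven by the same Brownian motion $\bfB$, Theorem~\ref{thm:DBM} furnishes an $\XRTsp(\alpha,1,p)$-valued strong solution starting from $\bfX(0)$; pathwise uniqueness identifies it with $\bfX$. The main obstacle throughout is the partition-and-comparison bootstrap in (b) that converts the pointwise-in-time hypothesis into a uniform-in-time density lower bound. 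This parallels the density propagation underlying Proposition~\ref{prop:dens} but is considerably simpler, since $\bfY(t_j)\in\Ysp(\alpha,\rho)$ is already supplied at each partition endpoint by hypothesis rather than needing to be derived.
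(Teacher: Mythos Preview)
Your argument for (a) and the ``in particular'' clause (c) is fine and matches the paper. The gap is in (b), in the treatment of the short-range remainder $R_m$.

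When you write the telescoped identity for $(\phi_m-\phi_0)(\bfx)$, the exceptional indices are those with $|i|$ small \emph{or} $|i-m|$ small. For $i$ near $0$ the contribution is $\approx 1/|x_0-x_i|$, which indeed integrates and is independent of $m$. But for $i$ near $m$ the contribution is $\approx 1/|x_m-x_i|=1/Y_{(m,i)}(u)$, and you then need
\[
|m|^{\alpha-1}\sum_{|i-m|\,\text{small}}\int_0^t\frac{du}{Y_{(m,i)}(u)}
\]
to be bounded uniformly in $m$. Nothing in your argument gives a uniform-in-$a$ bound on $\int_0^t du/Y_a(u)$; the Bessel-type equation only yields finiteness for each fixed $a$. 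Moreover, the ``threshold'' beyond which your density bound $|x_j-x_i|\geq\rho|j-i|/2$ kicks in is not uniform in position: from $\bfY(t_j)\in\Ysp(\alpha,\rho)$ one only gets $|Y_{(i,i')}-\rho(i'-i)|\leq N(|i|^{1-\alpha}+|i'|^{1-\alpha})$, so the number of exceptional indices near $m$ grows with $|m|$.

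The paper avoids this entirely by reducing to Lemma~\ref{lem:den2YTsp}. The two hypotheses of that lemma are checked directly: $\bfY\in\YTspl(\rho)$ via the continuity argument of Lemma~\ref{lem:ZYsp}(\textit{ii}), and the mesoscopic bound~\eqref{eq:dens} by noting that the hypothesis $\bfY(t_j)\in\Ysp(\alpha,\rho)$ at partition points gives $\av_{\A_{b,k}}(\bfY(t_j))\geq 3\rho/4$ for large $k$, and then $\bfundY(t_{j-1},t_j)\geq\bfY(t_j)-\bfQ^{t_{j-1},t_j}$ propagates this backward. The point is that inside the proof of Lemma~\ref{lem:den2YTsp}, the summation is not taken over $(0,m)$ but over a slightly larger window $\fkI=(I^-_{b^--1,j},I^+_{b^++1,j})$ whose endpoints are the seeds of~\eqref{eq:Ipmbj}, chosen via Lemma~\ref{lem:goodset} so that $h_{(I^\pm_{b,j},\pm\infty)}(\bfundY^*)\geq\rho/3$. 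This one-sided density control \emph{from the very first step} is what makes $\barf_{\fkI'}$ and $\undf_{\fkI}$ bounded by $c|m|^{-1}\log|m|$ pointwise, with no residual $R_m$. Your direct approach over $(0,m)$ misses precisely this device.
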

\begin{proof}
Let $ \bfY:=u(\bfX) $.
Fixing arbitrary $ t\geq 0 $, by \eqref{eq:BesComp:sup} we have
$ \bfbarY(0,t) \leq \bfY(0) + \bfQ^{0,t} $.
With this and $ \bfY(0)\in\RTsp(p) $, by \eqref{eq:QLLN} we obtain $ \bfY\in\RTsp(p) $.

It now suffices to prove $ \bfY\in\YTsp(\alpha,\rho) $.
This, by Lemma~\ref{lem:den2YTsp}, amounts to proving
the bound \eqref{eq:dens} and $ \bfY\in\YTspl(\rho) $.
The latter, with $ \bfY $ being a weak solution of \eqref{eq:DBMg} satisfying $ \bbP(\bfY(t)\in\Yspl(\rho)) $, $\forall t\geq 0 $,
is proven by the continuity argument in proof of Lemma~\ref{lem:ZYsp}(\textit{ii}).
Turning to proving the bound \eqref{eq:dens} (recall that $ \A_{b,k} $ is defined as in \eqref{eq:A}), 
we partition $ [0,t] $ into equally spaced subintervals $ [t_{j-1},t_j] $, $ j=1,\ldots,j_* $,
each with length $ t/j_* \leq \tau $, where $ \tau $ is as in Proposition~\ref{prop:denss}.
Similar to \eqref{eq:avbd}, here we have
\begin{align*}
	\av_{A^{k}_b}(\bfY(t_j)) 	\geq
		\rho - \tfrac{2}{|\AL_{b,k}|} \norm{\bfY(t_j)} |\tilm^k_{|b|+1/2}|^{1-\alpha}.
\end{align*}
With $ \bfX(t_j)\in\XRsp(\alpha,\rho,p) $, 
by \eqref{eq:ALwbd}, the last term tends to zero as $ k\to\infty $,
so there exists $ K\in\bbZ_{>0} $ such that
$ \av_{\A_{b,k}}(\bfY(t_j)) \geq \tfrac{3\rho}{4} $,
$ \forall  j=1,\ldots,j_* $, $ b\in\hZ $ and $ k\geq K $.
Combining this with 
$
	\bfundY(t_{j-1},t_{j}) \geq \bfY(t_j) - \bfQ^{t_{j-1},t_j}
$
(by \eqref{eq:BesComp}), and \eqref{eq:denss:claim1} (choosing $ K $ larger if necessary),
we obtain \eqref{eq:dens}.
\end{proof}

\bibliographystyle{abbrv}
\bibliography{DBM}

\end{document}